\pdfoutput=1
\documentclass[letterpaper,11pt]{article}
\usepackage[margin=1in]{geometry}
\usepackage{amsmath,amssymb,amsfonts,amsthm,mathtools}
\usepackage{microtype,xcolor,enumitem,bbm,booktabs,tabularx}
\usepackage{needspace}
\usepackage{graphicx}
\usepackage{tikz}
\usetikzlibrary{arrows.meta,patterns.meta,patterns,calc,intersections,decorations.pathreplacing,through,arrows,decorations.markings,positioning}
\mathtoolsset{showonlyrefs}
\usepackage[CJKbookmarks=true,
            bookmarksnumbered=true,
            bookmarksopen=true,
            colorlinks=true,
            citecolor=red,
            linkcolor=blue,
            anchorcolor=red,
            urlcolor=blue,
            pdfusetitle]{hyperref}

\newtheorem*{informaltheorem}{Theorem}
\newtheorem{theorem}{Theorem}[section]
\newtheorem{lemma}[theorem]{Lemma}
\newtheorem{proposition}[theorem]{Proposition}
\newtheorem{corollary}[theorem]{Corollary}
\newtheorem{remark}[theorem]{Remark}
\newtheorem{definition}[theorem]{Definition}

\providecommand{\keywords}[1]{\small\textbf{\textit{Keywords---}} #1}

\tikzset{global scale/.style={scale=#1,every node/.append style={scale=#1}}}

\newcommand{\maG}{\mathcal{G}}
\newcommand{\maH}{\mathcal{H}}

\newcommand{\maK}{\mathcal{K}}

\newcommand{\Bern}{\mathrm{Bern}}
\def\E{\mathbb{E}}
\def\P{\mathbb{P}}

\newcommand{\TV}{\mathsf{TV}}
\newcommand{\KL}{\mathsf{KL}}

\newcommand{\tr}{\mathrm{Tr}}

\newcommand{\indc}[1]{\mathbf{1}_{\{#1\}}}

\newcommand{\ER}{Erd\H{o}s--R\'enyi }

\def\var{\mathrm{Var}}

\definecolor{red1}{RGB}{255,0,0}
\definecolor{red2}{RGB}{220,47,2}
\definecolor{red3}{RGB}{189,58,85}

\newcommand{\Sph}{\mathbb{S}}

\newcommand{\op}{\mathrm{op}}
\newcommand{\sgn}{\operatorname{sgn}}

\newcommand{\HS}{\mathrm{HS}}
\newcommand{\post}{\mathrm{post}}
\newcommand{\1}{\mathbf{1}}
\newcommand{\esssup}{\mathop{\text{\normalfont ess sup}}}
\newcommand{\Part}{\mathfrak{P}}
\newcommand{\dd}{\mathrm{d}}

\title{Detecting Random Geometric Graphs with General Kernel}
\author{Dong\ Huang\thanks{
D.\ Huang is with the Department of Statistics and Data Science, Tsinghua
University. Email: \href{mailto:hd23@mails.tsinghua.edu.cn}{\color{black}{\nolinkurl{hd23@mails.tsinghua.edu.cn}}}.}\ ~and
Shuangping\ Li\thanks{S. Li is with the Department of Statistics and Data Science, Yale University. Email: \href{mailto:shuangping.li@yale.edu}{\color{black}{\nolinkurl{shuangping.li@yale.edu}}}.}}
\date{}
\hypersetup{
  pdfsubject={Detection in high-dimensional latent geometric graphs},
  pdfkeywords={random geometric graph, hypothesis testing, nonsmooth kernel,
  spherical harmonics, posterior overlap, cavity method}
}

\begin{document}
\maketitle

\begin{abstract}
We study testing a random geometric graph with a fixed kernel against an \ER graph with the same edge density. Under the null hypothesis, the graph has $n$ vertices and independent edges with probability $p$. Under the alternative hypothesis, latent points $X_1,\ldots,X_n$ are independent and uniform on $\mathbb{S}^{d-1}$, and edges are conditionally independent with probabilities $K(\langle X_i,X_j\rangle)$, where $p=\E[K(\langle X_1,X_2\rangle)]$. The goal is to determine a critical dimension $d_\star = d_\star(n,K)$: the total variation distance between the two graph laws tends to one for $d\ll d_\star$ and to zero for $d\gg d_\star$.

For dense graphs, \cite{bubeck2016testing} established $d_\star\asymp n^3$ for hard threshold kernels, whereas \cite{mao2026random} proved $d_\star\asymp n^{3/4}$ for smooth kernels with $K'(0)\ne0$. We connect these results by characterizing the detection thresholds for kernels that need not be differentiable at zero. We also determine the thresholds for smooth kernels whose first nonzero derivative at zero has finite order greater than one. Throughout these classes, signed triangle or quadrilateral counts attain the optimal detection threshold. For certain nondifferentiable kernels, quadrilateral counts are optimal while triangle counts are suboptimal. 
\end{abstract}

{\keywords{Random geometric graph; hypothesis testing; spherical harmonics; subgraph counts; posterior expansion}}

\tableofcontents

\section{Introduction}\label{sec:introduction}

\subsection{Model and testing problem}


Network data arise in various fields, including social networks~\cite{wasserman1994social}, biological networks~\cite{barabasi2004network}, and communication networks~\cite{faloutsos1999power}. The \ER model $\maG(n,p)$, in which each pair of vertices is connected independently with probability $p$, is a basic model with a well-developed mathematical theory~\cite{bollobas2001random}. However, its independence assumption does not capture dependence arising from unobserved vertex attributes in real network data. Latent position models~\cite{hoff2002latent} address this limitation by assigning each vertex an unobserved feature vector and allowing connection probabilities to depend on the features of its endpoints. Specifically, edges are independent conditional on these features.

Indeed, inner products provide a natural way to describe interactions between
latent feature vectors~\cite{young2007random}. In this
paper, we consider a spherical \emph{random geometric graph}
(RGG). Let \(X_1,\ldots,X_n\) be sampled independently from the uniform probability
measure \(\sigma_d\) on the unit sphere \(\Sph^{d-1}\).
Conditional on these latent points, the \(d\)-dimensional RGG on the vertex set \([n]=\{1,\ldots,n\}\) is generated
by connecting each pair of distinct vertices \(i,j\) independently with
probability \(K(\langle X_i,X_j\rangle)\).
We consider dense graphs with a fixed measurable kernel
\(K:[-1,1]\to[c,1-c]\) for some constant \(0<c<1/2\).

In the \ER model, all edges are mutually independent.
A natural question is whether the latent geometry in an RGG can be
detected by comparing it with an \ER graph having the same edge density.
As \(d\) grows, independent uniform points on \(\Sph^{d-1}\) become nearly
orthogonal, and the RGG becomes more similar to an \ER graph. This motivates studying how large \(d\) must be for the geometric structure to become statistically undetectable. 
Let \(P_{n,d}=P_{n,d}(K)\) denote the law of the RGG, and let \(Q_{n,d}\)
denote the law of the \ER graph on \([n]\) with edge probability
\(p=\E[K(\langle X_1,X_2\rangle)]\).
The two models therefore have the same marginal distribution on each edge. Let $\TV(P,Q)$ denote the total variation distance between two distributions $P$ and $Q$.
We seek a critical dimension \(d_\star=d_\star(n,K)\) such that, as
\(n,d\to\infty\),
\begin{equation}
 \TV(P_{n,d},Q_{n,d})\longrightarrow
 \begin{cases}
  1,&d\ll d_\star,\\
  0,&d\gg d_\star.
 \end{cases}
 \label{eq:testing-threshold}
\end{equation}

Despite its simple construction, this testing problem is already
challenging. The two models agree not only on individual edges, but
on the joint distribution of the edge indicators along any fixed
forest. Intuitively, one can integrate out the latent points at
the leaves one at a time: rotational symmetry makes the probability
of each incident edge equal to $p$, regardless of the position
of its neighbor. Thus, ordinary tree subgraph counts have the same
expectation under the two models. A triangle is the smallest
configuration that can reveal geometric dependence. For the
classical hard RGG (i.e. $K_d(t)=\indc{t\ge\tau_{d,p}}$ for some threshold $\tau_{d,p}$), where nearby latent points are connected,
edges between $i$ and $j$ and between $j$ and $k$ place $X_i$
and $X_k$ near a common point $X_j$. This makes them more likely
to be close to each other, increasing the chance of the remaining
edge between $i$ and $k$. 
This observation motivates the signed triangle count, which centers
each edge indicator at $p$ before multiplying around a triangle
and summing over all triples. 
Indeed, the signed triangle count has played a central role in previous
studies of spherical RGGs and their
variants~\cite{bubeck2016testing,liu2023phase,liu2022testing,
bangachev2024fourier,mao2026random,bok2026detection,
hunter2026distinguishability}. However, proving that this statistic
is optimal is challenging, since one must rule out every other
test beyond its detection threshold. For the simple hard threshold kernel
$K_d(t)=\indc{t\ge\tau_{d,p}}$, where $\tau_{d,p}$ is chosen to
give edge density $p$, this question has required a sustained
sequence of advances. Its optimality was established for fixed $p\in(0,1)$
in~\cite{bubeck2016testing}.
For $p=\Theta(1/n)$, \cite{liu2022testing} proved
indistinguishability for $d\gg\log^{36}n$, matching the
triangle bound up to polylogarithmic factors.
More recently, \cite{du2026resolution} established optimality
for vanishing $p\gtrsim n^{-2/3}/\log n$.
See also~\cite{hunter2026distinguishability} for another recent result.
Very recently, \cite{du2026sharp} established the sharp detection
threshold for $p\ge n^{-1+\delta}$ with $\delta>0$.

For more general kernels, \cite{mao2026random} highlights the
importance of the behavior of $K$ near zero. Since
$\langle X_i,X_j\rangle=O_{\P}(d^{-1/2})$, this is the region
that determines most connection probabilities as $d$ grows.
For fixed smooth dense kernels with $K'(0)\neq0$,
\cite{mao2026random} proves that $d_\star\asymp n^{3/4}$,
again attained by the signed triangle count, and further
conjectures that this statistic governs the detection threshold
for general kernels. This threshold differs substantially
from $d_\star\asymp n^3$ for dense hard
RGGs~\cite{bubeck2016testing}. What connects these two regimes,
and what happens when $K$ is not differentiable at zero
or satisfies $K'(0)=0$? 
These results motivate us to study the relationship between the detection threshold and the local regularity of $K$.

To address these questions, we study fixed kernel families that allow
nondifferentiability at zero or a vanishing first derivative. For some
of these kernels, contributions from the even and odd parts cancel
in the signed triangle mean, so the triangle signal alone need not
determine the optimal detection threshold. In contrast, the signed
quadrilateral mean can be written as an integral of a square and has
no such cancellation. We show that quadrilaterals attain the optimal
threshold in regimes where cancellation makes the triangle signal too
weak. Our main results can be summarized as follows.

\begin{informaltheorem}[Informal]
For the kernel families considered in this paper, the optimal detection
threshold is determined jointly by the signed triangle and quadrilateral
counts.
\end{informaltheorem}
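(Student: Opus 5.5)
The informal theorem has two halves. First, a test based on the signed triangle or signed quadrilateral count succeeds whenever $d$ is below the smaller of two explicit thresholds $d_\triangle$ and $d_\square$. Second, $\TV(P_{n,d},Q_{n,d})\to0$ once $d$ exceeds that same value. The plan is to express everything through the Gegenbauer (spherical harmonic) expansion of the kernel, $K(t)=p+\sum_{k\ge1}\lambda_k\,\dim(\mathcal H_k)\,G_k(t)$. Both the subgraph-count signals and the indistinguishability bounds then reduce to sums over the coefficients $\lambda_k$. The local regularity of $K$ at zero fixes how $\lambda_k$ scales with $d$. For $K(t)=p+a|t|^{\alpha}+b\,\sgn(t)|t|^{\beta}+\dots$, a rescaling $t=s/\sqrt d$ gives the low-degree coefficients, of order $d^{-\alpha/2}$ and $d^{-\beta/2}$, together with slowly decaying high-degree tails coming from the singularity.

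\textbf{Upper bound (detection).} Let $\bar A_{ij}=A_{ij}-p$. Under $P$ the signed cycle means are
\[
\E\Bigl[\prod_{\text{cycle}}\bar A\Bigr]=\sum_k \lambda_k^{\ell}\dim(\mathcal H_k),\qquad \ell=3,4.
\]
For $\ell=4$ this is a sum of nonnegative terms, i.e.\ the integral of a square. For $\ell=3$, even- and odd-degree contributions can have opposite signs and may cancel. I would compute the means, then bound the variances under both $P$ and $Q$ by the standard overlap decomposition: only pairs of cycles sharing an edge or a vertex contribute. This yields the conditions $n^3\mu_3^2\gg1$ and $n^4\mu_4^2\gg1$, and a Chebyshev test then separates the two laws.

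\textbf{Lower bound (indistinguishability).} Here I would not use a plain $\chi^2$ bound, which fails in the dense regime. Instead I would bound $\KL(P_{n,d}\,\|\,Q_{n,d})$ through the chain rule $\sum_m \E\,\KL\bigl(P(A_{m,\cdot}\mid A_{[m-1]})\,\|\,Q\bigr)$ and control each term by the second moment of the posterior mean of $K(\langle X_m,X_j\rangle)-p$ given the past. This is the cavity and posterior-expansion approach of the cited smooth-kernel work. The posterior of $X_m$ given its past edges is close to uniform on the sphere. Expanding the posterior in spherical harmonics, the degree-$k$ component is driven by sums $\sum_j \bar A_{mj}\,G_k(\langle X_m,X_j\rangle)$. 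Its leading fluctuation reproduces cycle-type statistics: the degree-1 and degree-2 terms give triangle- and quadrilateral-type quantities of size $n\lambda_k^2$ per coordinate. Summing over $m$ and $k$, the total is $o(1)$ exactly when both signals vanish.

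\textbf{Main obstacle.} The hard step is controlling the posterior for nonsmooth kernels. The high-degree tails of the $\lambda_k$ are not summable fast enough for a finite-degree truncation, so I would split $K=K_{\le L}+K_{>L}$. The low-degree part is handled by the harmonic posterior expansion with concentration of the overlap $\langle X_m,\widehat X\rangle$. For the tail, I would use a direct coupling bound $\|K_{>L}\|_{L^2(\sigma_d)}^2\cdot n^2$, choosing $L$ as a power of $\log n$ times $n/d$, and I would try this first. A second difficulty is cancellation in the triangle term. The indistinguishability argument must show that, in the regime where cancellation leaves the triangle signal small, the degree-one and degree-two posterior corrections are truly of quadrilateral size and not triangle size. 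So the estimates need to track even and odd degrees separately rather than bound them jointly.
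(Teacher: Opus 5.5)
\textbf{Gap 1: the tail step fails.} Truncating in harmonic degree and paying for the tail with an edgewise coupling cannot work. For a nonsmooth kernel the energy $\sum_{q>L}\gamma_{q,d}^2$ decays only polynomially in $L$. Take the jump kernel $K(t)=\frac12+\frac14\sgn(t)$. Then $\kappa_d=\frac12\sgn$, about a third of $v_d$ lies above degree one, and in the Gaussian regime the tail beyond degree $L$ decays like $L^{-1/2}$. At the threshold $n\asymp d^{1/3}$ your cost $n^2\|K_{>L}\|_2^2$ is therefore of order $d^{2/3}L^{-1/2}$. This diverges for every polylogarithmic $L$, and your choice $L\approx(n/d)\,\mathrm{polylog}$ is even smaller here since $n/d\to0$. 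Continuous cusps behave the same way, and taking $L$ polynomially large just moves the whole difficulty into the \emph{low-degree} part. The bound is lossy because degree $q$ enters the graph law only through the eigenvalue $\gamma_{q,d}/\sqrt{N_{q,d}}$. This is why the paper never truncates in degree for cusps. Instead it:
\begin{itemize}
\item localizes in $t$ (Lemma~\ref{lem:localization}: $K$ is replaced by $p_0$ for $|t|>d^{-11/24}$ at exponentially small cost);
\item keeps all degrees and uses the multiplicities $N_{q,d}\ge d$ (and $\ge cd^2$ for even $q$) through operator bounds such as $\|T_{\kappa_d}\|_{\op}^4\le\tau_{4,d}/d$ and the bounds on $T_A$ in Lemma~\ref{lem:cusp-query};
\item handles large cavity orders by prior moments of $H_d$;
\item treats jumps by a separate Gram-matrix comparison with data processing, giving $\KL(P\Vert Q)\le\binom n3/(2d)+O(n^4/d^2)$ (Proposition~\ref{prop:jump}).
\end{itemize}
Even in the smooth class, the $C^{m+6}$ hypothesis only gives $n^2\|R\|_2^2\lesssim d^{m+2q_m/3-6}/\omega_d^2$ for the Taylor residual, which is not small for $m\ge6$. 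That is why the paper removes $R$ inside the posterior rather than by coupling.

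Your cavity step is also misidentified. After $\KL\le\chi^2$, the per-step quantity is not the posterior mean of $K(\langle X_m,X_j\rangle)-p$; that vanishes because $X_m$ is independent of the past. It is the whole hierarchy $\E_P[\E_P[h_{k,d}(X_S)\mid A^{(r)}]^2]$ over subsets $S$ of old vertices with $|S|=k\ge2$. The low orders need posterior estimates in which the likelihood denominator must be removed.

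\textbf{Gap 2: cancellation.} Bounding even and odd contributions separately is exactly what loses the cancellation. On $\beta=\alpha+1/3$, both $\gamma_{1,d}^3/\sqrt d$ and $\gamma_{2,d}^3/\sqrt{N_{2,d}}$ are of order $d^{-(3\alpha+2)/2}$, while their sum $\tau_{3,d}$ can be much smaller. Squaring them separately therefore only yields $G_2\lesssim\epsilon_n^3$, not $\Lambda_{n,d}$. The missing idea is to let triangles enter only through the exact signed trace:
\begin{itemize}
\item at order two, the internal edge $12$ is split off by Bayes' formula, producing $\E[H_d(T_d)\kappa_d(T_d)]^2=\tau_{3,d}^2$;
\item at higher orders, internal perfect matchings have mean exactly $\tau_{3,d}^j$;
\item every other configuration is shown to be $O(n^4\tau_{4,d}^2)$. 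At order two this uses re-predicting $H_{4,d}$, the kernel of $T_{\kappa_d}^4$, under the posterior (Proposition~\ref{prop:cusp-critical-two}); conditioning on the remaining latent points instead leaves $n^5\tau_{4,d}^2/d$, which can exceed $n^4\tau_{4,d}^2$. At orders $\ge3$ it uses the graph-integral bound of Lemma~\ref{lem:fixed-graph-budget}, which gains $d^{-1/6}$ whenever a biconnected block on four vertices is present.
\end{itemize}

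\textbf{Two smaller corrections.}
\begin{itemize}
\item Detection holds below the \emph{larger} of the two critical dimensions, i.e.\ whenever $\Lambda_{n,d}=n^3\tau_{3,d}^2+n^4\tau_{4,d}^2\to\infty$. With the smaller one, your impossibility half is false on the cancellation line, where quadrilaterals detect and triangles do not.
\item In the variance computation, cycles sharing exactly one vertex have zero covariance. However, $4$-cycles sharing two opposite vertices contribute $n^6\E[H_d(T_d)^4]$. So you also need $\E[H_d(T_d)^4]\lesssim\tau_{4,d}^2$ and $\tau_{6,d}\le\|T_{\kappa_d}\|_{\op}^2\tau_{4,d}$.
\end{itemize}
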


Related phenomena have also been observed in different settings.
For RGGs on the torus with the $L_\infty$ metric,
\cite{bangachev2024linfty} shows that signed quadrilateral counts
outperform triangles and are optimal, up to logarithmic factors,
among polynomial tests of low degree. For certain stochastic block
models, \cite{bangachev2025graph} shows that signed star and
quadrilateral counts suffice whenever a polynomial test of
constant degree succeeds.

\subsection{Main results}

Throughout the paper, we assume that $K:[-1,1]\to[c,1-c]$ is fixed and measurable with constant
$0<c<1/2$. Write
\begin{equation}
 T_d=\langle X_1,X_2\rangle,\quad p_d=\E[K(T_d)],\quad
 s_d=\sqrt{p_d(1-p_d)},\quad
 \kappa_d(t)=\frac{K(t)-p_d}{s_d}.
 \label{eq:model-normalization}
\end{equation}
Let $T_{\kappa_d}$ be the integral operator on $L^2(\sigma_d)$
with kernel $\kappa_d(\langle x,y\rangle)$, and define
$\tau_{\ell,d}=\tr(T_{\kappa_d}^{\ell})$ for $\ell\ge2$.
For $\ell\ge3$, these traces are the centered moments of simple cycles
of length $\ell$. Between the hard threshold case with $d_\star\asymp n^3$
\cite{bubeck2016testing} and the smooth case with $K'(0)\ne0$ and
$d_\star\asymp n^{3/4}$ \cite{mao2026random}, there are kernels that are
continuous but not differentiable at zero, such as
$K(x)=c+(1-2c)\sqrt{|x|}$. To study such kernels, we first introduce
the cusp class by separating the even and odd parts of $K$. Indeed, we will show that this class of kernels unifies the two previously known results.
For $p_0\in(0,1)$ and $0<r_0<1$, write
\begin{align}\label{eq:def-KeKo}
 K_e(r)&=\frac{K(r)+K(-r)}2-p_0,\quad
 K_o(r)=\frac{K(r)-K(-r)}2,\quad 0<r<r_0.
\end{align}

\begin{definition}[The cusp class]\label{gen:def:local}\label{def:cusp}
A fixed dense function $K$ belongs to $\maK_{\mathrm{cusp}}$ if, for some
$p_0,r_0$ as in~\eqref{eq:def-KeKo}, it satisfies one of the following
conditions. All expansions are as $r\downarrow0$.
\begin{itemize}
\item \emph{Pure even.} For some $0<\alpha\le1$ and $a_e\ne0$,
$K_e(r)=a_er^\alpha+o(r^\alpha)$ and $K_o\equiv0$.
\item \emph{Pure odd.} For some $0<\beta\le1$ and $a_o\ne0$,
$K_e\equiv0$ and $K_o(r)=a_or^\beta+o(r^\beta)$.
\item \emph{Mixed.} For some $0<\alpha\le1$, $\beta>0$, and $a_e,a_o\ne0$,
$K_e(r)=a_er^\alpha+o(r^\alpha)$ and $K_o(r)=a_or^\beta+o(r^\beta)$.
\item \emph{Jump.} For some $\varepsilon\in\{-1,1\}$ and $c_1>0$,
$\varepsilon\{K(r)-K(-r)\}\ge c_1$ for every $0<r<r_0$.
\end{itemize}
\end{definition}

\begin{figure}[htbp]
\centering
\includegraphics[width=\textwidth]{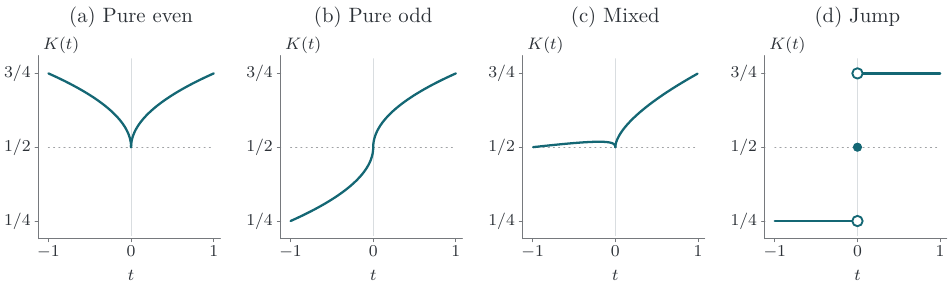}
\caption{Examples of the four cases in Definition~\ref{def:cusp}:
\textup{(a)} $K(t)=\frac12+\frac14|t|^{1/2}$;
\textup{(b)} $K(t)=\frac12+\frac14\operatorname{sgn}(t)|t|^{1/2}$;
\textup{(c)} $K(t)=\frac12+\frac18|t|^{1/2}
+\frac18\operatorname{sgn}(t)|t|^{3/4}$; and
\textup{(d)} $K(t)=\frac12+\frac14\operatorname{sgn}(t)$.
The dotted line marks $p_0=1/2$, and $\operatorname{sgn}(0)=0$.
In \textup{(d)}, the open circles mark the one-sided limits.}
\label{fig:cusp-kernel-examples}
\end{figure}


The first three cases describe continuous cusps, whereas the last describes discontinuous behavior at zero (see Figure~\ref{fig:cusp-kernel-examples} for examples).
The exponents $\alpha$ and $\beta$ describe the leading local
behavior of the even and odd parts of $K$ without requiring
derivatives at zero. Smaller exponents correspond to sharper cusps.
The pure odd case connects the two regimes discussed above:
the profile $a_o\operatorname{sgn}(t)|t|^\beta$ is continuous but
not differentiable at zero for $0<\beta<1$, gives a jump at
$\beta=0$, and becomes a nonzero linear term at $\beta=1$,
matching the local behavior of a smooth kernel with $K'(0)\ne0$. 
These exponents also determine the scale of the local signal.
Since $\langle X_1,X_2\rangle=O_{\P}(d^{-1/2})$, the corresponding
even and odd power terms have typical sizes $d^{-\alpha/2}$ and
$d^{-\beta/2}$. Their rates of decay, together with how they
contribute to cycle counts, determine the detection threshold.
This concentration near zero also motivates imposing the
conditions only for $r\in(0,r_0)$: outside $(-r_0,r_0)$, $K$ may
be any extension satisfying the dense bound.
The following theorem characterizes the detection threshold
for the cusp class.

\begin{theorem}[Detection for the cusp class]\label{thm:cusp}
Fix $K\in\maK_{\mathrm{cusp}}$ and let
$\Lambda_{n,d}=n^3\tau_{3,d}^2+n^4\tau_{4,d}^2$.
As $n,d\to\infty$,
\begin{equation}
 \TV(P_{n,d},Q_{n,d})\longrightarrow
 \begin{cases}
  1,&\Lambda_{n,d}\to\infty,\\
  0,&\Lambda_{n,d}\to0.
 \end{cases}
 \label{eq:cusp-TV}
\end{equation}
\end{theorem}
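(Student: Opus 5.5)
The proof has two directions. In one, signed cycle counts give a test that succeeds when $\Lambda_{n,d}\to\infty$. In the other, we show indistinguishability when $\Lambda_{n,d}\to0$ by bounding $\TV$ through a second-moment (or posterior-overlap) argument written in the spherical harmonic basis.

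\emph{Step 1: spectral description.} By Funk--Hecke, $T_{\kappa_d}$ is diagonal in spherical harmonics, with eigenvalue $\lambda_k$ of multiplicity $N_{k,d}\asymp d^k/k!$. Hence $\tau_{\ell,d}=\sum_{k\ge1}N_{k,d}\lambda_k^\ell$, and $\lambda_0=0$ by centering. The even part $K_e$ feeds only even $k$, and the odd part $K_o$ feeds only odd $k$.

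Rescale $t=u/\sqrt d$, so that $T_d\sqrt d$ is approximately Gaussian and the Gegenbauer polynomials behave like Hermite polynomials. With this scaling, the local expansions in Definition~\ref{def:cusp} give
\[
N_{k,d}\lambda_k^2\asymp d^{-\gamma}h_k^2,
\]
where $h_k$ is the $k$-th Hermite coefficient of the profile $a_e|u|^\alpha+a_o\sgn(u)|u|^\beta$, and $\gamma=\alpha$ or $\beta$ depending on the case (the jump case is the limit $\beta=0$). These coefficients decay polynomially in $k$, because the profile is nonsmooth at $0$. The tail region $|t|\ge r_0$ contributes only exponentially small amounts.

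From this we obtain two-sided estimates on $\tau_{3,d}$ and $\tau_{4,d}$:
\[
\tau_{4,d}=\sum_k N_{k,d}\lambda_k^4>0,
\]
with no cancellation, while $\tau_{3,d}$ may cancel between even and odd $k$. We only need $\tau_4$ as a lower-bound device and $\tau_3$ exactly as stated.

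\emph{Step 2: upper bound on detection (the test).} Use the signed triangle count $S_3$ and the signed quadrilateral count $S_4$. Under $Q$ each has mean $0$, with $\var_Q(S_3)\asymp n^3$ and $\var_Q(S_4)\asymp n^4$. Under $P$ the means are $\binom n3\cdot 2\cdot s_d^3\tau_{3,d}$ and the analogue for $S_4$ with $\tau_{4,d}$. We then have to bound $\var_P(S_\ell)$ by
\[
O\bigl(n^\ell+n^{2\ell}\tau_{\ell,d}^2\cdot o(1)\bigr).
\]
This is done by expanding over pairs of cycles sharing vertices or edges. Each overlap pattern reduces to traces and products of operator norms, e.g. $\|T_{\kappa_d}\|_{\op}\to0$ and $\tau_{2,d}\asymp 1$. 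Chebyshev's inequality then shows that whichever of $n^3\tau_3^2$ or $n^4\tau_4^2$ diverges yields a test with vanishing error.

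\emph{Step 3: indistinguishability.} This is the main obstacle. Bound
\[
\chi^2(P\,\|\,Q)=\E_{X,X'}\prod_{i<j}\bigl(1+\kappa_d(\langle X_i,X_j\rangle)\kappa_d(\langle X'_i,X'_j\rangle)\bigr)-1.
\]
Expand the product as a sum over edge sets $H$. Integrating out the latent points expresses each term through the kernel $\kappa\otimes\kappa$ on $(\Sph^{d-1})^2$, whose eigenvalues are $\lambda_k\lambda_{k'}$. Forests contribute $0$, and a simple cycle of length $\ell$ contributes $\tau_{\ell,d}^2$. Summing over all cycles gives roughly
\[
\sum_{\ell\ge3}n^\ell\tau_{\ell,d}^2/(2\ell),
\]
and we must show this is $O(\Lambda_{n,d})$. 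To do so, bound
\[
\tau_{\ell,d}^2\le\tau_{4,d}^2\,\|T_{\kappa_d}\|_{\op}^{2\ell-8}
\]
for $\ell\ge5$, and note that $n\|T_{\kappa_d}\|_{\op}^2$ is small whenever $n^4\tau_4^2\to0$. This follows from Step 1, since $N_{1,d}\lambda_1^4\le\tau_4$ and all the $\lambda_k$ are comparable to the scaling.

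The difficulty is the graphs $H$ with many overlapping cycles, i.e. a cluster expansion for general $H$. Naive $\chi^2$ can blow up because of atypical latent configurations, such as many nearly collinear points in the jump case. I would first try truncation: condition on a high-probability event where all $|\langle X_i,X_j\rangle|\lesssim\sqrt{\log n/d}$, and where the pairwise empirical Gram matrix is well controlled. Then bound contributions of general $H$ by
\[
\prod(\text{cycle-space factors})\le\exp\bigl(C\Lambda_{n,d}\bigr)-1
\]
using the trace-norm bounds. If that fails, the fallback is the posterior-overlap (cavity) approach: add one vertex at a time and bound $\KL$ increments by $n^2\tau_3^2+n^3\tau_4^2$ per vertex, then sum the increments.
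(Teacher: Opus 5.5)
Your detection half is essentially the paper's argument: Propositions~\ref{prop:triangle-moments} and~\ref{prop:square-moments}, with the branch selection of Corollary~\ref{cor:cycle-detection}. Three small corrections. You must pick the test according to which of $n^3\tau_{3,d}^2$, $n^4\tau_{4,d}^2$ is \emph{larger}, since the term $n^4\tau_{4,d}$ in $\var_P(S_3)$ is negligible only in that branch. For continuous cusps $\tau_{2,d}=v_d\to0$, not $\asymp1$. And $\var_P(S_4)$ contains $\E[H_d(T_d)^4]$, which is not a trace and needs its own bound (Lemma~\ref{lem:contracted}).

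The genuine gap is Step~3, which is the entire impossibility direction. The untruncated $\chi^2$ route fails outright. Take the pure odd cusp $K(t)=\tfrac12+\tfrac t4$, so $\beta=1$, $\kappa_d(t)=t/2$, and $\Lambda_{n,d}\to0$ iff $n\ll d^{4/3}$. Every factor $1+\kappa_d(T_{ij})\kappa_d(T'_{ij})$ is nonnegative. Consider the event that $\langle X_i,e_1\rangle\ge\tfrac12$ and $\langle X_i',e_1\rangle\ge\tfrac12$ for all $i$, with the orthogonal parts having their typical overlaps $O(\sqrt{\log n/d})$. On it, every factor is at least $1+c$, and the event has probability $e^{-Cnd}$. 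Hence $\chi^2(P\Vert Q)\ge e^{cn^2-Cnd}-1\to\infty$ throughout $d\ll n\ll d^{4/3}$, where $\TV(P,Q)\to0$. The same computation applies to any continuous cusp once the threshold permits $n\gg d$. This includes the cancelling kernels of Remark~\ref{rem:cusp-cancellation} with $\alpha>1/6$, for which $\Lambda_{n,d}\to0$ allows $n$ up to $d^{\alpha+5/6}$.

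Your repair, conditioning the prior on $\max_{i<j}|T_{ij}|\lesssim\sqrt{\log n/d}$, destroys the product structure behind the two identities you rely on: forests contributing exactly $0$, and an $\ell$-cycle contributing exactly $\tau_{\ell,d}^2$. The central claim, that all $H$ together contribute at most $\exp(C\Lambda_{n,d})-1$, is asserted without a mechanism. For non-cycles, $t(H,\kappa_d)^2$ is a homomorphism density, not spectral data. Graphs with overlapping cycles, up to size of order $n$, are exactly what has to be controlled.

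Even granting a truncation, the missing estimate is the hard one. On $\beta=\alpha+1/3$ the target is $O(n^4\tau_{4,d}^2)$, which can lie far below the triangle scale $(nd^{-1/s})^3$. So every non-cycle contribution must be bounded at that finer scale, and the signed $\tau_{3,d}$ must never be estimated through its even and odd parts separately.

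The paper does not use $\chi^2$ here. It uses the sequential bound $\KL(P\Vert Q)\le\sum_k G_k$ of Lemma~\ref{lem:cavity}. It first localizes the kernel (Lemma~\ref{lem:localization}); this modifies $K$ rather than conditioning the prior, so the exact cycle identities survive. It controls likelihood denominators by reverse hypercontractivity. A direct order-two estimate leaves an extra $n^5\tau_{4,d}^2/d$, which can exceed $n^4\tau_{4,d}^2$ on the critical relation. That term is removed only by retaining the posterior of the other vertices and passing to the profile $H_{4,d}$ of $T_{\kappa_d}^4$ (Proposition~\ref{prop:cusp-critical-two}). Orders three and higher use the extra factor $d^{-1/6}$ for graph integrals containing a biconnected block on at least four vertices (Lemma~\ref{lem:fixed-graph-budget}).

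Your fallback, a per-vertex KL increment of order $n^2\tau_{3,d}^2+n^3\tau_{4,d}^2$, restates this conclusion rather than giving a route to it. Finally, the jump case needs its own argument. The paper passes by data processing to the Gram matrix and computes the divergence from i.i.d.\ entries using the explicit density, obtaining $\binom n3/(2d)+O(n^4/d^2)$ (Proposition~\ref{prop:jump}). Nothing in your plan replaces this.
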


In view of Theorem~\ref{thm:cusp}, the two terms
$n^3\tau_{3,d}^2$ and $n^4\tau_{4,d}^2$ determine the detection
threshold. They are comparable to the squared signal-to-noise
ratios of the signed triangle and quadrilateral counts, respectively.
Here the noise is measured under $Q_{n,d}$ (see
Section~\ref{sec:cusp-spectrum} for details).
The models are indistinguishable when $\Lambda_{n,d}\to0$
and distinguishable when $\Lambda_{n,d}\to\infty$.
Thus these two cycle counts together attain the optimal threshold
throughout the cusp class.
Except in the mixed case with $\beta=\alpha+1/3$,
we have $\Lambda_{n,d}\asymp n^3\tau_{3,d}^2$
whenever $\Lambda_{n,d}=O(1)$, and the threshold is
$d_\star=n^s$, where
\begin{equation}
s=
\begin{cases}
3/(3\alpha+2),&\text{pure even},\\
3/(3\beta+1),&\text{pure odd},\\
\max\{3/(3\alpha+2),3/(3\beta+1)\},&\text{mixed},\\
3,&\text{jump}.
\end{cases}
\label{eq:cusp-dimension}
\end{equation}
On the line $\beta=\alpha+1/3$, the leading contributions to
$\tau_{3,d}$ can cancel, weakening the triangle signal.
Both traces may then be needed to determine the threshold;
see Section~\ref{sec:cusp-converse}.
For pure odd cusps, \eqref{eq:cusp-dimension} gives
$d_\star=n^{3/(3\beta+1)}$.
The exponent approaches $3$ as $\beta\downarrow0$ and equals
$3/4$ at $\beta=1$, connecting the jump threshold $n^3$
to the smooth threshold $n^{3/4}$.
Signed triangle counts attain the optimal threshold throughout
this family and in the jump case.

Beyond the cusp class, a natural question is how the detection threshold
changes for smooth kernels with $K'(0)=0$. The linear term then vanishes,
so higher derivatives determine the local behavior of $K$. We focus
on kernels whose first nonzero derivative at zero has a finite order
$m\ge2$. Their leading nonconstant Taylor term is proportional to $t^m$.
Since $\langle X_1,X_2\rangle=O_{\P}(d^{-1/2})$, this term has typical size $d^{-m/2}$,
so $m$ directly affects the strength of the geometric signal.
We therefore classify these kernels by $m$, which measures their
flatness at zero. We first introduce the kernel class $\maK_{\mathrm{smooth}}(m)$.

\begin{definition}[The class $\maK_{\mathrm{smooth}}(m)$]
\label{def:finite-flat}
For a fixed integer $m\ge2$, let $\maK_{\mathrm{smooth}}(m)$
consist of fixed dense links $K\in C^{m+6}([-1,1])$ satisfying
$K'(0)=\cdots=K^{(m-1)}(0)=0$ and $K^{(m)}(0)\ne0$.
\end{definition}

Let $q_m=1$ for odd $m$ and $q_m=2$ for even $m$. Specifically, we have the following theorem.

\begin{theorem}[Detection for finitely flat smooth kernels]
\label{thm:finite-flat}
Fix an integer $m\ge2$ and $K\in\maK_{\mathrm{smooth}}(m)$.
As $n,d\to\infty$,
\begin{equation}
 \TV(P_{n,d},Q_{n,d})\longrightarrow
 \begin{cases}
  1,&d\ll n^{3/(3m+q_m)},\\
  0,&d\gg n^{3/(3m+q_m)}.
 \end{cases}
 \label{eq:finite-flat-threshold}
\end{equation}
Specifically, a test based on the signed triangle count attains this optimal detection threshold.
\end{theorem}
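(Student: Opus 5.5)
We follow the same two-sided scheme that underlies Theorem~\ref{thm:cusp}. Throughout, the spectrum of $T_{\kappa_d}$ is described through the Gegenbauer (spherical harmonic) expansion
\[
\kappa_d(t)=\sum_{k\ge1}\lambda_k\,\dim(\mathcal H_k)\,P_k^{(d)}(t),
\]
so that $\tau_{\ell,d}=\sum_k \dim(\mathcal H_k)\lambda_k^\ell$. The test statistic is the signed triangle count; its squared signal-to-noise ratio under $Q_{n,d}$ is of order $n^3\tau_{3,d}^2$.

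\textbf{Step 1: coefficient sizes.} Taylor expand $K$ to order $m+6$ and compute $\lambda_k=\E[\kappa_d(T_d)P_k(T_d)]$. The basic moment identity is $\E[T_d^jP_k(T_d)]=0$ for $j<k$ and $j\not\equiv k \pmod 2$. Since $K^{(j)}(0)=0$ for $1\le j<m$, the coefficients $\lambda_1,\dots,\lambda_{m-1}$ come only from the constant and degree $\ge m$ terms. From this we get $\lambda_m\asymp d^{-m}$ with $\dim\mathcal H_m\asymp d^m$. Every other $\lambda_k$ with $k\le m+6$ is smaller than the corresponding $d^{-\lceil\cdot\rceil}$ scale, and the remainder $R$ is controlled in $C^{m+6}$.

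The lower degrees need care, and they are what produce $q_m$.
\begin{itemize}[label={}]
\item \emph{Even $m$.} The term $t^m$ also feeds $\lambda_0$, which is removed by centering, and $\lambda_2,\lambda_4,\dots$. For instance $\lambda_2\asymp d^{-2}\cdot d^{-(m-2)/2}\cdot(\text{lower})$; all of these must be tracked.
\item \emph{Odd $m$.} The term $t^m$ feeds $\lambda_1,\lambda_3,\dots$. In particular $\lambda_1=\E[\kappa T]\asymp d^{-1}d^{-(m-1)/2}$, since $\E T^{m+1}\asymp d^{-(m+1)/2}$.
\end{itemize}
The aim is to show that $\tau_{3,d}\asymp d^{-(3m+q_m)/6}$, so that $n^3\tau_{3,d}^2\asymp n^3 d^{-(3m+q_m)/3}$, and that $\tau_{3,d}$ is dominated by a single nonvanishing term. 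The argument proceeds by computing $\tau_3=\E[\kappa(\langle X_1,X_2\rangle)\kappa(\langle X_2,X_3\rangle)\kappa(\langle X_3,X_1\rangle)]$ directly from the Taylor expansion:
\begin{itemize}[label={}]
\item \emph{Odd $m$.} The leading part is $a^3\,\E[(T_{12}T_{23}T_{31})^m]$ minus centering. The product $T_{12}T_{23}T_{31}$ has mean $d^{-2}$, and the resulting scale must be checked carefully against the centered variance.
\item \emph{Even $m$.} The leading contribution comes from the covariance structure of $(T_{12}^m-\E T^m)$, giving an extra $d^{-1/2}$ factor.
\end{itemize}
Rather than guess, I would compute $\tau_3$ exactly via $\sum_k \dim\mathcal H_k\lambda_k^3$ using the explicit Gegenbauer moments of $t^m$. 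I would then verify that the leading coefficient is nonzero, since it is a positive combination and no cancellation occurs in the pure-monomial case. Finally, I would check that $\tau_{4,d}$ satisfies $n^4\tau_4^2\ll n^3\tau_3^2$ exactly when $n^3\tau_3^2=O(1)$.

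\textbf{Step 2: detection.} Bound the variance of the signed triangle count under both $P$ and $Q$ by $O(n^3+n^4\tau_4+\dots)$ using the trace bounds from Step 1, and conclude by Chebyshev's inequality when $d\ll n^{3/(3m+q_m)}$.

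\textbf{Step 3: indistinguishability.} I would reuse the general machinery the paper develops for Theorem~\ref{thm:cusp}, namely the posterior/cavity expansion of the $\chi^2$ or KL divergence in terms of cycle traces. This would give $\TV\to0$ whenever $\sum_{\ell\ge3}n^\ell\tau_{\ell,d}^2\to0$, together with a tail term controlled by $\|T_{\kappa_d}\|_{\op}$ and $\tau_{2,d}$. It then suffices to check:
\begin{itemize}[label={}]
\item $\|T_{\kappa_d}\|_{\op}\lesssim \max_k|\lambda_k|=o(n^{-1})$ in the regime $d\gg n^{3/(3m+q_m)}$;
\item $n^\ell\tau_\ell^2$ is summable and decays geometrically, using $|\tau_\ell|\le \|T\|_{\op}^{\ell-4}\tau_4$.
\end{itemize}

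\textbf{Main obstacle.} I expect Step 3 to be the main difficulty. The real issue is that the largest eigenvalue need not be $\lambda_m$. For odd $m$, $\lambda_1$ is comparatively large with small multiplicity, and one must show that such low-degree spikes do not break the expansion. My first attempt would be to split $\kappa_d$ into its degree $\le m+1$ harmonic part and a remainder, and handle the finite-rank part by a direct Gaussian-type comparison.
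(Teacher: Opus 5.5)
Your detection half (Steps 1--2) follows the paper: the dominant harmonic degree is $q_m$, and the resulting trace scales are fed into the signed-triangle Chebyshev bound of Proposition~\ref{prop:triangle-moments}.

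There is an exponent slip in Step 1, though. Your own eigenvalue estimate gives, for odd $m$, $\tau_{3,d}\approx d\lambda_1^3\asymp d^{1-3(m+1)/2}$. So the target is $|\tau_{3,d}|\asymp d^{-(3m+q_m)/2}$, not $d^{-(3m+q_m)/6}$. With your exponent, $n^3\tau_{3,d}^2\asymp n^3d^{-(3m+q_m)/3}$ would put the threshold at $n^{9/(3m+q_m)}$, which contradicts the statement. Also, once the higher Taylor terms are included, the absence of cancellation comes from degree separation, not positivity: every other degree is smaller by a power of $d$, including the opposite-parity degree one when $m$ is even.

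The genuine gap is Step 3. The paper does not contain a criterion of the form ``$\TV\to0$ whenever $\sum_{\ell\ge3}n^\ell\tau_{\ell,d}^2\to0$,'' and you do not prove one. The cavity expansion bounds $\KL(P\Vert Q)$ by $\sum_kG_k$, where $G_k$ involves posterior predictions $\E_P[h_{k,d}(X_1,\ldots,X_k)\mid A^{(r)}]$ of star interactions, not cycle traces.

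The cusp estimates also do not transplant. The order-two bound of Proposition~\ref{prop:cusp-triangle-two} contains the term $n^5\tau_{4,d}^2/d$. At $n=d^{m+q_m/3}/\omega_d$ this is of order $d^{m-q_m/3-1}/\omega_d^5\to\infty$.

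The conditions you propose to check are false near the threshold:
\begin{itemize}
\item $n\|T_{\kappa_d}\|_{\op}$ is of order $d^{m/2-1/6}/\omega_d$ for odd $m$ and $d^{m/2-1/3}/\omega_d$ for even $m$;
\item $nv_d\asymp d^{q_m/3}/\omega_d$.
\end{itemize}
Both diverge. This is exactly why prior (Jensen) bounds fail at low orders; for example, $G_2\le\binom n3\tau_{4,d}\asymp d^m/\omega_d^3$. Such bounds only work for $k>6m/q_m+2$ (Proposition~\ref{prop:cavity-tail}).

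The missing idea is a posterior analysis of each fixed order:
\begin{itemize}
\item Write $\E[\E[f\mid A]^2]=\E_{\post}[f(X)f(X')]$.
\item Pass to an independent prior pair via $\P_{\post}[\mathcal E]\le(\E[\Gamma_d\1_{\mathcal E}])^{1/2}$.
\item On a Gram-norm event, bound $\E[\1_{\mathcal G_D(X)\cap\mathcal G_D(X')}e^{aS_\kappa}]\le e^{C(1+V_0)}$, where $V_0\le d^{2q_m/3+o(1)}$.
\item Beat this cost with prior tails of size $\exp(-d^{\zeta})$, $\zeta>2q_m/3$, for the replica overlap statistics. These are quadratic overlaps at order two, a canonical cubic $U$-statistic at order three, and, via Newton's identities, overlap moments for $k\ge4$.
\item Do all of this for a Taylor polynomial of $\kappa_d$, then transfer to $\kappa_d$ itself.
\end{itemize}

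Your fallback of splitting off a finite-rank part does not fill this gap. The degree-$\le m+1$ part has rank $\asymp d^{m+1}$, which exceeds $n$ in this regime, so it is not finite rank. And a latent Gram comparison as in Proposition~\ref{prop:jump} gives only $\KL\lesssim n^3/d$, which is useless at this scale.
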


In view of Theorem~\ref{thm:finite-flat}, the critical dimension
$d_\star=n^{3/(3m+q_m)}$ is attained by the signed triangle count (see Section~\ref{sec:detection} for details).
At the level of the threshold formulas, odd $m$ corresponds to
the odd cusp expression with $\beta=m$, while even $m$ corresponds
to the even cusp expression with $\alpha=m$. The resulting exponents
are $3/(3m+1)$ and $3/(3m+2)$, respectively.

By Theorems~\ref{thm:cusp} and~\ref{thm:finite-flat}, signed triangles
and quadrilaterals suffice for optimal detection across these two
classes of fixed kernels (see Figure~\ref{fig:threshold-exponents} for details). Together, the results show how the local
behavior of $K$ near zero determines the critical dimension for
a broad range of kernels. In the smooth class, the leading Taylor
monomial $t^m$ already identifies the relevant parity: odd $m$
produces an odd leading term, and even $m$ an even leading term.
Thus, the two cases are captured by the single order $m$, without
separate assumptions on the even and odd components.

These results extend the conclusions of~\cite{mao2026random}, which
proves that signed triangle counts are optimal for fixed smooth
kernels with $K'(0)\ne0$.
Theorem~\ref{thm:finite-flat} shows that triangles remain optimal
when the first nonzero derivative at zero has finite order $m\ge2$.
Theorem~\ref{thm:cusp} shows that this need not hold for continuous
kernels that are not differentiable at zero.
For certain mixed cusps, cancellation weakens the triangle signal,
and signed quadrilateral counts attain the optimal threshold.


\begin{figure}[htbp]
\centering
\begingroup
\definecolor{cyclepossible}{RGB}{223,239,237}
\definecolor{cycleimpossible}{RGB}{245,229,216}
\definecolor{cycleink}{RGB}{44,49,55}
\resizebox{.9\textwidth}{!}{%
\begin{tikzpicture}[
  x=1cm,y=1cm,font=\small,text=cycleink,
  axis/.style={
    draw=cycleink,line width=.65pt,
    -{Stealth[length=4pt]}
  },
  boundary/.style={draw=cycleink,line width=1.1pt},
  secondary/.style={
    draw=cycleink,line width=.85pt,dashed
  },
  triangles/.style={
    pattern={Lines[angle=90,distance=5.5pt,line width=.35pt]},
    pattern color=cycleink!60
  },
  quadrilaterals/.style={
    pattern={Lines[angle=0,distance=5.5pt,line width=.35pt]},
    pattern color=cycleink!60
  }
]

\begin{scope}
  \fill[cyclepossible] (0,0) rectangle (3.55,2.80);
  \fill[cycleimpossible]
    (0,0) -- (0,2.80) -- (2.35,2.80) -- cycle;

  \path[triangles]
    (0,0) -- (2.35,2.80) -- (3.55,2.80)
    -- (3.55,0) -- cycle;
  \path[quadrilaterals]
    (0,0) -- (3.55,1.22) -- (3.55,0) -- cycle;

  \draw[secondary] (0,0) -- (3.55,1.22);
  \draw[boundary] (0,0) -- (2.35,2.80)
    node[pos=.63,sloped,above=3pt,
         fill=cycleimpossible,inner sep=1pt] {slope $s$};

  \draw[axis] (0,0) -- (3.82,0)
    node[right] {$\log n$};
  \draw[axis] (0,0) -- (0,3.05)
    node[above] {$\log d$};
  \node[anchor=north] at (1.80,-.35)
    {\textup{(a)} No cancellation};
\end{scope}

\begin{scope}[xshift=5.05cm]
  \fill[cyclepossible] (0,0) rectangle (3.55,2.80);
  \fill[cycleimpossible]
    (0,0) -- (0,2.80) -- (2.35,2.80) -- cycle;

  \path[quadrilaterals]
    (0,0) -- (2.35,2.80) -- (3.55,2.80)
    -- (3.55,0) -- cycle;
  \path[triangles]
    (0,0) -- (3.55,1.22) -- (3.55,0) -- cycle;

  \draw[secondary] (0,0) -- (3.55,1.22);
  \draw[boundary] (0,0) -- (2.35,2.80);

  \draw[axis] (0,0) -- (3.82,0)
    node[right] {$\log n$};
  \draw[axis] (0,0) -- (0,3.05)
    node[above] {$\log d$};
  \node[anchor=north] at (1.80,-.35)
    {\textup{(b)} Cancellation};
\end{scope}

\begin{scope}[
  xshift=9.9cm,yshift=2.46cm,
  every node/.style={anchor=west,inner sep=0pt},
  legend border/.style={draw=cycleink!35,line width=.4pt}
]
  \foreach \y/\shade/\entry in {
    0/cycleimpossible/Impossible,
    -.50/cyclepossible/Detectable
  } {
    \filldraw[fill=\shade,legend border]
      (0,\y-.18) rectangle (.48,\y+.18);
    \node at (.65,\y) {\entry};
  }

  \foreach \y/\texture/\entry in {
    -1.12/triangles/Triangles,
    -1.62/quadrilaterals/Quadrilaterals
  } {
    \fill[cyclepossible]
      (0,\y-.18) rectangle (.48,\y+.18);
    \path[\texture]
      (0,\y-.18) rectangle (.48,\y+.18);
    \draw[legend border]
      (0,\y-.18) rectangle (.48,\y+.18);
    \node at (.65,\y) {\entry};
  }

  \fill[cyclepossible]
    (0,-2.30) rectangle (.48,-1.94);
  \path[triangles]
    (0,-2.30) rectangle (.48,-1.94);
  \path[quadrilaterals]
    (0,-2.30) rectangle (.48,-1.94);
  \draw[legend border]
    (0,-2.30) rectangle (.48,-1.94);
  \node at (.65,-2.12) {Triangles + quadrilaterals};
\end{scope}

\end{tikzpicture}%
}
\endgroup
\caption{Solid lines separate detection from impossibility; dashed lines mark
the boundary for the other cycle count. Vertical and horizontal
hatching indicate detection by signed triangles and quadrilaterals,
respectively.
\textup{(a)} Triangles attain $d_\star=n^s$.
\textup{(b)} Cancellation with quadrilateral dominance.}
\label{fig:threshold-exponents}
\end{figure}

\subsection{Proof overview}

For notational simplicity, we write $P=P_{n,d}$ and $Q=Q_{n,d}$
throughout the rest of the paper.
The detection results in Theorems~\ref{thm:cusp}
and~\ref{thm:finite-flat} use the signed cycle counts $S_3$ and $S_4$.
Let $A=(A_{ij})$ be the adjacency matrix and let
$Y_{ij}=(A_{ij}-p_d)/s_d$. The counts $S_3$ and $S_4$ sum products
of these variables over all triangles and all distinct quadrilaterals,
respectively. Their means satisfy
\[
 \E_Q[S_3]=\E_Q[S_4]=0,\quad
 \E_P[S_3]=\binom n3\tau_{3,d},\quad
 \E_P[S_4]=3\binom n4\tau_{4,d}.
\]
Independence under $Q$ gives $\var_Q(S_3)=\binom n3$ and
$\var_Q(S_4)=3\binom n4$. Under $P$, we bound the variances by
classifying pairs of overlapping cycles. The test selects the model
whose mean is closer to $S_\ell$. Chebyshev's inequality gives
vanishing testing error whenever
\[
 \frac{\var_P(S_\ell)+\var_Q(S_\ell)}
 {\big|\E_P[S_\ell]-\E_Q[S_\ell]\big|^2}\longrightarrow0,
 \quad \ell\in\{3,4\}.
\]
We verify this condition for the appropriate count in each
detection regime (see Section~\ref{sec:detection}).

For the impossibility results, let $\KL$ denote the
Kullback--Leibler divergence. By Pinsker's inequality,
$\TV(P,Q)^2\le\KL(P\Vert Q)/2$,
so it suffices to prove $\KL(P\Vert Q)=o(1)$.
Let $A^{(r)}$ be the graph induced by $[r]$, and let
$B^{(r+1)}=(A_{i,r+1})_{i\le r}$ contain the edges joining
the new vertex to $[r]$. Following~\cite[Section~3]{mao2026random},
we add one vertex at a time and apply the chain rule
(see Figure~\ref{fig:sequential-kl}):
\[
 \KL(P\Vert Q)
 =\sum_{r=1}^{n-1}\E_P\Big[
 \KL\big(P(B^{(r+1)}\mid A^{(r)})
 \Vert\Bern(p_d)^{\otimes r}\big)\Big].
\]
Integrating out the new latent point gives
\[
 h_{k,d}(x_1,\ldots,x_k)
 =\E_Z\Big[\prod_{i=1}^k
 \kappa_d(\langle x_i,Z\rangle)\Big],
 \quad Z\sim\sigma_d.
\]
Expanding the conditional likelihood and bounding KL by the
chi-squared divergence gives $\KL(P\Vert Q)\le\sum_{k=2}^{n-1}G_k$, where
\[
 G_k=\sum_{r=k}^{n-1}\binom rk
 \E_P\Big[\E_P[h_{k,d}(X_1,\ldots,X_k)\mid A^{(r)}]^2\Big].
\]
Centering removes order one, and exchangeability gives $\binom rk$
(see Lemma~\ref{lem:cavity}). Thus $G_k$ measures how well the
observed graph predicts an interaction among $k$ latent points.
Conditional Jensen's inequality gives
\[
 G_k\le\binom n{k+1}
 \E[h_{k,d}(X_1,\ldots,X_k)^2],
 \quad G_2\le\binom n3\tau_{4,d}.
\]
Prior moment bounds therefore control sufficiently high orders
(see Lemma~\ref{lem:contracted} and Proposition~\ref{prop:cavity-tail}).
However, the displayed bound on $G_2$ need not vanish when both
cycle signals do. The lower orders require sharper posterior estimates.

\

\noindent\textbf{Cusp kernels.}
For continuous cusps, we first modify the link outside a small
neighborhood of zero. Concentration of the inner products makes
the changes in KL and the cycle traces exponentially small
(see Lemma~\ref{lem:localization} and Figure~\ref{fig:cusp-localization}).
For mixed cusps with $\beta=\alpha+1/3$, the leading even and odd
contributions to $\tau_{3,d}$ can cancel (see Remark~\ref{rmk:cancel}).
This shrinks the triangle signal, while our target remains
\[
 \KL(P\Vert Q)\lesssim
 \Lambda_{n,d}=n^3\tau_{3,d}^2+n^4\tau_{4,d}^2
 \quad\text{when }\Lambda_{n,d}=o(1).
\]
Bounding the even and odd contributions separately loses this
cancellation. We must preserve the actual signed trace before
squaring and estimate the remaining terms more carefully.

\begin{itemize}
\item \textbf{Order two.} The interaction $h_{2,d}$ represents a path of length two.
The direct edge closes a triangle; a common neighbor closes a
quadrilateral. At the triangle scale, the conditional estimate gives
\[
 G_2\lesssim\Lambda_{n,d}+\frac{n^5\tau_{4,d}^2}{d}
 \quad\text{(Proposition~\ref{prop:cusp-triangle-two}).}
\]
The extra term can exceed $n^4\tau_{4,d}^2$ in the cancellation
regime. We then retain the posterior of the remaining latent points.
Integrating the queried points produces $H_{4,d}$, the kernel
profile of $T_{\kappa_d}^4$. A second posterior estimate gives
\[
 G_2\lesssim\Lambda_{n,d}
 +n^5\sup_{2\le r\le n}
 \E_P\Big[\E_P[H_{4,d}(\langle X_1,X_2\rangle)\mid A^{(r)}]^2\Big]
 \lesssim\Lambda_{n,d}
\]
on the critical relation (see Proposition~\ref{prop:cusp-critical-two}
and Figure~\ref{fig:order-two-comparison}).

\item \textbf{Order three.} Parity improves the moment of $h_{3,d}$:
only products with zero or two odd factors survive.
This suffices at the triangle scale. On the critical relation,
we retain the three possible internal edges and expand their
eight configurations. Bounding their means and centered
conditional contributions separately gives
$G_3\lesssim n^4\tau_{4,d}^2$
(see Section~\ref{sec:order-3-csup} and Appendix~\ref{app:cusp-order-three}).

\item \textbf{Higher orders.} Without cancellation, conditional estimates
handle $4\le k\le9$, and prior moments control $k\ge10$.
On the critical relation, perfect matchings at fixed orders contribute
powers of the actual $\tau_{3,d}$, while graph estimates control
the other configurations. Uniform bounds then handle orders up
to logarithmic size and the remaining tail
(see Section~\ref{sec:order-high-csup}).
\end{itemize}
For jumps, a comparison of Gram matrix laws and data processing
give $\KL(P\Vert Q)\lesssim n^3/d$
(see Proposition~\ref{prop:jump}).

\

\noindent\textbf{Finitely flat kernels.}
We first work with finite harmonic expansions, keeping the
individual coefficient sizes to use the vanishing derivatives.
Write $\kappa(x,y)$ for the auxiliary centered kernel.
Draw $A$ from the model under consideration, then draw two latent
configurations $X,X'$ independently from the posterior given $A$.
For any square-integrable $f$,
\[
 \E\big[\E[f(X)\mid A]^2\big]=\E[f(X)f(X')],
\]
where both expectations use this joint sampling.
The products on the right are easier to estimate under two
independent prior samples. Transferring these estimates to the
posterior has an exponential cost (see Lemma~\ref{lem:planting}).

\begin{itemize}
\item \textbf{Order two.} The replica identity gives a positive semidefinite
quadratic form. A direct Gram matrix bound is too large, so we
restrict to an event of high probability where its operator norm
is smaller. Prior exponential moments then control the posterior
quadratic form (see Section~\ref{sec:smooth-order-two}).

\item \textbf{Order three.} Conditioning on one prior sample gives a
$U$-statistic whose kernel is centered in each variable.
A consequence of~\cite[Theorem~7]{adamczak2006moment} bounds its
tail through four terms. We estimate these terms and truncate
large kernel values to obtain a tail strong enough for the
posterior comparison (see Lemma~\ref{lem:adamczak-cor} and
Section~\ref{sec:smooth-order-three}).

\item \textbf{Higher orders.} For each fixed $k\ge4$, Newton's identities rewrite sums over
distinct vertices in terms of power sums. Let $Z,Z'$ be independent
uniform points, independent of the replicas, and let
$r_i=\kappa(X_i,Z)\kappa(X_i',Z')$. The formal identity
\[
 \prod_{i=1}^n(1+tr_i)
 =\exp\Big(\sum_{j\ge1}\frac{(-1)^{j-1}t^j}{j}
               \sum_{i=1}^n r_i^j\Big)
\]
expresses the coefficient of $t^k$ through power sums.
Only the first power sum needs a posterior estimate, which reduces
to a quadratic overlap. Higher power sums admit direct moment
bounds (see Section~\ref{sec:smooth-higher-orders}).
\end{itemize}

To return to the original kernel, let $\Pi$ be the centered Taylor polynomial of $\kappa_d$
through degree $m+5$ and let $R=\kappa_d-\Pi$.
For each fixed $1\le q<\infty$,
\[
 \|R(T_d)\|_q\lesssim_q d^{-(m+6)/2}=d^{-3}d^{-m/2}.
\]
This gain controls the errors from replacing the kernel in both
the likelihood weight and $h_{k,d}$ under the original posterior
(see Section~\ref{sec:smooth-transfer}). Together with the tail
bound in Proposition~\ref{prop:cavity-tail}, these estimates give
$\KL(P\Vert Q)=o(1)$.



\subsection{Related work}

Latent position models describe network dependence through
unobserved feature vectors associated with the vertices.
Conditional on these vectors, edges are independent, with
connection probabilities determined by the features of their
endpoints. One natural choice is to let vertices with similar
features connect more often, as in latent distance
models~\cite{hoff2002latent}.
Inner products provide another way to describe feature interactions.
Eigenmodels use weighted inner products~\cite{hoff2007modeling},
while random dot product graphs use the inner product itself as
the connection probability~\cite{young2007random}.
The latter admit a rich theory of spectral estimation and
inference for latent positions~\cite{athreya2018statistical}.
In a random dot product graph, the latent distribution must be
supported on a set whose pairwise inner products lie in $[0,1]$.
This condition can hold in any dimension, although much of the
classical inference theory takes $d$ to be fixed as the graph
grows~\cite{athreya2018statistical}.
When many latent attributes may affect connections, it is also
natural to study a growing dimension.
The spherical model provides a simple setting for this question.
Inner products of unit vectors lie in $[-1,1]$ in every dimension,
and a fixed function $K:[-1,1]\to[0,1]$ gives valid connection
probabilities. Uniform sampling on the sphere also gives every
latent position the same expected connection probability.
As the dimension grows, inner products concentrate near zero.
Related estimation problems for spherical graphs include recovering
an unknown connection function and latent distances~\cite{decastro2019adaptive,arayavaldivia2019latent}.

Early work studied similarity between random geometric graphs and \ER graphs through clique
numbers~\cite{devroye2011high}.
Even the hard threshold rule
$K_d(t)=\indc{t\ge\tau_{d,p}}$, with $\tau_{d,p}$ chosen to give
edge density $p$, leads to a difficult detection problem.
For fixed $p\in(0,1)$, \cite{bubeck2016testing} established
$d_\star\asymp n^3$ using signed triangles and a matching converse
based on a comparison between Wishart and Gaussian orthogonal
ensemble (GOE) matrices.
Related matrix comparisons address eigenvalue laws,
non-Gaussian entries, and the critical Wishart--GOE
window~\cite{jiang2015approximation,bubeck2018entropic,racz2019smooth}.
The sparse case required further work~\cite{brennan2020phase}.
For $p=a/n$ with fixed $a\ge1$, \cite{liu2022testing} proved
indistinguishability when $d\gg(\log n)^{36}$.
This leaves a logarithmic gap from the conjectured scale
$(nh(p))^3\asymp(\log n)^3$, where
$h(p)=p\log(1/p)+(1-p)\log(1/(1-p))$.
More recently, \cite{du2026resolution} proved indistinguishability
when $d\gg(nh(p))^3$ and $d\ge(1+\epsilon)n$, for fixed
$\epsilon>0$ and $\epsilon/n\le p\le1-\epsilon$.
Together with signed triangle tests, this resolves the threshold
for $p\gtrsim n^{-2/3}/\log n$.
Their proof expands the KL divergence into posterior moments.
An information comparison and strong log-concavity of a truncated
posterior then control these moments.
Independently, \cite{hunter2026distinguishability} established
the same threshold for
$n^{-1/5}\operatorname{polylog}(n)\le p\le1/3$.
Their proof uses Gram--Schmidt orthogonalization and a sequential
analysis of the latent vectors to estimate the probabilities of
prescribed graphs.
Very recently, \cite{du2026sharp} established the sharp detection
threshold when $p\to0$ and $p\ge n^{-1+\delta}$ for fixed $\delta>0$.
Their proof combines posterior estimates with Fourier and spherical
harmonic expansions.

The latent distribution also affects detection.
For dense anisotropic Gaussian geometric graphs,
\cite{eldan2020information,brennan2024threshold} studied how the
covariance spectrum determines the threshold.
Our model keeps the latent distribution uniform on the sphere
and varies the connection kernel.
Soft connection functions also arise in models of latent features
and in spatial random geometric graphs, where much of the work
concerns connectivity~\cite{dettmann2016random,penrose2016connectivity}.
For detecting geometry, \cite{liu2023probabilistic} studied
smooth monotone connection functions with Gaussian latent points.
Their bounds depend on both dimension and a smoothing parameter.
In the spherical setting, \cite{liu2023phase} studied independent
edge noise that interpolates between a hard geometric graph
and its matched \ER graph.
Other work studies geometry confined to a hidden set of
vertices~\cite{silber2026testing,bok2026detection}, or observed through
randomly masked edges in bipartite Gaussian latent graphs~\cite{gobel2026information}.
For fixed smooth kernels with $K'(0)\ne0$,
\cite{mao2026random} established the sharp threshold
$d_\star\asymp n^{3/4}$, attained by signed triangle counts.
Together with the hard threshold $n^3$, this result motivates
studying how the local behavior of the kernel determines detectability.
Very recently, \cite{luo2026cubic} constructed kernels depending
on $d$ for which the triangle signal cancels but signed
quadrilateral counts still detect geometry.
The same work identifies an additional detection scale for dense
polynomial kernels of bounded degree, beyond those given by
triangles and quadrilaterals.

The repeated appearance of signed triangles suggests studying
geometric detection through subgraph statistics.
More generally, the Fourier coefficients of the graph likelihood
ratio are signed subgraph moments.
By bounding these moments, \cite{bangachev2024fourier} characterized
testing with polynomials of low degree for hard spherical and
Gaussian geometric graphs, including partial observations.
However, the most effective statistic can change with the latent
geometry. For the $L_\infty$ model on a torus,
\cite{bangachev2024linfty} proved that signed quadrilateral counts
are optimal among tests of low degree, up to logarithmic factors,
while signed triangles are suboptimal.
Related Fourier methods give conditions for convergence of random
algebraic graphs to \ER graphs~\cite{bangachev2025random}.
For several classes of stochastic block models,
\cite{bangachev2025graph} showed that signed stars and quadrilaterals
suffice whenever a polynomial test of constant degree succeeds.
Subgraph statistics also provide moment estimators for network
models~\cite{bickel2011method} and tests for community
structure~\cite{gao2017testing}.
Signed cycles appear in contiguity results for planted partition
models~\cite{banerjee2018contiguity} and in tests allowing degree
heterogeneity and mixed memberships~\cite{jin2021optimal}.
The role of quadrilaterals also connects to classical graph
quasirandomness~\cite{chung1989quasi}.

\subsection{Notation and paper organization}

Let $\sigma_d$ denote the uniform probability measure on $\Sph^{d-1}$.
Unless stated otherwise, latent points are independent with law $\sigma_d$.
We use the normalization in~\eqref{eq:model-normalization} and write
$v_d=\E[\kappa_d(T_d)^2]$ for the kernel energy.
For a zonal profile $f$ with finite $L^2$ norm, its integral operator is
$T_f g(x)=\int f(\langle x,y\rangle)g(y)\sigma_d(dy)$.
The power $T_f^\ell$ denotes operator composition. For every
integer $\ell\ge2$,
\[
 \tr(T_f^\ell)
 =\E\big[f(\langle X_1,X_2\rangle)\cdots
          f(\langle X_\ell,X_1\rangle)\big].
\]
When writing $f(x,y)$, we mean $f(\langle x,y\rangle)$; integral
operators evaluated at $(x,y)$ denote their kernels.
For probability measures $P$ and $Q$ on the same measurable space,
write $\TV(P,Q)=\sup_B|P(B)-Q(B)|$ and
$\KL(P\|Q)=\int\log(dP/dQ)dP$.
For integers $r\ge1$ and $0\le N\le r$, write
$[r]=\{1,\ldots,r\}$ and $(r)_N=r(r-1)\cdots(r-N+1)$,
with $(r)_0=1$. Also, $x_+=\max\{x,0\}$.
We use $\|\cdot\|_p$ for the $L^p$ norm under the indicated measure;
for functions of latent points, the default measure is their independent prior.
The norms $\|\cdot\|_{\op}$ and $\|\cdot\|_{\HS}$ are the operator and
Hilbert--Schmidt norms, respectively.
For positive sequences, $a_n\lesssim b_n$ means $a_n=O(b_n)$,
$a_n\asymp b_n$ means $a_n\lesssim b_n\lesssim a_n$, and
$a_n\ll b_n$ means $a_n=o(b_n)$; $\gtrsim$ and $\gg$ denote the reverse
relations. Implicit constants may depend on fixed model parameters
and fixed moment orders, but not on $n$ or $d$. Constants in estimates
may change between lines unless their values have been fixed.

\

\noindent\textbf{Paper organization.}
Section~\ref{sec:detection} proves the detection results using signed
triangle and quadrilateral counts.
Section~\ref{sec:cavity} introduces the KL expansion and posterior
formulation used to prove impossibility.
Sections~\ref{sec:cusp-converse} and~\ref{sec:polynomial} establish
the matching impossibility results for cusp kernels and finitely
flat smooth kernels, respectively.
Section~\ref{sec:discussion} discusses the implications of our results
and their connections to previous work.
Some technical proofs are deferred to the appendices.

\section{Detection by cycle counts}\label{sec:detection}

\subsection{Detection guarantees for triangle and quadrilateral counts}

We prove the detection results using centered cycle counts, starting
with signed triangles.
A single edge has the same distribution under $P$ and $Q$.
Moreover, any two distinct edges have zero covariance: disjoint edges
are independent, while for edges sharing a vertex, integrating out
either nonshared latent point gives the fixed edge probability $p_d$.
A triangle is the shortest cycle whose product of centered edges
can reveal the latent geometry. Let $A=(A_{ij})$ be the adjacency
matrix and $Y=(Y_{ij})$ its standardized version, with
$Y_{ij}=(A_{ij}-p_d)/\sqrt{p_d(1-p_d)}$ for $1\le i<j\le n$.
Set $Y_{ji}=Y_{ij}$ and write $T_{ij}=\langle X_i,X_j\rangle$
for all $1\le i,j\le n$.
We first consider the signed triangle count
\begin{equation}
 S_3=\sum_{1\le i<j<k\le n}Y_{ij}Y_{jk}Y_{ki}.
 \label{eq:signed-triangle}
\end{equation}

Under $Q$, the edges $A_{ij}$ are independent Bernoulli variables
with parameter $p_d$, so the standardized variables $Y_{ij}$ are
independent with mean zero and variance one. Thus
$\E_Q[S_3]=0$ and $\var_Q(S_3)=\binom n3$.
Under $P$, the latent geometry can induce a nonzero mean.
Intuitively, as illustrated in Figure~\ref{fig:triangle-signal},
observing the edges $\{i,j\}$ and $\{i,k\}$ can change the
probability that $\{j,k\}$ is also present.
This triangle signal is captured by
$\tau_{3,d}=\tr(T_{\kappa_d}^3)$, with
$\E_P[S_3]=\binom n3\tau_{3,d}$.
The following proposition gives the moments under $P$.

\begin{figure}[!htbp]
\centering
\begingroup
\definecolor{triink}{RGB}{44,49,55}
\definecolor{trigeometry}{RGB}{20,103,116}
\definecolor{triclose}{RGB}{175,83,43}

\begin{tikzpicture}[
  x=1cm,y=1cm,line cap=round,line join=round,
  tri point/.style={
    circle,fill=triink,draw=white,line width=.5pt,
    minimum size=5.6pt,inner sep=0pt
  },
  tri label/.style={
    font=\small,text=triink,inner sep=1pt
  }
]
\path[use as bounding box] (0,0) rectangle (16,5.6);

\node[font=\normalsize,text=triink] at (3.9,5.25)
  {ER: $Q$};
\node[font=\normalsize,text=triink] at (11.95,5.25)
  {RGG: $P$};

\coordinate (qi) at (3.9,4.13);
\coordinate (qj) at (2.38,2.52);
\coordinate (qk) at (5.42,2.52);

\draw[trigeometry,line width=1.45pt] (qj)--(qi)--(qk);
\draw[
  triclose,line width=1.55pt,
  dash pattern=on 3.4pt off 2.3pt
] (qj)--(qk);

\foreach \v in {i,j,k}{
  \node[tri point] at (q\v) {};
}
\node[tri label,above=5pt] at (qi) {$i$};
\node[tri label,below left=5pt] at (qj) {$j$};
\node[tri label,below right=5pt] at (qk) {$k$};

\fill[trigeometry!5] (11.95,2.8) circle[radius=2.02];

\foreach \caprad/\captint in {.67/8,.61/12,.55/17}{
  \pgfmathsetmacro{\capcx}
    {11.95+2.02*.270473744*cos(deg(\caprad))}
  \pgfmathsetmacro{\capcy}
    {2.8+2.02*.200350921*cos(deg(\caprad))}
  \pgfmathsetmacro{\capmajor}
    {2.02*sin(deg(\caprad))}
  \pgfmathsetmacro{\capminor}
    {\capmajor*.941649331}
  \fill[
    trigeometry!\captint,
    rotate around={36.528855:(\capcx,\capcy)}
  ]
    (\capcx,\capcy)
    ellipse[x radius=\capminor cm,y radius=\capmajor cm];
}

\draw[
  trigeometry!25,line width=.4pt,
  dash pattern=on 2pt off 2.5pt
]
  (9.93,2.8)
  arc[start angle=180,end angle=0,
      x radius=2.02,y radius=.7675904];

\draw[trigeometry!35,line width=.45pt]
  (9.93,2.8)
  arc[start angle=180,end angle=360,
      x radius=2.02,y radius=.7675904];

\draw[trigeometry!35,line width=.45pt]
  (11.95,4.82)
  arc[start angle=90,end angle=270,
      x radius=.5656,y radius=2.02];

\draw[
  trigeometry!25,line width=.4pt,
  dash pattern=on 2pt off 2.5pt
]
  (11.95,4.82)
  arc[start angle=90,end angle=-90,
      x radius=.5656,y radius=2.02];

\draw[trigeometry!55,line width=.65pt]
  (11.95,2.8) circle[radius=2.02];

\coordinate (pi) at (12.372287,4.119331);
\coordinate (pj) at (11.837516,2.659943);
\coordinate (pk) at (13.113074,2.659943);

\def\rggarc#1#2#3#4#5#6{%
  \draw[#1,domain=0:1,samples=60,variable=\t]
    plot (
      {11.95+
       (sin((1-\t)*#6)*(#2-11.95)
        +sin(\t*#6)*(#4-11.95))/sin(#6)},
      {2.8+
       (sin((1-\t)*#6)*(#3-2.8)
        +sin(\t*#6)*(#5-2.8))/sin(#6)}
    );
}

\rggarc{trigeometry,line width=1.45pt}
  {12.372287}{4.119331}
  {11.837516}{2.659943}{48.086836}

\rggarc{trigeometry,line width=1.45pt}
  {12.372287}{4.119331}
  {13.113074}{2.659943}{48.086836}

\rggarc{
  triclose,line width=1.55pt,
  dash pattern=on 3.4pt off 2.3pt
}
  {11.837516}{2.659943}
  {13.113074}{2.659943}{38.355353}

\foreach \v in {i,j,k}{
  \node[tri point] at (p\v) {};
}
\node[tri label,above=5pt] at (pi) {$X_i$};
\node[tri label,below left=5pt] at (pj) {$X_j$};
\node[tri label,below right=5pt] at (pk) {$X_k$};

\node[font=\normalsize,text=triink] at (3.9,.35)
  {$\E_Q[Y_{ij}Y_{jk}Y_{ki}]=0$};
\node[font=\normalsize,text=triink] at (11.95,.35)
  {$\E_P[Y_{ij}Y_{jk}Y_{ki}]=\tr(T_{\kappa_d}^3)$};

\end{tikzpicture}
\endgroup
\caption{Triangle closure for an increasing link.
Observing the two solid edges favors latent configurations with
$X_j$ and $X_k$ near $X_i$, making proximity between them, and hence
the dashed closing edge, more likely.
Under $Q$, the closing edge is independent of the first two.}
\label{fig:triangle-signal}
\end{figure}

\begin{proposition}[Moments of signed triangles]
\label{prop:triangle-moments}
For any $K:[-1,1]\to[c,1-c]$ with fixed $0<c<1/2$, there is a
constant $C_{\ref{prop:triangle-moments}}(c)$ such that
\begin{equation}
 \E_P[S_3]=\binom n3\tau_{3,d},\quad
 \var_P(S_3)\le C_{\ref{prop:triangle-moments}}(c)
 \big(n^3+n^4\tau_{4,d}\big).
 \label{eq:tri-planted}
\end{equation}
The test
$\phi_3=\mathbf1\{\sgn(\E_P[S_3])S_3\ge|\E_P[S_3]|/2\}$
has type I and type II errors tending to zero if
\begin{equation}
 \frac{n^6\tau_{3,d}^2}{n^3+n^4\tau_{4,d}}\longrightarrow\infty.
 \label{eq:tri-snr}
\end{equation}
\end{proposition}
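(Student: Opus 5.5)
The plan is to compute the mean directly and then bound the variance by expanding $\var_P(S_3)=\sum_{\Delta,\Delta'}\mathrm{Cov}_P(Y_\Delta,Y_{\Delta'})$, where $Y_\Delta=Y_{ij}Y_{jk}Y_{ki}$. Pairs are classified by how many vertices the two triangles share.

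\textbf{Mean.} Conditional on the latent points, the edges are independent with $\E[Y_{ij}\mid X]=\kappa_d(T_{ij})$. Hence
\[
\E_P[Y_\Delta]=\E[\kappa_d(T_{ij})\kappa_d(T_{jk})\kappa_d(T_{ki})]=\tau_{3,d},
\]
and summing over the $\binom n3$ triples gives $\E_P[S_3]=\binom n3\tau_{3,d}$.

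\textbf{Variance.} Write $|\Delta\cap\Delta'|\in\{0,1,2,3\}$ for the number of shared vertices.
\begin{itemize}
\item Disjoint triangles ($0$ shared vertices) depend on disjoint latent points and disjoint edges, so they are independent and have zero covariance.
\item For one shared vertex $v$, I condition on $X_v$. By rotational invariance, $\E[Y_\Delta\mid X_v]=\tau_{3,d}$ is constant, and the two triangles are conditionally independent given $X_v$. So the covariance again vanishes.
\item For three shared vertices ($\Delta=\Delta'$), I use that $Y_{ij}$ is bounded by a constant depending on $c$, since $p_d\in[c,1-c]$. This gives $\E_P[Y_\Delta^2]\le C(c)$, and summing over the $O(n^3)$ triangles contributes $O(n^3)$.
\item For two shared vertices $i,j$, with other vertices $k,l$, the triangles share the edge $ij$. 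Here $\E[Y_\Delta Y_{\Delta'}\mid X]=\E[Y_{ij}^2\mid X]\,\kappa_d(T_{ik})\kappa_d(T_{kj})\kappa_d(T_{il})\kappa_d(T_{lj})$.
\end{itemize}

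\textbf{The two-shared-vertex case.} This is the main obstacle, since $\E[Y_{ij}^2\mid X]=1+g(T_{ij})$ is not constant. Using $\E Y_{ij}^2=1$, the conditional second moment equals $1+\frac{1-2p_d}{s_d}\kappa_d(T_{ij})$. The constant term $1$ integrates to $\tr(T_{\kappa_d}^4)=\tau_{4,d}$: this is the cycle $i\,k\,j\,l$. The remaining term is $\frac{1-2p_d}{s_d}\E[\kappa_d(T_{ij})\,\kappa_d^{(2)}(T_{ij})^2]$, where $\kappa_d^{(2)}$ denotes the kernel of $T_{\kappa_d}^2$. Since $|\kappa_d|\le C(c)$, this is at most $C\E[\kappa_d^{(2)}(X_i,X_j)^2]=C\tau_{4,d}$. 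This works because the squared kernel of $T_{\kappa_d}^2$ is nonnegative and integrates to $\tau_{4,d}$, which is also why $\tau_{4,d}\ge0$. Subtracting $\tau_{3,d}^2\ge0$ only helps. So each of the $O(n^4)$ such pairs has covariance at most $C\tau_{4,d}$, giving $\var_P(S_3)\le C(n^3+n^4\tau_{4,d})$.

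\textbf{Test.} With $\mu=\E_P[S_3]$, Chebyshev's inequality gives
\[
Q(\phi_3=1)\le\frac{4\binom n3}{\mu^2},\qquad P(\phi_3=0)\le P\bigl(|S_3-\mu|\ge|\mu|/2\bigr)\le\frac{4\var_P(S_3)}{\mu^2}.
\]
Since $\mu^2\asymp n^6\tau_{3,d}^2$, both bounds are $O\bigl((n^3+n^4\tau_{4,d})/(n^6\tau_{3,d}^2)\bigr)$. They therefore tend to zero under \eqref{eq:tri-snr}.
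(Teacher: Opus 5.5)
Your proposal is correct and follows essentially the same route as the paper: the same mean computation, the same classification of overlapping triangle pairs, and Chebyshev for the test. The only cosmetic difference is in the shared-edge case. The paper conditions on $(X_1,X_2,A_{12})$ and writes the covariance as $\var_P\big(Y_{12}H_d(T_{12})\big)\le b_c\tau_{4,d}$. You instead compute $\E[Y_{ij}^2\mid X]=1+\frac{1-2p_d}{s_d}\kappa_d(T_{ij})$ explicitly. Both yield the bound $C(c)\,\tau_{4,d}$.
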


\begin{proof}
Write $Z_{ijk}=Y_{ij}Y_{jk}Y_{ki}$ and $b_c=(1-c)/c$.
Since $p_d\in[c,1-c]$, we have $|Y_{ij}|^2\le b_c$.
Conditional independence and integration around the triangle give
$\E_P[Z_{123}]=\tau_{3,d}$, hence
$\E_P[S_3]=\binom n3\tau_{3,d}$.
Rotational invariance also gives
$\E_P[Z_{123}\mid X_1]=\tau_{3,d}$.

For the variance, disjoint triangles are independent. Triangles
sharing just one vertex are conditionally independent given that
vertex and have constant conditional means, so their covariance is
zero. For identical triangles,
$\var_P(Z_{123})\le\E_P[Z_{123}^2]\le b_c^3$.
For triangles sharing an edge, define the profile $H_d$ by
\begin{equation}
 H_d(\langle x,y\rangle)
 =\int\kappa_d(\langle x,z\rangle)
       \kappa_d(\langle z,y\rangle)\sigma_d(dz).
 \label{eq:contracted-profile}
\end{equation}
This is the integral kernel of $T_{\kappa_d}^2$. Condition on $(X_1,X_2,A_{12})$.
The variables $Z_{123}$ and $Z_{124}$ are then independent, with
common conditional mean $Y_{12}H_d(T_{12})$, where
$T_{12}=\langle X_1,X_2\rangle$. Therefore
\begin{align*}
 \operatorname{Cov}_P(Z_{123},Z_{124})
 &=\var_P\big(Y_{12}H_d(T_{12})\big)\le b_c\E[H_d(T_d)^2]=b_c\tau_{4,d}.
\end{align*}
The last equality uses~\eqref{eq:contracted-profile} and the symmetry
of $T_{\kappa_d}$. There are $12\binom n4$ ordered pairs of distinct
triangles sharing an edge, so
\begin{align*}
 \var_P(S_3)
 &\le b_c^3\binom n3+12b_c\binom n4\tau_{4,d}\le C_{\ref{prop:triangle-moments}}(c)
       \big(n^3+n^4\tau_{4,d}\big),
\end{align*}
where one may take
$C_{\ref{prop:triangle-moments}}(c)=b_c^3/6+b_c/2$.
This proves~\eqref{eq:tri-planted}.

Condition~\eqref{eq:tri-snr} makes $\E_P[S_3]\ne0$ eventually.
Since $\E_Q[S_3]=0$ and $\var_Q(S_3)=\binom n3$,
Chebyshev's inequality gives
\[
 \P_Q[\phi_3=1]+\P_P[\phi_3=0]
 \le\frac{4\{\var_Q(S_3)+\var_P(S_3)\}}{\E_P[S_3]^2}
 \lesssim
 \frac{n^3+n^4\tau_{4,d}}{n^6\tau_{3,d}^2}
 \longrightarrow0.
\qedhere
\]
\end{proof}

We then consider signed cycles of length four, whose mean is proportional to $\tau_{4,d}$:
\begin{equation}
 S_4=\sum_{1\le i<j<k<\ell\le n}
 \big(Y_{ij}Y_{jk}Y_{k\ell}Y_{\ell i}
     +Y_{ij}Y_{j\ell}Y_{\ell k}Y_{ki}
     +Y_{ik}Y_{kj}Y_{j\ell}Y_{\ell i}\big).
 \label{eq:signed-square}
\end{equation}
Under $Q$, distinct cycle monomials are orthogonal. Hence
$\E_Q[S_4]=0$ and $\var_Q(S_4)=3\binom n4$.
The moments under $P$ again depend on cycle overlaps and involve
the same contracted profile $H_d$ from~\eqref{eq:contracted-profile}.

\begin{proposition}[Moments of signed cycles of length four]
\label{prop:square-moments}
For any $K:[-1,1]\to[c,1-c]$ with fixed $0<c<1/2$, there is a
constant $C_{\ref{prop:square-moments}}(c)$ such that
\begin{equation}
 \E_P[S_4]=3\binom n4\tau_{4,d},\quad
 \var_P(S_4)\le C_{\ref{prop:square-moments}}(c)
 \big(n^4+n^5\tau_{4,d}
       +n^6\big(\tau_{6,d}+\E[H_d(T_d)^4]\big)\big).
 \label{eq:square-variance}
\end{equation}
The test $\phi_4=\mathbf1\{S_4\ge\E_P[S_4]/2\}$
has type I and type II errors tending to zero if
\begin{equation}
 \frac{n^8\tau_{4,d}^2}
 {n^4+n^5\tau_{4,d}
       +n^6\big(\tau_{6,d}+\E[H_d(T_d)^4]\big)}
 \longrightarrow\infty.
 \label{eq:square-snr}
\end{equation}
\end{proposition}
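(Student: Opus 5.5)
The argument follows the proof of Proposition~\ref{prop:triangle-moments}: compute the mean by integrating around the cycle, then bound the variance by sorting pairs of $4$-cycle monomials by their overlap. Each monomial $W_C=\prod_{e\in C}Y_e$ over a $4$-cycle $C$ has
\[
 \E_P[W_C]=\E[\kappa_d(T_{12})\kappa_d(T_{23})\kappa_d(T_{34})\kappa_d(T_{41})]=\tau_{4,d},
\]
because the $A_e$ are conditionally independent given $X$ with $\E[Y_e\mid X]=\kappa_d(T_e)$. There are $3\binom n4$ such cycles, which gives the mean. Rotational invariance also gives $\E_P[W_C\mid X_v]=\tau_{4,d}$ for every single vertex $v$, so two cycles sharing exactly one vertex have zero covariance, and vertex-disjoint cycles are independent. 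The remaining overlap types are: sharing two vertices, three vertices, or four vertices (identical, or distinct cycles on the same vertex set). In each case the shared vertex set has $j\in\{2,3,4\}$ elements and the union has $8-j$ vertices, so there are $O(n^{8-j})$ such pairs. The variance bound then reduces to showing the covariance is $O(1)$ for $j=4$, $O(\tau_{4,d})$ for $j=3$, and $O(\tau_{6,d}+\E[H_d(T_d)^4])$ for $j=2$.

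For $j=4$, use $|W_C|\le b_c^2$, which contributes $O(n^4)$. For $j=3$, the two cycles share either two edges forming a path $a$--$b$--$c$ or no common edges. In the first case, condition on $(X_a,X_b,X_c)$ and the shared edges. Each cycle's remaining contribution is a length-two path through one private vertex, whose conditional mean is $H_d(T_{ac})$. The covariance is therefore at most $b_c^2\,\E[H_d(T_{ac})^2]=b_c^2\tau_{4,d}$, and such pairs number $O(n^5)$. In the second case the edges are disjoint but the vertices overlap. I would bound this by conditioning on the shared latent points and applying Cauchy--Schwarz. Since one cycle's edges among shared vertices are not edges of the other, integrating out the private vertex still produces an $H_d$ factor, so I expect the same $O(\tau_{4,d})$ bound.

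The case $j=2$ is the main obstacle, because $n^6$ pairs must each gain a factor. There are three sub-cases. If the two shared vertices $u,v$ span a common edge, conditioning on $(X_u,X_v,A_{uv})$ makes the remaining three-edge paths independent, each with conditional mean $Y_{uv}T_{\kappa_d}^3(X_u,X_v)$. The covariance is then bounded by $b_c\E[(T^3_{\kappa_d})^2]=b_c\tau_{6,d}$. If $u,v$ are opposite in both cycles, each cycle given $(X_u,X_v)$ has mean $H_d(T_{uv})^2$, so the covariance is at most $\E[H_d(T_d)^4]$. In the mixed sub-case, where $u,v$ are adjacent in one cycle only but the edge $uv$ is not shared, the conditional means are $\kappa_d(T_{uv})T^3_{\kappa_d}(X_u,X_v)$ and $H_d(T_{uv})^2$. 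Cauchy--Schwarz bounds the covariance by
\[
 \sqrt{\E[\kappa_d^2(T^3_{\kappa_d})^2]\,\E[H_d^4]}\lesssim\sqrt{\tau_{6,d}\,\E[H_d^4]}\le\tau_{6,d}+\E[H_d^4],
\]
using $|\kappa_d|^2\le b_c$. This yields~\eqref{eq:square-variance}. The test guarantee follows exactly as before: $\E_Q[S_4]=0$, $\var_Q(S_4)=3\binom n4$, and $\tau_{4,d}=\|T_{\kappa_d}\|_{\HS}^4$-type positivity gives $\tau_{4,d}\ge0$. Chebyshev's inequality then bounds the sum of the two error probabilities by $4(\var_P+\var_Q)/\E_P[S_4]^2$, which tends to zero under~\eqref{eq:square-snr}.
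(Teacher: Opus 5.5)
Your proposal is correct and is essentially the paper's proof: the same mean computation, the same classification by the number of shared vertices, the same conditional means, and the same Chebyshev step. The conditional means used in both are $Y_{uv}$ times the kernel of $T_{\kappa_d}^3$, $H_d(T_{uv})^2$ with Cauchy--Schwarz in the mixed two-vertex case, and $Y_{ab}Y_{bc}H_d(T_{ac})$ for a shared path.

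One correction in the three-shared-vertex case. Two distinct $4$-cycles with three common vertices can never be edge-disjoint. Each cycle induces a two-edge path on the triangle spanned by those vertices, and any two such paths share an edge.

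Your ``no common edges'' case is therefore really the case of exactly one shared edge. An example is $C=(1,2,3,4)$ against $C'=(1,3,2,5)$, which share only $\{2,3\}$. Conditioning only on $(X_1,X_2,X_3)$ does not make $W_C$ and $W_{C'}$ conditionally independent, and your remark that ``one cycle's edges among shared vertices are not edges of the other'' is false.

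The fix is to also condition on $A_{23}$, as the paper does. The conditional means become $Y_{23}\kappa_d(T_{12})H_d(T_{13})$ and $Y_{23}\kappa_d(T_{13})H_d(T_{12})$. Each has second moment at most $b_c^2\tau_{4,d}$, so Cauchy--Schwarz gives the $O(\tau_{4,d})$ bound you expected.

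For the test, $\tau_{4,d}=\|T_{\kappa_d}^2\|_{\HS}^2\ge0$ (not a fourth power of $\|T_{\kappa_d}\|_{\HS}$). Condition~\eqref{eq:square-snr} then forces $\tau_{4,d}>0$ eventually.
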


\begin{proof}
For distinct $i,j,k,\ell$, write
$Z_{ijk\ell}=Y_{ij}Y_{jk}Y_{k\ell}Y_{\ell i}$.
Let $b_c=(1-c)/c$, so both $|Y_{ij}|^2$ and
$|\kappa_d(t)|^2$ are bounded by $b_c$.
Under $P$, conditional independence and integration around the
cycle give $\E_P[Z_{ijk\ell}]=\tau_{4,d}$, hence
$\E_P[S_4]=3\binom n4\tau_{4,d}$.

For the variance, disjoint cycles are independent.
For cycles sharing only one vertex, such as $Z_{1234}$ and
$Z_{1567}$, conditioning on $X_1$ makes the two products
independent. Rotational invariance gives the same constant
conditional mean $\tau_{4,d}$, so their covariance is zero.

\

 \noindent\textbf{Case 1.}
We first suppose that exactly two vertices are shared.
First, consider $Z_{1234}$ and $Z_{1256}$, which share the edge
$\{1,2\}$. Given $(X_1,X_2,A_{12})$, they are independent with
the same conditional mean. Hence,
\[
\operatorname{Cov}_P(Z_{1234},Z_{1256})
=
\var_P\big(\E_P[Z_{1234}\mid X_1,X_2,A_{12}]\big)
\le b_c\tau_{6,d}.
\]
Indeed, after squaring the conditional mean and using
$Y_{12}^2\le b_c$, the two remaining paths of length three
form a cycle of length six.
Next, consider $Z_{1234}$ and $Z_{1356}$. Their shared vertices
$1,3$ are opposite in the first cycle and adjacent in the second.
Given $(X_1,X_3)$, the two products are independent, with
\begin{align*}
 \E_P[Z_{1234}\mid X_1,X_3]
 &=H_d(T_{13})^2,\quad
 \E_P[Z_{1356}\mid X_1,X_3]
 =\kappa_d(T_{13})\E_P[Y_{35}Y_{56}Y_{61}\mid X_1,X_3].
\end{align*}
The second conditional mean has squared $L^2$ norm at most
$b_c\tau_{6,d}$ by the same calculation with two paths of
length three. Cauchy--Schwarz therefore gives
\[
\big|\operatorname{Cov}_P(Z_{1234},Z_{1356})\big|
\le
\sqrt{b_c\tau_{6,d}\E[H_d(T_d)^4]}.
\]
Finally, consider $Z_{1234}$ and $Z_{1536}$, whose shared
vertices $1,3$ are opposite in both cycles.
Given $(X_1,X_3)$, the two products are independent with
common conditional mean $H_d(T_{13})^2$. Thus,
\[
\operatorname{Cov}_P(Z_{1234},Z_{1536})
=
\var\big(H_d(T_{13})^2\big)
\le \E[H_d(T_d)^4].
\]
All three bounds are at most
$b_c\tau_{6,d}+\E[H_d(T_d)^4]$.

\

 \noindent\textbf{Case 2.}
Suppose now that exactly three vertices are shared.
For $Z_{1234}$ and $Z_{1235}$, the common edges are
$\{1,2\}$ and $\{2,3\}$.
Conditioning on $(X_1,X_2,X_3,A_{12},A_{23})$ makes the
two products independent with common conditional mean
$Y_{12}Y_{23}H_d(T_{13})$. Hence,
\[
\operatorname{Cov}_P(Z_{1234},Z_{1235})
=
\var_P\big(Y_{12}Y_{23}H_d(T_{13})\big)
\le b_c^2\tau_{4,d},
\]
where we used $\E[H_d(T_d)^2]=\tau_{4,d}$.
The other possibility is represented by $Z_{1234}$ and
$Z_{1325}$, which share only the edge $\{2,3\}$.
Given $(X_1,X_2,X_3,A_{23})$, the two products are independent
with conditional means
$Y_{23}\kappa_d(T_{12})H_d(T_{13})$ and
$Y_{23}\kappa_d(T_{13})H_d(T_{12})$, respectively.
Each has second moment at most $b_c^2\tau_{4,d}$.
Cauchy--Schwarz therefore gives
\[
\big|\operatorname{Cov}_P(Z_{1234},Z_{1325})\big|
\le b_c^2\tau_{4,d}.
\]

\

 \noindent\textbf{Case 3.}
Pairs sharing all four vertices have absolute covariance at most
$b_c^4$. 

There are $O(n^6)$, $O(n^5)$, and $O(n^4)$ ordered
pairs sharing exactly two, three, and four vertices, respectively.
Summing these bounds yields
\[
\var_P(S_4)
\le C_{\ref{prop:square-moments}}(c)
\Big(n^4+n^5\tau_{4,d}
+n^6\big(\tau_{6,d}+\E[H_d(T_d)^4]\big)\Big),
\]
which proves~\eqref{eq:square-variance}.
Condition~\eqref{eq:square-snr} ensures that $\tau_{4,d}>0$
eventually, so $\E_P[S_4]>0$.
Since $\E_Q[S_4]=0$ and $\var_Q(S_4)=3\binom n4$,
Chebyshev's inequality gives
\begin{align*}
\P_Q[\phi_4=1]+\P_P[\phi_4=0]
&\le
\frac{4\big(\var_Q(S_4)+\var_P(S_4)\big)}{\E_P[S_4]^2}\lesssim
\frac{n^4+n^5\tau_{4,d}
+n^6\big(\tau_{6,d}+\E[H_d(T_d)^4]\big)}
{n^8\tau_{4,d}^2}
\to0.
\qedhere
\end{align*}
\end{proof}

We next use the spherical harmonic expansion to control the traces in
the two variance bounds (see, e.g., \cite[Chapter~1]{dai2013approximation}).
For $d\ge3$ and an integer $q\ge0$, let $\maH_{q,d}$ denote the
restrictions to $\Sph^{d-1}$ of homogeneous harmonic polynomials
of degree $q$ in $d$ variables. Its dimension is
\begin{equation}
N_{q,d}
=
\binom{d+q-1}{q}-\binom{d+q-3}{q-2}
=
\frac{d^q}{q!}\big(1+O_q(d^{-1})\big),
\label{eq:multiplicity}
\end{equation}
where the second binomial coefficient is zero for $q=0,1$,
and the asymptotic holds for fixed $q$ as $d\to\infty$.
Let $P_{q,d}$ be the Gegenbauer polynomial of degree $q$ with parameter
$(d-2)/2$ (i.e., orthogonal to all polynomials of lower degree under
the law of $T_d$), normalized by $P_{q,d}(1)=1$. Let $\psi_{q,d}(t)=\sqrt{N_{q,d}}P_{q,d}(t)$ and define $\gamma_{q,d}
\triangleq
\E\big[\kappa_d(T_d)\psi_{q,d}(T_d)\big].$
The polynomials $\{\psi_{q,d}\}_{q\ge0}$ form an orthonormal basis
with respect to the law of $T_d$. Since $\kappa_d$ is centered,
its coefficient of degree zero vanishes, and
\begin{equation}
\kappa_d(t)=\sum_{q\ge1}\gamma_{q,d}\psi_{q,d}(t),
\quad
v_d=\sum_{q\ge1}\gamma_{q,d}^2,
\label{eq:harmonic-expansion}
\end{equation}
where the expansion converges in mean square and the second identity
follows from Parseval's identity.
By the Funk--Hecke formula (see, e.g., ~\cite[Chapter~1, Funk--Hecke formula]{dai2013approximation}),
\[
T_{\kappa_d}f
=
\E\big[\kappa_d(T_d)P_{q,d}(T_d)\big]f
=
\frac{\gamma_{q,d}}{\sqrt{N_{q,d}}}f,
\quad f\in\maH_{q,d}.
\]
Thus degree $q$ contributes $N_{q,d}$ copies of the eigenvalue
$\gamma_{q,d}/\sqrt{N_{q,d}}$ to the spectrum.
Since the spherical harmonic spaces form an orthogonal decomposition
of $L^2(\sigma_d)$, summing the $\ell$th powers of these eigenvalues gives
\begin{equation}
\tau_{\ell,d}=\tr(T_{\kappa_d}^{\ell})
=\sum_{q\ge1}N_{q,d}
\Big(\frac{\gamma_{q,d}}{\sqrt{N_{q,d}}}\Big)^\ell
=\sum_{q\ge1}\gamma_{q,d}^{\ell}N_{q,d}^{1-\ell/2},
\quad \ell\ge2.
\label{eq:trace-formula}
\end{equation}

We are now ready to combine the two tests into a single detection criterion.

\begin{corollary}
\label{cor:cycle-detection}
Suppose that $\E[H_d(T_d)^4]\lesssim \tau_{4,d}^2$.
If $n^3\tau_{3,d}^2+n^4\tau_{4,d}^2\to\infty$, then
$\TV(P,Q)\to1$. For each $(n,d)$, use $\phi_3$ when
$n^3\tau_{3,d}^2\ge n^4\tau_{4,d}^2$ and $\phi_4$ otherwise.
\end{corollary}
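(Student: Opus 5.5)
We combine Propositions~\ref{prop:triangle-moments} and~\ref{prop:square-moments}, using the trace formula~\eqref{eq:trace-formula} to compare the error terms with the signals. Since $\tau_{\ell,d}=\sum_q\gamma_{q,d}^\ell N_{q,d}^{1-\ell/2}$, the eigenvalues $\lambda_{q}=\gamma_{q,d}/\sqrt{N_{q,d}}$ of $T_{\kappa_d}$ satisfy $|\lambda_q|\le\|\kappa_d\|_\infty\le b_c^{1/2}$. Moreover $\tau_{4,d}=\sum_q N_{q,d}\lambda_q^4\ge0$, and
\[
 \tau_{6,d}=\sum_q N_{q,d}\lambda_q^6\le \Big(\sup_q\lambda_q^2\Big)\tau_{4,d}\le \tau_{4,d}^{1/2}\,\tau_{4,d}=\tau_{4,d}^{3/2},
\]
because $\sup_q\lambda_q^4\le\sum_qN_{q,d}\lambda_q^4=\tau_{4,d}$. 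We also have the cruder bound $\tau_{4,d}\le b_c v_d\le b_c^2$. With these facts in hand, we split according to which of the two terms in $\Lambda=n^3\tau_{3,d}^2+n^4\tau_{4,d}^2$ dominates; along any subsequence, at least one of them tends to infinity.

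\emph{Case $n^3\tau_{3,d}^2\ge n^4\tau_{4,d}^2$.} Here $n^3\tau_{3,d}^2\to\infty$, and we must verify~\eqref{eq:tri-snr}, i.e.\ that $n^6\tau_{3,d}^2/(n^3+n^4\tau_{4,d})\to\infty$. The part with denominator $n^3$ equals $n^3\tau_{3,d}^2\to\infty$. For the part with denominator $n^4\tau_{4,d}$, the case assumption gives $\tau_{4,d}\le n^{-1/2}|\tau_{3,d}|$, hence
\[
 \frac{n^6\tau_{3,d}^2}{n^4\tau_{4,d}}\ge n^{5/2}|\tau_{3,d}|=n\,(n^3\tau_{3,d}^2)^{1/2}\to\infty.
\]
Proposition~\ref{prop:triangle-moments} then shows that $\phi_3$ has vanishing errors.

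\emph{Case $n^4\tau_{4,d}^2>n^3\tau_{3,d}^2$.} Here $n^2\tau_{4,d}\to\infty$, and we check~\eqref{eq:square-snr} term by term. First, $n^8\tau_{4,d}^2/n^4=(n^2\tau_{4,d})^2\to\infty$. Second, $n^8\tau_{4,d}^2/(n^5\tau_{4,d})=n^3\tau_{4,d}\to\infty$. Third, the hypothesis $\E[H_d(T_d)^4]\lesssim\tau_{4,d}^2$ gives $n^8\tau_{4,d}^2/(n^6\tau_{4,d}^2)\gtrsim n^2\to\infty$. Finally, the $\tau_{6,d}$ term gives $n^8\tau_{4,d}^2/(n^6\tau_{6,d})\ge n^2\tau_{4,d}^{1/2}=n\,(n^2\tau_{4,d})^{1/2}\to\infty$. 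Proposition~\ref{prop:square-moments} then shows that $\phi_4$ has vanishing errors.

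Since the test chosen for each $(n,d)$ has error sum tending to zero along every subsequence, $\TV(P,Q)\ge1-(\text{type I}+\text{type II})\to1$. The only step needing care is controlling $\tau_{6,d}$ in the quadrilateral variance, and this is handled by the spectral comparison $\tau_{6,d}\le\tau_{4,d}^{3/2}$ above. Everything else is bookkeeping.
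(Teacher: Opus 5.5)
Your proof is correct and follows the paper's argument. Both use the same case split, the same bound $\tau_{4,d}\le n^{-1/2}|\tau_{3,d}|$ in the triangle branch, and the same comparison $\tau_{6,d}\le\|T_{\kappa_d}\|_{\op}^2\tau_{4,d}$ in the quadrilateral branch. The only difference is that you use $\|T_{\kappa_d}\|_{\op}^4\le\tau_{4,d}$, while the paper uses the sharper $\|T_{\kappa_d}\|_{\op}^4\le\tau_{4,d}/d$ from harmonic multiplicities; you rightly note that the factor $d$ is unnecessary, because $n^2\sqrt{\tau_{4,d}}=n(n^2\tau_{4,d})^{1/2}\to\infty$ already.
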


\begin{proof}
If $n^3\tau_{3,d}^2\ge n^4\tau_{4,d}^2$, use the midpoint test for $\sgn(\tau_{3,d})S_3$.
Chebyshev's inequality and Proposition~\ref{prop:triangle-moments} give
\begin{align}
 \P_Q[\phi_3=1]+\P_P[\phi_3=0]
 &\le C_K\Big(\frac1{n^3\tau_{3,d}^2}
       +\frac{\tau_{4,d}}{n^2\tau_{3,d}^2}\Big)
 \le C_K\Big(\frac1{n^3\tau_{3,d}^2}
       +\frac1{n^{5/2}|\tau_{3,d}|}\Big).
 \label{eq:triangle-error}
\end{align}
If $n^4\tau_{4,d}^2>n^3\tau_{3,d}^2$, use the midpoint test for $S_4$, whose alternative
mean is positive. The variance bound for cycles of length four and
the assumed bound on the fourth moment yield
\begin{equation}
 \P_Q[\phi_4=1]+\P_P[\phi_4=0]
 \le C_K\Big(
 \frac1{n^4\tau_{4,d}^2}+\frac1{n^3\tau_{4,d}}
 +\frac{\tau_{6,d}}{n^2\tau_{4,d}^2}+\frac1{n^2}\Big).
 \label{eq:square-error}
\end{equation}
By~\eqref{eq:trace-formula}, every nonconstant eigenvalue has
multiplicity at least $d$, so
$\|T_{\kappa_d}\|_{\op}^4\le\tau_{4,d}/d$. Also
$\tau_{6,d}\le\|T_{\kappa_d}\|_{\op}^2\tau_{4,d}$. Therefore the right side
of \eqref{eq:square-error} is at most
\[
 C_K\Big(\frac1{n^4\tau_{4,d}^2}+\frac1{n^3\tau_{4,d}}
           +\frac1{n^2\sqrt{d\tau_{4,d}}}+\frac1{n^2}\Big).
\]
When $n^3\tau_{3,d}^2+n^4\tau_{4,d}^2\to\infty$, the larger of $n^3\tau_{3,d}^2$ and $n^4\tau_{4,d}^2$ diverges.
Select the corresponding test for each $(n,d)$. The bounds in~\eqref{eq:triangle-error} and~\eqref{eq:square-error}
show that its two errors tend to zero, even if the selected
branch changes along the sequence. Hence
$\TV(P,Q)\to1$.
\end{proof}

\subsection{Possibility results for cusp kernels}
\label{sec:cusp-spectrum}

We apply Corollary~\ref{cor:cycle-detection} to the four cases in
Definition~\ref{def:cusp}. For continuous cusps, the local
expansions are sampled at the scale $T_d=O_{\P}(d^{-1/2})$.
Let $G\sim N(0,1)$ be standard normal and define $M_t=\E[|G|^t]$.
The limit $\sqrt dT_d\Rightarrow G$, together with the moment
bounds in Lemma~\ref{gen:lem:gap-transfer}, gives
\begin{equation}
 s_d\gamma_{1,d}
 =a_oM_{\beta+1}d^{-\beta/2}(1+o(1)),\quad
 s_d\gamma_{2,d}
 =\frac{a_e\alpha M_\alpha}{\sqrt2}d^{-\alpha/2}(1+o(1)).
 \label{eq:cusp-low-coefficients}
\end{equation}
The first relation applies to pure odd and mixed cusps, and the
second to pure even and mixed cusps. The factor $\alpha M_\alpha$
comes from $\E[|G|^\alpha(G^2-1)]=\alpha M_\alpha$.
For a jump, the signed difference condition instead gives
$|\gamma_{1,d}|\asymp 1$ and $v_d\asymp 1$.
Thus the relevant degrees are one for the odd part and two for
the centered even part. Their multiplicities in~\eqref{eq:trace-formula}
give the following trace scales.

\begin{proposition}[Cycle traces for cusp kernels]
\label{gen:prop:spectral-scales}
Let $K\in\maK_{\mathrm{cusp}}$ be fixed.
\begin{itemize}
\item \emph{Pure even.} We have
$\tau_{3,d}^2\asymp d^{-3\alpha-2}$,
$\tau_{4,d}\asymp d^{-2\alpha-2}$, and
$\|T_{\kappa_d}\|_{\op}^2\asymp d^{-\alpha-2}$.
\item \emph{Pure odd.} We have
$\tau_{3,d}^2\asymp d^{-3\beta-1}$,
$\tau_{4,d}\asymp d^{-2\beta-1}$, and
$\|T_{\kappa_d}\|_{\op}^2\asymp d^{-\beta-1}$.
\item \emph{Mixed.} We have
\begin{equation}
 \tau_{4,d}\asymp d^{-2\beta-1}+d^{-2\alpha-2},\quad
 \|T_{\kappa_d}\|_{\op}^2\asymp d^{-\beta-1}+d^{-\alpha-2}.
 \label{gen:eq:even-trace-scales}
\end{equation}
If $\beta\ne\alpha+1/3$, then, with $s$ defined
in~\eqref{eq:cusp-dimension},
\begin{equation}
 \tau_{3,d}^2\asymp d^{-3/s}.
 \label{gen:eq:nonbalanced-cubic}
\end{equation}
If $\beta=\alpha+1/3$, then
\begin{equation}
 s_d^3d^{(3\alpha+2)/2}\tau_{3,d}
 =a_o^3M_{\beta+1}^3+\frac{(a_e\alpha M_\alpha)^3}{2}+o(1).
 \label{eq:cusp-critical-coefficient}
\end{equation}
\item \emph{Jump.} We have
$\tau_{3,d}^2\asymp \tau_{4,d}
\asymp \|T_{\kappa_d}\|_{\op}^2\asymp d^{-1}$.
\end{itemize}
\end{proposition}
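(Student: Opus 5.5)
We prove the proposition through the trace formula \eqref{eq:trace-formula}, $\tau_{\ell,d}=\sum_{q\ge1}\gamma_{q,d}^\ell N_{q,d}^{1-\ell/2}$, together with the operator norm identity $\|T_{\kappa_d}\|_{\op}=\sup_q|\gamma_{q,d}|/\sqrt{N_{q,d}}$. The strategy is to show that degrees one and two dominate, and that their coefficients are given by \eqref{eq:cusp-low-coefficients}.

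\textbf{Step 1: leading coefficients.} Since $N_{1,d}=d$ and $N_{2,d}\asymp d^2/2$, the degree-one term contributes $\gamma_{1,d}^\ell d^{1-\ell/2}$ and the degree-two term contributes $\gamma_{2,d}^\ell (d^2/2)^{1-\ell/2}(1+o(1))$. In the pure odd case, $\gamma_{2,d}=0$ by parity. In the pure even case, $\gamma_{1,d}=0$. Hence:
\begin{itemize}
\item for $\ell=3$, the contributions are $a_o^3M_{\beta+1}^3s_d^{-3}d^{-(3\beta+1)/2}$ and $2^{1/2}(a_e\alpha M_\alpha/\sqrt2)^3s_d^{-3}d^{-(3\alpha+2)/2}$, the latter equal to $(a_e\alpha M_\alpha)^3s_d^{-3}d^{-(3\alpha+2)/2}/2$;
\item for $\ell=4$, the contributions are of order $d^{-2\beta-1}$ and $d^{-2\alpha-2}$;
\item for the operator norm, the degree-one and degree-two eigenvalues squared are of order $d^{-\beta-1}$ and $d^{-\alpha-2}$.
\end{itemize}
On $\beta=\alpha+1/3$ the two exponents for $\ell=3$ coincide, which gives exactly \eqref{eq:cusp-critical-coefficient}. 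Off this line, one term dominates by a power of $d$, which gives \eqref{gen:eq:nonbalanced-cubic} with $3/s=\min\{3\alpha+2,3\beta+1\}$. For $\ell=4$ and for the operator norm all terms are nonnegative, so no cancellation issue arises and the two-sided bounds follow once the remainder is shown to be smaller.

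\textbf{Step 2: tail control (main obstacle).} We must show that $\sum_{q\ge3}|\gamma_{q,d}|^\ell N_{q,d}^{1-\ell/2}$ is $o$ of the leading terms, and that $\sup_{q\ge3}\gamma_{q,d}^2/N_{q,d}$ is as well. Using $|\gamma_{q,d}|\le\sqrt{v_d}$ together with $\sum_q\gamma_q^2=v_d$, we get
\[
\sum_{q\ge3}|\gamma_{q,d}|^\ell N_{q,d}^{1-\ell/2}\le v_d^{\ell/2}\min_{q\ge3}N_{q,d}^{1-\ell/2}\lesssim v_d^{\ell/2}d^{-3(\ell/2-1)}.
\]
After localization (Lemma~\ref{lem:localization}), $v_d\asymp d^{-\min(\alpha,\beta)}$, since the kernel has size $|T_d|^{\alpha}$ or $|T_d|^{\beta}$ on the bulk. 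For $\ell=3$, the tail is then $d^{-3\min/2-3/2}$. This is smaller than $d^{-(3\alpha+2)/2}$ whenever $\alpha<1$, but at $\alpha\le1$ it is borderline. One must also check odd degrees $q\ge3$ against the degree-one term, for example $\beta$ near $1$ versus $d^{-2}$ from $q=3$.

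I would first try a sharper estimate: the projection of a profile of H\"older order $\alpha$ at the scale $d^{-1/2}$ onto degree $q$ satisfies $\gamma_{q,d}\lesssim d^{-\alpha/2}$ uniformly in fixed $q$. This comes from writing $\gamma_{q,d}=\E[\kappa_d(T_d)\psi_{q,d}(T_d)]$, using that $\psi_{q,d}(T_d)\approx He_q(\sqrt dT_d)/\sqrt{q!}$, and applying the moment bounds of Lemma~\ref{gen:lem:gap-transfer}. For large $q$, I would use the crude Parseval bound above, and split at $q\asymp\log d$ if necessary. With $\gamma_{q,d}\lesssim d^{-\alpha/2}$, the degree-$q$ term is at most $d^{-3\alpha/2-q/2}$, which is $o(d^{-(3\alpha+2)/2})$ for $q\ge3$. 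The odd analogue gives $d^{-3\beta/2-3/2}=o(d^{-(3\beta+1)/2})$. The operator norm is handled in the same way.

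\textbf{Step 3: jump case.} The signed gap gives $|\gamma_{1,d}|=|\E[\kappa_d(T_d)\sqrt dT_d]|\gtrsim\E[c_1|\sqrt dT_d|]/s_d\asymp1$, since the odd part does not cancel. By contrast, $\sum_{q\ge2}$ contributes at most $v_d^{3/2}d^{-1}\cdot d^{-1/2}$ to $\tau_3$, because $N_{q,d}\ge d^2/3$ for $q\ge2$. Therefore $\tau_{3,d}\asymp d^{-1/2}$. Similarly, $\tau_{4,d}\asymp d^{-1}$ and $\|T_{\kappa_d}\|_{\op}^2=\gamma_{1,d}^2/d\asymp d^{-1}$, the other eigenvalues being $O(d^{-2})$.
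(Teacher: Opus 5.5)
Your proposal is correct and follows the paper's route: the trace formula \eqref{eq:trace-formula}, the low coefficients \eqref{eq:cusp-low-coefficients} for degrees one and two, and a Parseval bound with monotone multiplicities for the remaining degrees. Using localization to make the missing parity vanish exactly is a harmless substitute for the paper's exponentially small bounds.

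The one real misstep is in Step~2, where you call your crude bound borderline. The remainder must be compared with the larger of the two leading terms, and your bound $v_d^{3/2}d^{-3/2}$, with $v_d\lesssim d^{-\min\{\alpha,\beta\}}$, always beats it:
\begin{itemize}
\item If $\alpha\le\beta$, it is smaller than $d^{-(3\alpha+2)/2}$ by a factor $d^{-1/2}$. This case includes the critical line.
\item If $\beta<\alpha$, it is smaller than $d^{-(3\beta+1)/2}$ by a factor $d^{-1}$.
\end{itemize}
So the Hermite detour and the split at $q\asymp\log d$ are unnecessary. They would also need more than a fixed-$q$ Hermite limit to give bounds uniform over $3\le q\le\log d$.

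The paper's cleaner version applies the same Parseval bound separately by parity. It bounds odd $q\ge3$ against $\|o_d(T_d)\|_2^2\asymp d^{-\beta}$ and even $q\ge4$ against $\|e_d(T_d)\|_2^2\asymp d^{-\alpha}$. This gives relative error $O(d^{-1})$ in every case at once.

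In Step~3, $N_{q,d}^{-1/2}\lesssim d^{-1}$ for $q\ge2$ gives a remainder $O(d^{-1})$, not $O(d^{-3/2})$. This is still $o(d^{-1/2})$, so the conclusion stands.
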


The proof is in Appendix~\ref{app:cusp-spectrum}.
The first odd and even cubic terms have orders
$d^{-(3\beta+1)/2}$ and $d^{-(3\alpha+2)/2}$, respectively.
The leading triangle signal disappears when these orders agree
and the two constants in~\eqref{eq:cusp-critical-coefficient} sum
to zero.
The fourth trace has nonnegative spectral terms and remains
of order $d^{-2\alpha-5/3}$ on this relation.
Section~\ref{sec:cusp-converse} determines the resulting sharp
thresholds, including the effect of the local remainders.

To apply Corollary~\ref{cor:cycle-detection}, it remains to verify
its condition on the fourth moment. 
By~\eqref{eq:H-moments} in Lemma~\ref{lem:contracted},
\[
 \|H_d(T_d)\|_4
 \lesssim \frac{\gamma_{1,d}^2}{\sqrt d}+\frac{v_d}{d}
 \overset{\mathrm{(a)}}{\lesssim} \sqrt{\tau_{4,d}}.
\]
Here $\mathrm{(a)}$ uses Lemma~\ref{gen:lem:gap-transfer} and
Proposition~\ref{gen:prop:spectral-scales} for the continuous cases;
for jumps, $v_d\asymp\gamma_{1,d}^2\asymp1$ and
$\tau_{4,d}\asymp d^{-1}$.
Hence the selection rule in Corollary~\ref{cor:cycle-detection}
achieves $\TV(P,Q)\to1$ whenever
$n^3\tau_{3,d}^2+n^4\tau_{4,d}^2\to\infty$.
This proves the detection assertion of Theorem~\ref{thm:cusp}.
For a continuous cusp away from the critical relation,
$\tau_{4,d}=o(d^{-1/s})$. Thus $n\gg d^{1/s}$ implies both
$n^3\tau_{3,d}^2\to\infty$ and
$n^2\tau_{3,d}^2/\tau_{4,d}\to\infty$, so
Proposition~\ref{prop:triangle-moments} shows that signed triangles
alone achieve detection for $d\ll n^s$.
The same argument gives $d\ll n^3$ for jumps.
On the critical relation, the criterion retains both traces and
allows signed cycles of length four to detect a signal suppressed in the
triangle mean.

\subsection{Possibility results for smooth kernels}

We now consider $K\in\maK_{\mathrm{smooth}}(m)$.
The leading nonconstant Taylor term is proportional to $t^m$.
Specifically, write
\[
t^m=\sum_{i=0}^m d_{m,i}P_{i,d}(t),
\quad
d_{m,i}=N_{i,d}\E\big[T_d^mP_{i,d}(T_d)\big].
\]
By symmetry, $d_{m,i}=0$ whenever $m-i$ is odd.
Centering removes the component of degree zero, leaving degrees
$m,m-2,\ldots,q_m$.

To determine the coefficient scales, the Gaussian representation
of a uniform point on the sphere and Slutsky's theorem give
$\sqrt dT_d\Rightarrow G\sim N(0,1)$.
Hence, $T_d^m-\E[T_d^m]$ has $L^2$ norm of order $d^{-m/2}$.
Under the same rescaling, the normalized Gegenbauer polynomials
converge to normalized Hermite polynomials
(see Lemma~\ref{lem:gegen-hermite} for details).
The corresponding Gaussian projections are nonzero at degrees
$m,m-2,\ldots,q_m$. Combining these facts with Taylor's theorem
and convergence of the relevant moments gives $v_d\asymp d^{-m}$
and $|\gamma_{q,d}|\asymp d^{-m/2}$ for
$q\in\{m,m-2,\ldots,q_m\}$.

For even $m$, the coefficient of degree two has this leading scale.
Only Taylor terms of higher order contribute to degree one, and
the Taylor remainder gives
$\gamma_{1,d}=O(d^{-(m+1)/2})$.
Since $N_{1,d}=d$ and $N_{2,d}\asymp d^2$,
\eqref{eq:trace-formula} shows that the contribution of degree one
to the cubic trace is smaller in absolute value than the contribution of degree two by a factor $O(d^{-1})$.
The energy bound and $N_{q,d}\ge N_{3,d}\asymp d^3$ control the
remaining degrees. Thus degree two dominates, giving
$|\tau_{3,d}|\asymp d^{-(3m+2)/2}$.
For odd $m$, degree one already has a nonzero leading coefficient,
with $|\gamma_{1,d}|\asymp d^{-m/2}$.
Using~\eqref{eq:trace-formula} and the energy bound to control
degrees $q\ge2$, the term of degree one dominates, giving
$|\tau_{3,d}|\asymp d^{-(3m+1)/2}$.
The following proposition records both cases.

\begin{proposition}
\label{prop:spectrum}
Fix an integer $m\ge2$ and $K\in\maK_{\mathrm{smooth}}(m)$.
As $d\to\infty$, we have $|\gamma_{q,d}|\asymp d^{-m/2}$ for $q=m,m-2,\ldots,q_m$.
For every fixed integer $\ell\ge3$,
\begin{equation}
\tau_{\ell,d}
=\gamma_{q_m,d}^\ell N_{q_m,d}^{1-\ell/2}(1+O(d^{-1/2})).
\label{eq:prop-finite-positive-flat-1}
\end{equation}
In particular,
\begin{equation}
|\tau_{3,d}|\asymp d^{-(3m+q_m)/2},\quad
\tau_{4,d}\asymp d^{-(2m+q_m)}.
\label{eq:prop-finite-positive-flat-2}
\end{equation}
\end{proposition}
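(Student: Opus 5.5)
The plan has two stages. First, establish the coefficient scales $|\gamma_{q,d}|\asymp d^{-m/2}$ for $q\in\{m,m-2,\ldots,q_m\}$, together with a bound on the remaining coefficients. Second, insert these into the trace formula~\eqref{eq:trace-formula} and show that the degree-$q_m$ term dominates every fixed $\ell\ge3$.

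For the coefficients, I would Taylor expand $K$ at zero to order $m+5$:
\[
K(t)=K(0)+\sum_{j=m}^{m+5}\frac{K^{(j)}(0)}{j!}t^j+R(t),\qquad |R(t)|\lesssim|t|^{m+6}.
\]
Since $\|T_d^j\|_2\asymp d^{-j/2}$, the normalization gives $p_d=K(0)+O(d^{-m/2})$ and $s_d\asymp1$. Hence
\[
s_d\gamma_{q,d}=\frac{K^{(m)}(0)}{m!}\E[T_d^m\psi_{q,d}(T_d)]+O(d^{-(m+1)/2})
\]
for every fixed $q\ge1$. The error comes from the terms $t^j$ with $j>m$ and from $R$, using Cauchy--Schwarz and $\|\psi_{q,d}\|_2=1$.

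To evaluate the main term, I would use Lemma~\ref{lem:gegen-hermite}. It gives $\psi_{q,d}(T_d)\approx h_q(\sqrt dT_d)$, with $h_q$ the normalized Hermite polynomial, and convergence of moments then gives
\[
d^{m/2}\E[T_d^m\psi_{q,d}(T_d)]\to\E[G^mh_q(G)].
\]
This limit is nonzero exactly when $q\le m$ and $q\equiv m\pmod 2$, which proves the first claim. In the same way, the degree-$(q_m-1)$ or lower-parity coefficients are of order $O(d^{-(m+1)/2})$. Concretely, for even $m$ this gives $\gamma_{1,d}=O(d^{-(m+1)/2})$, which is the only such coefficient that matters.

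For the traces, I would split the sum in~\eqref{eq:trace-formula} into the leading term $q=q_m$ and the rest, and compare each remaining term against the leading one using $N_{q,d}\asymp d^q$ for fixed $q$.
\begin{itemize}
\item For odd $m$ ($q_m=1$), the leading term has order $d^{-m\ell/2}d^{1-\ell/2}$. A term with fixed $q\ge2$ is smaller by the factor $N_{q,d}^{1-\ell/2}/N_{1,d}^{1-\ell/2}\lesssim d^{-(q-1)(\ell/2-1)}\le d^{-1/2}$ when $\ell\ge3$.
\item For even $m$ ($q_m=2$), the $q=1$ term is $\gamma_{1,d}^\ell d^{1-\ell/2}=O(d^{-(m+1)\ell/2}d^{1-\ell/2})$. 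The leading term is $d^{-m\ell/2}d^{2-\ell}$, so the ratio is $d^{-\ell/2}d^{\ell/2-1}=d^{-1}$. Terms with fixed $q\ge3$ are again smaller by at least $d^{-1/2}$.
\end{itemize}

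The tail over all $q\ge q_0$ is the only delicate step, because $q$ is unbounded there. I would use $N_{q,d}\ge N_{q_0,d}\asymp d^{q_0}$ for $q\ge q_0$ (with $d$ large) and $|\gamma_{q,d}|^{\ell-2}\le v_d^{(\ell-2)/2}$, which give
\[
\sum_{q\ge q_0}|\gamma_{q,d}|^\ell N_{q,d}^{1-\ell/2}\le v_d^{\ell/2}N_{q_0,d}^{1-\ell/2}\lesssim d^{-m\ell/2}d^{-q_0(\ell/2-1)},
\]
using $v_d\asymp d^{-m}$. Choosing $q_0=q_m+1$ already yields a relative error $O(d^{-1/2})$, so the fixed-$q$ bookkeeping above is only needed for $q<q_0$, and there is no issue of uniformity in $q$.

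These estimates together give~\eqref{eq:prop-finite-positive-flat-1}. Then~\eqref{eq:prop-finite-positive-flat-2} follows by plugging in $|\gamma_{q_m,d}|\asymp d^{-m/2}$ and $N_{q_m,d}\asymp d^{q_m}$. For $\ell=4$ the leading term is $\gamma_{q_m,d}^4N_{q_m,d}^{-1}>0$, which gives $\tau_{4,d}\asymp d^{-(2m+q_m)}$ with the correct sign.

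The main obstacle I expect is the justification of $v_d\asymp d^{-m}$ and of the Gegenbauer-to-Hermite moment convergence with the needed $O(d^{-(m+1)/2})$ error. For this I would lean on uniform moment bounds for $\sqrt dT_d$ and on Lemma~\ref{lem:gegen-hermite}.
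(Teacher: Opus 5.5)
Your proposal is correct and follows essentially the same route as the paper: a Taylor reduction to the centered monomial $t^m$ with $O(d^{-(m+1)/2})$ error, the Gegenbauer--Hermite limit for the leading coefficients, exact parity to get $\gamma_{1,d}=O(d^{-(m+1)/2})$ for even $m$, and the tail bound $N_{q_m+1,d}^{1-\ell/2}v_d^{\ell/2}$ in the trace formula. The obstacle you flag at the end is milder than you suggest. The Hermite limit needs no rate, since the $O(d^{-(m+1)/2})$ error comes only from the Taylor terms. Likewise, only the upper bound $v_d\lesssim d^{-m}$ is used, and it follows directly from $|K(t)-K(0)|\lesssim|t|^m$.
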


The proof is in Appendix~\ref{app:proof-spectrum}.
For detection in Theorem~\ref{thm:finite-flat}, suppose
$d\ll n^{3/(3m+q_m)}$. Both $n^3d^{-(3m+q_m)}$ and $n^2d^{-m}$
then diverge. By~\eqref{eq:prop-finite-positive-flat-2},
\[
 \frac{n^6\tr(T_{\kappa_d}^3)^2}
 {n^3+n^4\tr(T_{\kappa_d}^4)}
 \asymp n^3d^{-(3m+q_m)}\wedge n^2d^{-m}\longrightarrow\infty,
\]
so Proposition~\ref{prop:triangle-moments} applies.

\begin{remark}[Triangles versus cycles of length four]\label{rmk:cancel}
We compare the leading contributions to the triangle signal
in the two kernel classes.
\begin{itemize}
\item \emph{Cusp kernels.}
Recall from~\eqref{eq:tri-planted} that
$\E_P[S_3]=\binom n3\tau_{3,d}$, so $\tau_{3,d}$ determines
the mean of the signed triangle count.
By~\eqref{eq:trace-formula},
\[
 \tau_{3,d}
 =\frac{\gamma_{1,d}^3}{\sqrt d}
 +\frac{\gamma_{2,d}^3}{\sqrt{N_{2,d}}}
 +\sum_{q\ge3}\frac{\gamma_{q,d}^3}{\sqrt{N_{q,d}}}.
\]
For a mixed cusp, \eqref{eq:cusp-low-coefficients} gives
\begin{align*}
 \frac{s_d^3\gamma_{1,d}^3}{\sqrt d}
 &=a_o^3M_{\beta+1}^3d^{-(3\beta+1)/2}
   +o(d^{-(3\beta+1)/2}),\\
 \frac{s_d^3\gamma_{2,d}^3}{\sqrt{N_{2,d}}}
 &=\frac{(a_e\alpha M_\alpha)^3}{2}d^{-(3\alpha+2)/2}
   +o(d^{-(3\alpha+2)/2}).
\end{align*}
The remaining degrees contribute a smaller order than the larger
of these two terms.
If $\beta<\alpha+1/3$, the first term dominates; if
$\beta>\alpha+1/3$, the second dominates.
If $\beta=\alpha+1/3$, the two terms have the same order,
and their leading coefficients cancel precisely when
\[
 a_o^3M_{\alpha+4/3}^3+\frac{(a_e\alpha M_\alpha)^3}{2}=0.
\]
In contrast, the fourth trace satisfies
\[
 \tau_{4,d}
 =\frac{\gamma_{1,d}^4}{d}
 +\frac{\gamma_{2,d}^4}{N_{2,d}}
 +\sum_{q\ge3}\frac{\gamma_{q,d}^4}{N_{q,d}}.
\]
Every term is nonnegative, so no such cancellation occurs.
On this relation, $\tau_{4,d}\asymp d^{-2\alpha-5/3}$.
After the leading terms cancel, the local remainders
determine whether cycles of length four dominate.

\item \emph{Finitely flat smooth kernels.}
For $K\in\maK_{\mathrm{smooth}}(m)$, the first nonconstant
Taylor term has the parity of $m$, whereas the opposite parity
can first appear at order $m+1$.
Consequently, for odd $m$,
\[
 \frac{|\gamma_{1,d}|^3}{\sqrt d}
 \asymp d^{-(3m+1)/2},
 \quad
 \frac{|\gamma_{2,d}|^3}{\sqrt{N_{2,d}}}
 =O(d^{-(3m+5)/2}),
\]
and for even $m$,
\[
 \frac{|\gamma_{1,d}|^3}{\sqrt d}
 =O(d^{-(3m+4)/2}),
 \quad
 \frac{|\gamma_{2,d}|^3}{\sqrt{N_{2,d}}}
 \asymp d^{-(3m+2)/2}.
\]
Thus degree one dominates for odd $m$, and degree two dominates
for even $m$. The remaining degrees are also smaller, as in
Proposition~\ref{prop:spectrum}.
Hence the leading cubic contribution cannot cancel, and signed
triangle counts attain the optimal detection threshold.
\end{itemize}
\end{remark}

\section{Posterior cavity reduction}\label{sec:cavity}

In this section, we prove a general KL bound for the impossibility
results. To show $\TV(P,Q)=o(1)$, it suffices to prove
$\KL(P\Vert Q)=o(1)$ by Pinsker's inequality
$\TV(P,Q)^2\le\KL(P\Vert Q)/2$ (see, e.g.,~\cite[Chapter~7]{polyanskiy2025information}).
Following~\cite[Section~3]{mao2026random}, we expand the KL
divergence by adding one vertex at a time and observing its edges to the existing graph.
Recall that $A^{(r)}$ is the graph induced by $[r]$, and define
$B^{(r+1)}=(A_{1,r+1},\ldots,A_{r,r+1})$.
At step $r+1$, we observe $B^{(r+1)}$ conditional on $A^{(r)}$ (see Figure~\ref{fig:sequential-kl} for an illustration).

\begin{figure}[htbp]
\centering
\begingroup
\definecolor{klold}{RGB}{20,103,116}
\definecolor{klnew}{RGB}{175,83,43}
\definecolor{klink}{RGB}{44,49,55}
\resizebox{0.88\textwidth}{!}{%
\begin{tikzpicture}[
  x=1cm,y=1cm,
  line cap=round,line join=round,
  every node/.style={text=klink},
  old block/.style={
    fill=klold!15,draw=klold,line width=.8pt
  },
  new block/.style={
    fill=klnew!23,draw=klnew,line width=.8pt
  },
  next step/.style={
    -{Stealth[length=5pt]},draw=klink!70,line width=.8pt
  }
]
\node at (0,-1.15) {$\cdots$};

\begin{scope}[shift={(.7,0)}]
  \path[old block] (0,0) rectangle (2,-2);
  \node at (1,.45) {$A^{(r)}$};
\end{scope}

\draw[next step] (2.95,-1.15)--(4.2,-1.15);
\node[font=\small] at (3.575,-.75) {$B^{(r+1)}$};

\begin{scope}[shift={(4.45,0)}]
  \path[old block] (0,0) rectangle (2,-2);
  \path[new block]
    (2,0)--(2.45,0)--(2.45,-2.45)
    --(0,-2.45)--(0,-2)--(2,-2)--cycle;
  \node at (1.225,.45) {$A^{(r+1)}$};
\end{scope}

\draw[next step] (7.15,-1.15)--(8.4,-1.15);
\node[font=\small] at (7.775,-.75) {$B^{(r+2)}$};

\begin{scope}[shift={(8.65,0)}]
  \path[old block] (0,0) rectangle (2.45,-2.45);
  \draw[klold!40,line width=.5pt]
    (2,0)--(2,-2.45)
    (0,-2)--(2.45,-2);
  \path[new block]
    (2.45,0)--(2.9,0)--(2.9,-2.9)
    --(0,-2.9)--(0,-2.45)--(2.45,-2.45)--cycle;
  \node at (1.45,.45) {$A^{(r+2)}$};
\end{scope}
\node at (12.2,-1.15) {$\cdots$};

\path[old block] (3.25,-3.50) rectangle (3.60,-3.22);
\node[anchor=west,font=\small] at (3.75,-3.36)
  {Already observed};
\path[new block] (7,-3.50) rectangle (7.35,-3.22);
\node[anchor=west,font=\small] at (7.50,-3.36)
  {Newly observed};

\end{tikzpicture}%
}
\endgroup
\caption{Sequential observation of the adjacency matrix.
Each new row and column represent the same edge variables.}
\label{fig:sequential-kl}
\end{figure}

Under $Q$, the new edges are independent of $A^{(r)}$ and have
law $\Bern(p_d)^{\otimes r}$.
The chain rule and $\KL\le\chi^2$, where $\chi^2$ denotes
the chi-squared divergence, give
\begin{align}
 \KL(P\Vert Q)
 &=\sum_{r=1}^{n-1}\E_P\Big[
 \KL\big(P(B^{(r+1)}\mid A^{(r)})
 \Vert\Bern(p_d)^{\otimes r}\big)\Big]\notag\\
 &\le\sum_{r=1}^{n-1}
 \sum_{\substack{S\subseteq[r]\\|S|\ge2}}
 \E_P\Big[\E_P\Big[
 \prod_{i\in S}\kappa_d(\langle X_i,X_{r+1}\rangle)
 \mid A^{(r)}\Big]^2\Big].
 \label{eq:cavity-chain-expansion}
\end{align}
To obtain the last inequality, fix $b\in\{0,1\}^r$ and let
$y_i(b)=(b_i-p_d)/s_d$.
Given the latent points, the likelihood ratio for edge $\{i,r+1\}$ is
$1+\kappa_d(\langle X_i,X_{r+1}\rangle)y_i(b)$.
Conditional independence therefore gives
\begin{align*}
 \frac{P(B^{(r+1)}=b\mid A^{(r)})}{Q(B^{(r+1)}=b)}
 &=\E_P\Big[\prod_{i=1}^r
 \big(1+\kappa_d(\langle X_i,X_{r+1}\rangle)y_i(b)\big)
 \mid A^{(r)}\Big]\\
 &=1+\sum_{\substack{S\subseteq[r]\\|S|\ge2}}
 \E_P\Big[\prod_{i\in S}
 \kappa_d(\langle X_i,X_{r+1}\rangle)\mid A^{(r)}\Big]
 \prod_{i\in S}y_i(b).
\end{align*}
The terms with $|S|=1$ vanish by centering.
For $b\sim\Bern(p_d)^{\otimes r}$, the variables $y_i(b)$ are
independent with mean zero and variance one. Hence
\[
 \E_{b\sim\Bern(p_d)^{\otimes r}}\Big[
 \prod_{i\in S}y_i(b)\prod_{j\in T}y_j(b)\Big]
 =\mathbf{1}_{\{S=T\}},\quad S,T\subseteq[r].
\]
Thus squaring the likelihood ratio minus one and averaging over $b$
removes all cross terms, giving~\eqref{eq:cavity-chain-expansion}.

We now group the terms by $k=|S|$.
Since $X_{r+1}$ is independent of $(A^{(r)},X_1,\ldots,X_r)$,
we can first integrate out this new point. For $k\ge1$, define
\begin{equation}
 h_{k,d}(x_1,\ldots,x_k)
 =\int_{\Sph^{d-1}}\prod_{i=1}^k
 \kappa_d(\langle x_i,z\rangle)\sigma_d(dz).
 \label{eq:star-kernel}
\end{equation}
Centering gives $h_{1,d}=0$ and
$\int h_{k,d}(x_1,\ldots,x_k)\sigma_d(dx_i)=0$ for each $i$.
We call a kernel with this property canonical in each argument.
For $S=\{i_1<\cdots<i_k\}$, let $X_S=(X_{i_1},\ldots,X_{i_k})$.
The inner expectation in~\eqref{eq:cavity-chain-expansion} is then
$\E_P[h_{k,d}(X_S)\mid A^{(r)}]$.
After averaging over $A^{(r)}$, vertex exchangeability makes all
subsets of size $k$ contribute equally. There are $\binom rk$
such subsets. We therefore define
\begin{equation}
 g_{r,k}=\E_P\Big[
 \E_P[h_{k,d}(X_1,\ldots,X_k)\mid A^{(r)}]^2\Big],
 \quad 2\le k\le r\le n.
 \label{eq:4-2}
\end{equation}
This quantity measures how well the observed graph predicts
$h_{k,d}$. To collect the contributions of order $k$ over all
exposure steps, define
\begin{equation}
 G_k=\sum_{r=k}^{n-1}\binom rk g_{r,k},\quad
 G_{\ge k_0}=\sum_{k=k_0}^{n-1}G_k.
 \label{eq:cavity-costs}
\end{equation}
We also write $\bar g_k=g_{n,k}$ and
$\bar G_k=\binom n{k+1}\bar g_k$.
Conditioning on the full graph gives $g_{r,k}\le g_{n,k}$.
Together with $\sum_{r=k}^{n-1}\binom rk=\binom n{k+1}$,
this yields $G_k\le\bar G_k$.
The following lemma summarizes the KL bound and its direct
comparison with prior moments.

\begin{lemma}[Sequential cavity inequality]\label{lem:cavity}
For every dense spherical latent kernel graph with its centered kernel,
\begin{equation}
 \KL(P\Vert Q)
 \le\sum_{r=1}^{n-1}\sum_{k=2}^r\binom rk g_{r,k}
 =\sum_{k=2}^{n-1}G_k\le\sum_{k=2}^{n-1}\bar G_k.
 \label{eq:cavity-double}
\end{equation}
Moreover, with $H_d$ defined in~\eqref{eq:contracted-profile},
\begin{equation}
 g_{r,k}\le\E[h_{k,d}(X_1,\ldots,X_k)^2]
 =\E[H_d(T_d)^k],\quad 2\le k\le r\le n.
 \label{eq:4-4}
\end{equation}
\end{lemma}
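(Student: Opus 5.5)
The first inequality in \eqref{eq:cavity-double} is already essentially contained in \eqref{eq:cavity-chain-expansion}. For each $r$ I split the sum over $S\subseteq[r]$ with $|S|\ge2$ according to $k=|S|$. The point $X_{r+1}$ is independent of $(A^{(r)},X_1,\ldots,X_r)$, so I would condition additionally on $X_1,\ldots,X_r$ and integrate out $X_{r+1}$ first (tower property). This turns each inner conditional expectation into $\E_P[h_{k,d}(X_S)\mid A^{(r)}]$. The law of $(A^{(r)},X_1,\ldots,X_r)$ under $P$ is invariant under relabelling of $[r]$. Applying a permutation that sends $S$ to $[k]$ therefore shows that $\E_P[\E_P[h_{k,d}(X_S)\mid A^{(r)}]^2]=g_{r,k}$ for every $S$ of size $k$. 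Counting the $\binom rk$ subsets gives the double sum.

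The equality with $\sum_k G_k$ is just an interchange of the finite sums over $1\le r\le n-1$ and $2\le k\le r$. For the last inequality I would note that $A^{(r)}$ is a function of $A^{(n)}$. Then conditional Jensen, $\E[\E[W\mid A^{(r)}]^2]\le\E[\E[W\mid A^{(n)}]^2]$ with $W=h_{k,d}(X_1,\ldots,X_k)$, gives $g_{r,k}\le g_{n,k}=\bar g_k$. Combining this with the hockey-stick identity $\sum_{r=k}^{n-1}\binom rk=\binom n{k+1}$ yields $G_k\le\bar G_k$.

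For \eqref{eq:4-4}, conditional Jensen again gives $g_{r,k}\le\E_P[h_{k,d}(X_1,\ldots,X_k)^2]$. Since $h_{k,d}$ depends only on the latent points, whose $P$-marginal is the i.i.d.\ prior $\sigma_d^{\otimes k}$, this is a prior moment. To evaluate it, I would write $h_{k,d}^2$ as a double integral over independent $Z,Z'\sim\sigma_d$:
\[
\E[h_{k,d}^2]=\E_{Z,Z'}\Big[\prod_{i=1}^k\E_{X_i}\big[\kappa_d(\langle X_i,Z\rangle)\kappa_d(\langle X_i,Z'\rangle)\big]\Big]=\E_{Z,Z'}\big[H_d(\langle Z,Z'\rangle)^k\big],
\]
using Fubini, the independence of the $X_i$, and the definition \eqref{eq:contracted-profile}. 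Finally, $\langle Z,Z'\rangle$ has the law of $T_d$.

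I expect no real obstacle here. The only points needing care are justifying Fubini, which holds because $|\kappa_d|\le\sqrt{b_c}$ is bounded, and checking that the exchangeability of $(A^{(r)},X_{[r]})$ holds under $P$ rather than only marginally.
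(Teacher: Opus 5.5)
Your proposal is correct and follows the paper's argument step for step. You group \eqref{eq:cavity-chain-expansion} by $|S|$ after integrating out $X_{r+1}$, use vertex exchangeability for the factor $\binom rk$, apply conditional Jensen together with $\sum_{r=k}^{n-1}\binom rk=\binom n{k+1}$ to get $G_k\le\bar G_k$, and use Fubini with two independent centers $Z,Z'$ to identify $\E[h_{k,d}^2]$ with $\E[H_d(T_d)^k]$.
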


\begin{proof}
Grouping~\eqref{eq:cavity-chain-expansion} by $|S|$ and using
\eqref{eq:4-2} and~\eqref{eq:cavity-costs} gives
\eqref{eq:cavity-double}, since $G_k\le\bar G_k$.
For~\eqref{eq:4-4}, conditional Jensen's inequality gives
$g_{r,k}\le\E[h_{k,d}(X_1,\ldots,X_k)^2]$.
Let $Z,Z'\sim\sigma_d$ be independent of each other and of
$X_1,\ldots,X_k$.
By~\eqref{eq:star-kernel}, Fubini's theorem and independence give
\begin{align*}
 \E[h_{k,d}(X_1,\ldots,X_k)^2]
 &=\E_{Z,Z'}\Big[
 \Big(\int\kappa_d(\langle x,Z\rangle)
 \kappa_d(\langle x,Z'\rangle)\sigma_d(dx)\Big)^k\Big]\\
 &=\E[H_d(\langle Z,Z'\rangle)^k]
 =\E[H_d(T_d)^k],
\end{align*}
where the last line uses~\eqref{eq:contracted-profile}
and $\langle Z,Z'\rangle\stackrel{d}{=}T_d$.
\end{proof}

We next bound the terms $g_{r,k}$ in~\eqref{eq:cavity-double}.
The prior comparison~\eqref{eq:4-4} already bounds each term by
$\E[H_d(T_d)^k]$. To estimate these moments, we first examine
$H_d$. Recall its definition in~\eqref{eq:contracted-profile}
and the harmonic expansion~\eqref{eq:harmonic-expansion}.
The Funk--Hecke formula and orthogonality give
\[
 \int_{\Sph^{d-1}}
 \psi_{q,d}(\langle x,z\rangle)
 \psi_{j,d}(\langle z,y\rangle)\sigma_d(\dd z)
 =\indc{q=j}P_{q,d}(\langle x,y\rangle).
\]
Substituting the harmonic expansion into both factors of
\eqref{eq:contracted-profile} therefore yields
\begin{equation}
 H_d(t)=\sum_{q\ge1}\gamma_{q,d}^2P_{q,d}(t).
 \label{eq:contracted-kernel}
\end{equation}
This calculation follows first for finite sums and then by $L^2$
convergence and Cauchy--Schwarz. Since $P_{1,d}(t)=t$, we separate
the degree one term and bound the remaining degrees uniformly.

\begin{lemma}[A uniform contracted kernel bound]\label{lem:contracted}
For $d\ge3$, $t\in[-1,1]$, and every integer $k\ge2$,
\begin{align}
 |H_d(t)-\gamma_{1,d}^2t|
 &\le v_d\Big(t^2+\frac{1-t^2}{d-1}\Big),\quad 
 \|H_d(T_d)\|_k
\le C\Big(\gamma_{1,d}^2\sqrt{\frac{k}{d}}+
                      v_d\frac{k}{d}\Big),\label{eq:H-moments}
\end{align}
where $C$ is an absolute constant.
For every fixed $K\in\maK_{\mathrm{cusp}}$, we also have
$\E[H_d(T_d)^4]\le C_K\tau_{4,d}^2$ for all sufficiently large $d$.
\end{lemma}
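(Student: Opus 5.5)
We start from the expansion~\eqref{eq:contracted-kernel}, $H_d(t)=\sum_{q\ge1}\gamma_{q,d}^2P_{q,d}(t)$, and remove the degree one term, which is $\gamma_{1,d}^2t$ because $P_{1,d}(t)=t$. The remainder is $\sum_{q\ge2}\gamma_{q,d}^2P_{q,d}(t)$. We bound every normalized Gegenbauer polynomial of degree $q\ge2$ pointwise by its degree two value, $|P_{q,d}(t)|\le t^2+(1-t^2)/(d-1)$. This is the step that needs real work, and we prove it as follows.

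\begin{itemize}
\item For $q=2$ the identity $P_{2,d}(t)=(dt^2-1)/(d-1)=t^2-(1-t^2)/(d-1)$ gives the bound with equality in absolute value (it is at most the right side since $t^2\le 1$).
\item For $q\ge3$ we use the Gegenbauer three-term recurrence, or equivalently the addition/product formula $P_{q,d}(t)=\E[(t+i\sqrt{1-t^2}\,\xi_1)^q]$ with $\xi$ uniform on $\Sph^{d-2}$. This gives $|P_{q,d}(t)|\le \E[(t^2+(1-t^2)\xi_1^2)^{q/2}]$.
\item Since $t^2+(1-t^2)\xi_1^2\le1$, we have $(\cdot)^{q/2}\le(\cdot)^{1}$ for $q\ge2$. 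Taking the expectation with $\E\xi_1^2=1/(d-1)$ gives exactly $t^2+(1-t^2)/(d-1)$.
\end{itemize}

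Summing over $q\ge2$ and using $\sum_{q\ge2}\gamma_{q,d}^2\le v_d$ then yields the pointwise bound in~\eqref{eq:H-moments}. Interchanging the pointwise bound with the infinite sum is justified by absolute convergence, because $\sum_q\gamma_{q,d}^2<\infty$ and $|P_{q,d}|\le1$.

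For the moment bound, we apply Minkowski to the split:
\[
\|H_d(T_d)\|_k\le\gamma_{1,d}^2\|T_d\|_k+v_d\big(\|T_d^2\|_k+1/(d-1)\big).
\]
The law of $T_d$ gives sub-Gaussian moments $\|T_d\|_k\le C\sqrt{k/d}$. This follows from the density $(1-t^2)^{(d-3)/2}$, from the representation $T_d=g_1/\|g\|$, or from concentration on the sphere. We also have $\|T_d^2\|_k=\|T_d\|_{2k}^2\le Ck/d$. Combining these gives the second inequality in~\eqref{eq:H-moments} with an absolute constant.

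For the cusp statement, we take $k=4$, which gives $\E[H_d(T_d)^4]\lesssim\gamma_{1,d}^4/d^2+v_d^4/d^4$. We then compare this with $\tau_{4,d}^2$ case by case, using Proposition~\ref{gen:prop:spectral-scales}, \eqref{eq:trace-formula}, and \eqref{eq:cusp-low-coefficients}.

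\begin{itemize}
\item \emph{Pure even.} Here $\gamma_{1,d}=0$ since $K$ is even, and $\tau_{4,d}\asymp d^{-2\alpha-2}$. We need the energy scale $v_d\lesssim d^{-\alpha}$, which follows because $\kappa_d(T_d)$ has size $|T_d|^\alpha\sim d^{-\alpha/2}$ on the local scale. Concentration of $T_d$ makes the contribution outside $(-r_0,r_0)$ exponentially small. This is the content of Lemma~\ref{gen:lem:gap-transfer}, which we invoke.
\item \emph{Pure odd.} Here $\gamma_{1,d}^2\asymp d^{-\beta}$, $v_d\lesssim d^{-\beta}$, and $\tau_{4,d}\asymp d^{-2\beta-1}$. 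The required bound is $d^{-2\beta-2}+d^{-4\beta-4}\lesssim d^{-4\beta-2}$, which holds because $\beta\le1$. It is not automatic, so the restriction $\beta\le1$ is exactly where the hypothesis enters.
\item \emph{Mixed.} Here $v_d\lesssim d^{-\alpha}+d^{-\beta}$, and the same comparison with $\tau_{4,d}\asymp d^{-2\beta-1}+d^{-2\alpha-2}$ needs care when $\beta>1$.
\item \emph{Jump.} Here $v_d\asymp1$ and $\tau_{4,d}\asymp d^{-1}$, so the bound is $d^{-2}\asymp\tau_{4,d}^2$.
\end{itemize}

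If the crude fourth moment fails in the mixed case with large $\beta$, we will instead split $H_d=\gamma_{1,d}^2t+\gamma_{2,d}^2P_{2,d}+R$. In that split, $\|P_{2,d}(T_d)\|_4\lesssim1/d$, the term $\gamma_{2,d}^4/d^2\asymp\tau_{4,d}$-scale, and $\|R\|_4\lesssim d^{-3/2}\sum_{q\ge3}\gamma_{q,d}^2$ via hypercontractivity of degree-$q$ harmonics. Since $\gamma_{2,d}^2/d\asymp d^{-\alpha-1}\asymp\sqrt{\tau_{4,d}}$, this version gives the bound directly.

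I expect the main obstacle to be this bookkeeping in the mixed and pure odd cases, rather than the polynomial bound itself.
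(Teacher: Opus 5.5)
Your pointwise bound (Laplace representation, $(\cdot)^{q/2}\le(\cdot)$ on $[0,1]$ for $q\ge2$, and $\E V^2=1/(d-1)$) and your moment bound (Minkowski with $\|T_d\|_r\lesssim\sqrt{r/d}$) are exactly the paper's argument.

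The gap is in the cusp comparison. From $\|H_d(T_d)\|_4\le C(2\gamma_{1,d}^2/\sqrt d+4v_d/d)$ you get $\E[H_d(T_d)^4]\lesssim\gamma_{1,d}^8/d^2+v_d^4/d^4$. You wrote $\gamma_{1,d}^4/d^2$ for the first term. With that exponent, your pure odd step asserts $d^{-2\beta-2}\lesssim d^{-4\beta-2}$, which is false for every $\beta>0$. The restriction $\beta\le1$ cannot rescue it, and in fact it plays no role in this lemma. You then leave the mixed case open (``needs care when $\beta>1$''). The fallback you sketch claims $\|R\|_4\lesssim d^{-3/2}\sum_{q\ge3}\gamma_{q,d}^2$ ``via hypercontractivity''. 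That uniform-in-$q$ bound does not follow from hypercontractivity, because the constants $3^{q/2}$ overwhelm $N_{q,d}^{-1/2}$ once $q$ is a large multiple of $d$. It does follow from the Laplace representation with exponent $3/2$, but the fallback is unnecessary anyway.

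The missing observation is that no case split is needed. For $\ell=4$ every term in \eqref{eq:trace-formula} is nonnegative, so $\tau_{4,d}\ge\gamma_{1,d}^4/d+\gamma_{2,d}^4/N_{2,d}$. In particular $\gamma_{1,d}^2/\sqrt d\le\sqrt{\tau_{4,d}}$ automatically.

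For the energy term, Lemma~\ref{gen:lem:gap-transfer} gives $v_d\le C_K(\gamma_{1,d}^2+\gamma_{2,d}^2)$ in all three continuous cases, since a missing parity contributes only exponentially small energy. Hence $v_d/d\lesssim\gamma_{1,d}^2/\sqrt d+\gamma_{2,d}^2/d\lesssim\sqrt{\tau_{4,d}}$, using $N_{2,d}\asymp d^2$. This handles pure even, pure odd, and mixed cusps at once, for every $\beta>0$. In the mixed case it is simply $d^{-4\alpha-4}+d^{-4\beta-4}\lesssim d^{-4\alpha-4}+d^{-4\beta-2}$.

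Your jump case is fine as written. In the pure even case, $\gamma_{1,d}$ is exponentially small rather than zero, because $K_o$ vanishes only on $(0,r_0)$; this is harmless given the bound above.
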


\begin{proof}
Let $V$ be the first coordinate of a uniform point on $\Sph^{d-2}$.
The Laplace representation~\cite[Theorem 6.7.4]{Andrews1999special} gives
\[
 P_{q,d}(t)=\E\big[
   \big(t+\mathrm{i}\sqrt{1-t^2}V\big)^q\big].
\]
Since $t^2+(1-t^2)V^2\le1$ and $\E[V^2]=1/(d-1)$, for $q\ge2$,
\[
 |P_{q,d}(t)|
 \le\E\big[\big(t^2+(1-t^2)V^2\big)^{q/2}\big]
 \le t^2+\frac{1-t^2}{d-1}.
\]
Summing against $\gamma_{q,d}^2$ in~\eqref{eq:contracted-kernel}
and using $\sum_{q\ge2}\gamma_{q,d}^2\le v_d$ proves
\eqref{eq:H-moments}. The spherical tail bound
\eqref{eq:kernel-bridge-sphere-tail} gives
$\|T_d\|_r\le C\sqrt{r/d}$ for $r\ge2$; for $r\ge d$,
this also follows from $|T_d|\le1$. Hence Minkowski's inequality gives
\begin{align*}
 \|H_d(T_d)\|_k
 &\le\gamma_{1,d}^2\|T_d\|_k
       +v_d\Big(\|T_d\|_{2k}^2+\frac1{d-1}\Big)\le C\Big(\gamma_{1,d}^2\sqrt{\frac{k}{d}}+
                  v_d\frac{k}{d}\Big),
\end{align*}
which proves~\eqref{eq:H-moments}.

For continuous cusps, Lemma~\ref{gen:lem:gap-transfer} gives
$v_d\le C_K(\gamma_{1,d}^2+\gamma_{2,d}^2)$ for all sufficiently
large $d$. Recall from~\eqref{eq:trace-formula} that
$\tau_{4,d}\ge\gamma_{1,d}^4/d+\gamma_{2,d}^4/N_{2,d}$,
where $N_{2,d}\asymp d^2$. Thus
\[
 \frac{\gamma_{1,d}^2}{\sqrt d}+\frac{v_d}{d}
 \le C_K\Big(\frac{\gamma_{1,d}^2}{\sqrt d}
             +\frac{\gamma_{2,d}^2}{d}\Big)
 \le C_K\sqrt{\tau_{4,d}}.
\]
For jumps, the same bound follows from $v_d\le1$ and
$|\gamma_{1,d}|\asymp1$, $\tau_{4,d}\asymp d^{-1}$ in
Proposition~\ref{gen:prop:spectral-scales}.
Taking $k=4$ in~\eqref{eq:H-moments} proves the final assertion.
\end{proof}

We now apply these moment bounds to the cavity costs.
By~\eqref{eq:4-4} and the definition of $\bar G_k$,
\begin{equation}
 G_k\le\bar G_k
 \le\binom n{k+1}\E[H_d(T_d)^k]
 \le\binom n{k+1}\E\big[|H_d(T_d)|^k\big].
 \label{eq:prior-cavity-cost}
\end{equation}
For cusp kernels, Lemma~\ref{lem:contracted} therefore gives
\begin{equation}
 \bar G_k\le\binom n{k+1}
 \big[C\big(\gamma_{1,d}^2\sqrt{k/d}+v_dk/d\big)\big]^k.
 \label{eq:general-prior-cavity}
\end{equation}
Away from cubic cancellation, this controls all orders $k\ge10$
at the threshold scale. On the critical relation $\beta=\alpha+1/3$,
the sample size can be larger. The same prior estimate then controls
the orders above $C_K\log d$, and Section~\ref{sec:cusp-converse}
bounds the retained posterior terms.
For smooth kernels, \eqref{eq:growing-moment} gives
$\bar G_k\lesssim_k n^{k+1}d^{-k(m+q_m/2)}$ at each fixed $k$.
Proposition~\ref{prop:cavity-tail} controls the sum over
$k>k_\star$ for any fixed $k_\star>6m/q_m+2$.
Thus both kernel families require sharper posterior estimates at low
orders, while their growing order bounds use different features
of the kernel.


At low orders, we retain the conditional averaging in $g_{r,k}$
by representing squared conditional means with two posterior samples.
First draw $A\sim P$ and then draw
$X,X'$ conditionally independently from the posterior given $A$.
We call these samples \emph{posterior replicas} and denote their
joint law, after averaging over $A$, by $\P_{\post}$, with
expectation $\E_{\post}$.
For every latent statistic $f$ with finite second moment,
conditional independence gives
\[
\E_{A\sim P}\Big[\E[f(X)\mid A]^2\Big]
=
\E_{\post}[f(X)f(X')].
\]
Taking $f(X)=h_{k,d}(X_1,\ldots,X_k)$ represents $\bar g_k$;
the same construction with $A^{(r)}$ represents $g_{r,k}$.

We estimate these expectations through posterior tail probabilities.
To use concentration under the independent prior, we express these
probabilities by Bayes' rule. With $Q$ as the reference measure,
define the conditional and marginal graph likelihoods
\[
L_X(A)
=
\frac{\mathrm dP(\cdot\mid X)}{\mathrm dQ}(A),
\quad
L(A)
=
\E_X[L_X(A)]
=
\frac{\mathrm dP}{\mathrm dQ}(A),
\]
where $\E_X$ averages over $X\sim\sigma_d^{\otimes n}$.
The posterior density relative to this prior is $L_X(A)/L(A)$.
Each of the two replicas contributes one such factor, while
averaging $A$ under $P$ contributes a factor $L(A)$.
Consequently, for any event $\mathcal E$ depending only on $(X,X')$,
\begin{equation}
\P_{\post}[\mathcal E]
=
\E_Q\Bigg[
\frac{
\E_{X,X'}[L_X(A)L_{X'}(A)\indc{\mathcal E}]
}{L(A)}
\Bigg],
\label{eq:posterior-event-bayes}
\end{equation}
where $X,X'$ on the right side are independent prior
configurations.

To use concentration for the independent prior pair, we first remove
the denominator $L(A)$. The numerator in~\eqref{eq:posterior-event-bayes}
is bounded by $L(A)^2$, which yields the square root comparison below.
The remaining weight is the expectation under $Q$ of the product of the
two conditional likelihoods.
Define the \emph{likelihood overlap}
\begin{equation}
\Gamma_d(X,X')
:=
\E_Q[L_X(A)L_{X'}(A)]
=
\prod_{i<j}
\big\{1+\kappa_d(T_{ij})\kappa_d(T_{ij}')\big\},
\label{eq:4-5}
\end{equation}
where $T_{ij}=\langle X_i,X_j\rangle$ and
$T_{ij}'=\langle X_i',X_j'\rangle$.
The product formula follows from independence of the edges
under $Q$.
We record the resulting comparison in the following lemma.

\begin{lemma}
\label{lem:planting}
For every measurable event $\mathcal E$ depending only on
the latent pair $(X,X')$,
\begin{equation}
\P_{\post}[\mathcal E]
\le
\sqrt{
\E_{X,X'}\big[
\Gamma_d(X,X')\indc{\mathcal E}
\big]
}.
\label{eq:4-6}
\end{equation}
Here $X,X'$ on the right side are independent with law
$\sigma_d^{\otimes n}$.
Moreover, the marginal law of either replica under $\P_{\post}$
is $\sigma_d^{\otimes n}$.
\end{lemma}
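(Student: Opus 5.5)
The plan is to start from the Bayes representation~\eqref{eq:posterior-event-bayes}, which expresses $\P_{\post}[\mathcal E]$ as $\E_Q$ of a ratio, and remove the denominator $L(A)$ by Cauchy--Schwarz. Write $N(A)=\E_{X,X'}[L_X(A)L_{X'}(A)\indc{\mathcal E}]$. Since $L_X\ge0$ and $\indc{\mathcal E}\le1$, we have $0\le N(A)\le \E_X[L_X(A)]\E_{X'}[L_{X'}(A)]=L(A)^2$. Hence
\[
\frac{N(A)}{L(A)}=\sqrt{N(A)}\cdot\frac{\sqrt{N(A)}}{L(A)}\le\sqrt{N(A)}
\]
wherever $L(A)>0$. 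On $\{L(A)=0\}$ we also have $N(A)=0$, so that set contributes nothing, and no separate treatment of null sets is required.

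With this pointwise bound in hand, Jensen's inequality (concavity of the square root) under $Q$ gives $\P_{\post}[\mathcal E]\le \E_Q[\sqrt{N(A)}]\le\sqrt{\E_Q[N(A)]}$. Applying Fubini to the nonnegative integrand, $\E_Q[N(A)]=\E_{X,X'}\big[\indc{\mathcal E}\E_Q[L_X(A)L_{X'}(A)]\big]=\E_{X,X'}[\Gamma_d(X,X')\indc{\mathcal E}]$. This uses that $\mathcal E$ depends only on $(X,X')$ and uses the definition~\eqref{eq:4-5}. The product formula for $\Gamma_d$ follows from $L_X(A)=\prod_{i<j}(1+\kappa_d(T_{ij})Y_{ij})$ together with $\E_Q[(1+aY)(1+bY)]=1+ab$ for each independent standardized edge. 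This proves~\eqref{eq:4-6}.

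It remains to identify the replica marginal. For measurable $f\ge0$ of the latent configuration, $\E_{\post}[f(X)]=\E_{A\sim P}\big[\E[f(X)\mid A]\big]=\E_P[f(X)]$ by the tower property. Under the joint law of $P$, the latents are drawn from $\sigma_d^{\otimes n}$, so the first replica has marginal $\sigma_d^{\otimes n}$, and by symmetry so does the second.

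The only delicate point is justifying that the posterior density relative to the prior is $L_X(A)/L(A)$, which is what underlies~\eqref{eq:posterior-event-bayes}. Because $A$ takes finitely many values and every edge probability lies in $[c,1-c]$, $L(A)>0$ for all $A$, and Bayes' rule is elementary in this setting; nothing else requires care.
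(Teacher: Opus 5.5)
Your proposal is correct and follows the paper's proof essentially step for step: the pointwise bound $N(A)/L(A)\le\sqrt{N(A)}$ from $0\le N(A)\le L(A)^2$, Jensen under $Q$, Fubini to identify $\E_Q[N(A)]$ with $\E_{X,X'}[\Gamma_d(X,X')\indc{\mathcal E}]$, and the tower property for the replica marginal. The step your opening line calls Cauchy--Schwarz is really this pointwise bound followed by Jensen, exactly as in the paper.
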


\begin{proof}
    Set $ M_{\mathcal E}(A)
 =\E_{X,X'}[L_X(A)L_{X'}(A)\1_{\mathcal E}].$
Bayes' rule gives $ \P_{\post}[\mathcal E]
 =\E_{A\sim Q}\Big[\tfrac{M_{\mathcal E}(A)}{L(A)}\Big]. $
Since $\1_{\mathcal E}\le1$,
$0\le M_{\mathcal E}(A)\le L(A)^2$; hence
$M_{\mathcal E}(A)/L(A)\le\sqrt{M_{\mathcal E}(A)}$.
By Jensen's inequality, 
\[
 \P_{\post}[\mathcal E]
 \le\sqrt{\E_Q[M_{\mathcal E}(A)]}
 =\sqrt{\E_{X,X'}[\Gamma_d(X,X')\1_{\mathcal E}]},
\]
which is~\eqref{eq:4-6}. For every bounded $F$, the tower property gives
$\E_{A\sim P}[\E[F(X)\mid A]]=\E_X[F(X)]$.
Thus an event depending on one replica has the same probability
under the averaged posterior and the prior.
\end{proof}
For the smooth class, we will apply~\eqref{eq:4-6} to tail events of replica statistics,
bound the corresponding weighted prior expectations, and
integrate the resulting posterior tail bounds. Section~\ref{sec:polynomial} carries this out using a finite Taylor expansion.
The cusp argument in Section~\ref{sec:cusp-converse} uses the same
conditional likelihoods and bounds negative moments of their likelihood denominators using
reverse hypercontractivity.

\section{Impossibility results for cusp kernels}
\label{sec:cusp-converse}\label{gen:sec:converse}

In this section, we prove the impossibility results for
$K\in\maK_{\mathrm{cusp}}$ in Theorem~\ref{thm:cusp}.
By Pinsker's inequality and Lemma~\ref{lem:cavity},
$\TV(P,Q)^2\le\KL(P\Vert Q)/2
\le\frac12\sum_{k=2}^{n-1}G_k$.
It therefore suffices to show that this sum is $o(1)$.
Recall that $\Lambda_{n,d}=n^3\tau_{3,d}^2+n^4\tau_{4,d}^2$
combines the signals from signed triangle and quadrilateral counts. The following proposition gives the required bound in terms of
$\Lambda_{n,d}$.

\begin{proposition}[KL bound for cusp kernels]\label{prop:cusp-KL}
For every $K\in\maK_{\mathrm{cusp}}$, there are constants
$c_{\ref{prop:cusp-KL}}(K),C_K,d_K>0$ such that, for $d\ge d_K$ and $n\ge2$,
\begin{equation}
 \Lambda_{n,d}\le c_{\ref{prop:cusp-KL}}(K)
 \quad\Longrightarrow\quad
 \KL(P\Vert Q)\le C_K\Lambda_{n,d}.
 \label{eq:cusp-KL}
\end{equation}
\end{proposition}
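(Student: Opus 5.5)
We follow the roadmap of the proof overview and split the argument by case of Definition~\ref{def:cusp}. The jump case is handled separately. There $\tau_{3,d}^2\asymp d^{-1}$ by Proposition~\ref{gen:prop:spectral-scales}, so $\Lambda_{n,d}\asymp n^3/d$. We compare the laws of the Gram matrices $(\langle X_i,X_j\rangle)$ with a Gaussian (GOE-type) matrix as in \cite{bubeck2016testing}. Since the graph is obtained from the Gram matrix by an independent randomization, and the same randomization applied to the Gaussian comparison should produce $Q$ up to negligible error, data processing gives $\KL(P\Vert Q)\lesssim n^3/d$.

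For continuous cusps we first apply the localization step. We replace $K$ outside $(-r_0,r_0)$ by a convenient extension. Because $T_d$ concentrates at scale $d^{-1/2}$, this changes $\KL$ and the traces $\tau_{3,d},\tau_{4,d}$ by $e^{-cd}$ terms. Next we use Lemma~\ref{lem:cavity} to reduce the problem to bounding $\sum_k G_k$. The orders are handled as follows.

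\textbf{High orders.} We use \eqref{eq:general-prior-cavity}. Proposition~\ref{gen:prop:spectral-scales} gives $\gamma_{1,d}^2\lesssim d^{-\beta}$ and $v_d\lesssim d^{-\beta}+d^{-\alpha}$, and $\Lambda_{n,d}\le c$ forces $n\lesssim d^{s}$. Together these make $\bar G_k$ geometrically small for $k\ge10$ away from the critical line, and for $k\ge C_K\log d$ on it; in both cases the sum is $\ll\Lambda_{n,d}$.

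\textbf{Low orders, $2\le k\le 9$.} We use the posterior replica representation $g_{r,k}=\E_{\post}[h_{k,d}(X_S)h_{k,d}(X'_S)]$. For $k=2$ we write $h_{2,d}(x_1,x_2)=H_d(\langle x_1,x_2\rangle)$. We then expand $\E[H_d(T_{12})\mid A^{(r)}]$ around the direct edge $Y_{12}$ (the triangle term, of size $\tau_{3,d}$) and around common neighbours (the quadrilateral term). This should yield $G_2\lesssim\Lambda_{n,d}+n^5\tau_{4,d}^2/d$. For $k=3$, parity of the harmonic expansion kills the terms with an odd number of odd factors, giving $\E h_{3,d}^2$ of order $\tau$-products that sum to $O(n^4\tau_{4,d}^2)$. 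For $4\le k\le9$, conditional Jensen combined with the moment bound $\E[H_d(T_d)^k]$ from Lemma~\ref{lem:contracted} should already suffice at the triangle scale. To control the posterior, we bound negative moments of $L(A)$ by reverse hypercontractivity, which converts the planted expectations into prior ones at constant cost.

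\textbf{Main obstacle.} The hard part is the critical relation $\beta=\alpha+1/3$, where $\tau_{3,d}$ may cancel and $n$ can be much larger than $d^{s}$. There the extra term $n^5\tau_{4,d}^2/d$ in the bound for $G_2$ is no longer dominated. We plan to iterate the posterior estimate. We would keep the posterior of the two remaining latent points and integrate out the queried ones, producing the profile $H_{4,d}$ of $T_{\kappa_d}^4$. We would then show $n^5\sup_r\E_P\bigl[\E_P[H_{4,d}(T_{12})\mid A^{(r)}]^2\bigr]\lesssim\Lambda_{n,d}$, which holds because $H_{4,d}$ gains further multiplicity factors of $d^{-1}$. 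We would likewise expand the eight edge configurations at $k=3$, and treat orders up to $C\log d$ through perfect matchings. The key constraint throughout is that the signed $\tau_{3,d}$ is never split into its even and odd contributions before squaring.
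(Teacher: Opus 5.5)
There is a genuine gap at the orders $3\le k\le9$ away from the critical line. There you rely only on conditional Jensen, $g_{r,k}\le\E[H_d(T_d)^k]$, and this prior bound does not vanish at the threshold.

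For a pure odd cusp, $\|H_d(T_d)\|_4\asymp\gamma_{1,d}^2d^{-1/2}\asymp d^{-\beta-1/2}$; the degrees $q\ge2$ contribute only $O(v_d/d)$ by Lemma~\ref{lem:contracted}. With $n=\epsilon_nd^{\beta+1/3}$ this gives
\[
 \binom n5\E[H_d(T_d)^4]\asymp n^5d^{-4\beta-2}\asymp\epsilon_n^5d^{\beta-1/3}.
\]
This diverges for $\beta>1/3$, even though $\Lambda_{n,d}\asymp\epsilon_n^3$ is small.

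Your parity claim at $k=3$ fails in the same way. For a pure even cusp, $\E[h_{3,d}^2]=\E[H_d(T_d)^3]\asymp\gamma_{2,d}^6d^{-3}$. This exceeds $\tau_{4,d}^2\asymp\gamma_{2,d}^8d^{-4}$ by a factor $d^{1+\alpha}$, and at $n=\epsilon_nd^{\alpha+2/3}$ one gets $\binom n4\E[H_d(T_d)^3]\asymp\epsilon_n^4d^{\alpha-1/3}$. Prior moments only suffice from $k\ge10$ on.

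The missing ingredient is Lemma~\ref{lem:cusp-query}, the mechanism by which conditioning on the graph beats Jensen. Reverse hypercontractivity is not used to reduce to prior expectations, which would give nothing beyond Jensen. Instead it replaces the posterior second moment, at constant cost, by that of an amplified numerator. After averaging the other latent points, this is bounded (up to a $k^2n\tau_{4,d}\|f\|_4^2$ term) by $\sum_{F\subseteq E}\langle f_F,T_A^{\otimes k}f_F\rangle$ with $A(t)=\exp(nb_*^2H_d(t))$. Two facts then give the gain:
\begin{itemize}
\item because $h_{k,d}$ is canonical, $T_A$ acts on each free root with eigenvalue $O(n\sqrt{\tau_{4,d}/d})$;
\item each retained internal edge costs $\|\kappa_d\|_\infty^2\lesssim v_d^{11/12}$.
\end{itemize}
Together these produce the factor $(v_d^{11/24}+Cn\sqrt{\tau_{4,d}/d})^k$ in \eqref{eq:cusp-query-rough}. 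That factor rescues $3\le k\le9$, and it also underlies your $G_2$ bound.

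The envelope $\|\kappa_d\|_\infty^2\lesssim v_d^{11/12}$ is why the paper localizes to the shrinking window $\delta_d=d^{-11/24}$, at cost $\exp(-cd^{1/12})$. Localizing to a fixed $(-r_0,r_0)$ keeps $\|\kappa_d\|_\infty$ bounded away from zero and loses this decay.

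On the critical line, perfect matchings only account for the mean terms whose internal components are single edges. Larger components and the glued conditional moments need the graph-integral bound of Lemma~\ref{lem:fixed-graph-budget}: a gain of $d^{-1/6}$ from any biconnected block on at least four vertices. Your sketch does not supply this.

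In the jump case, use $\mu_d^{\otimes\binom n2}$, independent copies of the exact law of $T_d$, as the reference. It randomizes exactly to $Q$. The bound $\KL(\mathcal L(R)\Vert\mu_d^{\otimes\binom n2})=\binom n3/(2d)+O(n^4/d^2)$ then follows from the unit-diagonal Gram density and its Beta decomposition, not from the Wishart--GOE comparison. A Gaussian reference can be salvaged, since $\KL(P\Vert\cdot)$ over product laws is minimized by the product of the marginals, but you must make that step explicit.
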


For continuous cusps, let $\epsilon_n=nd^{-1/s}$, with $s$ given
by~\eqref{eq:cusp-dimension}. When $\epsilon_n$ is small, the basic estimates
control $G_2$, $G_3$, and $\sum_{k=4}^{n-1}G_k$ by
$C\epsilon_n^3$ (see Propositions~\ref{prop:cusp-triangle-two},
\ref{prop:cusp-triangle-three}, and~\ref{prop:cusp-triangle-remaining},
respectively). Without cancellation, these estimates suffice since
\[
 \Lambda_{n,d}\asymp n^3\tau_{3,d}^2
 \asymp\epsilon_n^3.
\]
However, cancellation reduces $\tau_{3,d}$, and $\Lambda_{n,d}$
can be much smaller than $\epsilon_n^3$ (see Remark~\ref{rmk:cancel}).
We must then retain the actual triangle contribution
$n^3\tau_{3,d}^2$ and control the remaining terms at the finer scale
$n^4\tau_{4,d}^2$. Obtaining these sharper estimates is the main
difficulty of the proof.
Recall from~\eqref{eq:cavity-costs} that
$G_k=\sum_{r=k}^{n-1}\binom rk g_{r,k}$.
This section is organized as follows:\begin{itemize}
    \item Section~\ref{sec:localization}: Localization near zero
    and control of the approximation error.
    \item Section~\ref{sec:cusp-posterior}: Order two estimates ($G_2$)
    retaining the triangle and quadrilateral signals.
    \item Section~\ref{sec:order-3-csup}: Order three estimates ($G_3$)
    using parity and the edges among the three roots.
        \item Section~\ref{sec:order-high-csup}: Higher order estimates ($G_k$ with $k\ge 4$)
    and completion of the proof of Proposition~\ref{prop:cusp-KL}.
\end{itemize}

\subsection{Preliminary: localization of the kernel}\label{sec:localization}

We first consider the three continuous cases in Definition~\ref{def:cusp}. Since $T_d=\langle X_1,X_2\rangle=O_{\P}(d^{-1/2})$,
the edge probability $K(T_d)$ mainly depends on the behavior of $K$ near zero. We may therefore replace $K$ by $p_0$ outside a slightly
larger interval before estimating the posterior means. Let
$\delta_d=d^{-11/24}$ and define
\[
 \widetilde K_d(t)=p_0+(K(t)-p_0)\indc{0<|t|\le\delta_d}.
\]
Use tildes for the corresponding density, centered kernel, graph laws, and
traces. The modified link still takes values in $[c,1-c]$.
Figure~\ref{fig:cusp-localization} illustrates this change.
The next lemma shows that it preserves the KL divergence and the
cycle signals up to exponentially small errors.

\begin{figure}[!htbp]
\centering
\begingroup
\definecolor{locteel}{RGB}{20,103,116}
\definecolor{loccopper}{RGB}{175,83,43}
\definecolor{locink}{RGB}{44,49,55}
\definecolor{locrule}{RGB}{196,203,207}
\begin{tikzpicture}[
  x=1cm,y=1cm,
  loc axis/.style={draw=locink!75,line width=.55pt,
    -{Stealth[length=3.5pt,width=3pt]}},
  loc guide/.style={draw=locrule,line width=.4pt,
    dash pattern=on 1.2pt off 2.1pt},
  loc original/.style={draw=locteel,line width=1.2pt},
  loc changed/.style={draw=loccopper,line width=1.2pt,solid},
  loc label/.style={font=\footnotesize,text=locink,inner sep=1pt},
  loc title/.style={font=\small,text=locink,inner sep=0pt},
  loc closed/.style={circle,draw=loccopper,fill=loccopper,
    line width=.6pt,inner sep=0pt,minimum size=3.4pt},
  loc open/.style={circle,draw=loccopper,fill=white,
    line width=.8pt,inner sep=0pt,minimum size=4.4pt}
]
\def\locwindow{.28}
\def\locbase{.23}
\def\locheight{.56}
\foreach \panelx in {3.15,10.75}{
  \begin{scope}[shift={(\panelx,0)},x=2.65cm,y=3.1cm]
    \fill[locteel!6] (-\locwindow,0) rectangle (\locwindow,.96);
    \draw[loc guide] (-\locwindow,0)--(-\locwindow,.96);
    \draw[loc guide] (\locwindow,0)--(\locwindow,.96);
    \draw[loc guide] (-1.04,\locbase)--(1.03,\locbase);
    \draw[loc axis] (-1.04,0)--(1.09,0);
    \draw[loc axis] (-1.04,0)--(-1.04,1.015);
    \node[loc label,below=3pt] at (1.105,0) {$t$};
    \node[loc label,left=3pt] at (-1.04,\locbase) {$p_0$};
    \foreach \xx/\lab in {-1/{-1},0/{0},1/{1}}{
      \draw[locink!70,line width=.5pt] (\xx,0)--(\xx,-.018);
      \node[loc label,below=3pt] at (\xx,0) {$\lab$};
    }
    \foreach \xx/\lab in {-\locwindow/{-\delta_d},\locwindow/{\delta_d}}{
      \draw[locink!65,line width=.45pt] (\xx,0)--(\xx,-.018);
      \node[loc label,below=3pt] at (\xx,0) {$\lab$};
    }
  \end{scope}
}
\begin{scope}[shift={(3.15,0)},x=2.65cm,y=3.1cm]
  \node[loc title] at (0,1.17) {(a) Original link};
  \node[loc label,above=2pt] at (-1.04,1.015) {$K(t)$};
  \draw[loc original] plot[domain=0:1,samples=91,variable=\u]
    ({-\u*\u},{\locbase+\locheight*\u});
  \draw[loc original] plot[domain=0:1,samples=91,variable=\u]
    ({\u*\u},{\locbase+\locheight*\u});
\end{scope}
\begin{scope}[shift={(10.75,0)},x=2.65cm,y=3.1cm]
  \node[loc title] at (0,1.17) {(b) Localized link};
  \node[loc label,above=2pt] at (-1.04,1.015) {$\widetilde K_d(t)$};
  \draw[loc changed] (-1,\locbase)--(-\locwindow,\locbase);
  \draw[loc changed] (\locwindow,\locbase)--(1,\locbase);
  \draw[loc changed] plot[domain=0:sqrt(\locwindow),samples=71,variable=\u]
    ({-\u*\u},{\locbase+\locheight*\u});
  \draw[loc changed] plot[domain=0:sqrt(\locwindow),samples=71,variable=\u]
    ({\u*\u},{\locbase+\locheight*\u});
  \foreach \sgn in {-1,1}{
    \node[loc closed] at
      ({\sgn*\locwindow},{\locbase+\locheight*sqrt(\locwindow)}) {};
    \node[loc open] at ({\sgn*\locwindow},\locbase) {};
  }
\end{scope}
\end{tikzpicture}
\endgroup
\caption{Localization keeps $K(t)=p_0+b\sqrt{|t|}$ on
$[-\delta_d,\delta_d]$ and replaces it by $p_0$ outside, where
$\delta_d=d^{-11/24}$ (window enlarged).
Lemma~\ref{lem:localization} bounds the resulting changes in the
centered kernel, cycle traces, and KL divergence.}
\label{fig:cusp-localization}
\end{figure}

\begin{lemma}[Localization in relative entropy]\label{lem:localization}
Fix a continuous cusp $K$ and $C_0>0$. There are constants
$C,c_2>0$ such that, for $n\le C_0d^2$ and all sufficiently large $d$,
\begin{align}
 \|\kappa_d-\widetilde\kappa_d\|_2
 +|\tau_{3,d}-\widetilde\tau_{3,d}|
 +|\tau_{4,d}-\widetilde\tau_{4,d}|+ |\KL(P\Vert Q)
 -\KL(\widetilde P\Vert\widetilde Q)|
 &\le C\exp(-c_2d^{1/12}).\label{eq:cusp-localized-KL}
\end{align}
\end{lemma}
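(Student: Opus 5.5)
The plan is to reduce everything to the tail event $\{|T_d|>\delta_d\}$. Since $\delta_d\sqrt d=d^{1/24}$, the spherical tail bound \eqref{eq:kernel-bridge-sphere-tail} gives
\[
\P[|T_d|>\delta_d]\le 2\exp(-c\,d\delta_d^2)=2\exp(-c\,d^{1/12}).
\]
Here $\{t=0\}$ has measure zero, so it plays no role.

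\textbf{Step 1: densities and centered kernels.} Since $K$ and $\widetilde K_d$ take values in $[c,1-c]$ and agree off the tail event, we have $|p_d-\widetilde p_d|\le\P[|T_d|>\delta_d]$. The same bound, up to constants, holds for $|s_d-\widetilde s_d|$, because $s\mapsto\sqrt{s(1-s)}$ is Lipschitz on $[c,1-c]$.

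Next, $\kappa_d-\widetilde\kappa_d$ is bounded pointwise by $C(|p_d-\widetilde p_d|+|s_d-\widetilde s_d|)$ everywhere, plus a bounded term supported on $\{|t|>\delta_d\}$. Hence
\[
\|\kappa_d-\widetilde\kappa_d\|_2\le C\exp(-c\,d^{1/12}/2).
\]

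\textbf{Step 2: traces.} I telescope $\tau_{\ell,d}-\widetilde\tau_{\ell,d}$ for $\ell=3,4$, replacing one factor of the cycle product at a time. Every other factor is bounded by $\sqrt{b_c}$, so each difference is at most $C\,\E|\kappa_d(T_d)-\widetilde\kappa_d(T_d)|\le C\|\kappa_d-\widetilde\kappa_d\|_2$.

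\textbf{Step 3: KL.} This is the main obstacle, since there are $n^2$ edges and the likelihood ratio could be exponentially large. I would avoid comparing densities directly and instead couple the two models.

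Couple $P$ and $\widetilde P$ using the same latent points and the same uniforms $U_{ij}$, with $A_{ij}=\indc{U_{ij}\le K(T_{ij})}$ in the first model and the analogous rule with $\widetilde K_d$ in the second. The graphs then coincide outside the event
\[
\mathcal B=\{\exists\,i<j:\ |T_{ij}|>\delta_d\}.
\]
A union bound with $n\le C_0d^2$ gives
\[
\P[\mathcal B]\le n^2e^{-c\,d^{1/12}}\le C\,d^4e^{-c\,d^{1/12}},
\]
so $\TV(P,\widetilde P)\le\P[\mathcal B]$.

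To pass from this to KL, I use that all likelihood ratios are bounded in a controlled way. Each of $dP/dQ$, $dP/d\widetilde Q$, $d\widetilde P/d\widetilde Q$ is at most $((1-c)/c)^{n^2}$, since edge probabilities lie in $[c,1-c]$, and likewise $Q$ versus $\widetilde Q$. Write
\[
\KL(P\Vert Q)-\KL(\widetilde P\Vert\widetilde Q)
=\Big(\int\log\tfrac{dP}{dQ}\,dP-\int\log\tfrac{d\widetilde P}{d\widetilde Q}\,dP\Big)
+\Big(\int\log\tfrac{d\widetilde P}{d\widetilde Q}\,(dP-d\widetilde P)\Big).
\]
The second term is at most $\|\log(d\widetilde P/d\widetilde Q)\|_\infty\,\TV(P,\widetilde P)\le Cn^2\,\TV(P,\widetilde P)$.

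For the first term I write $\log\frac{dP}{dQ}-\log\frac{d\widetilde P}{d\widetilde Q}=\log\frac{dP}{d\widetilde P}-\log\frac{dQ}{d\widetilde Q}$. Its $Q$-part has size $O(n^2|p_d-\widetilde p_d|)$, because the Bernoulli parameters differ by an exponentially small amount. For its $P$-part, note that $\int\log\frac{dP}{d\widetilde P}\,dP=\KL(P\Vert\widetilde P)$. By convexity of KL under the coupling of latents, this is at most $\E_X[\KL(P(\cdot\mid X)\Vert\widetilde P(\cdot\mid X))]$. That quantity vanishes off $\mathcal B$ and is at most $Cn^2$ on $\mathcal B$, so it is bounded by $Cn^2\P[\mathcal B]$. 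All of these are $\mathrm{poly}(d)\,e^{-c\,d^{1/12}}$.

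Finally, I absorb the polynomial factors by shrinking $c$ to $c_2$. I expect the delicate point to be keeping every likelihood-ratio bound polynomial in $n$ rather than exponential. Handling $\log(dP/d\widetilde P)$ through conditional KL given $X$, rather than through sup-norms, is exactly what ensures this.
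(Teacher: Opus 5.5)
Your proposal is correct. Steps~1--2 follow the paper in substance. For the traces, the paper telescopes $T_{\kappa_d}^\ell-T_{\widetilde\kappa_d}^\ell$ and uses Hilbert--Schmidt norms. Your direct telescoping of the cycle product, with $|\kappa_d|,|\widetilde\kappa_d|\le\sqrt{b_c}$ on the untouched factors, is more elementary and gives the same bound.

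The KL step takes a genuinely different route, though both arguments bound $\TV(P,\widetilde P)$ with the same coupling (shared latents and edge uniforms). The paper uses that $Q$ has the same edge marginals as $P$ to write $\KL(P\Vert Q)=\sum_aP(a)\log P(a)+Nh(p_d)$. It then proves a continuity bound for Shannon entropy on $\{0,1\}^N$ through a maximal coupling,
$\bigl|\sum_aP(a)\log P(a)-\sum_a\widetilde P(a)\log\widetilde P(a)\bigr|\le h(\eta)+\eta N\log 2$ with $\eta=\TV(P,\widetilde P)$, together with Lipschitz continuity of $h$ on $[c,1-c]$.

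You instead split the difference into three pieces:
\begin{itemize}
\item $\KL(P\Vert\widetilde P)$, controlled by joint convexity given the shared latents;
\item a $Q$-versus-$\widetilde Q$ term of size $O(N|p_d-\widetilde p_d|)$;
\item a TV term weighted by $\|\log(d\widetilde P/d\widetilde Q)\|_\infty\le N\log\frac{1-c}{c}$.
\end{itemize}

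Your argument does not need the matched-marginal identity and avoids the entropy bookkeeping. The price is using the dense bound on both links to control log-likelihood ratios. The paper's entropy inequality holds for arbitrary laws on $\{0,1\}^N$ and uses density only through the Lipschitz bound on $h$. Either way the losses are polynomial in $n$ and are absorbed by $n\le C_0d^2$.

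One small correction to your closing remark: nothing delicate happens there. Since $|\log(dP/d\widetilde P)|\le N\log\frac{1-c}{c}$ as well, even the crude bound $\KL(P\Vert\widetilde P)\le 2\|\log(dP/d\widetilde P)\|_\infty\TV(P,\widetilde P)$ would suffice. Your conditional bound is simply sharper: counting tail pairs instead of using $\mathcal B$ gives $CN\,\P[|T_d|>\delta_d]$.
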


\begin{proof}
By the spherical tail bound~\eqref{eq:kernel-bridge-sphere-tail},
$\epsilon_d:=\P[|T_d|>\delta_d]\le C\exp(-cd^{1/12})$.
The links agree on $0<|t|\le\delta_d$, so
$|p_d-\widetilde p_d|\le\epsilon_d$ and
$\|\kappa_d-\widetilde\kappa_d\|_2\lesssim\sqrt{\epsilon_d}$.
Their operator norms and Hilbert--Schmidt norms are bounded by
constants depending only on $c$.
For $\ell=3,4$, we expand $T_{\kappa_d}^\ell-T_{\widetilde\kappa_d}^\ell$ and use cyclicity
of the trace to obtain
\begin{align*}
|\tau_{\ell,d}-\widetilde\tau_{\ell,d}|
&=\Big|\sum_{j=0}^{\ell-1}
\operatorname{Tr}\big((T_{\kappa_d}-T_{\widetilde\kappa_d})
T_{\widetilde\kappa_d}^jT_{\kappa_d}^{\ell-1-j}\big)\Big|\\
&\le \|T_{\kappa_d}-T_{\widetilde\kappa_d}\|_{\mathrm{HS}}
\sum_{j=0}^{\ell-1}\|T_{\widetilde\kappa_d}^jT_{\kappa_d}^{\ell-1-j}\|_{\mathrm{HS}}\lesssim \|\kappa_d-\widetilde\kappa_d\|_2.
\end{align*}
Here we used $\|T_{\kappa_d}-T_{\widetilde\kappa_d}\|_{\mathrm{HS}}
=\|\kappa_d-\widetilde\kappa_d\|_2$
and the uniform bounds on the operator and Hilbert--Schmidt norms
of $T_{\kappa_d}$ and $T_{\widetilde\kappa_d}$.
It remains to bound $|\KL(P\Vert Q)
 -\KL(\widetilde P\Vert\widetilde Q)|$.

Let $N=\binom n2$ and $h(x)=-x\log x-(1-x)\log(1-x)$.
Coupling the two graphs using the same latent points and edge uniforms
shows that $\eta:=\TV(P,\widetilde P)\le N\epsilon_d$.
In the sums below, $a$ ranges over $\{0,1\}^N$.
Since $Q=\Bern(p_d)^{\otimes N}$ has the same edge marginals as
$P$,
\[
 \KL(P\Vert Q)
 =\sum_a P(a)\log P(a)+Nh(p_d),
\]
and the same identity holds for the modified model.

Let $\pi(a,b)$ be a maximal coupling of $P$ and $\widetilde P$,
so that $\sum_{a\ne b}\pi(a,b)=\eta$.
For each graph $a$, define
$\eta_a=\sum_{b\ne a}\pi(a,b)/P(a)$.
Then $\pi(a,a)=P(a)(1-\eta_a)$ and
$\sum_a P(a)\eta_a=\eta$.
Separating the terms with $a=b$ and $a\ne b$, we obtain
\begin{align*}
-\sum_{a,b}\pi(a,b)\log\frac{\pi(a,b)}{P(a)}
&=-\sum_a\pi(a,a)\log\frac{\pi(a,a)}{P(a)}
-\sum_{a\ne b}\pi(a,b)\log\frac{\pi(a,b)}{P(a)}\\
&=\sum_a P(a)h(\eta_a)
-\sum_{a\ne b}\pi(a,b)
\log\frac{\pi(a,b)}{P(a)\eta_a}\\
&\le \sum_a P(a)h(\eta_a)+\eta N\log 2\le h(\eta)+\eta N\log 2,
\end{align*}
where $h(t)=-t\log t-(1-t)\log(1-t)$.
For the first inequality, whenever $\eta_a>0$, the numbers
$\pi(a,b)/(P(a)\eta_a)$, indexed by $b\ne a$, form a probability
distribution on at most $2^N$ graphs. Its entropy is therefore
at most $N\log 2$, and its contribution is weighted by
$P(a)\eta_a$. The last inequality follows from the concavity of $h$.
Terms with zero mass are interpreted as zero.
Since $\pi(a,b)\le\widetilde P(b)$, it follows that
\begin{align*}
\sum_a P(a)\log P(a)
-\sum_b\widetilde P(b)\log\widetilde P(b)
&=\sum_{a,b}\pi(a,b)\log\frac{\pi(a,b)}{\widetilde P(b)}
-\sum_{a,b}\pi(a,b)\log\frac{\pi(a,b)}{P(a)}\\
&\le-\sum_{a,b}\pi(a,b)\log\frac{\pi(a,b)}{P(a)}\le h(\eta)+\eta N\log 2.
\end{align*}
Interchanging $P$ and $\widetilde P$ gives $\Big|\sum_a P(a)\log P(a)
-\sum_a\widetilde P(a)\log\widetilde P(a)\Big|
\le h(\eta)+\eta N\log 2.$
Since $h$ is Lipschitz on $[c,1-c]$, we obtain
\begin{align*}
 |\KL(P\Vert Q)-\KL(\widetilde P\Vert\widetilde Q)|
 &\le CN\epsilon_d+h(\eta)+\eta N\log2
 \le C\exp(-c_2d^{1/12}),
\end{align*}
where the last inequality uses $n\le C_0d^2$,
$h(x)\le x\log(e/x)$, and a smaller $c_2$.
This proves~\eqref{eq:cusp-localized-KL}.
\end{proof}

Recall from Proposition~\ref{gen:prop:spectral-scales} that
$\sqrt{\tau_{4,d}}\gtrsim d^{-2}$. Thus $\Lambda_{n,d}\le c_{\ref{prop:cusp-KL}}(K)$
implies $n\lesssim d^2$, and the errors in
Lemma~\ref{lem:localization} are negligible compared with
$n^4\tau_{4,d}^2$. In the posterior estimates below, all quantities
refer to the localized model and its own matched null. We omit tildes
and return to the original model in the proof of
Proposition~\ref{prop:cusp-KL}. For the triangle scale, let $\epsilon_n=nd^{-1/s}$, taking $1/s=\alpha+2/3$ also on the critical relation.
The spectral estimates in Proposition~\ref{gen:prop:spectral-scales}
give
\begin{equation}
 \sqrt{\tau_{4,d}}\lesssim d^{-1/s-1/6},\quad
 v_d\lesssim\min\{1,d^{2/3-1/s}\},\quad
 |\tau_{3,d}|\lesssim d^{-3/(2s)},\quad 1/s\le5/3.
 \label{eq:cusp-triangle-upper-scales}
\end{equation}

\subsection{Order two}
\label{sec:cusp-posterior}

In this section, we focus on the term $G_2$ introduced in~\eqref{eq:cavity-double}.
Recall that
$Y_{ij}=(A_{ij}-p_d)/s_d$ and
$H_d(T_{12})=\E_Z[\kappa_d(\langle X_1,Z\rangle)
\kappa_d(\langle Z,X_2\rangle)]$, where $Z\sim\sigma_d$ is independent.
Thus $H_d$ integrates the two edges of the path $1\to Z\to2$.
Adding edge $12$ closes a triangle; adding the path $1\to j\to2$
through a distinct vertex closes a quadrilateral. Conditional
independence therefore gives
\begin{equation}
 \E_P[H_d(T_{12})Y_{12}]=\tau_{3,d},\quad
 \E_P[H_d(T_{12})Y_{1j}Y_{2j}]=\tau_{4,d}\quad(j\notin\{1,2\}).
 \label{eq:cusp-two-basic-signals}
\end{equation}
These two cycle moments enter the posterior bound as follows.

\begin{proposition}[Order two at the triangle scale]
\label{prop:cusp-triangle-two}
For a fixed continuous cusp and $\epsilon_n = n d^{-1/s}\le c_{\ref{prop:cusp-triangle-two}}(K)$,
\begin{equation}
 G_2\lesssim n^3\tau_{3,d}^2+n^4\tau_{4,d}^2
                 +\frac{n^5\tau_{4,d}^2}{d}
 \lesssim\epsilon_n^3.
 \label{eq:cusp-triangle-two}
\end{equation}
\end{proposition}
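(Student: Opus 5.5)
We bound $g_{r,2}=\E_P[\E_P[H_d(T_{12})\mid A^{(r)}]^2]$ uniformly in $r\le n$ by a projection argument, and then sum $\binom r2g_{r,2}$ over $r<n$. The key idea is to compare the posterior mean $\E_P[H_d(T_{12})\mid A^{(r)}]$ with its best linear predictor in the graph. Write the low-degree statistic
\[
 W_r=a\,Y_{12}+b\sum_{j\in[r]\setminus\{1,2\}}Y_{1j}Y_{2j}.
\]
The coefficients are chosen so that $W_r$ captures the two signals in~\eqref{eq:cusp-two-basic-signals}. Their natural sizes are $a\approx\tau_{3,d}$ and $b\approx\tau_{4,d}$, since under $Q$ the monomials $Y_{12}$ and $Y_{1j}Y_{2j}$ are orthonormal. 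We then decompose
\[
 \E_P[H_d\mid A^{(r)}]=\Pi_r+R_r,
\]
where $\Pi_r$ is the $L^2(P)$ projection of $H_d(T_{12})$ onto $\mathrm{span}\{Y_{12},\,Y_{1j}Y_{2j}\}$. We bound $\E[\Pi_r^2]$ directly and the residual $R_r$ separately.

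\emph{Projection part.} Under $P$, the Gram matrix of the features $Y_{12}$ and $Y_{1j}Y_{2j}$ is close to that under $Q$. The diagonal is bounded. The off-diagonal entries $\E_P[Y_{1j}Y_{2j}Y_{1k}Y_{2k}]$ equal $\E[H_d(T_d)^2]=\tau_{4,d}$ for $j\ne k$, and $\E_P[Y_{12}Y_{1j}Y_{2j}]=\tau_{3,d}$. Hence the Gram matrix is $I+E$ with $\|E\|_{\op}\lesssim r\tau_{4,d}+\sqrt r|\tau_{3,d}|$, which is small when $\epsilon_n$ is small, using~\eqref{eq:cusp-triangle-upper-scales}. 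The inner products with $H_d(T_{12})$ are $\tau_{3,d}$ and $\tau_{4,d}$ by~\eqref{eq:cusp-two-basic-signals}. This gives
\[
 \E[\Pi_r^2]\lesssim\tau_{3,d}^2+r\tau_{4,d}^2.
\]
Multiplying by $\binom r2$ and summing over $r$ gives $n^3\tau_{3,d}^2+n^4\tau_{4,d}^2$.

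\emph{Residual.} We write $\E[R_r^2]=\E[\E[H_d\mid A^{(r)}]\,R_r]$ and expand $H_d$ via~\eqref{eq:contracted-kernel}. The degree-one part $\gamma_{1,d}^2T_{12}$ and the part of degree at least two are handled separately. The leading correction comes from higher-order graph statistics through which the posterior can learn $T_{12}$, chiefly paths of length three and pairs of length-two paths. Each such term contributes a covariance bounded by a product of cycle traces. The dominant one is $r^3\tau_{4,d}^2\|T_{\kappa_d}\|_{\op}^2$, using $\tau_{6,d}\le\|T_{\kappa_d}\|_{\op}^2\tau_{4,d}$ and $\|T_{\kappa_d}\|_{\op}^2\lesssim\sqrt{\tau_{4,d}/d}$, or the sharper scale from Proposition~\ref{gen:prop:spectral-scales}. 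Summing this contribution over $r$ gives the term $n^5\tau_{4,d}^2/d$.

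Rigorously controlling the residual is the main obstacle, because the posterior is not a low-degree function. I would implement it with the replica identity. First write $g_{r,2}=\E_{\post}[H_d(T_{12})H_d(T'_{12})]$. Then split the likelihood weight $L_X/L$ into its low-degree harmonic components, namely the cycle and path polynomials above, plus a remainder. The remainder is controlled by Lemma~\ref{lem:planting} together with the moment bound~\eqref{eq:H-moments}, and reverse hypercontractivity bounds negative moments of $L$ at the triangle scale.

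Finally, the second inequality in~\eqref{eq:cusp-triangle-two} follows from~\eqref{eq:cusp-triangle-upper-scales}. We have $n^3\tau_{3,d}^2\lesssim\epsilon_n^3$. We also have $n^4\tau_{4,d}^2\lesssim n^4d^{-4/s-2/3}=\epsilon_n^4d^{-2/3}$ and $n^5\tau_{4,d}^2/d\lesssim\epsilon_n^5d^{1/s-5/3}\le\epsilon_n^5$, since $1/s\le5/3$. Both are at most $\epsilon_n^3$ once $\epsilon_n\le c$.
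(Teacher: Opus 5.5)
Your projection step is fine. Under $P$ the Gram matrix of $Y_{12}$ and the $Y_{1j}Y_{2j}$ is $I+E$ with $\|E\|_{\op}=o(1)$, so $\E[\Pi_r^2]\lesssim\tau_{3,d}^2+r\tau_{4,d}^2$. This reproduces the first two terms; the paper gets them as $\tau_{3,d}^2$ and $n\tau_{4,d}\|H_d\|_4^2$ in \eqref{eq:cusp-two-root}. Your final comparison with $\epsilon_n^3$ is also the paper's.

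The gap is the residual $R_r$. It carries everything else, in particular the term $n^5\tau_{4,d}^2/d$, and you only describe it. The orthogonal split gives no leverage on it. $R_r$ is the posterior mean minus its projection onto finitely many monomials, so it is not a finite combination of graph monomials, and these monomials are not orthogonal under $P$. Hence ``each higher-order statistic contributes a covariance bounded by cycle traces'' is a low-degree heuristic, not an estimate.

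The heuristic does not even give the claimed term. A per-$r$ contribution $r^3\tau_{4,d}^2\|T_{\kappa_d}\|_{\op}^2$, weighted by $\binom r2$ and summed, is of order $n^6\tau_{4,d}^2\|T_{\kappa_d}\|_{\op}^2$. This exceeds $n^5\tau_{4,d}^2/d$ by the factor $nd\|T_{\kappa_d}\|_{\op}^2$, which for a pure odd cusp is of order $nd^{-\beta}=\epsilon_nd^{1/3}$.

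The route you sketch for rigor also fails as stated:
\begin{itemize}
\item Lemma~\ref{lem:planting} turns a prior weight of size $x$ into a posterior bound of size $\sqrt x$. It is useful only for tail events of very small prior probability, not for a second moment of polynomial size.
\item Reverse hypercontractivity removes the denominator only at the cost of a factor $\E_Q[\widehat L^2]$. For the likelihood of the whole graph this is a $\chi^2$-type quantity with no $O(1)$ bound at this scale. Already the degree-one part $b_*^2\gamma_{1,d}^2d\sum_{i<j}T_{ij}T_{ij}'$ of $\sum_{i<j}b_*^2\kappa_d(T_{ij})\kappa_d(T_{ij}')$ has variance of order $n^2\gamma_{1,d}^4$, which is $\epsilon_n^2d^{2/3}$ for a pure odd cusp.
\end{itemize}

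The missing idea is the reduction behind Lemma~\ref{lem:cusp-query}.
\begin{itemize}
\item Condition in addition on the other $r-2$ latent points. By conditional Jensen this only increases the squared conditional mean. Afterwards the only informative observations are edge $12$ and the edges from the two roots to the fixed points, and their likelihood factorizes into two star likelihoods $L_Z$.
\item For these, $\E_Q[L_Z^2]$ is bounded by a constant outside an event of probability $\exp(-cd^\delta)$. Reverse hypercontractivity with a fixed amplification $b_*$ then gives $\E_Q[N^2/L]\le\E_Q[\widehat N^2]\E_Q[\widehat L^2]\lesssim\E_Q[\widehat N^2]$.
\item The amplified numerator's second moment is then explicit. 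Up to an error $O(n\tau_{4,d}\|f\|_4^2)$ it equals $\sum_F\langle f_F,T_A^{\otimes2}f_F\rangle$ with $A(t)=\exp(nb_*^2H_d(t))$. Moreover $T_A$ has norm $O(1)$ on constants and $O(n\sqrt{\tau_{4,d}/d})$ on centered functions.
\item Apply this to $f_\varnothing=H_d$, which is canonical in both roots, and to $f_{\{12\}}=b_*H_d\kappa_d=b_*\tau_{3,d}+b_*(H_d\kappa_d-\tau_{3,d})$. This gives \eqref{eq:cusp-query-two}, i.e.\ $g_{r,2}\lesssim\tau_{3,d}^2+n\tau_{4,d}^2+n^2\tau_{4,d}^2/d$ up to an exponentially small error. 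Summing $\binom r2g_{r,2}$ proves the proposition.
\end{itemize}

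In this picture the last term is an oracle effect, not a path-of-length-three effect. The degree-one part $\gamma_{1,d}^2T_{12}$ of $H_d$, seen through the eigenvalue $\lambda_1(T_A)\approx nb_*^2\gamma_{1,d}^2/d$, contributes about $n^2\gamma_{1,d}^8/d^3\lesssim n^2\tau_{4,d}^2/d$. This measures what the incident edges reveal about $X_1,X_2$ once the other positions are known.
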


\begin{proof}
Recall that $G_2 = \sum_{r=2}^{n-1}\binom{r}{2}g_{r,2}$.
By~\eqref{eq:cusp-triangle-upper-scales},
$n\sqrt{\tau_{4,d}}\lesssim\epsilon_nd^{-1/6}$, thus the
condition of Lemma~\ref{lem:cusp-query} holds.
Applying its consequence for two roots,~\eqref{eq:cusp-query-two},
with $f=H_d$ and the internal edge $12$ retained, we obtain the following bound, where $c_1,\delta>0$ are the constants supplied by that lemma:
\begin{align}
 g_{r,2}&\le C\Big(\E[H_d(T_d)\kappa_d(T_d)]^2
        +n\tau_{4,d}\|H_d\|_4^2\Big)\\
 &\quad{}+\frac{Cn^2\tau_{4,d}}{d}
       \Big(\|H_d\|_2^2+\|H_d\kappa_d-\tau_{3,d}\|_2^2\Big)
       +\|H_d\|_\infty^2\exp(-c_1d^\delta).
 \label{eq:cusp-two-root}
\end{align}
Note that $\E[H_d(T_d)\kappa_d(T_d)] = \E[\kappa_d(T_{12})\kappa_d(T_{23})\kappa_d(T_{13})] = \tau_{3,d}$. Similarly, we have $\Vert H_d\Vert^2_2 = \tau_{4,d}$. It follows from Lemma~\ref{lem:contracted} that $\Vert H_{d}\Vert_4^2\lesssim \tau_{4,d}$.
The dense bound on $K$ gives $\|\kappa_d\|_\infty\lesssim1$, hence
\[
 \|H_d\kappa_d-\tau_{3,d}\|_2^2
 =\operatorname{Var}(H_d(T_d)\kappa_d(T_d))
 \le\|\kappa_d\|_\infty^2\|H_d\|_2^2
 \lesssim\tau_{4,d}.
\]
Also, $\|H_d\|_\infty\le v_d\lesssim1$ by Cauchy--Schwarz.
Since $\tau_{4,d}\gtrsim d^{-4}$ by
Proposition~\ref{gen:prop:spectral-scales}, the exponential error in
\eqref{eq:cusp-two-root} is absorbed into $n\tau_{4,d}^2$.
Substituting these bounds and summing the weights
$\sum_{r=2}^{n-1}\binom r2=\binom n3$ in~\eqref{eq:cavity-costs} yields
\begin{align*}
 G_2&\lesssim n^3\tau_{3,d}^2+n^4\tau_{4,d}^2
                       +n^5\tau_{4,d}^2/d\lesssim\epsilon_n^3+\epsilon_n^4d^{-2/3}
                       +\epsilon_n^5d^{1/s-5/3}
 \lesssim\epsilon_n^3,
\end{align*}
where the inequality uses~\eqref{eq:cusp-triangle-upper-scales} and
$1/s\le5/3$.
\end{proof}

We now consider $\beta=\alpha+1/3$ and
$\Lambda_{n,d}\le c_{\ref{prop:cusp-KL}}(K)$.
Since $\tau_{4,d}\asymp d^{-2\alpha-5/3}$, this permits
$n\lesssim d^{\alpha+5/6}$. The extra term
$n^5\tau_{4,d}^2/d$ in~\eqref{eq:cusp-triangle-two} can then exceed
$n^4\tau_{4,d}^2$. The direct estimate conditions on the remaining
latent positions. We instead retain their posterior given the observed graph.
Integrating the queried vertices in the likelihood expansion gives
$H_{4,d}$, the profile of $T_{\kappa_d}^4$.
Its smaller eigenvalues allow us to bound the second moment of its
posterior mean more sharply. Figure~\ref{fig:order-two-comparison}
illustrates the two steps in the following proposition.

\begin{figure}[!htbp]
\centering
\begingroup
\resizebox{.95\textwidth}{!}{%
\definecolor{proofteal}{RGB}{20,103,116}
\definecolor{proofcopper}{RGB}{175,83,43}
\definecolor{proofink}{RGB}{44,49,55}
\begin{tikzpicture}[
  font=\small,text=proofink,
  vertex/.style={circle,minimum size=3mm,inner sep=0pt,line width=.9pt},
  query/.style={vertex,draw=proofink,fill=white},
  fixed/.style={vertex,fill=proofteal,draw=proofteal},
  latent/.style={vertex,fill=white,draw=proofteal},
  hub/.style={vertex,fill=proofcopper,draw=proofcopper},
  graph/.style={draw=proofteal!45,line width=.8pt},
  factor/.style={draw=proofteal,line width=1.25pt},
  pathfactor/.style={draw=proofcopper,line width=1.25pt,dashed},
  process/.style={draw=proofink!65,line width=.8pt,-{Stealth[length=2mm]}},
  smalllabel/.style={font=\footnotesize,align=center},
  heading/.style={font=\bfseries\normalsize,anchor=base}
]
\path[use as bounding box] (-.2,.4) rectangle (16.0,-9.2);
\draw[proofink!15,line width=.6pt] (7.9,.1)--(7.9,-9.02);

\foreach \offset/\mode in {0/fixed,8.3/latent}{
\begin{scope}[xshift=\offset cm,yshift=.4cm]
  \fill[proofteal!5,rounded corners=5pt] (.25,-2.65) rectangle (7.25,-4.5);
  \node[query,label=left:$X_1$] (q1) at (1.05,-1.7) {};
  \node[query,label=right:$X_2$] (q2) at (6.45,-1.7) {};
  \node[hub,label={[text=proofcopper]above:$Z$}] (z) at (3.75,-1.2) {};
  \draw[pathfactor] (q1)--(z)--(q2);
  \node[text=proofcopper] at (3.75,-1.97) {$H_d(T_{12})$};
  \node[\mode,label=left:$i$] (i) at (1.05,-3.25) {};
  \node[\mode,label=right:$j$] (j) at (6.45,-3.25) {};
  \node[\mode] (a) at (2.5,-3.95) {};
  \node[\mode] (b) at (4.95,-3.95) {};
  \node[\mode] (c) at (3.75,-2.98) {};
  \draw[graph] (i)--(a)--(c)--(b)--(j) (i)--(c)--(j) (a)--(b);
  \draw[factor] (q1)--(i) node[midway,right,font=\footnotesize] {$\kappa_d(T_{i1})$};
  \draw[factor] (q2)--(j) node[midway,left,font=\footnotesize] {$\kappa_d(T_{2j})$};
\end{scope}
}

\node[heading] at (3.75,0) {(a) Fix the remaining latent points};
\node[heading] at (12.05,0) {(b) Keep their posterior};
\node[smalllabel,text=proofteal] at (3.75,-3.88) {latent positions given};
\node[smalllabel,text=proofteal] at (12.05,-3.88) {only the graph observed};

\draw[process] (3.75,-4.3)--(3.75,-7.43)
 node[midway,right,font=\footnotesize] {direct estimate};

\draw[process] (12.05,-4.3)--(12.05,-4.93)
 node[midway,right,font=\footnotesize] {integrate $X_1,Z,X_2$};
\fill[proofteal!5,rounded corners=5pt] (8.55,-5.15) rectangle (15.55,-6.68);
\node[latent,label=left:$i$] (ri) at (9.35,-5.6) {};
\node[latent,label=right:$j$] (rj) at (14.75,-5.6) {};
\node[latent] (ra) at (10.8,-6.35) {};
\node[latent] (rb) at (13.25,-6.35) {};
\node[latent] (rc) at (12.05,-6.05) {};
\draw[graph] (ri)--(ra)--(rc)--(rb)--(rj) (ri)--(rc)--(rj) (ra)--(rb);
\draw[pathfactor] (ri)--(rj)
 node[midway,above,font=\small,text=proofcopper] {$H_{4,d}(T_{ij})$};
\draw[process] (12.05,-6.88)--(12.05,-7.43)
 node[midway,right,font=\footnotesize] {bound its posterior mean};

\fill[proofcopper!7,rounded corners=4pt] (.65,-7.65) rectangle (6.85,-9.05);
\fill[proofteal!7,rounded corners=4pt] (8.95,-7.65) rectangle (15.15,-9.05);
\node[smalllabel] at (3.75,-7.95) {Extra term in $G_2$};
\node[text=proofcopper] at (3.75,-8.6) {$\displaystyle \frac{n^5\tau_{4,d}^2}{d}$};
\node[smalllabel] at (12.05,-7.95) {Extra term in $G_2$};
\node[text=proofteal] at (12.05,-8.6) {$\displaystyle n^4\tau_{4,d}^2$};

\end{tikzpicture}%
}
\endgroup
\caption{Two bounds for the additional contribution to $G_2$.
The right panel integrates $X_1,Z,X_2$ and then bounds the posterior
mean of $H_{4,d}(T_{ij})$.}
\label{fig:order-two-comparison}
\end{figure}

\begin{proposition}[Order two on the critical relation]
\label{prop:cusp-critical-two}
For a fixed mixed cusp with $\beta=\alpha+1/3$ and
$\Lambda_{n,d}\le c_{\ref{prop:cusp-KL}}(K)$, uniformly over $2\le r\le n$,
\begin{align}
 &~g_{r,2}\lesssim\tau_{3,d}^2+n\tau_{4,d}^2
       +n^2\E_P\big[\E_P[H_{4,d}(T_{12})\mid A^{(r)}]^2\big],
 \label{eq:second-star-decomposition}\\
 &~\E_P\big[\E_P[H_{4,d}(T_{12})\mid A^{(r)}]^2\big]
       \lesssim\tau_{5,d}^2+n\tau_{4,d}\tau_{8,d}
                   +n^2\tau_{4,d}\|T_{\kappa_d}\|_{\op}^8.
 \label{eq:fourth-kernel-prediction}
\end{align}
Consequently, $n^3\sup_{2\le r\le n}g_{r,2}\lesssim\Lambda_{n,d}$
and $G_2\lesssim\Lambda_{n,d}$.
\end{proposition}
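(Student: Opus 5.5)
We prove the two displays in turn and then combine them with the spectral scales of Proposition~\ref{gen:prop:spectral-scales} on the line $\beta=\alpha+1/3$. For \eqref{eq:second-star-decomposition} we rerun the argument of Proposition~\ref{prop:cusp-triangle-two}, but stop one step earlier. We do not bound the contribution of the remaining vertices by conditioning on their latent positions. Instead, we expand the posterior mean of $H_d(T_{12})$ in the edge likelihood of the two roots. Given $A^{(r)}$ and the positions $X_{[r]\setminus\{1,2\}}$, the posterior of $(X_1,X_2)$ is tilted by $\prod_{j}(1+\kappa_d(T_{1j})Y_{1j})(1+\kappa_d(T_{2j})Y_{2j})$ together with the factor $1+\kappa_d(T_{12})Y_{12}$.

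Keeping terms of degree at most one in each root, the constant term vanishes by centering. The internal edge gives $Y_{12}\E[H_d\kappa_d]=Y_{12}\tau_{3,d}$. Single pendant edges $Y_{1j}$ vanish because $H_d$ is canonical in each argument. The terms with one edge from each root to vertices $i,j$ give
\[
\sum_{i,j}Y_{1i}Y_{2j}\,\E_{X_1,X_2}\big[\kappa_d(T_{1i})H_d(T_{12})\kappa_d(T_{2j})\big]=\sum_{i,j}Y_{1i}Y_{2j}H_{4,d}(T_{ij}),
\]
where $H_{4,d}$ is the kernel of $T_{\kappa_d}^4$ and the diagonal $i=j$ is included. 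After squaring and averaging, three contributions appear. The internal edge contributes $\tau_{3,d}^2$. The diagonal terms $i=j$, together with the higher-degree remainders, are controlled as in Lemma~\ref{lem:cusp-query} by $n\tau_{4,d}\|H_d\|_4^2\lesssim n\tau_{4,d}^2$; this uses Lemma~\ref{lem:contracted}. The off-diagonal terms, whose edge signs $Y_{1i},Y_{2j}$ are independent of $A^{(r)}$ under the root exposure, contribute $n^2\E_P[\E_P[H_{4,d}(T_{12})\mid A^{(r)}]^2]$ after exchangeability. To justify truncating the expansion at this degree, we use the negative-moment bound on the likelihood denominator (reverse hypercontractivity) from Lemma~\ref{lem:cusp-query}. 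The exponential error there is absorbed exactly as before.

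For \eqref{eq:fourth-kernel-prediction} we apply the same two-root expansion to $f=H_{4,d}$ in place of $H_d$. The internal edge now gives $\E[H_{4,d}\kappa_d]=\tau_{5,d}$. The diagonal and pendant terms give $n\tau_{4,d}\|H_{4,d}\|_2^2=n\tau_{4,d}\tau_{8,d}$. For the off-diagonal terms we do not iterate further. We bound them crudely by conditioning, as in Proposition~\ref{prop:cusp-triangle-two}: the resulting kernel $H_{6,d}$ satisfies $\|H_{6,d}\|_2^2=\tau_{12,d}\le\|T_{\kappa_d}\|_{\op}^8\tau_{4,d}$. The factor $n^2$ from the pair of queried vertices, combined with the posterior-mean contraction, should then give $n^2\tau_{4,d}\|T_{\kappa_d}\|_{\op}^8$. \textbf{This is the step I expect to be the main obstacle.} One must check that the pendant-edge remainders in Lemma~\ref{lem:cusp-query} are compatible with the smaller profile $H_{4,d}$. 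One must also check that $\|H_{4,d}\|_\infty$ and $\|H_{4,d}\|_4$ obey bounds analogous to Lemma~\ref{lem:contracted}, with $\gamma_{q,d}^2$ replaced by $\gamma_{q,d}^4/N_{q,d}$ in \eqref{eq:contracted-kernel}. I would first try to derive these bounds from the Laplace representation bound on $|P_{q,d}|$.

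Finally, we insert the mixed scales $\tau_{4,d}\asymp d^{-2\alpha-5/3}$ and $\|T_{\kappa_d}\|_{\op}^2\asymp d^{-\alpha-4/3}$. We also use $|\tau_{5,d}|\le\|T_{\kappa_d}\|_{\op}\tau_{4,d}$ and $\tau_{8,d}\le\|T_{\kappa_d}\|_{\op}^4\tau_{4,d}$. With $n\lesssim d^{\alpha+5/6}$, each of $n^2\tau_{5,d}^2$, $n^3\tau_{4,d}\tau_{8,d}$ and $n^4\tau_{4,d}\|T_{\kappa_d}\|_{\op}^8$ is then $\lesssim n\tau_{4,d}^2$; for example, $n^3\tau_{4,d}^2\|T_{\kappa_d}\|_{\op}^4\le n\tau_{4,d}^2\cdot n^2d^{-2\alpha-8/3}\lesssim n\tau_{4,d}^2$. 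Hence $g_{r,2}\lesssim\tau_{3,d}^2+n\tau_{4,d}^2$ uniformly in $r$. Multiplying by $n^3$ gives $n^3\sup_r g_{r,2}\lesssim\Lambda_{n,d}$, and summing the weights $\binom r2$ as in \eqref{eq:cavity-costs} gives $G_2\lesssim\Lambda_{n,d}$.
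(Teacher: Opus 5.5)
Your skeleton matches the paper's. You expand the incident-edge likelihood of the two roots, read off $\tau_{3,d}$ from edge $12$ and $H_{4,d}(T_{ij})$ from one incident edge per root, apply \eqref{eq:cusp-query-two} to $f=H_{4,d}$, and finish with the critical scales. Your final arithmetic is correct, and so is the use of $\|H_{4,d}\|_4^2\lesssim\tau_{8,d}$ from the Laplace bound. The second display, which you flagged as the obstacle, is in fact the routine step.

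The genuine gap is in \eqref{eq:second-star-decomposition}, and the first problem there is how you obtain the \emph{posterior} mean of $H_{4,d}(T_{ij})$. Lemma~\ref{lem:cusp-query} cannot give it: its proof removes the likelihood denominator only after conditioning on the other latent points. If you condition on $X_{[r]\setminus\{1,2\}}$, as your second sentence does, the coefficients $H_{4,d}(T_{ij})$ are fixed numbers. The off-diagonal part then contributes about $n^2\tau_{8,d}$ to $g_{r,2}$. On $\beta=\alpha+1/3$ one has $\tau_{8,d}\asymp\tau_{4,d}^2/d$, so this is exactly the extra term $n^5\tau_{4,d}^2/d$ of Proposition~\ref{prop:cusp-triangle-two} that you are trying to remove. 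The fix takes four steps:
\begin{itemize}
\item condition only on the graph $C$ among the other vertices;
\item write the posterior mean as $N_C/L_C$, with $L_C^{b_*}=\E[L_Z^{b_*}(B)L_Z^{b_*}(B')\mid C]$;
\item apply the reverse-hypercontractive bound conditionally on $C$;
\item control $\E_Q[(L_C^{b_*})^2]$ by conditional Jensen and \eqref{eq:cusp-app-uq-design-moments}.
\end{itemize}
This is Lemma~\ref{lem:cusp-retained-design}. Only then does orthonormality of the products $Y_{1i}Y_{2j}$ under $Q$ turn the off-diagonal terms into $\E[\E[H_{4,d}(T_{ij})\mid C]^2]$. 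Your claim that these signs are ``independent of $A^{(r)}$'' is not the right mechanism.

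The second problem is that your ``higher-degree remainders'' are not controlled by $n\tau_{4,d}\|H_d\|_4^2$.
\begin{itemize}
\item Pairs of incident sets with $S\cap T\ne\varnothing$ can be sent to the prior, and this gives $Cn\tau_{4,d}^2$.
\item Disjoint pairs with $|S|+|T|=k\ge3$ cannot. For $k=3$ the prior second moment of a coefficient is $\E[H_{6,d}(T_d)H_d(T_d)^2]\asymp\gamma_{1,d}^8\gamma_{2,d}^2d^{-4}\asymp d^{-5\alpha-16/3}$. The $\asymp n^3$ such pairs therefore exceed $n\tau_{4,d}^2$ by the factor $n^2d^{-\alpha-2}$. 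At the admissible $n\asymp d^{\alpha+5/6}$ this factor is $d^{\alpha-1/3}\to\infty$ whenever $\alpha>1/3$.
\item Conditioning on positions for these terms instead reproduces the $n^2\tau_{4,d}^2/d$ loss.
\end{itemize}
The paper handles these terms with a second posterior estimate. It proves the four-replica bound $\|f_{s,t}\|_4^2\le\|T_{\kappa_d}\|_{\op}^4(Cs)^{s/2}(Ct)^{t/2}\tau_{4,d}^{k/2}$. It then applies \eqref{eq:cusp-query-rough} with the remaining vertices as roots and sums over $3\le k\le L=C\log d$. For $k>L$ it uses a cutoff obtained by rerunning the likelihood bound with amplification $b_*/\theta$.

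You also drop the cross terms of edge $12$ with the incident edges, namely the prediction of the canonical profile $H_d\kappa_d-\tau_{3,d}$. The paper withholds edge $12$ via Bayes' formula and bounds that prediction by \eqref{eq:cusp-query-two-minus}. Conditioning on positions is harmless there because of the extra factor $v_d$.
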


\begin{proof}[Proof sketch]
We first explain~\eqref{eq:second-star-decomposition}.
Recall that $H_d$ represents a path of length two.
Adding edge $12$ closes a triangle, giving
$\E[H_d(T_d)\kappa_d(T_d)]=\tau_{3,d}$.
Separating this edge and writing
$H_d\kappa_d=\tau_{3,d}+(H_d\kappa_d-\tau_{3,d})$
therefore gives the contribution $\tau_{3,d}^2$.
For the other edges, retain the posterior of the remaining
latent points and expand the incident likelihood.
Selecting edges $1i$ and $2j$, with $i,j\in\{3,\ldots,r\}$,
gives the coefficient
\begin{equation}
 \E_{X_1,X_2}\big[
 \kappa_d(T_{i1})H_d(T_{12})\kappa_d(T_{2j})\big]
 =H_{4,d}(T_{ij}),
 \label{eq:cusp-fourth-profile}
\end{equation}
where the two roots are integrated against their independent
priors, keeping the other latent points fixed.
The two selected edges extend the original path to length four.
If $i=j$, it closes and $H_{4,d}(1)=\tau_{4,d}$,
giving $n\tau_{4,d}^2$ after summing the squared coefficients.
If $i\ne j$, the coefficient is averaged over the posterior
given the graph on the remaining vertices.
There are at most $n^2$ such choices, giving the last term
in~\eqref{eq:second-star-decomposition}.
Lemma~\ref{lem:cusp-retained-design} justifies this reduction.
The root moment and query bounds control all remaining terms
by $Cn\tau_{4,d}^2$.

We next apply the same estimate as in~\eqref{eq:cusp-two-root},
with $H_d$ replaced by $H_{4,d}$.
The corresponding terms become
\begin{align*}
 \E[H_{4,d}(T_d)\kappa_d(T_d)]
 &=\tau_{5,d},\quad \|H_{4,d}\|_2^2=\tau_{8,d},\quad
 \|H_{4,d}\|_4^2+\|H_{4,d}\kappa_d-\tau_{5,d}\|_2^2
 \lesssim\tau_{8,d}.
\end{align*}
Then we have 
\[
 \E_P\big[\E_P[H_{4,d}(T_{12})\mid A^{(r)}]^2\big]
 \lesssim \tau_{5,d}^2+n\tau_{4,d}\tau_{8,d}
                  +\frac{n^2\tau_{4,d}\tau_{8,d}}{d},
\]
with the exponential error absorbed as in
Proposition~\ref{prop:cusp-triangle-two}.
Since $\tau_{8,d}/d\asymp\|T_{\kappa_d}\|_{\op}^8$
on the critical relation, this proves
\eqref{eq:fourth-kernel-prediction}.

Finally, the spectral estimates~\eqref{eq:cusp-app-critical-scales}
in Appendix~\ref{app:cusp-local} and $n\lesssim d^{\alpha+5/6}$ give
\begin{align*}
 n^5\tau_{5,d}^2
 &\lesssim n^4\tau_{4,d}^2
              \big(nd^{-\alpha-4/3}\big)
 \lesssim n^4\tau_{4,d}^2d^{-1/2},\\
 n^6\tau_{4,d}\tau_{8,d}
 &\lesssim n^4\tau_{4,d}^2
              \big(n^2d^{-2\alpha-8/3}\big)
 \lesssim n^4\tau_{4,d}^2d^{-1},\\
 n^7\tau_{4,d}\|T_{\kappa_d}\|_{\op}^8
 &\lesssim n^4\tau_{4,d}^2
              \big(n^3d^{-2\alpha-11/3}\big)
 \lesssim n^4\tau_{4,d}^2d^{\alpha-7/6}.
\end{align*}
Since $\alpha\le1$, these three terms are bounded by
$Cn^4\tau_{4,d}^2$.
Multiplying~\eqref{eq:second-star-decomposition} by $n^3$
therefore gives
$n^3\sup_{2\le r\le n}g_{r,2}\lesssim\Lambda_{n,d}$
and hence $G_2\lesssim\Lambda_{n,d}$.
The full proof is in Appendix~\ref{app:cusp-order-two}.
\end{proof}

\subsection{Order three}\label{sec:order-3-csup}

We next bound $G_3$. At the triangle scale, the moment estimate in
Lemma~\ref{lem:root-moments} and the general bound in
Lemma~\ref{lem:cusp-query} suffice. On the critical relation
$\beta=\alpha+1/3$, cancellation may weaken the triangle signal,
so we retain the internal edges and estimate their contributions
more carefully.

\begin{proposition}[Order three]
\label{prop:cusp-triangle-three}
For a fixed continuous cusp and $\epsilon_n\le c_{\ref{prop:cusp-triangle-three}}(K)$,
\begin{equation}
 G_3\lesssim\big(nd^{-1/s-1/6}\big)^5
       +n^4v_d^{19/8}d^{-2/s-4/3}
 \lesssim\epsilon_n^3.
 \label{eq:cusp-triangle-three}
\end{equation}
For a mixed cusp with $\beta=\alpha+1/3$ and
$\Lambda_{n,d}\le c_{\ref{prop:cusp-KL}}(K)$, we have $G_3\lesssim n^4\tau_{4,d}^2$.
\end{proposition}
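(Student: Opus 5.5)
We bound $g_{r,3}$ uniformly in $3\le r\le n$ and then sum against $\binom r3$, which gives a factor of $\binom n4\asymp n^4$. The two regimes need different amounts of care.

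\emph{Triangle scale.} We apply the three-root version of Lemma~\ref{lem:cusp-query} with $f=h_{3,d}$. This splits $g_{r,3}$ into three parts:
\begin{itemize}
\item the squared prior mean of $h_{3,d}$ times the retained internal edges among the roots $1,2,3$;
\item a term of size $n\tau_{4,d}$ times a fourth moment of $h_{3,d}$;
\item a term of size $n^2\tau_{4,d}/d$ times second moments, plus an exponentially small error.
\end{itemize}
For the moments we use parity, via Lemma~\ref{lem:root-moments}. Expand each $\kappa_d$ in $h_{3,d}$ into its degree-one part (odd, of size $d^{-\beta/2}$) and its higher-degree part (centered even part of size $d^{-\alpha/2}$, plus remainders). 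Integrating $Z$ kills every product with an odd number of odd factors. The surviving configurations have zero or two odd factors, so $\E[h_{3,d}^2]$ and $\|h_{3,d}\|_4$ gain an extra factor $d^{-1/2}$ over the crude bound $\E[H_d(T_d)^3]$. The retained-edge mean terms are cycle traces of the form $\E[h_{3,d}\kappa_d\kappa_d\kappa_d]$. We bound them by combinations of $\tau_{4,d}$ and $\|T_{\kappa_d}\|_{\op}$, together with $v_d$ from \eqref{eq:cusp-triangle-upper-scales}. Inserting the scales of Proposition~\ref{gen:prop:spectral-scales} yields the two displayed terms $(nd^{-1/s-1/6})^5$ and $n^4v_d^{19/8}d^{-2/s-4/3}$. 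The exponent $19/8$ comes from interpolating between $\|h_{3,d}\|_2$ and $\|h_{3,d}\|_\infty\lesssim v_d^{3/2}$ in the fourth moment. Both terms are $\lesssim\epsilon_n^3$, using $1/s\le5/3$ and $v_d\lesssim d^{2/3-1/s}$.

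\emph{Critical relation.} Here $n$ can reach $d^{\alpha+5/6}$, so the crude query term $n^2\tau_{4,d}/d$ is too large. Following the approach of Proposition~\ref{prop:cusp-critical-two}, we retain the three internal edges $Y_{12},Y_{13},Y_{23}$ and expand the root likelihood over the $2^3=8$ subsets of internal edges. Each subset $E$ gives a coefficient equal to the mean of $h_{3,d}\prod_{e\in E}\kappa_d(e)$ plus a centered fluctuation.
\begin{itemize}
\item \emph{Means.} The empty set gives zero, since $h_{3,d}$ is canonical. A single edge also gives zero after integrating the free root. Two edges give a path closed through $Z$, which is a $4$-cycle-type trace $\lesssim\tau_{4,d}$ and has no triangle cancellation issue. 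All three edges give a $K_4$-type integral, which we bound by $\|T_{\kappa_d}\|_{\op}^2\tau_{4,d}$.
\item \emph{Centered parts.} These are handled like the $H_{4,d}$ step: integrating the roots against external edges produces higher cycle profiles, whose posterior second moments are controlled by $\tau_{4,d}\tau_{8,d}$-type quantities.
\end{itemize}
The target is $g_{r,3}\lesssim\tau_{4,d}^2$, which gives $G_3\lesssim n^4\tau_{4,d}^2$.

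The main obstacle is the centered fluctuation terms on the critical relation. There we must show that the external-edge contributions cost at most $n^3\tau_{4,d}\|T_{\kappa_d}\|_{\op}^{8}$-type factors, and that these are $o(1)$ under $n\lesssim d^{\alpha+5/6}$, using $\alpha\le1$ as in the final display of Proposition~\ref{prop:cusp-critical-two}. I would first try to reuse Lemma~\ref{lem:cusp-retained-design} configuration by configuration, deferring the case-by-case bookkeeping to Appendix~\ref{app:cusp-order-three}.
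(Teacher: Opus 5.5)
Your triangle-scale part follows the paper's route: the rough query bound \eqref{eq:cusp-query-rough} with $f=h_{3,d}$, plus the parity bound \eqref{eq:cusp-app-root-useful}. However, you misattribute the exponent. We have $19/8=1+3\cdot\frac{11}{24}$. The factor $v_d$ comes from $\|h_{3,d}\|_4^2\lesssim v_dd^{-2/s-4/3}$, and the factor $v_d^{11/8}$ comes from the internal-edge factor $(v_d^{11/24}+Cn\sqrt{\tau_{4,d}/d})^3$, i.e.\ from $\|\kappa_d\|_\infty^2\lesssim v_d^{11/12}$ after localization. No interpolation against $\|h_{3,d}\|_\infty$ is involved, and $\E[h_{3,d}^2]=\E[H_d(T_d)^3]$ holds exactly, so there is no gain over it.

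The genuine gap is on $\beta=\alpha+1/3$. First, for three internal edges the mean is $b_*^3t(K_4,\kappa_d)$, and your bound $\lesssim\|T_{\kappa_d}\|_{\op}^2\tau_{4,d}\asymp d^{-3\alpha-3}$ is false. Each edge of $K_4$ lies in two triangles, so in the Gram-density expansion the product of the four triangle monomials is $\prod_e z_e^2$. This gives the all-even configuration the contribution $8\gamma_{2,d}^6d^{-2}(1+o(1))$, so $t(K_4,\kappa_d)$ is generically of order $d^{-3\alpha-2}$. What you actually need is $|t(K_4,\kappa_d)|\lesssim\tau_{4,d}\asymp d^{-2\alpha-5/3}$, and operator-norm and moment reasoning does not give it. Conditioning on two vertices and applying $T_{\kappa_d}$ to the product of the remaining edges yields only $\|T_{\kappa_d}\|_{\op}v_d^{5/2}\asymp d^{-3\alpha-2/3}$, which is enough only at $\alpha=1$. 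The paper gets $d^{-3\alpha-5/3}$ from Lemma~\ref{lem:fixed-graph-budget}. That lemma expands the joint Gram density and uses centering and parity to show that a biconnected block on four vertices costs an extra $d^{-1/6}$. Your plan has no substitute for this.

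Second, you give no argument for the fluctuations, and the route you suggest (Lemma~\ref{lem:cusp-retained-design}, $H_{4,d}$-type profiles, $\tau_{8,d}$) is not what order three needs. The paper feeds the eight configurations into \eqref{eq:cusp-query-tensor} and splits each $f_F$ into Hoeffding projections $P_If_F$. It then uses the three eigenvalue regimes of $T_A$ from Lemma~\ref{lem:cusp-query}: $C$ on constants, $Cn\sqrt{\tau_{4,d}/d}$ on centered functions, and $Cn\sqrt{\tau_{4,d}}/d$ on centered even functions. Projections with $|I|=1$ vanish by rotational invariance. Those with $F\ne\varnothing$ and $|I|\ge2$ are harmless: the paper uses glued-cone graph integrals, and at $m=3$ the factor $v_d^{11/12}$ per internal edge already suffices.

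The decisive term is $F=\varnothing$. Since $h_{3,d}$ is invariant under $(X_1,X_2,X_3)\mapsto(-X_1,-X_2,-X_3)$, its harmonic degrees have even sum. Hence every component has a positive even degree in some root, and the even eigenvalue supplies an extra $d^{-1/2}$. If you use only the centered eigenvalue in all three roots, the term is bounded by $n^4(Cn\sqrt{\tau_{4,d}/d})^3\|h_{3,d}\|_2^2$. Since $\|h_{3,d}\|_2^2=\E[H_d(T_d)^3]\asymp\gamma_{1,d}^4\gamma_{2,d}^2d^{-2}\asymp d^{-3\alpha-8/3}$, this is of order $n^4\tau_{4,d}^2$ times $(n\sqrt{\tau_{4,d}})^3d^{\alpha-5/6}$. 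That factor is unbounded for $\alpha>5/6$ when $n\sqrt{\tau_{4,d}}$ is of order one, which the cancellation regime permits. Without this parity step and the graph-integral lemma, $g_{r,3}\lesssim\tau_{4,d}^2$ is not established.
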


\begin{proof}[Proof sketch]
Recall that $\epsilon_n=nd^{-1/s}\le c_{\ref{prop:cusp-triangle-three}}(K)$.
By~\eqref{eq:cusp-triangle-upper-scales} and
$v_d\gtrsim d^{-1}$ from Lemma~\ref{gen:lem:gap-transfer},
\[
 n\sqrt{\tau_{4,d}}\lesssim\epsilon_nd^{-1/6},
 \quad
 n\sqrt{\tau_{4,d}/d}
 \lesssim\epsilon_nd^{-2/3}\lesssim v_d^{11/24}.
\]
Thus Lemma~\ref{lem:cusp-query} applies.
Using~\eqref{eq:cusp-query-rough} with $f=h_{3,d}$
and summing the exposure weights gives
\[
 G_3\lesssim
 n^5\tau_{4,d}\|h_{3,d}\|_4^2
 +n^4\|h_{3,d}\|_4^2
 \big(v_d^{11/24}+Cn\sqrt{\tau_{4,d}/d}\big)^3.
\]
We suppress the exponentially small error, which is absorbed
into $(nd^{-1/s-1/6})^5$ below.
By~\eqref{eq:cusp-app-root-useful},
\[
 \|h_{3,d}\|_4^2
 \lesssim\gamma_{2,d}^6d^{-3}
         +\gamma_{2,d}^2\gamma_{1,d}^4d^{-2}
 \lesssim v_dd^{-2/s-4/3}
 \lesssim d^{-3/s-1/2},
\]
where we used $\gamma_{2,d}^2\le v_d$,
$\gamma_{1,d}^2\lesssim d^{-1/s+1/3}$, and
$v_d\lesssim d^{2/3-1/s}$.
We then obtain
\begin{align*}
 G_3
 &\lesssim(nd^{-1/s-1/6})^5
          +n^4v_d^{19/8}d^{-2/s-4/3}\lesssim\epsilon_n^5d^{-5/6}
          +\epsilon_n^4v_d^{3/8}
 \lesssim\epsilon_n^3.
\end{align*}

On $\beta=\alpha+1/3$, cancellation can reduce $\tau_{3,d}$
and permit larger $n$. The second term in the coarse estimate
does not give $G_3\lesssim n^4\tau_{4,d}^2$.
We instead use~\eqref{eq:cusp-query-tensor} in
Lemma~\ref{lem:cusp-query}.
Let $X=(X_1,X_2,X_3)$ and $X'=(X_1',X_2',X_3')$
be independent prior triples, and write
$T_{ij}'=\langle X_i',X_j'\rangle$.
For the fixed amplification factor $b_*$ in that lemma,
let $A(t)=\exp(nb_*^2H_d(t))$. Then
\begin{align*}
 g_{r,3}
 &\lesssim n\tau_{4,d}\|h_{3,d}\|_4^2
 +\E\Big[h_{3,d}(X)h_{3,d}(X')
 \prod_{i=1}^3A(\langle X_i,X_i'\rangle)
 \prod_{1\le i<j\le3}\big(1+b_*^2\kappa_d(T_{ij})\kappa_d(T_{ij}')\big)\Big].
\end{align*}
The first term contributes at most
$Cn^5\tau_{4,d}^{5/2}
=C(n\sqrt{\tau_{4,d}})^5\lesssim n^4\tau_{4,d}^2$
after summing the exposure weights.

For the second term, expand the three internal factors
and group the eight choices of internal edges as follows.
\begin{itemize}
\item \emph{Zero or one internal edge.}
A root remains connected only to the integrated center,
so the corresponding coefficient has mean zero.
The weighted contribution is controlled using centering,
parity, and conditional moment estimates.
\item \emph{Two internal edges.}
The internal edges form a path.
Adding the center from $h_{3,d}$ produces $K_4$
with one edge removed.
\item \emph{Three internal edges.}
The internal edges form a triangle.
Adding the center produces the complete graph $K_4$.
\end{itemize}
Lemma~\ref{lem:cusp-critical-fixed-orders} controls these
weighted contributions using the graph estimates in
Lemma~\ref{lem:fixed-graph-budget}, giving
$G_3\lesssim n^4\tau_{4,d}^2$.
The critical case is proved in
Appendix~\ref{app:cusp-order-three}.
\end{proof}

\subsection{Higher orders and completion}\label{sec:order-high-csup}

In this section we focus on the higher order $k\ge 4$. We first introduce the following proposition.

\begin{proposition}[The remaining orders at the triangle scale]
\label{prop:cusp-triangle-remaining}
For a fixed continuous cusp and $\epsilon_n\le c_{\ref{prop:cusp-triangle-remaining}}(K)$, each $4\le k\le9$
satisfies
\begin{equation}
 G_k\lesssim_k\big(nd^{-1/s-1/6}\big)^{k+2}
       +n\big(nd^{-1/s-1/6}\big)^k v_d^{11k/24}
 \lesssim_k\epsilon_n^3.
 \label{eq:cusp-triangle-fixed}
\end{equation}
Moreover,
\begin{equation}
 \sum_{k=10}^{n-1}G_k\lesssim\epsilon_n^3.
 \label{eq:cusp-triangle-tail}
\end{equation}
\end{proposition}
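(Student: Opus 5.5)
The plan has two parts: a posterior query estimate for the fixed orders $4\le k\le9$, and the prior moment bound for the tail $k\ge10$.

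\textbf{Fixed orders $4\le k\le 9$.} I would repeat the argument of Proposition~\ref{prop:cusp-triangle-three} at the triangle scale. By~\eqref{eq:cusp-triangle-upper-scales} we have $n\sqrt{\tau_{4,d}}\lesssim\epsilon_nd^{-1/6}$ and $n\sqrt{\tau_{4,d}/d}\lesssim v_d^{11/24}$, so Lemma~\ref{lem:cusp-query} applies. Its rough consequence~\eqref{eq:cusp-query-rough}, with $f=h_{k,d}$ and $k$ roots, bounds $g_{r,k}$ by two pieces. The first is $n\tau_{4,d}\|h_{k,d}\|_4^2$, which comes from the query edges to the remaining vertices. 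The second is $\|h_{k,d}\|_4^2\big(v_d^{11/24}+Cn\sqrt{\tau_{4,d}/d}\big)^k$, which comes from the internal edges among the roots. An exponentially small error is absorbed as before.

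The root moment bound I need is $\|h_{k,d}\|_4^2\lesssim_k \tau_{4,d}^{k/2}$, or an equivalent scale of order $d^{-k(1/s+1/6)}$. I would obtain it from Lemma~\ref{lem:root-moments}: expand $h_{k,d}$ in degrees one and two, and use $\|H_d\|_k\lesssim\sqrt{\tau_{4,d}}$ from Lemma~\ref{lem:contracted} together with hypercontractivity. Multiplying by the exposure weight $\sum_r\binom rk\le n^{k+1}$ then gives exactly the two displayed terms $(nd^{-1/s-1/6})^{k+2}$ and $n(nd^{-1/s-1/6})^kv_d^{11k/24}$.

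For the final comparison with $\epsilon_n^3$, the first term equals $\epsilon_n^{k+2}d^{-(k+2)/6}$, which is at most $\epsilon_n^3$ since $k+2\ge6$. For the second term I would use $v_d\lesssim d^{2/3-1/s}$ and $1/s\le5/3$. These give $n\,(nd^{-1/s-1/6})^kv_d^{11k/24}\le\epsilon_n^{k+1}d^{1/s-k/6}v_d^{11k/24}$, and the exponent of $d$ is nonpositive for $k\ge4$. The step I expect to be the main obstacle is checking this exponent uniformly over $\alpha,\beta$. If it fails for small $k$, I would switch to the sharper parity improvement used at order three, namely that only products with an even number of odd factors survive, which improves $\|h_{k,d}\|_4$.

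\textbf{Tail $k\ge10$.} I would use the prior bound~\eqref{eq:general-prior-cavity}, which gives
\[
\bar G_k\le\binom n{k+1}\big[C(\gamma_{1,d}^2\sqrt{k/d}+v_dk/d)\big]^k .
\]
Since $\gamma_{1,d}^2/\sqrt d+v_d/d\lesssim\sqrt{\tau_{4,d}}$, and using $\binom n{k+1}\le n^{k+1}/(k+1)!$, this is at most
\[
n\,\big(Cn\sqrt{\tau_{4,d}}\,\sqrt k\big)^k/(k+1)! \;+\; n\,\big(Cnv_d k/d\big)^k/(k+1)! .
\]
Stirling's formula turns $k^{k/2}/k!$ and $k^k/k!$ into geometric factors. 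Because $n\sqrt{\tau_{4,d}}\lesssim\epsilon_nd^{-1/6}$ and $nv_d/d\lesssim\epsilon_nd^{-1/3}$ are small, the series is dominated by its $k=10$ term. That term is at most $n(\epsilon_nd^{-1/6})^{10}\le\epsilon_n^{10}d^{1/s-5/3}\le\epsilon_n^3$, using $1/s\le5/3$. For $k$ close to $n$ the crude bound $|H_d|\le v_d$ and $\binom nk\le2^n$ still give summability. This proves~\eqref{eq:cusp-triangle-tail}.
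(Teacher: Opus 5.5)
Your route is the paper's: \eqref{eq:cusp-query-rough} with $f=h_{k,d}$ and the root moment bound $\|h_{k,d}\|_4^2\lesssim_k\tau_{4,d}^{k/2}\lesssim d^{-k/s-k/6}$ for $4\le k\le9$, then the prior bound \eqref{eq:general-prior-cavity} for $k\ge10$. Your tail argument matches the paper's. One verification step for the fixed orders, however, fails as you state it.

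If you replace every factor of $v_d$ by $d^{2/3-1/s}$, then at $k=4$ you get $d^{1/s-2/3}v_d^{11/6}\lesssim d^{\frac56(2/3-1/s)}$. This diverges whenever $1/s<2/3$, which happens for pure odd cusps and for mixed cusps with $\beta<1/3$. The bound $1/s\le5/3$ does not help here. The fix is not the parity improvement. Instead, use the full estimate $v_d\lesssim\min\{1,d^{2/3-1/s}\}$ from \eqref{eq:cusp-triangle-upper-scales}, and apply the power bound to only one factor:
\[
 d^{1/s-k/6}v_d^{11k/24}
 =\big(d^{1/s-2/3}v_d\big)\,d^{2/3-k/6}\,v_d^{11k/24-1}\lesssim1,
 \quad k\ge4.
\]
This is how the paper treats $k=4$; each further order only adds the factor $nd^{-1/s-1/6}v_d^{11/24}=o(1)$. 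Also, the comparison $n\sqrt{\tau_{4,d}/d}\lesssim\epsilon_nd^{-2/3}\lesssim v_d^{11/24}$ needs the lower bound $v_d\gtrsim d^{-1}$ from Lemma~\ref{gen:lem:gap-transfer}, not only \eqref{eq:cusp-triangle-upper-scales}.

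Second, your account of where the root moment bound comes from is off. The moment $\|H_d\|_k$ controls only $\|h_{k,d}\|_2^2=\E[H_d(T_d)^k]$. Spherical hypercontractivity is unavailable because a cusp kernel is not a polynomial. The $L^4$ bound \eqref{eq:cusp-app-root-growing} instead uses four copies of the integrated point: $\|h_{k,d}\|_4^4=\E[h_{4,d}^k]$, so $\|h_{k,d}\|_4^2\le\|h_{4,d}\|_k^{k/2}$. Combine this with the moment bound on $h_{4,d}$ in \eqref{eq:cusp-app-root-useful}. Finally, your separate remark about $k$ near $n$ is unnecessary, since \eqref{eq:H-moments} already holds for every $k\ge2$.
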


\begin{proof}
Lemma~\ref{gen:lem:gap-transfer} gives $v_d\gtrsim d^{-1}$.
Together with~\eqref{eq:cusp-triangle-upper-scales}, this implies
$n\sqrt{\tau_{4,d}/d}\lesssim\epsilon_nd^{-2/3}
\lesssim v_d^{11/24}$.
Thus~\eqref{eq:cusp-query-rough} in Lemma~\ref{lem:cusp-query}
gives, for fixed $k\ge4$,
\begin{align*}
 G_k&\lesssim_k n^{k+1}\|h_{k,d}\|_4^2
       \big(nd^{-2/s-1/3}+v_d^{11k/24}\big)
       +n^{k+1}\exp(-cd^\delta).
\end{align*}
The moment bound~\eqref{eq:cusp-app-root-growing}, a consequence of
Lemma~\ref{lem:root-moments}, gives
$\|h_{k,d}\|_4^2\lesssim_k d^{-k/s-k/6}$.
Substitute this bound and absorb the exponentially small term into
$\big(nd^{-1/s-1/6}\big)^{k+2}$ yields the first inequality
in~\eqref{eq:cusp-triangle-fixed}.
Its first term is at most $\epsilon_n^{k+2}$.
For the second term, at $k=4$ we have
\[
 n\big(nd^{-1/s-1/6}\big)^4v_d^{11/6}
 =\epsilon_n^5d^{1/s-2/3}v_d^{11/6}
 \lesssim\epsilon_n^5,
\]
since~\eqref{eq:cusp-triangle-upper-scales} gives
$d^{1/s-2/3}v_d\lesssim1$ and $v_d\lesssim1$.
Each additional order contributes the factor
$nd^{-1/s-1/6}v_d^{11/24}=o(1)$.
This proves~\eqref{eq:cusp-triangle-fixed} for $4\le k\le9$.
For $k\ge10$, using~\eqref{eq:general-prior-cavity} with
$n\gamma_{1,d}^2/\sqrt d\lesssim\epsilon_nd^{-1/6}$ and
$nv_d/d\lesssim\epsilon_nd^{-1/3}$ yields
\begin{align*}
 \sum_{k=10}^{n-1}G_k
 &\lesssim n\big[(C\epsilon_nd^{-1/6})^{10}
                  +(C\epsilon_nd^{-1/3})^{10}\big]\lesssim\epsilon_n^{11}d^{1/s-5/3}
 \lesssim\epsilon_n^3,
\end{align*}
where the last inequality uses $1/s\le5/3$ and small $\epsilon_n$.
This proves~\eqref{eq:cusp-triangle-tail}.
\end{proof}

We return to the localized mixed kernel with $\beta=\alpha+1/3$.
By~\eqref{eq:prior-cavity-cost} and Lemma~\ref{lem:contracted},
\begin{equation}
 G_k\le\binom n{k+1}\E[|H_d(T_d)|^k]
 \le\binom n{k+1}
 \big[C(\gamma_{1,d}^2\sqrt{k/d}+v_dk/d)\big]^k,
 \label{eq:cusp-prior-start}
\end{equation}
where $v_d=\E[\kappa_d(T_d)^2]$.
Under $\Lambda_{n,d}=o(1)$, \eqref{eq:cusp-prior-start} controls
the sum over $k>C_K\log d$ for a sufficiently large constant $C_K$.
Among the remaining orders, only finitely
many require the graph estimate of Lemma~\ref{lem:fixed-graph-budget}.
An internal edge closes a triangle after integrating the additional
point in $h_{k,d}$. Hence an internal matching on $2j$ roots gives
\begin{equation}
 \E\Big[h_{2j,d}(X_1,\ldots,X_{2j})
       \prod_{i=1}^{j}\kappa_d(T_{2i-1,2i})\Big]
 =\tau_{3,d}^{j}.
 \label{eq:cusp-matching-mean}
\end{equation}
Its squared contribution is at most
$n^{2j+1}\tau_{3,d}^{2j}=n^{1-j}(n^3\tau_{3,d}^2)^j$.
Larger connected components are controlled by the fourth trace.
Lemma~\ref{lem:cusp-critical-fixed-orders} combines these bounds
for every fixed range of orders. We use it with the general query
estimate to obtain the following result.

\begin{proposition}[Higher orders on the critical relation]
\label{prop:cusp-critical-higher}
For a fixed mixed cusp with $\beta=\alpha+1/3$ and
$\Lambda_{n,d}\le c_{\ref{prop:cusp-KL}}(K)$,
\begin{equation}
 \sum_{k=4}^{n-1}G_k\lesssim\Lambda_{n,d}.
 \label{eq:cusp-critical-higher}
\end{equation}
\end{proposition}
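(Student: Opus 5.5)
We split the sum $\sum_{k=4}^{n-1}G_k$ into a tail $k>k_1:=\lceil C_K\log d\rceil$, an intermediate range $k_0<k\le k_1$ for a large fixed $k_0$, and finitely many fixed orders $4\le k\le k_0$. The fixed orders are handled by Lemma~\ref{lem:cusp-critical-fixed-orders}, and the other two ranges by the prior bound~\eqref{eq:cusp-prior-start} and the query estimate of Lemma~\ref{lem:cusp-query}.

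\textbf{Tail range.} On the critical relation, $\gamma_{1,d}^2\asymp d^{-\beta}=d^{-\alpha-1/3}$, $v_d\lesssim d^{-\alpha}$, and $\Lambda_{n,d}\le c$ forces $n\lesssim d^{\alpha+5/6}$. Hence
\[
 n\gamma_{1,d}^2/\sqrt d\lesssim d^{-\alpha-5/6}n\lesssim 1,
 \quad
 nv_d/d\lesssim n d^{-\alpha-1}\lesssim d^{-1/6}.
\]
The factor $\sqrt k$ in~\eqref{eq:cusp-prior-start} hurts: the $k$th term behaves like $n\,(e\,C\,n\gamma_{1,d}^2\sqrt{k/d}/k)^k$, i.e.\ $n\,(C' n\gamma_{1,d}^2/\sqrt{dk})^k$ after using $\binom n{k+1}\le n(en/k)^k$. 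The gain is $k^{-1/2}$ per factor from $1/k!$, so for $k\ge C_K\log d$ the base is below $1/2$. The sum over $k>k_1$ is then at most $n2^{-k_1}\le d^{-10}$. This is $\lesssim n^4\tau_{4,d}^2$, since $\tau_{4,d}\asymp d^{-2\alpha-5/3}$ and we may assume $n\ge2$ (choosing $C_K$ large so that $d^{-10}\ll d^{-4\alpha-10/3}$).

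\textbf{Fixed and intermediate ranges.} For $4\le k\le k_1$ we use the tensorized query bound~\eqref{eq:cusp-query-tensor}, exactly as in the critical case of Proposition~\ref{prop:cusp-triangle-three}. It gives
\[
 g_{r,k}\lesssim_k n\tau_{4,d}\|h_{k,d}\|_4^2+\E\Big[h_{k,d}(X)h_{k,d}(X')\prod_iA(\langle X_i,X_i'\rangle)\prod_{i<j}\big(1+b_*^2\kappa_d(T_{ij})\kappa_d(T_{ij}')\big)\Big].
\]
We then expand the internal product over graphs $F$ on the $k$ roots.
\begin{itemize}
\item A root isolated in $F$ is attached only to the center, so centering kills its mean up to weight corrections from $A$. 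These corrections are of size $nH_d$ and are controlled by the fourth trace.
\item If $F$ is a perfect matching, \eqref{eq:cusp-matching-mean} gives a contribution $\tau_{3,d}^{2j}$ with $k=2j$. Weighted by $n^{k+1}$, this is $n^{1-j}(n^3\tau_{3,d}^2)^j\le\Lambda_{n,d}\cdot n^{1-j}\Lambda_{n,d}^{j-1}$, which is summable in $j$.
\item Every other $F$ contains a component with at least three roots. Joined with the center, such a component contains a quadrilateral-type substructure, and Lemma~\ref{lem:fixed-graph-budget} bounds it by $n^4\tau_{4,d}^2$ times a factor $(Cn\sqrt{\tau_{4,d}})^{k-4}d^{-c}$.
\end{itemize}
Summing over the $2^{\binom k2}$ graphs is harmless for fixed $k\le k_0$; this is precisely Lemma~\ref{lem:cusp-critical-fixed-orders}. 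The first term gives $n^{k+2}\tau_{4,d}\|h_{k,d}\|_4^2$, which by the root moments is $\lesssim (n\sqrt{\tau_{4,d}})^{k+2}\le n^4\tau_{4,d}^2$ for $k\ge4$ and small $c$.

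\textbf{Main obstacle.} The hardest step is the intermediate range $k_0<k\le C_K\log d$. There the combinatorial count of internal graphs and the constants implicit in Lemma~\ref{lem:cusp-query} grow with $k$, so a per-order bound $\lesssim_k$ is insufficient. We would first try to avoid expanding internal edges at all for $k>k_0$. Instead we use the rough query bound~\eqref{eq:cusp-query-rough} together with the uniform moment bound $\|h_{k,d}\|_4^2\le\|H_d\|_{2k}^{k}$ from Lemma~\ref{lem:contracted}, tracking constants of the form $C^k k^{k/2}$. This gives terms $n(Cn\|H_d\|_{2k})^{k}$ times a factor $d^{-\delta}$ per internal structure. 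Since $n\|H_d\|_{2k}\lesssim\sqrt{k}\,n\gamma_{1,d}^2/\sqrt d+nv_dk/d$ and $k\le C\log d$, each factor is at most $d^{-c}\log d$ except for the linear part. For the linear part we need $n\gamma_{1,d}^2/\sqrt d$ small, which follows from $n^4\tau_{4,d}^2\ge n^4\gamma_{1,d}^8/d^2$ and $\Lambda_{n,d}\le c$. Choosing $k_0$ large then makes the geometric sum beyond $k_0$ dominated by $n^4\tau_{4,d}^2$, and together with the other two ranges yields~\eqref{eq:cusp-critical-higher}.
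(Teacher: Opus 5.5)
Your three-range split matches the paper's proof. Fixed orders are handled by Lemma~\ref{lem:cusp-critical-fixed-orders}, orders up to a logarithm by the rough query bound~\eqref{eq:cusp-query-rough}, and the remaining tail by the prior bound~\eqref{eq:cusp-prior-start}. Your tail argument is correct. Keeping the factor $\sqrt k$ and using $n\gamma_{1,d}^2/\sqrt d\lesssim 1$ even avoids the paper's requirement $Cn\sqrt{\tau_{4,d}}\le e^{-4}$. The intermediate range, however, rests on an input that is unjustified as stated.

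The inequality $\|h_{k,d}\|_4^2\le\|H_d\|_{2k}^k$ does not follow from Lemma~\ref{lem:contracted}, and it fails as an exact inequality. Fubini gives $\|h_{k,d}\|_4^4=\E[h_{4,d}(Z_1,\ldots,Z_4)^k]$, which is a moment of the four-root kernel, not of $H_d$. Already on the circle with $\kappa(x,z)=2a\cos(\theta_x-\theta_z)$ one gets $\E[h_4^4]=90a^{16}>70a^{16}=\E[H^8]$. What you need is the root-moment estimate. Combine $\|h_{k,d}\|_4^2\le\|h_{4,d}\|_k^{k/2}$ with the four-root envelope~\eqref{eq:cusp-app-root-four} of Lemma~\ref{lem:root-moments}. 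On $\beta=\alpha+1/3$ this gives $\|h_{4,d}\|_p\lesssim p\,\tau_{4,d}$ for $p\le cd^{1/3}$, hence $\|h_{k,d}\|_4^2\le (Ck)^{k/2}\tau_{4,d}^{k/2}$, which is~\eqref{eq:cusp-app-root-growing}. This is the order you wanted, so your bookkeeping survives once it is substituted.

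Second, make explicit what actually closes the intermediate sum. Smallness of $n\gamma_{1,d}^2/\sqrt d$ only yields a constant geometric ratio. That cannot absorb the prefactor $n\lesssim d^{\alpha+5/6}$ coming from $\binom n{k+1}$. The decisive gain is the per-root factor $v_d^{11/24}+Cn\sqrt{\tau_{4,d}/d}\lesssim d^{-11\alpha/24}$ in~\eqref{eq:cusp-query-rough}. After spending $\sqrt k\le d^{11\alpha/48}$, the second term summed over $k>k_0$ is at most $Cn^5\tau_{4,d}^2d^{-11\alpha(k_0+1)/48}$. Hence $k_0$ must depend on $\alpha$, with $11\alpha(k_0+1)/48>\alpha+5/6$; the paper adds a margin of $2$. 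The first term $k^2n\tau_{4,d}\|h_{k,d}\|_4^2$ carries no such factor. It is nonetheless summable because $k^{k/2+2}C^k/k!$ decays, and it contributes $O(n^4\tau_{4,d}^2)$.
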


\begin{proof}
Let $L=\lfloor\log d\rfloor$. All sums below are restricted to
$m\le n-1$. Recall from~\eqref{eq:cusp-app-critical-scales} that
$v_d\asymp d^{-\alpha}$ and
$\tau_{4,d}\asymp d^{-2\alpha-5/3}$.
Since $n\sqrt{\tau_{4,d}}\le c_{\ref{prop:cusp-KL}}(K)^{1/4}$, we have
$n\lesssim d^{\alpha+5/6}$, and Lemma~\ref{lem:cusp-query}
applies with $C_0=1$ when $c_{\ref{prop:cusp-KL}}(K)$ is sufficiently small.

We first bound the orders $4\le m\le L$.
Choose a fixed integer $M\ge4$ such that
$11\alpha(M+1)/48>\alpha+5/6+2$.
For sufficiently large $d$, we have $M\le L$, and
Lemma~\ref{lem:cusp-critical-fixed-orders} gives
$\sum_{m=4}^{M}G_m\lesssim\Lambda_{n,d}$.
For the remaining orders, the consequence
\eqref{eq:cusp-app-root-growing} of
Lemma~\ref{lem:root-moments} gives
\begin{equation}
 \|h_{m,d}\|_4^2
 \le(Cm)^{m/2}\tau_{4,d}^{m/2},
 \quad M<m\le L.
 \label{eq:cusp-app-critical-root-moment}
\end{equation}
Its range includes $L$ because $\log d=o(d^{1/3})$.

Apply~\eqref{eq:cusp-query-rough} in
Lemma~\ref{lem:cusp-query} with $f=h_{m,d}$ and $E=K_m$.
Since $v_d\asymp d^{-\alpha}$, $0<\alpha\le1$, and
$n\sqrt{\tau_{4,d}}\lesssim1$, we have
$v_d^{11/24}+Cn\sqrt{\tau_{4,d}/d}
\lesssim d^{-11\alpha/24}$.
Using~\eqref{eq:cusp-app-critical-root-moment},
$m!\ge(m/e)^m$, and the exposure weights
$\sum_{r=m}^{n-1}\binom rm=\binom n{m+1}$ gives
\begin{align*}
 \sum_{m=M+1}^{L}G_m
 &\lesssim\sum_{m=M+1}^{L}
 \frac{C^m m^{m/2+2}}{m!}(n\sqrt{\tau_{4,d}})^{m+2}\\
 &\quad{}+n\sum_{m=M+1}^{L}(n\sqrt{\tau_{4,d}})^m
 (C\sqrt m d^{-11\alpha/24})^m
 +\exp(C(\log d)^2-cd^\delta).
\end{align*}
For the first sum, $n\sqrt{\tau_{4,d}}\le1$ gives
\[
 \sum_{m=M+1}^{L}
 \frac{C^m m^{m/2+2}}{m!}
 (n\sqrt{\tau_{4,d}})^{m+2}
 \lesssim n^6\tau_{4,d}^3
 \lesssim n^4\tau_{4,d}^2.
\]
For the second sum, $C\sqrt m\le d^{11\alpha/48}$
uniformly for $m\le L$ when $d$ is sufficiently large.
Factoring out four powers of $n\sqrt{\tau_{4,d}}$
and summing a geometric series gives
\[
 n\sum_{m=M+1}^{L}
 (n\sqrt{\tau_{4,d}})^m
 (C\sqrt m d^{-11\alpha/24})^m
 \lesssim n^5\tau_{4,d}^2d^{-11\alpha(M+1)/48}
 \lesssim n^4\tau_{4,d}^2d^{-2}.
\]
The last inequality uses the choice of $M$ and
$n\lesssim d^{\alpha+5/6}$.
The exponential error is also absorbed by $n^4\tau_{4,d}^2$.
Together with the fixed orders $4\le m\le M$, this gives
$\sum_{m=4}^{L}G_m\lesssim\Lambda_{n,d}$.

It remains to bound the orders $m>L$.
By~\eqref{eq:general-prior-cavity} and
$\gamma_{1,d}^2/\sqrt d+v_d/d\lesssim\sqrt{\tau_{4,d}}$,
\begin{equation}
 G_m\le n(Cn\sqrt{\tau_{4,d}})^m.
 \label{eq:cusp-app-critical-prior-tail}
\end{equation}
Decrease $c_{\ref{prop:cusp-KL}}(K)$ if needed so that
$Cn\sqrt{\tau_{4,d}}\le e^{-4}$.
Factoring out four powers and using
$L=\lfloor\log d\rfloor$ gives
\[
 \sum_{m>L}G_m
 \lesssim n^5\tau_{4,d}^2e^{-4L}
 \lesssim n^4\tau_{4,d}^2d^{-2},
\]
where the last inequality uses
$n\lesssim d^{\alpha+5/6}$ and $\alpha\le1$.
Together with the bound for $4\le m\le L$, this proves
\eqref{eq:cusp-critical-higher}.
\end{proof}

We now finish all of the estimates for continuous kernels. It remains to prove the jump case for Proposition~\ref{prop:cusp-KL}. We first introduce the following proposition.
\begin{proposition}\label{prop:jump}
    For every measurable
$K:[-1,1]\to[0,1]$ and $n\le d/2$,
\begin{equation}
 \KL(P\Vert Q)\le\frac1{2d}\binom n3+O(n^4/d^2).
 \label{gen:eq:universal-Gram-KL}
\end{equation}
\end{proposition}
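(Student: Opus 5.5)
The plan is to follow the Gram-matrix comparison of \cite{bubeck2016testing}, with data processing doing most of the work. Under $P$, the graph is obtained from the Gram matrix $W=(\langle X_i,X_j\rangle)_{i<j}$ by a fixed Markov kernel: conditional on $W$, each edge is drawn independently as $\Bern(K(W_{ij}))$. Let $W'$ be a comparison matrix with independent entries, each distributed as $T_d$. Pushing $W'$ through the same kernel makes the edges independent $\Bern(\E K(T_d))=\Bern(p_d)$, which is exactly $Q$. The data processing inequality then gives
\[
 \KL(P\Vert Q)\le \KL\big(\mathcal L(W)\,\Vert\,\mathcal L(W')\big),
\]
and this bound holds for every measurable $K$ with values in $[0,1]$. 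It therefore remains to show that the Gram matrix of $n\le d/2$ uniform points on $\Sph^{d-1}$ has KL divergence at most $\binom n3/(2d)+O(n^4/d^2)$ from its product of marginals.

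To bound this divergence, I would expose the points one at a time and use the chain rule, as in Section~\ref{sec:cavity}. Fix $X_1,\dots,X_r$; since $r<d$ they are almost surely linearly independent. The new vector of inner products $V=(\langle X_i,X_{r+1}\rangle)_{i\le r}$ has an explicit density. Write $G$ for the $r\times r$ Gram matrix of the first $r$ points and express $V=\Phi^\top X_{r+1}$, where $\Phi$ is the frame spanned by those points. The density of $V$ is then proportional to
\[
 \det G^{-1/2}\,(1-v^\top G^{-1}v)_+^{(d-r-2)/2},
\]
because the projection of a uniform point onto an $r$-dimensional subspace has density proportional to $(1-|u|^2)^{(d-r-2)/2}$. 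The target density is the product $\prod_i f_d(v_i)$ with $f_d(t)\propto(1-t^2)^{(d-3)/2}$. The conditional KL divergence is thus an explicit expectation of a log ratio. I would expand it in $d^{-1}$ using $G=I+E$ with $E_{ij}=O(d^{-1/2})$. The leading surviving term should be $\frac{1}{2}\sum_{i<j\le r}$ of fluctuations of $E_{ij}v_iv_j$-type terms. After averaging, each pair $i<j\le r$ should contribute $\frac{1}{2}\E[(d\,T_d)^2]/d^2\approx 1/(2d)$, giving a conditional cost of $\binom r2/(2d)+O(r^3/d^2)$. Summing over $r$ then yields $\binom n3/(2d)+O(n^4/d^2)$.

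As an alternative route for this step, I could instead compare $\sqrt d$-rescaled Wishart and GOE matrices. The cleaner choice for obtaining the sharp constant $1/(2d)$, however, is the sequential Gaussian computation. In that version, $X_{r+1}$ is replaced by a Gaussian vector whose inner products are exactly $N(0,G/d)$. The Gaussian KL divergence is then $\frac12[\tr(G)-r-\log\det G]$, which expands to $\frac14\|E\|_{\HS}^2+O(\|E\|^3)$ and has mean $\approx\binom r2/(2d)$. The main obstacle will be controlling the errors uniformly. These come from the radial/normalization corrections (the sphere versus the Gaussian, and $f_d$ versus the normal law), together with the higher-order terms $\tr(E^3)$ and the moments $\E\|E\|_{\op}^k$ when $r$ is close to $d/2$. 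Here I would use $\|E\|_{\op}\lesssim\sqrt{r/d}$ with exponential tails, available since $r\le d/2$ keeps $G$ well conditioned, so that cubic terms contribute $O(r^2/d^2)$ per step and $O(n^3/d^2)\le O(n^4/d^2)$ in total.

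Finally, for the jump case, this bound gives $\KL(P\Vert Q)\lesssim n^3/d\asymp n^3\tau_{3,d}^2\le\Lambda_{n,d}$, using $\tau_{3,d}^2\asymp d^{-1}$ from Proposition~\ref{gen:prop:spectral-scales}. Smallness of $\Lambda_{n,d}$ forces $n\ll d^{1/3}\le d/2$, so the hypothesis $n\le d/2$ holds and the $O(n^4/d^2)$ term is negligible against $n^3/d$.
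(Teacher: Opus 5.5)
Your reduction matches the paper. The same Markov kernel carries the Gram entries to $P$ and i.i.d.\ $\mu_d$ entries to $Q$, so $\KL(P\Vert Q)\le\KL(\mathcal L(W)\Vert\mu_d^{\otimes\binom n2})$. Your chain-rule decomposition and conditional density $\propto\det G^{-1/2}(1-v^\top G^{-1}v)_+^{(d-r-2)/2}$ are also correct. Summed over $r$, they are a stepwise version of the paper's one-shot computation with the full Gram density $c_{n,d}(\det R)^{(d-n-1)/2}$.

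The gap is in the step you flag as the main obstacle: your proposed error control does not give $O(n^4/d^2)$, for three reasons.
\begin{itemize}
\item The cubic term is not $O(r^2/d^2)$ per step. Indeed $\E\tr(E^3)=r(r-1)(r-2)\,\E[T_{12}T_{23}T_{31}]=(r)_3/d^2$, and the step cost has a genuine correction of order $r^3/d^2$.
\item Your tool cannot reach that order. The bound $|\tr E^3|\le\|E\|_{\op}\|E\|_{\HS}^2\lesssim r^{5/2}/d^{3/2}$ sums to $n^{7/2}/d^{3/2}$, which exceeds $n^4/d^2$ by a factor $\sqrt{d/n}$. The cubic term must be handled through its exact mean, and only the quartic remainder can be bounded via $\|E\|_{\op}^2\|E\|_{\HS}^2$. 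Also, $\|E\|_{\op}\approx2\sqrt{r/d}+r/d$ exceeds $1$ near $r=d/2$, so the series for $\log\det(I+E)$ does not converge there and you need a different remainder estimate.
\item Replacing $X_{r+1}$ by a Gaussian and $f_d$ by the normal law changes both arguments of the divergence. You give no mechanism for bounding the resulting change in KL.
\end{itemize}

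One observation removes all three issues. Conditional on $G$, under $V\sim p_G$ each $V_i=\langle X_i,X_{r+1}\rangle$ still has law $\mu_d$. Also, $V^\top G^{-1}V$ is the squared norm of the projection of $X_{r+1}$ onto an $r$-dimensional subspace, so it has law $\mathrm{Beta}(r/2,(d-r)/2)$. Neither law depends on $G$. Taking the expectation of your explicit log ratio therefore gives exactly
\[
\KL\big(p_G\,\Vert\,f_d^{\otimes r}\big)=\KL\big(p_{I_r}\,\Vert\,f_d^{\otimes r}\big)-\tfrac12\log\det G .
\]
The first term is a deterministic constant: the divergence of the first $r$ coordinates of a uniform point from the product of their marginals. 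The second term equals your Gaussian divergence because $\tr G=r$, which is why your Gaussian heuristic produced the right constant $1/(2d)$.

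Now $\det G_r$ is a product of independent $\mathrm{Beta}((d-j)/2,j/2)$ variables. Hence $\E\log\det G_r=\sum_{j<r}[\psi((d-j)/2)-\psi(d/2)]$, and the constant is a ratio of Gamma functions. Summing over $r$ and using $\det G_{r+1}=\det G_r\,(1-V^\top G_r^{-1}V)$ reproduces exactly the paper's formula
\[
\log\big(c_{n,d}/c_{2,d}^{\binom n2}\big)+\tfrac{d-n-1}{2}\E\log\det R-\tfrac{d-3}{2}\tbinom n2\E\log(1-T_d^2).
\]
Its digamma expansion gives $\binom n3/(2d)+O(n^4/d^2)$, with no operator-norm or Gaussian-comparison estimates. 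Your final paragraph on the jump case agrees with the paper.
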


For a jump, Proposition~\ref{gen:prop:spectral-scales} gives
$\tau_{3,d}^2\asymp d^{-1}$.
Thus $\Lambda_{n,d}\le c_{\ref{prop:cusp-KL}}(K)$ implies $n\lesssim d^{1/3}$,
and~\eqref{gen:eq:universal-Gram-KL} gives
$\KL(P\Vert Q)\lesssim n^3/d\lesssim\Lambda_{n,d}$.

\begin{proof}[Proof of Proposition~\ref{prop:jump}]
We first reduce the graph comparison to a comparison of latent
matrices, as in~\cite[Section~5]{bubeck2016testing}.
Let $R=(T_{ij})_{1\le i,j\le n}$ be the Gram matrix of
$X_1,\ldots,X_n$, where $T_{ij}=\langle X_i,X_j\rangle$
and $R_{ii}=1$.
We identify $R$ with its entries above the diagonal, and let
$\mu_d$ be the law of $T_d$.
Conditional on $R$, generating edges independently with
probabilities $K(R_{ij})$ gives $P$.
If the entries above the diagonal are instead independent
with law $\mu_d$, the same procedure gives $Q$, since
$\E[K(T_d)]=p_d$.
Thus data processing gives
\[
 \KL(P\Vert Q)
 \le \KL\big(\mathcal L(R)\Vert
                  \mu_d^{\otimes\binom n2}\big).
\]
It remains to bound the divergence on the right.

By~\cite[equation~2.1]{rouault2007asymptotic}, the density
of $R$ on positive definite matrices with diagonal entries one is
\[
 c_{n,d}(\det R)^{(d-n-1)/2},\quad
 c_{n,d}=\prod_{j=1}^{n-1}
 \frac{\Gamma(d/2)}{\pi^{j/2}\Gamma((d-j)/2)}.
\]
The density of $\mu_d$ is
$c_{2,d}(1-t^2)^{(d-3)/2}$ for $-1<t<1$.
Taking the expected log ratio of these densities therefore gives
\begin{align*}
 \KL\big(\mathcal L(R)\Vert\mu_d^{\otimes\binom n2}\big)
 &=\log\frac{c_{n,d}}{c_{2,d}^{\binom n2}}
 +\frac{d-n-1}{2}\E[\log\det R]
 -\frac{d-3}{2}\binom n2\E[\log(1-T_d^2)].
\end{align*}

We now compute the three terms.
The Gram--Schmidt decomposition expresses $\det R$ as a product
of independent $\operatorname{Beta}((d-j)/2,j/2)$ variables,
$1\le j\le n-1$; see
\cite[Proposition~2.1(2)]{rouault2007asymptotic}.
For $B\sim\operatorname{Beta}(a,b)$,
$\E[\log B]=\psi(a)-\psi(a+b)$, where
$\psi=\Gamma'/\Gamma$.
Using this identity and the Gamma function estimates
in~\cite[Section~8]{rouault2007asymptotic}, Taylor expansion gives
\begin{align*}
 \log\frac{c_{n,d}}{c_{2,d}^{\binom n2}}
 &=-\frac{\binom n3}{2d}+O(n^4/d^2),\\
 \E[\log\det R]
 &=-\frac{\binom n2}{d}
   -\frac{n(n-1)(2n+5)}{12d^2}+O(n^4/d^3),\\
 \E[\log(1-T_d^2)]
 &=-\frac1d-\frac{3}{2d^2}+O(d^{-3}).
\end{align*}
The remainders are uniform for $n\le d/2$, since
$(d-j)/2\ge d/4$ throughout the expansions.
Substituting these estimates, the two expectation terms together
contribute $\binom n3/d+O(n^4/d^2)$.
Hence
\[
 \KL\big(\mathcal L(R)\Vert\mu_d^{\otimes\binom n2}\big)
 =-\frac{\binom n3}{2d}+\frac{\binom n3}{d}
      +O(n^4/d^2)
 =\frac{\binom n3}{2d}+O(n^4/d^2).
\]
Together with data processing, this proves
\eqref{gen:eq:universal-Gram-KL}.
\end{proof}

We are now ready to combine the bounds for all four cusp cases.

\begin{proof}[Proof of Proposition~\ref{prop:cusp-KL}]
For pure cusps and mixed cusps away from $\beta=\alpha+1/3$,
Proposition~\ref{gen:prop:spectral-scales} gives
$n^3d^{-3/s}\asymp n^3\tau_{3,d}^2$.
Since $\epsilon_n^3\lesssim\Lambda_{n,d}$, choosing $c_{\ref{prop:cusp-KL}}(K)$ sufficiently small ensures the smallness conditions in the following three propositions.
Combining Propositions~\ref{prop:cusp-triangle-two},
\ref{prop:cusp-triangle-three}, and~\ref{prop:cusp-triangle-remaining},
\begin{equation}
 \KL(P\Vert Q)
 \le G_2+G_3+\sum_{k=4}^{9}G_k+\sum_{k=10}^{n-1}G_k
 \lesssim n^3d^{-3/s}\lesssim\Lambda_{n,d}.
 \label{gen:eq:triangle-scale-KL}
\end{equation}
Terms beyond $n-1$ are omitted.
On the critical relation, Propositions~\ref{prop:cusp-critical-two},
\ref{prop:cusp-triangle-three}, and~\ref{prop:cusp-critical-higher}
give the same KL bound. Lemma~\ref{lem:localization} transfers
both conclusions to the original link and its actual traces.

For a jump, the Gram bound~\eqref{gen:eq:universal-Gram-KL}
gives $\KL(P\Vert Q)\lesssim n^3/d\lesssim\Lambda_{n,d}$,
as shown in the jump case.
For $n=2$, the graph law equals its matched null.
This proves~\eqref{eq:cusp-KL} in all four cases.
\end{proof}

Pinsker's inequality now gives $\TV(P,Q)\to0$ when
$\Lambda_{n,d}\to0$. Together with the cycle tests in
Section~\ref{sec:cusp-spectrum}, this completes the proof of
Theorem~\ref{thm:cusp}.

\begin{remark}[Triangle cancellation]
\label{rem:cusp-cancellation}
For a mixed cusp, by~\eqref{eq:cusp-critical-coefficient}, the leading triangle contributions cancel when
\[
 \beta=\alpha+1/3,\quad
 a_o=-\frac{a_e\alpha M_\alpha}
              {2^{1/3}M_{\alpha+4/3}}.
\]
The remaining triangle signal depends on the smaller order terms
in the local expansions. For example, fix
$0<\alpha\le1$, choose $a_e\ne0$ sufficiently small, and define
\[
 K(t)=\frac12+a_e|t|^\alpha
       +a_o\operatorname{sgn}(t)|t|^{\alpha+1/3},
 \quad t\in[-1,1],
\]
with $a_o$ as above. This is a fixed dense kernel.
The trace estimates in Appendix~\ref{app:cusp-spectrum} give
$\tau_{3,d}=O(d^{-3\alpha/2-2})$ and
$\tau_{4,d}\asymp d^{-2\alpha-5/3}$.
Thus Theorem~\ref{thm:cusp} gives the sharp threshold
$d_\star=n^{6/(6\alpha+5)}$, attained by signed quadrilateral counts.
In particular, taking $n\asymp d^{\alpha+1}$ gives
\[
 n^3\tau_{3,d}^2=O(d^{-1})\to0,\quad
 n^4\tau_{4,d}^2\asymp d^{2/3}\to\infty.
\]
Hence $\TV(P,Q)\to1$ even though $n^3\tau_{3,d}^2\to0$.
This contradicts the indistinguishability direction
of~\cite[Conjecture~1]{mao2026random}.
\end{remark}
\section{Impossibility results for finite flat kernels}\label{sec:polynomial}

We prove the impossibility results for Theorem~\ref{thm:finite-flat} in this section.
Recall that $q_m=1$ for odd $m$ and $q_m=2$ for even $m$.
Fix $K\in\maK_{\mathrm{smooth}}(m)$ with $m\ge2$, and write
$n=d^{m+q_m/3}/\omega_d$, where $\omega_d\to\infty$.
Recall that $\bar G_k=\binom n{k+1}\bar g_k$, where
$\bar g_k=g_{n,k}$ corresponds to conditioning on the full graph
$A^{(n)}$. Since $g_{r,k}\le g_{n,k}$, \eqref{eq:cavity-costs}
gives $G_k\le\bar G_k$.
By Lemma~\ref{lem:cavity}, it therefore suffices to show that
$\sum_{k=2}^{n-1}\bar G_k=o(1)$.
This section is organized as follows:
\begin{itemize}
 \item Section~\ref{sec:smooth-order-two}: Order two estimates
 using replicas and truncation.
 \item Section~\ref{sec:smooth-order-three}: Order three estimates
 using a concentration inequality.
 \item Section~\ref{sec:smooth-higher-orders}: Higher order estimates
 using Newton's identities.
 \item Section~\ref{sec:smooth-transfer}: Return to the original
 distribution, control of the Taylor remainder, and completion of the proof.
\end{itemize}
For the fixed order arguments in
Sections~\ref{sec:smooth-order-two}--\ref{sec:smooth-higher-orders},
consider a centered kernel
\begin{equation}
 \kappa(x,y)=\sum_{q\in\mathcal I}b_qU_q(x,y),
 \quad U_q(x,y)=P_{q,d}(\langle x,y\rangle),
 \label{eq:finite-harmonic-model}
\end{equation}
where $\mathcal I$ is a fixed finite set of positive degrees,
the coefficients $b_q$ may depend on $d$, and
$p_d+s_d\kappa$ takes values in $[0,1]$. For ease of notation,
in these fixed order arguments, $P$ denotes the graph law of this
auxiliary kernel. We retain $Q=G(n,p_d)$ and use
$\E_{\post}$, $\Gamma_d$, $h_k$, and $\bar g_k$ for its posterior
replicas, likelihood overlap, interaction kernels, and posterior
moments, respectively, as in Section~\ref{sec:cavity}.
Unsubscripted expectations involving $(X,X')$ refer to two independent
prior samples. The higher order tail below concerns the original kernel;
Section~\ref{sec:smooth-transfer} returns to its posterior for the fixed orders.

\subsection{Order two}
\label{sec:smooth-order-two}

Recall that $h_2$ is the kernel of $T_\kappa^2$. By orthogonality,
$h_2=\sum_q w_qU_q$, where $w_q=b_q^2/N_{q,d}$.
The replica identity turns the squared posterior mean into a quadratic
form. Define
\begin{align}
 O_{q\ell}&=\sum_{i<j}U_q(X_i,X_j)U_\ell(X_i',X_j'),\notag\\
 \mathbf C_{q\ell}
 &=\sum_{i<j}\E_P\big[
   \E_P[U_q(X_i,X_j)\mid A]\E_P[U_\ell(X_i,X_j)\mid A]\big]
 =\E_{\post}[O_{q\ell}].
 \label{eq:smooth-quadratic-form}
\end{align}
Let $w=(w_q)_{q\in\mathcal I}$. Then $\mathbf C\succeq0$ and
$\binom n2\bar g_2=w^\top\mathbf Cw$.
It is enough to bound $\mathbf C_{qq}$, since
$w^\top\mathbf Cw\le|\mathcal I|\sum_q w_q^2\mathbf C_{qq}$.
By~\eqref{eq:4-5}, the likelihood weight satisfies
\begin{equation}
 \Gamma_d(X,X')
 =\prod_{i<j}\big(1+\kappa(X_i,X_j)\kappa(X_i',X_j')\big)
 \le e^{S_\kappa},
 \quad S_\kappa=\sum_{q,\ell}b_qb_\ell O_{q\ell}.
 \label{eq:finite-likelihood-overlap}
\end{equation}
Feasibility makes each factor nonnegative. To apply the planting
inequality in Lemma~\ref{lem:planting}, we therefore need exponential
moments of $S_\kappa$ together with one diagonal overlap $O_{qq}$.

Conditioning on one latent sample turns each overlap into a quadratic
form in the other sample. The deterministic bound on its Gram matrix
is of order $n$, which is too large for the needed exponential estimate.
We instead restrict to the following event. For fixed $D>0$, define
$N_{\max}=\max_{q\in\mathcal I}N_{q,d}$ and
\begin{equation}
 \mathcal G_D(X)=\bigcap_{q\in\mathcal I}
 \Big\{\|\mathbf U_q(X)\|_{\op}
 \le C_{\ref{lem:gram-event}}(D,\mathcal I)\big(1+n/N_{q,d}+\log(n+N_{q,d})\big)\Big\},
 \label{eq:smooth-gram-event}
\end{equation}
where $\mathbf U_q(X)=(U_q(X_i,X_j))_{i,j\le n}$.
Lemma~\ref{lem:gram-event} gives
$\P[\mathcal G_D^c]\le(n+N_{\max})^{-D}$ after increasing
$C_{\ref{lem:gram-event}}(D,\mathcal I)$ for the fixed set of degrees.
The auxiliary model will be used with the coefficient scales of the
Taylor expansion of $K$. Accordingly, in
Sections~\ref{sec:smooth-order-two}--\ref{sec:smooth-higher-orders},
we restrict it to expansions satisfying
\begin{align}
 \sum_q b_q^2&=O(1),
 \quad \sum_q\frac{b_q^2}{N_{q,d}}=O(d^{-m}),\quad 
 \max_q\frac{|b_q|}{N_{q,d}}=O(d^{-(m+q_m)/2}),
 \quad \sum_q\frac{b_q^2}{N_{q,d}^2}=O(d^{-(m+q_m)}).
 \label{eq:finite-bounds}
\end{align}
These bounds hold for the centered Taylor polynomial used in
Section~\ref{sec:smooth-transfer}, as verified in
Lemma~\ref{lem:taylor-finite}. Indeed, a Taylor monomial $t^j$,
$j\ge m$, contributes $O(d^{-(j-q)/2})$ to degree $q$ only if
$q\le j$ and $j-q$ is even. Since $N_{q,d}\asymp d^q$ and
$j+q\ge m+q_m$ for these positive degrees, summing the finitely
many Taylor terms gives~\eqref{eq:finite-bounds}.
Thus this restriction on the auxiliary model adds no assumption on
$K\in\maK_{\mathrm{smooth}}(m)$.
We also require
\begin{equation}
 \max_{q,\ell\in\mathcal I}|b_qb_\ell|
 \big(1+n/N_{q\vee\ell,d}+\log(n+N_{q\vee\ell,d})\big)=o(d),
 \label{eq:finite-field}
\end{equation}
where $q\vee\ell=\max\{q,\ell\}$.
Condition~\eqref{eq:finite-field} permits the conditional exponential
estimate of Lemma~\ref{lem:signed-block} on~\eqref{eq:smooth-gram-event}.
Section~\ref{sec:smooth-transfer} also verifies this condition for
the Taylor coefficients. The weighted bounds in~\eqref{eq:finite-bounds}
retain the smaller degree one component that can occur when $m$ is even.

Let $V_0=nd^{-m}\big(1+\log(n+N_{\max})\big)+n^2d^{-2m}$.
The same exponential bound will also be used at higher orders, so we
allow one additional pair: $F=0$ or
$F=t|b_qb_\ell|O_{q\ell}$, where $0<t\le t_0$ and $t_0>0$ is fixed.

\begin{lemma}[Exponential moments of harmonic overlaps]
\label{lem:finite-mgf}
Under~\eqref{eq:finite-bounds} and~\eqref{eq:finite-field},
for every fixed $a,D>0$,
\begin{equation}
 \log\E\big[\1_{\mathcal G_D(X)\cap\mathcal G_D(X')}
 e^{a(S_\kappa+F)}\big]\le C_{a,t,D}(1+V_0).
 \label{eq:finite-mgf}
\end{equation}
\end{lemma}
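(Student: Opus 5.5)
Because $|\mathcal I|$ is fixed, Hölder's inequality reduces the claim to bounding $\log\E[\1_{\mathcal G_D(X)\cap\mathcal G_D(X')}e^{a'b_qb_\ell O_{q\ell}}]$ for a single pair $(q,\ell)$ and a fixed multiple $a'$ of $a$. The extra term $F=t|b_qb_\ell|O_{q\ell}$ has the same form with $t\le t_0$, so it only enlarges $a'$. For one pair, I would condition on $X'$. Write $U_\ell(x,y)=N_{\ell,d}^{-1}\sum_{s}Y_{\ell s}(x)Y_{\ell s}(y)$ with an orthonormal basis $\{Y_{\ell s}\}$ of $\maH_{\ell,d}$, and similarly for $q$. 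Then
\[
O_{q\ell}=\tfrac12\sum_{i\ne j}U_q(X_i,X_j)U_\ell(X_i',X_j'),
\]
which, given $X'$, is a decoupled-type quadratic form in the vectors $\phi_q(X_i)=(Y_{qs}(X_i))_s/\sqrt{N_{q,d}}$. Its "matrix" has the entries of $\mathbf U_\ell(X')$ off the diagonal. The diagonal terms are excluded automatically, and the form is canonical in each $X_i$ because $\E U_q(X_i,x)=0$.

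The key step is the conditional exponential estimate of Lemma~\ref{lem:signed-block} on the event $\mathcal G_D(X')$ (and, symmetrically, $\mathcal G_D(X)$). Heuristically, the form behaves like $\sum_{i\neq j}M_{ij}\langle g_i,g_j\rangle$ with $M=\mathbf U_\ell(X')$ and Gaussian-like $g_i\in\mathbb R^{N_{q,d}}$ of covariance $N_{q,d}^{-1}I$. Its log-mgf at parameter $\lambda$ is then of order $\lambda^2\|M\|_{\HS}^2/N_{q,d}$, provided $\lambda\|M\|_{\op}$ is small. The smallness condition $|b_qb_\ell|\,\|\mathbf U_\ell(X')\|_{\op}=o(1)$ (or $o(d)$ after accounting for normalization) is exactly what \eqref{eq:finite-field} together with the Gram event supplies. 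So I would invoke Lemma~\ref{lem:signed-block} with $\lambda=a'b_qb_\ell$ to get a conditional bound of the form
\[
C\,b_q^2b_\ell^2\,\frac{\|\mathbf U_\ell(X')\|_{\HS}^2}{N_{q,d}}.
\]
One then averages $\exp$ of this over $X'$. Note that $\|\mathbf U_\ell(X')\|_{\HS}^2=n+\sum_{i\ne j}U_\ell(X_i',X_j')^2$ has mean $n+n^2/N_{\ell,d}$. Using \eqref{eq:finite-bounds}, namely $b_q^2/N_{q,d}=O(d^{-m})$ and $|b_\ell|/N_{\ell,d}=O(d^{-(m+q_m)/2})$, together with $b_\ell^2=O(1)$, the mean exponent is $O(nd^{-m}+n^2d^{-2m})\lesssim V_0$.

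The main obstacle is the remaining exponential average over $X'$ of a multiple of $\sum_{i\ne j}U_\ell(X_i',X_j')^2$; one cannot simply replace it by its mean. I would bound it on $\mathcal G_D(X')$ deterministically via $\|\mathbf U_\ell\|_{\HS}^2\le n\|\mathbf U_\ell\|_{\op}$. This gives an exponent of order $nd^{-m}(1+n/N+\log(n+N))$, which after the case split $n\le N$ versus $n>N$ (using the fourth bound in \eqref{eq:finite-bounds} for $b_\ell^2 n^2/N^2$) is $\lesssim V_0$. This explains the logarithmic factor in $V_0$. If this crude bound were too weak, I would instead center $U_\ell^2$ and apply a second, scalar Bernstein/Hanson--Wright-type bound to the resulting canonical $U$-statistic. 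Combining the pieces yields \eqref{eq:finite-mgf}, with constants depending on $a,t,D$ through $a'$ and the Gram constant.
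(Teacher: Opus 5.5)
Your outline has the same architecture as the paper's proof: H\"older over the $|\mathcal I|^2$ pairs, with $F$ absorbed into one pair's coefficient; conditioning on one replica; Lemma~\ref{lem:signed-block}; the Gram event making the conditional bound deterministic; and summing the coefficient bounds. But the step where you apply Lemma~\ref{lem:signed-block} fails as stated.

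You always condition on $X'$ and claim that \eqref{eq:finite-field} together with $\mathcal G_D(X')$ gives the admissibility condition $|t|\,\|\mathbf U_\ell(X')\|_{\op}<c\,d$. However, \eqref{eq:finite-field} only controls $|b_qb_\ell|\,n/N_{q\vee\ell,d}$. On $\mathcal G_D(X')$ you only know $\|\mathbf U_\ell(X')\|_{\op}\lesssim 1+n/N_{\ell,d}+\log(n+N_{\ell,d})$, which is far larger when $\ell<q$.

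Concretely, take the Taylor coefficients with odd $m\ge3$ and the pair $(q,\ell)=(m,1)$. Then $\mathbf U_1(X')$ is the Gram matrix of $n$ uniform points in $\mathbb R^d$, so its norm really is of order $n/d$. Moreover $|b_mb_1|\,n/d\asymp d^{m/2-1/6}/\omega_d$, which is not $o(d)$, so Lemma~\ref{lem:signed-block} gives nothing for this pair.

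The missing idea, which is exactly what the paper's proof uses, is to condition on the replica carrying the \emph{larger} degree. For $\ell<q$, condition on $X$, bound $\|\mathbf U_q(X)\|_{\op}$ on $\mathcal G_D(X)$, and take the exponential moment over $X'$. Then the hypothesis involves only $n/N_{q\vee\ell,d}$, which is what \eqref{eq:finite-field} controls. The conditional exponent becomes $Cn\,b_q^2b_\ell^2N_{q\wedge\ell,d}^{-1}\bigl(1+n/N_{q\vee\ell,d}+\log(n+N_{\max})\bigr)$.

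Two smaller points. First, Lemma~\ref{lem:signed-block} already gives a bound $Ct^2n\|\mathbf U_\ell(X')\|_{\op}/N_{q,d}$, not a Hilbert--Schmidt bound. So on the Gram event the conditional exponent is deterministic, and your ``main obstacle'' of averaging $\sum_{i\ne j}U_\ell(X_i',X_j')^2$ over $X'$ never arises.

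Second, in the final summation keep $b_\ell^2$ attached to $N_{\ell,d}$. The $n/N$ part equals $n^2(b_q^2/N_{q,d})(b_\ell^2/N_{\ell,d})\lesssim n^2d^{-2m}$, by the second bound in \eqref{eq:finite-bounds} applied to both degrees. The rest is at most $nd^{-m}(1+\log(n+N_{\max}))$. No case split is needed, and the fourth bound in \eqref{eq:finite-bounds} is not the relevant one. Your expression $nd^{-m}(1+n/N+\log(n+N))$ has already replaced $b_\ell^2$ by $O(1)$, and it is not $\lesssim V_0$ when $N\ll d^m$.
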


The proof in Appendix~\ref{app:proof-finite-mgf} conditions on the
sample carrying the larger degree in each pair, applies
Lemma~\ref{lem:signed-block}, and combines the finitely many pairs
by H\"older's inequality. We now use the bound to control
the diagonal entries of $\mathbf C$.

\begin{proposition}[Order two bound for finite harmonic expansions]
\label{prop:finite-order-two}
Under the assumptions above,
\begin{equation}
 \binom n2\bar g_2\le C(1+V_0)d^{-(m+q_m)}.
 \label{eq:order-two-summary}
\end{equation}
\end{proposition}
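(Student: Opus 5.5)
The plan is to reduce \eqref{eq:order-two-summary} to a bound on each diagonal entry $\mathbf C_{qq}$, and to get that bound from the exponential moment estimate of Lemma~\ref{lem:finite-mgf}, transferred to the posterior through Lemma~\ref{lem:planting}. Since $\mathbf C\succeq0$, we have $\binom n2\bar g_2=w^\top\mathbf Cw\le|\mathcal I|\sum_q w_q^2\mathbf C_{qq}$ with $w_q=b_q^2/N_{q,d}$. The target is therefore
\[
 \mathbf C_{qq}\le C\,\frac{1+V_0}{b_q^2}\qquad(b_q\ne0).
\]
Given this, $w_q^2\mathbf C_{qq}\le C(1+V_0)\,b_q^2/N_{q,d}^2$. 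The last condition in~\eqref{eq:finite-bounds}, $\sum_q b_q^2/N_{q,d}^2=O(d^{-(m+q_m)})$, then gives \eqref{eq:order-two-summary} after summing over the finitely many $q$. Degrees with $b_q=0$ contribute nothing.

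To bound $\mathbf C_{qq}=\E_{\post}[O_{qq}]$, fix $t=t_0$ and set $F=t\,b_q^2O_{qq}$, which is the extra pair allowed in Lemma~\ref{lem:finite-mgf}. It suffices to bound $\E_{\post}[F_+]$ by $C(1+V_0)$. Split according to $\mathcal G:=\mathcal G_D(X)\cap\mathcal G_D(X')$.

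On $\mathcal G$, apply Lemma~\ref{lem:planting} to the events $\{F>u\}\cap\mathcal G$. By \eqref{eq:finite-likelihood-overlap}, $\Gamma_d\le e^{S_\kappa}$, and Markov's inequality gives $\1_{\{F>u\}}\le e^{F-u}$. Hence
\[
 \P_{\post}[\{F>u\}\cap\mathcal G]
 \le\sqrt{e^{-u}\,\E\big[\1_{\mathcal G}e^{S_\kappa+F}\big]}
 \le\exp\Big(-\frac u2+\frac{C(1+V_0)}2\Big),
\]
using \eqref{eq:finite-mgf} with $a=1$. Integrate this tail over $u\ge0$, using the trivial bound $1$ for $u\le C(1+V_0)$. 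This gives $\E_{\post}[F_+\1_{\mathcal G}]\le C(1+V_0)+2$.

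On $\mathcal G^c$, use the deterministic bound $|O_{qq}|\le n^2$, which holds since $|P_{q,d}|\le1$. Lemma~\ref{lem:planting} says each replica is marginally prior distributed, and Lemma~\ref{lem:gram-event} gives $\P[\mathcal G_D^c]\le(n+N_{\max})^{-D}$. Together these give
\[
 \E_{\post}[F_+\1_{\mathcal G^c}]\le 2t\,b_q^2n^2(n+N_{\max})^{-D}.
\]
Since $b_q^2=O(1)$, this is $O(1)$ once $D\ge2$. Combining both parts, $b_q^2\,\mathbf C_{qq}\le t^{-1}\E_{\post}[F_+]\le C(1+V_0)$, which is the target.

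The main obstacle is already contained in Lemma~\ref{lem:finite-mgf}, namely the exponential moment of $S_\kappa+F$ on the Gram event. This step also needs the field condition~\eqref{eq:finite-field} for the pair $(q,q)$, which is assumed. The remaining care is bookkeeping. The constant in the square-root comparison only halves the exponent, so the integration still converges. The restriction to $\mathcal G$ must be placed inside the event before applying Lemma~\ref{lem:planting}, because that lemma only bounds probabilities of events and cannot be applied after the fact. Finally, $C$ depends on $t_0$ and $D$ but not on $n$ or $d$.
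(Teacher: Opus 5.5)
Your proposal is correct and follows essentially the same route as the paper's proof. Both arguments reduce to the diagonal entries $\mathbf C_{qq}$ via positive semidefiniteness, obtain the exponential tail of $O_{qq}$ from Lemma~\ref{lem:finite-mgf} with $F=tb_q^2O_{qq}$, pass it to the posterior on the Gram event through Lemma~\ref{lem:planting}, and handle the complement using $|O_{qq}|\lesssim n^2$ and the prior marginal of each replica. The only difference is cosmetic: you integrate the tail of $F_+$ after splitting on $\mathcal G$, whereas the paper first writes one combined posterior tail bound for $O_{qq}$ that includes the $(n+N_{\max})^{-D}$ term.
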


\begin{proof}
Fix $D>2$ and an active degree $q$, so $b_q\ne0$.
Lemma~\ref{lem:finite-mgf} with $F=tb_q^2O_{qq}$ gives
\[
 \E\big[\Gamma_d\1_{\mathcal G_D(X)\cap\mathcal G_D(X')}
 \1_{\{O_{qq}>x\}}\big]
 \le\exp\big(C(1+V_0)-tb_q^2x\big).
\]
Apply~\eqref{eq:4-6} in Lemma~\ref{lem:planting} and use the prior
marginal of each posterior replica. After adjusting constants,
\begin{equation}
 \P_{\post}\Big[O_{qq}>\frac{C(1+V_0)+u}{b_q^2}\Big]
 \le Ce^{-cu}+2(n+N_{\max})^{-D},\quad u\ge0.
 \label{eq:mc-11}
\end{equation}
Since $|O_{qq}|\le\binom n2$, integrating the upper tail yields
\begin{equation}
 0\le\mathbf C_{qq}=\E_{\post}[O_{qq}]
 \le\frac{C(1+V_0)}{b_q^2}+Cn^2(n+N_{\max})^{-D}
 \le\frac{C(1+V_0)}{b_q^2},
 \label{eq:mc-12}
\end{equation}
where the last inequality uses $D>2$ and $b_q^2=O(1)$.
Positive semidefiniteness and~\eqref{eq:finite-bounds} now give
\begin{equation}
 \binom n2\bar g_2
  =w^\top\mathbf Cw
 \le|\mathcal I|\sum_qw_q^2\mathbf C_{qq}  
  \le C(1+V_0)\sum_q\frac{b_q^2}{N_{q,d}^2}
 \le C(1+V_0)d^{-(m+q_m)}.
 \label{eq:mc-13}
\end{equation}
\end{proof}

Multiplying by $(n-2)/3$, we obtain
\begin{equation}
 \binom n3\bar g_2
 \lesssim nd^{-(m+q_m)}
 +n^2\log(n+N_{\max})d^{-(2m+q_m)}
 +n^3d^{-(3m+q_m)}=o(1).
 \label{eq:mc-14}
\end{equation}
The last term is $\omega_d^{-3}$, and the first two have additional
negative powers of $d$.

\subsection{Order three}
\label{sec:smooth-order-three}

Recall the replica identity from Section~\ref{sec:cavity}. Define
\begin{equation}
 T_3=\sum_{i<j<k}h_3(X_i,X_j,X_k)h_3(X_i',X_j',X_k'),
 \quad \E_{\post}[T_3]=\binom n3\bar g_3.
 \label{eq:smooth-cubic-replica}
\end{equation}
It suffices to show that $\E_{\post}[|T_3|]=o(n^{-1})$.
Under the prior, conditioning on $X'$ makes $T_3$ a canonical
$U$-statistic in the independent points $X_1,\ldots,X_n$.
To bound its tail, we use a consequence of
\cite[Theorem~7]{adamczak2006moment}.
Let $M(X')=\max_{i<j<k}|h_3(X_i',X_j',X_k')|$, and let
$a_r$, $0\le r\le3$, be the largest $L^2(\sigma_d^{\otimes r})$
norm of $h_3$ obtained by fixing its other $3-r$ arguments.
In particular, $a_0=\|h_3\|_\infty$ and $a_3=\|h_3\|_2$.
If $M(X')=0$, then $T_3=0$. Otherwise, for
$t\ge\max_{0\le r\le3}M(X')n^{r/2}a_r$, the inequality gives
\begin{equation}
 \P[|T_3|>t\mid X']
 \le C\exp\Big(-c\min_{0\le r\le3}
 \Big(\frac{t}{M(X')n^{r/2}a_r}\Big)^{2/(6-r)}\Big).
 \label{eq:smooth-cubic-adamczak}
\end{equation}
See Lemma~\ref{lem:adamczak-cor} for the general statement and proof.
The four terms correspond to integrating zero, one, two, or three
variables. We first control their common factor $M(X')$ on an event
of high probability.

Let $H(x,y)$ be the kernel of $T_\kappa^2$.
By Lemma~\ref{lem:spherical-hypercontractivity} and
\eqref{eq:finite-bounds},
\[
 \|h_3\|_2^2=\E[H(X_1,X_2)^3]
 \le\|H\|_3^3\lesssim d^{-3m-3q_m/2}.
\]
For a fixed $D>5$, let $L_n=C_{\ref{lem:cubic-envelope}}(D)(\log(n+d))^{C_{\ref{lem:cubic-envelope}}(D)}$ and define
\begin{equation}
 \mathcal A_D(X')=
 \big\{M(X')\le L_nd^{-3m/2-3q_m/4}\big\}.
 \label{eq:smooth-cubic-event}
\end{equation}
Lemma~\ref{lem:cubic-envelope} gives
$\P[\mathcal A_D^c]\le(n+d)^{-D}$.
On $\mathcal A_D(X')$, the estimates
\eqref{eq:hc-1}--\eqref{eq:hc-4} in the proof of
Proposition~\ref{prop:finite-order-three} give
$M(X')n^{r/2}a_r\lesssim D_r$ for $0\le r\le3$, where
\begin{align}
 D_0&=L_nd^{-3m-3q_m/4},
 \quad D_1=L_n\sqrt n d^{-3m-5q_m/4},\notag\\
 D_2&=L_nnd^{-3m-3q_m/2},
 \quad D_3=L_nn^{3/2}d^{-3m-3q_m/2}.
 \label{eq:smooth-cubic-scales}
\end{align}
Thus, on $\mathcal A_D(X')$, whenever $t\ge C\max_{0\le r\le3}D_r$,
substitution in~\eqref{eq:smooth-cubic-adamczak} gives
\begin{equation}
 \P[|T_3|>t\mid X']
 \le C\exp\Big(-c\min\Big\{
 (t/D_0)^{1/3},(t/D_1)^{2/5},
 (t/D_2)^{1/2},(t/D_3)^{2/3}\Big\}\Big).
 \label{eq:hc-9}
\end{equation}
We now transfer this prior tail to the posterior.

\begin{proposition}[Order three bound for finite harmonic expansions]
\label{prop:finite-order-three}
Under the finite harmonic model described above,
$\binom n4\bar g_3=o(1)$.
\end{proposition}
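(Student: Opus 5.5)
We need to show $\E_{\post}[|T_3|]=o(n^{-1})$, since $\binom n4\bar g_3=\frac{n-3}{4}\E_{\post}[T_3]$. The plan is to integrate a posterior tail bound for $|T_3|$. That tail bound comes from transferring the conditional prior tail~\eqref{eq:hc-9} through the planting inequality of Lemma~\ref{lem:planting}, exactly as in Proposition~\ref{prop:finite-order-two}.

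\textbf{Step 1: the scales $D_r$.} First I would verify the scales in~\eqref{eq:smooth-cubic-scales}.
\begin{itemize}
\item $a_0=\|h_3\|_\infty$ is bounded by Cauchy--Schwarz through $\|H\|_\infty\lesssim\sum_q b_q^2/N_{q,d}$.
\item $a_3=\|h_3\|_2\lesssim d^{-3m/2-3q_m/4}$ comes from the hypercontractivity bound already displayed.
\item $a_1$ and $a_2$ come from fixing arguments and using the harmonic expansion of $h_3$. Its coefficients are controlled by~\eqref{eq:finite-bounds}; in particular, $\max_q|b_q|/N_{q,d}$ gives the extra $d^{-q_m/2}$ factors.
\end{itemize}
Multiplying by $M(X')\le L_nd^{-3m/2-3q_m/4}$ on $\mathcal A_D(X')$ yields $D_0,\dots,D_3$.

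With $n=d^{m+q_m/3}/\omega_d$, I then check that $\max_r D_r=o(n^{-1})$ up to the polylog factor $L_n$. For instance, $nD_3=L_nn^{5/2}d^{-3m-3q_m/2}=L_n\omega_d^{-5/2}d^{-m/2-2q_m/3}$, which is polynomially small. The other three scales are similar and even smaller. So every $D_r\le n^{-1}d^{-\eta}$ for some fixed $\eta>0$.

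\textbf{Step 2: exponential control of the weighted tail.} On $\mathcal G_D(X)\cap\mathcal G_D(X')\cap\mathcal A_D(X')$, I apply Hölder to $\E[\Gamma_d\1_{\{|T_3|>t\}}]$. This splits it into $\E[\1\,e^{2S_\kappa}]^{1/2}$, controlled by Lemma~\ref{lem:finite-mgf} with $F=0$ as $e^{C(1+V_0)}$, times $\P[|T_3|>t\mid X']^{1/2}$, controlled by~\eqref{eq:hc-9}.

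\emph{Main obstacle.} $V_0$ is only $O(\omega_d^{-1}d^{-\eta'}\log+\ldots)$, so the factor $e^{C(1+V_0)}$ is $O(1)$. However, planting takes a square root, and the stretched exponential tail must beat the constant $e^{C}$. I would therefore choose thresholds $t=u\cdot\max_rD_r$ with $u\ge u_0$. Then~\eqref{eq:hc-9} gives a bound $e^{-cu^{1/3}}$, and Lemma~\ref{lem:planting} gives
\[
\P_{\post}[|T_3|>u\max_rD_r]\le Ce^{-cu^{1/3}}+3(n+d)^{-D/2}+\ldots.
\]
The complement events are handled by the prior-marginal statement in Lemma~\ref{lem:planting}. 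The events $\mathcal G_D(X)$ and $\mathcal A_D(X')$ each depend on one replica, so their posterior probabilities equal their prior probabilities. One subtlety is that~\eqref{eq:hc-9} conditions on $X'$ while $\mathcal G_D(X)$ involves $X$; I would simply drop $\1_{\mathcal G_D(X)}$ inside the conditional tail, which only enlarges the bound.

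\textbf{Step 3: integrate the tail.} Using $|T_3|\le n^3a_0$ on the bad events, with $D>5$ so that $n^3a_0(n+d)^{-D/2}$ is negligible, I integrate:
\[
\E_{\post}|T_3|\lesssim\max_rD_r\int_0^\infty e^{-cu^{1/3}}du+o(n^{-1})=o(n^{-1}).
\]
Therefore $\binom n4\bar g_3\lesssim n\,\E_{\post}|T_3|=o(1)$.

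If the square-root loss or the $L_n$ factors were problematic, I would instead take $t=n^{-1}d^{-\eta/2}u$, which leaves a polynomial margin in $d$ to absorb all logarithms.
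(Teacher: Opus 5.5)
There is a genuine gap in your Step~2. You claim that $V_0=O(1)$, so that the planting factor $e^{C(1+V_0)}$ is a constant. This is false at the scaling of the proposition. With $n=d^{m+q_m/3}/\omega_d$,
\[
 V_0\ \ge\ n^2d^{-2m}=\frac{d^{2q_m/3}}{\omega_d^2},
\]
which diverges polynomially in $d$. The paper records $V_0\le d^{2q_m/3+o(1)}$, and this order is essentially attained. So the cost from Lemma~\ref{lem:finite-mgf} is $\exp(Cd^{2q_m/3}/\omega_d^2)$. Your posterior tail $\P_{\post}[|T_3|>u\max_rD_r]\lesssim e^{C(1+V_0)}e^{-cu^{1/3}}$ is therefore vacuous for every bounded $u$, and integrating $\max_rD_r\int e^{-cu^{1/3}}\,du$ does not bound $\E_{\post}|T_3|$. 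The missing ingredient is a prior tail of strength $\exp(-d^{\zeta})$ with $\zeta>2q_m/3$, at a threshold that is still $o(n^{-1})$.

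The paper obtains this by taking $t=n^{-1-\delta}$, which lies polynomially above every $D_r$. It then shows, separately for each $r$, that $(t/D_r)^{2/(6-r)}\ge d^{\zeta_r-O(\delta)+o(1)}$ with $\zeta_r$ as in \eqref{eq:hc-10}, and checks $\min_r\zeta_r>2q_m/3$ using $m\ge q_m$.

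Pairing each $D_r$ with its own exponent $2/(6-r)$ is essential. Your uniform bound $D_r\le n^{-1}d^{-\eta}$ has best possible $\eta=m/2+2q_m/3$, coming from $D_3$. Combined with the worst exponent $1/3$, it yields only $d^{m/6+2q_m/9}$. For $m=2$ this is $d^{7/9}$, short of the required $d^{4/3}$, so your fallback threshold $t=n^{-1}d^{-\eta/2}u$ also fails.

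Likewise, you need $a_0\lesssim d^{-3m/2}$ via H\"older with three $L^3$ slice norms and hypercontractivity, as in \eqref{eq:hc-1}. Bounding through $\|H\|_\infty$ together with $\|\kappa\|_\infty=O(1)$ gives only $d^{-m}$, and then $\zeta_0=m/2+5q_m/36<4/3$ at $m=2$.

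With these corrections, the rest of your outline matches the paper's proof. This includes Cauchy--Schwarz against $e^{2S_\kappa}$ on the Gram events, prior marginals for the excluded events, and the deterministic bound $|T_3|\lesssim n^3d^{-3m}$.
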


\begin{proof}[Proof sketch]
For $n=d^{m+q_m/3}/\omega_d$ with $\omega_d\to\infty$,
choose $\delta>0$ sufficiently small and let $t=n^{-1-\delta}$.
The four terms in~\eqref{eq:hc-9} give
\[
 \P[|T_3|>t,\mathcal A_D(X')]
 \le\exp\big(-d^{\zeta+o(1)}\big)
 \quad\text{for some }\zeta>2q_m/3.
\]
The exponent calculation is given in~\eqref{eq:hc-10}.
To apply the exponential moment estimate, let
$\mathcal G$ be the intersection of $\mathcal G_D$ and
$\mathcal A_D$ for both samples. For
$\mathcal B_t=\{|T_3|>t\}$, Lemma~\ref{lem:planting},
Cauchy--Schwarz, and~\eqref{eq:finite-mgf} give
\begin{align*}
 \P_{\post}[\mathcal B_t\cap\mathcal G]
 &\le\Big(\E[e^{2S_\kappa}
   \1_{\mathcal G_D(X)\cap\mathcal G_D(X')}]\Big)^{1/4}
   \P[\mathcal B_t\cap\mathcal G]^{1/4}\\
 &\le\exp\big(C(1+V_0)-d^{\zeta+o(1)}\big)
 =\exp\big(-d^{\zeta+o(1)}\big),
\end{align*}
because $V_0\le d^{2q_m/3+o(1)}$.
Each posterior replica has the prior marginal, so
$\P_{\post}[\mathcal G^c]\lesssim n^{-D}$.
Using $|T_3|\lesssim n^3d^{-3m}$, we obtain
\[
 \E_{\post}[|T_3|]
 \le n^{-1-\delta}
 +Cn^3d^{-3m}\Big(\exp\big(-d^{\zeta+o(1)}\big)+n^{-D}\Big)
 =o(n^{-1}).
\]
The conclusion follows from~\eqref{eq:smooth-cubic-replica}
and $\binom n4=\frac{n-3}{4}\binom n3$.
The full proof is in
Appendix~\ref{app:proof-finite-order-three}.
\end{proof}
\subsection{Higher fixed orders}
\label{sec:smooth-higher-orders}

We next bound $\bar G_k$ for each fixed $k\ge4$. Let $Z,Z'$ be
independent uniform points, independent of the posterior replicas, and define
$r_i=\kappa(X_i,Z)\kappa(X_i',Z')$ and $p_j=\sum_{i=1}^n r_i^j$.
If $e_k$ is the elementary symmetric polynomial of degree $k$, the
replica identity and Newton's identities give
\begin{align}
 \binom nk\bar g_k
 &=\E_{\post}\big[\E_{Z,Z'}[e_k(r_1,\ldots,r_n)]\big],
 \label{eq:hc-14}\\
 e_k(r_1,\ldots,r_n)
 &=\sum_{\substack{m_1,\ldots,m_k\ge0\\\sum_{j=1}^k jm_j=k}}
 \frac{(-1)^{k-\sum_{j=1}^k m_j}}
      {\prod_{j=1}^k j^{m_j}m_j!}
 \prod_{j=1}^k p_j^{m_j}.
 \label{eq:newton-main}
\end{align}
The second identity follows by taking the coefficient of $t^k$ in
the formal power series identity
\[
 \prod_{i=1}^n(1+tr_i)
 =\exp\Big(\sum_{j\ge1}\frac{(-1)^{j-1}}j p_jt^j\Big).
\]
For $j\ge2$, the triangle inequality already gives a useful bound
for $p_j$. Only $p_1$ requires an estimate under the posterior.
Let $H$ be the kernel of $T_\kappa^2$. The conditional second moment of $p_1$ is
\begin{equation}
 Y_H=\E_{Z,Z'}[p_1^2]
 =\sum_{i,j=1}^n H(X_i,X_j)H(X_i',X_j')\ge0.
 \label{eq:hc-15}
\end{equation}
By Lemma~\ref{lem:spherical-hypercontractivity} and
$\|\kappa\|_2^2\lesssim d^{-m}$ from~\eqref{eq:finite-bounds},
for every fixed $s\ge2$ and every realization of the replicas,
\begin{equation}
 \|p_1\|_{L^s(Z,Z')} \le C_s\sqrt{Y_H}, \quad 
 \|p_j\|_{L^s(Z,Z')}
  \le\sum_{i=1}^n\|r_i\|_{L^{js}(Z,Z')}^j
 \le C_{j,s}nd^{-mj},\quad j\ge2.
 \label{eq:newton-power-sums}
\end{equation}
Thus Newton's expansion reduces the problem to fixed posterior
moments of $Y_H$.

\begin{proposition}[Higher fixed orders]
\label{prop:finite-higher}
For every fixed $k\ge4$, $\binom n{k+1}\bar g_k=o(1)$.
\end{proposition}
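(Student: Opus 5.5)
By \eqref{eq:hc-14} and \eqref{eq:newton-main}, $\binom n{k+1}\bar g_k=\frac{n-k}{k+1}\binom nk\bar g_k$ is a finite linear combination, with coefficients depending only on $k$, of the terms $\frac nk\E_{\post}\big[\E_{Z,Z'}[\prod_j p_j^{m_j}]\big]$ with $\sum_j jm_j=k$. I will bound each such monomial separately. For fixed replicas, H\"older's inequality in $(Z,Z')$ combined with \eqref{eq:newton-power-sums} gives
\[
 \Big|\E_{Z,Z'}\Big[\prod_j p_j^{m_j}\Big]\Big|
 \le C_k\,Y_H^{m_1/2}\prod_{j\ge2}(nd^{-mj})^{m_j}.
\]
Set $k'=k-m_1=\sum_{j\ge2}jm_j$. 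The contribution of the monomial is then at most
\[
 C_k\, n^{1+\sum_{j\ge2}m_j}d^{-mk'}\,\E_{\post}[Y_H^{m_1/2}].
\]
So everything reduces to posterior moments of order $m_1/2\le k/2$ of the quadratic overlap $Y_H$.

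\textbf{Bounding posterior moments of $Y_H$.} Write $H=\sum_q w_qU_q$ with $w_q=b_q^2/N_{q,d}$. Then
\[
 Y_H=\sum_i H(X_i,X_i)H(X_i',X_i')+2\sum_{q,\ell}w_qw_\ell O_{q\ell}.
\]
The diagonal part is deterministic and of size $n\big(\sum_q w_q\big)^2\lesssim nd^{-2m}$, since $U_q(x,x)=1$. For the off-diagonal part, I will reuse the tail argument from the proof of Proposition~\ref{prop:finite-order-two}. The key input is Lemma~\ref{lem:finite-mgf} with $F=t|b_qb_\ell|O_{q\ell}$, applied with both signs via $\pm O_{q\ell}$; I expect the lemma covers this, since it conditions on one sample. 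Together with Lemma~\ref{lem:planting} and the Gram events, this gives
\[
 \P_{\post}\Big[|O_{q\ell}|>\frac{C(1+V_0)+u}{|b_qb_\ell|}\Big]\le Ce^{-cu}+(n+N_{\max})^{-D}.
\]
Because $|O_{q\ell}|\le n^2$ and $D$ is chosen large, integrating this tail yields
\[
 \E_{\post}|O_{q\ell}|^s\lesssim_s\Big(\frac{1+V_0}{|b_qb_\ell|}\Big)^s.
\]
Hence $w_qw_\ell|O_{q\ell}|$ has $s$-th posterior moment at most $\big((1+V_0)\,|b_qb_\ell|/(N_{q,d}N_{\ell,d})\big)^s$. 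By \eqref{eq:finite-bounds}, the quantity $|b_q|/N_{q,d}$ is $O(d^{-(m+q_m)/2})$. This gives
\[
 \E_{\post}[Y_H^{s}]^{1/s}\lesssim nd^{-2m}+(1+V_0)d^{-(m+q_m)}=:\Theta .
\]

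\textbf{Exponent bookkeeping.} Substitute $n=d^{m+q_m/3}/\omega_d$, so that $V_0\lesssim nd^{-m}\log d+n^2d^{-2m}$ and $\Theta\lesssim d^{-m-q_m/3+o(1)}$, with the $n^2d^{-2m}$ part of $V_0$ dominating. A monomial then contributes at most
\[
 n^{1+\sum_{j\ge2}m_j}d^{-mk'}\Theta^{m_1/2}.
\]
Each $j\ge2$ block costs $nd^{-mj}\le d^{q_m/3-m(j-1)}\le d^{q_m/3-m}$, a negative power of $d$ because $m\ge2$. Each unit of $m_1$ costs $\Theta^{1/2}\approx d^{-m/2-q_m/6}$. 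The prefactor $n\approx d^{m+q_m/3}$ must be beaten by these costs.

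\begin{itemize}
\item Pure $p_1$ monomial ($m_1=k\ge4$): the contribution is $n\Theta^{k/2}\lesssim d^{m+q_m/3-2m-2q_m/3+o(1)}\to0$.
\item Monomials containing $p_j$ with $j\ge2$: I must check that $nd^{-mj}\Theta^{(k-j)/2}\cdot n$ decays when $k\ge4$. This needs care for $k=4$ with $p_2^2$ or $p_2p_1^2$. For $p_2^2$ the bound is $n^3d^{-4m}\approx d^{-m+q_m}$, which is $o(1)$ since $m\ge2\ge q_m$; the case $m=2$, $q_m=2$ needs exactly $\omega_d^{-3}$ or the $\log$ slack.
\end{itemize}

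This exponent check is the step I expect to be the main obstacle: cases where $m=q_m=2$ are tight. If the crude triangle-inequality bound on $p_2$ fails there, I would instead center $p_2$, writing $p_2=\E_{Z,Z'}[p_2]+(p_2-\E_{Z,Z'}[p_2])$, and treat the fluctuation like $p_1$ through a second overlap estimate.
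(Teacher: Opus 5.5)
Your reduction (Newton's identities, H\"older in $(Z,Z')$ with \eqref{eq:newton-power-sums}) follows the paper's route. So does your posterior bound $\|Y_H\|_{L^s(\P_{\post})}\lesssim nd^{-2m}+(1+V_0)d^{-(m+q_m)}=:\Theta$, which is Lemma~\ref{lem:finite-quadratic-overlap}. Handling the lower tail of $O_{q\ell}$ with the negative sign in Lemma~\ref{lem:finite-mgf} is legitimate, since its proof allows either sign; the paper instead uses $n+2O_{q\ell}\ge0$.

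The gap is in the exponent bookkeeping, which is where the proposition is actually decided. You dropped the diagonal term of $\Theta$. At $n=d^{m+q_m/3}/\omega_d$ one has $\Theta\asymp nd^{-2m}=d^{-m+q_m/3}/\omega_d$, not $d^{-m-q_m/3+o(1)}$; the $V_0$ part is smaller by a factor of order $d^{-2q_m/3}$. So your pure-$p_1$ estimate is wrong. In fact $n\Theta^2\asymp n^3d^{-4m}=d^{q_m-m}\omega_d^{-3}$, so at $k=4$ the monomials $p_1^4$, $p_1^2p_2$ and $p_2^2$ are equally tight, and the power of $d$ is exactly zero when $m=q_m=2$. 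Writing scales as $d^{\cdots+o(1)}$ cannot close that case. It also tacitly assumes $\omega_d=d^{o(1)}$, whereas for small $n$ the constant part $d^{-(m+q_m)}$ of $\Theta$ dominates. You never give an argument covering all partitions, and the proposed centering of $p_2$ is unnecessary.

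The missing step is short. With $b=\sum_{j\ge2}m_j$ one has $2b\le\sum_{j\ge2}jm_j=k-m_1$, i.e.\ $b+m_1/2\le k/2$.

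If $n\ge d^{m-q_m}$, then $\Theta\lesssim nd^{-2m}$ (use $n\le d^{m+q_m/3}$ for the $n^2d^{-2m}$ part of $V_0$). Each monomial, including the cavity factor $n$, is then at most
\[
 C_kn^{1+b+m_1/2}d^{-mk}\le C_kn^{1+k/2}d^{-mk}
 =C_kd^{(m+q_m/3)(1+k/2)-mk}\omega_d^{-(1+k/2)}
 \le C_kd^{q_m-m}\omega_d^{-(1+k/2)}=o(1).
\]
Here $\omega_d$ must be kept explicit.

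If $n<d^{m-q_m}$, then $V_0=o(1)$ and $\Theta\lesssim d^{-(m+q_m)}$. Each term is at most $C_kd^{(m-q_m)(1+b+m_1/2)-mk}\le C_kd^{(m-q_m)(1+k/2)-mk}\le C_kd^{-m-3q_m}$ for $k\ge4$.

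This two-case check is precisely the paper's proof.
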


\begin{proof}
Adding the term $t|b_qb_\ell|O_{q\ell}$ in
Lemma~\ref{lem:finite-mgf} and integrating the resulting posterior
tails gives, for every fixed $s\ge1$,
\begin{equation}
 \|Y_H\|_{L^s(\P_{\post})}
 \le C_s\big(nd^{-2m}+(1+V_0)d^{-(m+q_m)}\big).
 \label{eq:newton-overlap-main}
\end{equation}
Lemma~\ref{lem:finite-quadratic-overlap} proves this estimate.
For one term of~\eqref{eq:newton-main}, let
$b=\sum_{j=2}^k m_j$. By H\"older's inequality and
\eqref{eq:newton-power-sums},
\begin{equation}
 \E_{\post}\Big[\E_{Z,Z'}\Big[\prod_{j=1}^k|p_j|^{m_j}\Big]\Big]
 \le C_k n^b d^{-m(k-m_1)}
       \|Y_H\|_{L^{k/2}(\P_{\post})}^{m_1/2},
 \quad b+m_1/2\le k/2.
 \label{eq:newton-partition}
\end{equation}
When $m_1=0$, the factor involving $Y_H$ is omitted. There are
only finitely many terms for each fixed $k$.

\noindent\textbf{Case I.} Suppose $n\ge d^{m-q_m}$.
Since $n=o(d^{m+q_m/3})$ and $N_{\max}$ is polynomial in $d$,
\[
 (1+V_0)d^{-(m+q_m)}
  \lesssim nd^{-2m}\big(1+d^{-q_m}\log d
                         +nd^{-(m+q_m)}\big)
 \lesssim nd^{-2m}.
\]
Substituting into~\eqref{eq:newton-partition} bounds each term
by $C_kn^{b+m_1/2}d^{-mk}\le C_kn^{k/2}d^{-mk}$.
Equations~\eqref{eq:hc-14} and~\eqref{eq:newton-main} therefore give
\begin{equation}
 \binom n{k+1}\bar g_k
 \le C_kn^{1+k/2}d^{-mk}
 \le C_k\frac{d^{q_m-m}}{\omega_d^{1+k/2}}=o(1),
 \quad k\ge4.
 \label{eq:hc-20}
\end{equation}
Here we used $\binom n{k+1}/\binom nk=(n-k)/(k+1)$ and
$(m+q_m/3)(1+k/2)-mk\le q_m-m\le0$.

\noindent\textbf{Case II.} Suppose $n<d^{m-q_m}$.
Then $V_0=o(1)$, and~\eqref{eq:newton-overlap-main} is
$O_s(d^{-(m+q_m)})$. After multiplication by the cavity factor,
each term in~\eqref{eq:newton-partition} is bounded by
\[
 C_k n^{1+b}d^{-m(k-m_1)}d^{-(m+q_m)m_1/2}
  \le C_k d^{(m-q_m)(1+b+m_1/2)-mk} 
  \le C_k d^{(m-q_m)(1+k/2)-mk}=o(1).
\]
The last exponent is at most $-m-3q_m$ for $k\ge4$.
\end{proof}

Proposition~\ref{prop:finite-higher} controls every fixed range of
orders for the auxiliary model. We next record a tail bound for the
original kernel $\kappa_d$. In this tail estimate, $H_d$, $\bar g_k$,
and $\bar G_k$ refer to the original graph law $P_{n,d}(K)$.
By Jensen's inequality,
\begin{equation}
 \bar g_k\le\E[h_{k,d}(X_1,\ldots,X_k)^2]
 =\E[H_d(T_d)^k]\le\E[|H_d(T_d)|^k].
 \label{eq:smooth-prior-start}
\end{equation}
The equality is the contraction identity~\eqref{eq:4-4}.
Taylor's theorem and Lemma~\ref{lem:gegen-moment} give
\begin{equation}
 \|H_d(T_d)\|_s\le Cd^{-m}(s/d)^{q_m/2},
 \quad 2\le s\le c_{\ref{lem:gegen-moment}}(m+5)d,
 \quad \|H_d\|_\infty\le Cd^{-m}.
 \label{eq:growing-moment}
\end{equation}
The constant in Lemma~\ref{lem:gegen-moment} is chosen uniformly over degrees at most $m+5$. The estimate is proved in Appendix~\ref{app:proof-cavity-tail}.
For fixed $k$, these bounds give
\[
 \bar G_k\lesssim_k n^{k+1}d^{-k(m+q_m/2)}
 =\frac{d^{m+q_m/3-kq_m/6}}{\omega_d^{k+1}}.
\]
Thus the prior bound suffices when $k>6m/q_m+2$.
The same estimate also controls the sum over all such orders.
Specifically, we have the following proposition.

\begin{proposition}[Higher order tail]
\label{prop:cavity-tail}
For any fixed integer $k_\star>6m/q_m+2$,
\begin{equation}
 \sum_{k=k_\star+1}^{n-1}\bar G_k=o(1).
 \label{eq:growing-tail}
\end{equation}
\end{proposition}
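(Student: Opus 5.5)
We bound the tail using only the prior comparison~\eqref{eq:smooth-prior-start}:
\[
\bar G_k\le\binom n{k+1}\E[|H_d(T_d)|^k],
\]
and split the orders $k>k_\star$ into a moderate range $k_\star<k\le c_0d$ and a large range $c_0d<k\le n-1$. Here $c_0=c_{\ref{lem:gegen-moment}}(m+5)$.

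\emph{Moderate range.} Take $s=k$ in~\eqref{eq:growing-moment} and use $\binom n{k+1}\le n^{k+1}/(k+1)!\le n(en/k)^k$. This gives
\[
\bar G_k\le n\Big(\frac{eCn\,d^{-m}k^{q_m/2}d^{-q_m/2}}{k}\Big)^k
= n\big(eC\,n\,d^{-m-q_m/2}k^{q_m/2-1}\big)^k .
\]
Since $q_m\le2$, the factor $k^{q_m/2-1}\le1$. Writing $n=d^{m+q_m/3}/\omega_d$, the base is at most
\[
\rho_d:=C'\omega_d^{-1}d^{-q_m/6}\to0.
\]
Hence
\[
\sum_{k>k_\star}^{c_0d}\bar G_k\lesssim n\rho_d^{k_\star+1}
= d^{m+q_m/3-(k_\star+1)q_m/6}\,\omega_d^{-(k_\star+2)}\,C'^{\,k_\star+1}.
\]
The exponent of $d$ is negative exactly when $k_\star+1>6m/q_m+2$. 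This holds because $k_\star>6m/q_m+2$, and we also gain the factor $\omega_d^{-(k_\star+2)}$. So the geometric series is $o(1)$.

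\emph{Large range.} For $k>c_0d$, use $\|H_d\|_\infty\le Cd^{-m}$ from~\eqref{eq:growing-moment}. This gives
\[
\bar G_k\le n(enCd^{-m}/k)^k\le n\big(eCc_0^{-1}n\,d^{-m-1}\big)^k .
\]
Since $q_m/3<1$, we have $nd^{-m-1}\le d^{q_m/3-1}\to0$, so the base is at most $d^{-1/3+o(1)}$. The sum over $k>c_0d$ is then at most $n\,d^{-c_0d/4}$, which is superpolynomially small.

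It remains to justify~\eqref{eq:growing-moment}, which the paper attributes to Taylor's theorem and Lemma~\ref{lem:gegen-moment}. Expand $\kappa_d$ as the centered Taylor polynomial of degree $m+5$ plus a remainder. Then $H_d=\sum_q\gamma_{q,d}^2P_{q,d}$, with the low-degree coefficients of size $\gamma_{q,d}^2\lesssim d^{-m}$ for $q\ge q_m$ (only degrees of the right parity appear at leading order). The degree-one coefficient for even $m$ is smaller, $O(d^{-m-1})$. The $L^s$ bound on $P_{q,d}(T_d)$, of size $(s/d)^{q/2}$ for $s\lesssim d$, comes from Lemma~\ref{lem:gegen-moment}. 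The high degrees and the remainder are handled with the Laplace-representation bound from Lemma~\ref{lem:contracted} together with the energy of the remainder, which is $O(d^{-m-6})$.

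I expect the main obstacle to be the uniformity of this moment bound in $s$ up to order $d$, especially the remainder and high-degree part. For these degrees, $|P_{q,d}(t)|\le t^2+1/d$ gives only $(s/d+1/d)$, and it must be multiplied by an energy small enough, namely $d^{-m-1}$ or better, to stay below $d^{-m}(s/d)^{q_m/2}$. This works because $q_m\le2$. Beyond that point the argument is the routine series summation above.
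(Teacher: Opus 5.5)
Your proof is correct and matches the paper's argument. You make the same split at $k\approx c_{\ref{lem:gegen-moment}}(m+5)d$, use the moment bound \eqref{eq:growing-moment} with $s=k$ together with $k!\ge(k/e)^k$ in the moderate range, and use the supremum bound for $k>c_{\ref{lem:gegen-moment}}(m+5)d$. Your sketch of \eqref{eq:growing-moment} differs only cosmetically from the paper's, which bounds $\|H_d-H_\Pi\|_\infty\lesssim d^{-m-3}$ rather than splitting off the harmonic degrees above $m+5$; both versions work.
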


\begin{proof}
For $k_\star<k\le c_{\ref{lem:gegen-moment}}(m+5)d$, equations
\eqref{eq:smooth-prior-start} and~\eqref{eq:growing-moment},
together with $k!\ge(k/e)^k$, give
\begin{equation}
 \bar G_k\le n\big(Cnd^{-m-q_m/2}k^{q_m/2-1}\big)^k
 \le n\big(Cd^{-q_m/6}/\omega_d\big)^k.
 \label{eq:8-9}
\end{equation}
Its geometric sum is $o(1)$ since
$m+q_m/3-(k_\star+1)q_m/6<0$.
For $k>c_{\ref{lem:gegen-moment}}(m+5)d$, the supremum bound instead gives
\begin{equation}
 \bar G_k\le n\Big(\frac{Cen d^{-m}}k\Big)^k
 \le n\big(Cd^{q_m/3-1}/\omega_d\big)^k.
 \label{eq:8-10}
\end{equation}
This sum is also $o(1)$ because $q_m/3<1$.
\end{proof}

Propositions~\ref{prop:finite-order-two},~\ref{prop:finite-order-three},
and~\ref{prop:finite-higher} control the fixed orders for the auxiliary
model, while Proposition~\ref{prop:cavity-tail} controls the tail for
the original kernel. It remains to establish the fixed order bounds
under the original graph law.

\subsection{Return to the original graph law}
\label{sec:smooth-transfer}

From this point on, $P$ again denotes the original graph law
$P_{n,d}(K)$, and $\kappa_d$ denotes its centered kernel.
We approximate $\kappa_d$ by a finite Taylor polynomial and
apply the estimates from
Sections~\ref{sec:smooth-order-two}--\ref{sec:smooth-higher-orders}
under this original posterior. There are two errors to control:
replacing the kernel in the likelihood bound, and replacing it
in $h_{k,d}$.

Recall that $K\in C^{m+6}([-1,1])$. Define
\begin{equation}
 \widetilde\Pi(t)=\sum_{j=0}^{m+5}
       \frac{\kappa_d^{(j)}(0)}{j!}t^j,
 \quad \Pi(t)=\widetilde\Pi(t)-\E[\widetilde\Pi(T_d)],
 \quad R=\kappa_d-\Pi.
 \label{eq:taylor-polynomial}
\end{equation}
Both $\Pi$ and $R$ are canonical. We use $\Pi$ in the
observables and prior exponential moments; all posterior
expectations below use $P$. In particular, we do not require
$p_d+s_d\Pi$ to take values in $[0,1]$.

\begin{lemma}[Taylor coefficients]
\label{lem:taylor-finite}
Write $\Pi=\sum_{q\in\mathcal I_\Pi}b_{q,d}^\Pi P_{q,d}$, where
$\mathcal I_\Pi=\{q\ge1:b_{q,d}^\Pi\ne0\}$.
This set is fixed for all sufficiently large $d$ and lies in $\{1,\ldots,m+5\}$.
Moreover,
\begin{align}
 \sum_q(b_{q,d}^\Pi)^2&=O(1),
 \quad \sum_q\frac{(b_{q,d}^\Pi)^2}{N_{q,d}}=O(d^{-m}),\notag\\
 \max_q\frac{|b_{q,d}^\Pi|}{N_{q,d}}
 &=O(d^{-(m+q_m)/2}),
 \quad \sum_q\frac{(b_{q,d}^\Pi)^2}{N_{q,d}^2}
       =O(d^{-(m+q_m)}).
 \label{eq:taylor-finite-bounds}
\end{align}
\end{lemma}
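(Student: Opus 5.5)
We expand each centered Taylor monomial in the Gegenbauer basis and read off the coefficient sizes. First, $\kappa_d(t)=(K(t)-p_d)/s_d$. Since $p_d\to p_0=K(0)$ and $s_d$ is bounded below, each Taylor coefficient $c_j:=\kappa_d^{(j)}(0)/j!$ is $O(1)$ uniformly in $d$. Because $K'(0)=\dots=K^{(m-1)}(0)=0$, we have $c_1=\dots=c_{m-1}=0$, while $c_m\to K^{(m)}(0)/(m!\,s_0)\neq0$. The constant term $c_0$ and the centering constant only affect degree zero, so
\[
 \Pi(t)=\sum_{j=m}^{m+5}c_j\big(t^j-\E[T_d^j]\big)=\sum_{j=m}^{m+5}c_j\sum_{1\le q\le j,\ q\equiv j\ (2)}d_{j,q}P_{q,d}(t),
\]
where $d_{j,q}=N_{q,d}\E[T_d^jP_{q,d}(T_d)]$ as in the smooth-kernel discussion of Section~\ref{sec:detection}. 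It follows that $b^\Pi_{q,d}=\sum_{j}c_jd_{j,q}$, with the sum over $m\le j\le m+5$, $j\ge q$, and $j-q$ even. This already shows $\mathcal I_\Pi\subseteq\{1,\dots,m+5\}$.

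The key estimate is $|d_{j,q}|\lesssim N_{q,d}\,d^{-(j+q)/2}\asymp d^{(q-j)/2}$ for fixed $j,q$. To prove it, write $\E[T_d^jP_{q,d}(T_d)]=N_{q,d}^{-1/2}\E[T_d^j\psi_{q,d}(T_d)]$. By Lemma~\ref{lem:gegen-hermite}, $\psi_{q,d}(T_d)$ converges to a normalized Hermite polynomial $\mathrm{He}_q(\sqrt dT_d)/\sqrt{q!}$ with convergent moments. Hence $\E[T_d^j\psi_{q,d}(T_d)]=d^{-j/2}(\E[G^j\mathrm{He}_q(G)]/\sqrt{q!}+o(1))$, and the Gaussian limit is nonzero exactly when $q\le j$ and $j-q$ is even. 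Multiplying by $N_{q,d}^{1/2}\asymp d^{q/2}$ gives $d_{j,q}=d^{(q-j)/2}(\lambda_{j,q}+o(1))$ with $\lambda_{j,q}\ne0$. An exact finite-$d$ alternative is the explicit Rodrigues or Gegenbauer connection formula for $t^j$; either route suffices.

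With this, $|b^\Pi_{q,d}|\lesssim\sum_{j\ge m,\,j\ge q}d^{(q-j)/2}$. The four bounds in \eqref{eq:taylor-finite-bounds} then follow from a short case analysis, using $N_{q,d}\asymp d^q$.
\begin{itemize}[leftmargin=*]
\item Since $q\le j$, each $|b^\Pi_{q,d}|=O(1)$, so $\sum_q(b^\Pi_{q,d})^2=O(1)$.
\item We have $(b^\Pi_{q,d})^2/N_{q,d}\lesssim d^{q-j-q}$ with $j\ge m$, which gives $O(d^{-m})$.
\item We have $|b^\Pi_{q,d}|/N_{q,d}\lesssim d^{-(j+q)/2}$. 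The constraint $j+q\ge m+q_m$ holds: if $j=m$, then $q\equiv m\ (2)$ and $q\ge1$ force $q\ge q_m$, and if $j\ge m+1$, then $q\ge1$ gives $j+q\ge m+2\ge m+q_m$. This yields $O(d^{-(m+q_m)/2})$.
\item Squaring the same bound gives $\sum_q(b^\Pi_{q,d})^2/N_{q,d}^2=O(d^{-(m+q_m)})$.
\end{itemize}

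The main subtlety is the claim that $\mathcal I_\Pi$ is fixed for all large $d$. Each $b^\Pi_{q,d}$ is a finite sum $\sum_jc_jd_{j,q}$, so we take its leading term, the smallest admissible $j$, namely $j_0(q)=\min\{j\ge\max(m,q):j\equiv q\ (2)\}$. When $c_{j_0}$ has a nonzero limit, $b^\Pi_{q,d}\sim c_{j_0}\lambda_{j_0,q}d^{(q-j_0)/2}\ne0$ eventually. This settles every $q$ whose leading $j_0$ is $m$, and in particular $q_m\in\mathcal I_\Pi$.

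Other coefficients may cancel: for instance $c_{m+1}$ could vanish, or higher terms could combine. To handle them, note that the $c_j$ are affine in $1/s_d$ with $K^{(j)}(0)$ fixed, so $b^\Pi_{q,d}$ is an explicit finite combination of $d_{j,q}$ with $d$-independent numerators up to the factor $1/s_d$. Because $d_{j,q}$ is a rational function of $d$ (from the Gegenbauer connection coefficients), $b^\Pi_{q,d}\,s_d$ is a rational function of $d$ with fixed coefficients. It is therefore either identically zero or nonzero for all large $d$. This gives a fixed index set $\mathcal I_\Pi$. The bounds do not depend on which $q$ survive, since vanishing coefficients only help.
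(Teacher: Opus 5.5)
Your proof is correct; the coefficient bounds use the same bookkeeping as the paper. You expand $t^j$ in the $P_{q,d}$, bound the $(j,q)$ connection coefficient by $d^{-(j-q)/2}$, and use $j+q\ge m+q_m$. Two steps differ.

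For the upper bound on $d_{j,q}$, the paper uses only Cauchy--Schwarz, $|\E[T_d^j\psi_{q,d}(T_d)]|\le\|T_d^j\|_2\lesssim d^{-j/2}$, and needs no coupling. Your Hermite limit gives more, namely $d_{j,q}=d^{(q-j)/2}(\lambda_{j,q}+o(1))$ with $\lambda_{j,q}\ne0$.

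For the eventual fixedness of $\mathcal I_\Pi$, the paper argues by leading-term dominance, and your own asymptotic already suffices for this. For fixed $q$, all admissible $j$ have the parity of $q$. So the contributions $c_jd_{j,q}$ have orders $d^{(q-j)/2}$ that differ by integer powers of $d$. The smallest admissible $j$ with $K^{(j)}(0)\ne0$ therefore dominates, giving $b^\Pi_{q,d}\ne0$ for large $d$; if no such $j$ exists, $b^\Pi_{q,d}\equiv0$. Your concern that higher terms ``could combine'' is thus unfounded: no cancellation between different Taylor orders can occur. The right choice is the first \emph{nonzero} $c_j$, not the first admissible $j$.

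Your rationality argument is nevertheless valid. Each $d_{j,q}$ is a fixed rational function of $d$, because the moments of $T_d$, the recurrence coefficients of $P_{q,d}$, and $N_{q,d}$ are all rational in $d$, so $s_db^\Pi_{q,d}=\sum_jK^{(j)}(0)d_{j,q}/j!$ is rational in $d$ with fixed coefficients. This route is more robust, since it needs no asymptotic nonvanishing. What it does not provide is the paper's explicit identification of $\mathcal I_\Pi$: those $q$ admitting some $j\in\{m,\ldots,m+5\}$ with $j\ge q$, $j\equiv q\pmod 2$, and $K^{(j)}(0)\ne0$. Nor does it give the exact order $d^{(q-j)/2}$ of each active coefficient.
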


\begin{proof}
Write $t^j=\sum_qc_{j,q,d}P_{q,d}(t)$.
Orthogonality and parity give $c_{j,q,d}=0$ unless
$q\le j$ and $j-q$ is even. Otherwise,
\begin{equation}
 |c_{j,q,d}|
 =\sqrt{N_{q,d}}\big|\E[T_d^j\psi_{q,d}(T_d)]\big|
 \le\sqrt{N_{q,d}}\|T_d^j\|_2
 \lesssim d^{-(j-q)/2}.
 \label{eq:smooth-monomial-coefficients}
\end{equation}
Only $m\le j\le m+5$ contribute to the nonconstant part of
$\Pi$. For each $q$, the first nonzero Taylor term of the same
parity has a nonzero leading projection by
Lemma~\ref{lem:gegen-hermite}; later Taylor terms are smaller.
Thus the active set is fixed for all sufficiently large $d$.
For each contributing pair $(j,q)$, we have $j+q\ge m+q_m$ and
\[
 \frac{c_{j,q,d}^2}{N_{q,d}}\lesssim d^{-j},
 \quad \frac{|c_{j,q,d}|}{N_{q,d}}
       \lesssim d^{-(j+q)/2},
 \quad \frac{c_{j,q,d}^2}{N_{q,d}^2}
       \lesssim d^{-(j+q)}.
\]
Summing over the fixed set of Taylor terms proves
\eqref{eq:taylor-finite-bounds}.
\end{proof}

The Taylor polynomial also satisfies~\eqref{eq:finite-field}.
Indeed, if $\ell\le r$ arise from Taylor orders $s,t\ge m$, then
\[
 |b_{\ell,d}^\Pi b_{r,d}^\Pi|\frac n{N_{r,d}}
 \lesssim nd^{-(s-\ell+t-r)/2-r}
 \le nd^{-m+(\ell-r)/2}
 \le d^{q_m/3}/\omega_d=o(d).
\]
The remaining logarithmic term in~\eqref{eq:finite-field} is
$o(d)$ because the coefficients are bounded. We next record
the error supplied by the six additional derivatives.

\begin{lemma}[Taylor residual]
\label{lem:residual-bounds}
For every fixed $1\le s<\infty$,
\begin{equation}
 |R(t)| \lesssim |t|^{m+6}+d^{-(m+6)/2}, \quad 
 \sup_x\|R(x,\cdot)\|_s \lesssim_s d^{-(m+6)/2},
 \quad \|T_R\|_{\op}\le\|R\|_2\lesssim d^{-(m+6)/2}.
 \label{eq:hc-21}
\end{equation}
\end{lemma}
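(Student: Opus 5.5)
The plan is to prove the three bounds in \eqref{eq:hc-21} in order, each resting on the previous one.

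\emph{Pointwise bound.} Write $\widetilde R=\kappa_d-\widetilde\Pi$, the plain Taylor remainder of $\kappa_d$ at zero, so that $R=\widetilde R+\E[\widetilde\Pi(T_d)]$. Since $K\in C^{m+6}([-1,1])$ and $s_d$ stays bounded away from zero by the dense bound, Taylor's theorem with Lagrange remainder gives $|\widetilde R(t)|\le \|K^{(m+6)}\|_\infty |t|^{m+6}/((m+6)!\,s_d)\lesssim|t|^{m+6}$ uniformly on $[-1,1]$. It then suffices to show $|\E[\widetilde\Pi(T_d)]|\lesssim d^{-(m+6)/2}$. Because $\kappa_d$ is centered, $\E[\widetilde\Pi(T_d)]=-\E[\widetilde R(T_d)]$. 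Using $\E|T_d|^{m+6}\lesssim d^{-(m+6)/2}$, which follows from the spherical tail bound \eqref{eq:kernel-bridge-sphere-tail} or from the Beta law of $T_d$, this gives $|\E[\widetilde\Pi(T_d)]|\lesssim d^{-(m+6)/2}$. Adding the two estimates yields the first bound.

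\emph{Uniform $L^s$ bound.} For each fixed $x$, the variable $\langle x,Y\rangle$ with $Y\sim\sigma_d$ has the law of $T_d$ by rotation invariance. Hence $\|R(x,\cdot)\|_s=\|R(T_d)\|_s$ does not depend on $x$. By Minkowski's inequality and the pointwise bound, $\|R(T_d)\|_s\lesssim\|T_d\|_{s(m+6)}^{m+6}+d^{-(m+6)/2}$. The moment bound $\|T_d\|_r\le C\sqrt{r/d}$ (used in the proof of Lemma~\ref{lem:contracted}) gives $\|T_d\|_{s(m+6)}^{m+6}\lesssim_s d^{-(m+6)/2}$ for fixed $s$. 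This is the second bound.

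\emph{Operator norm.} For the operator $T_R$ with zonal kernel $R(\langle x,y\rangle)$, the Hilbert--Schmidt norm dominates the operator norm, and $\|T_R\|_{\HS}^2=\E[R(T_d)^2]=\|R\|_2^2$. The case $s=2$ of the second bound then gives $\|T_R\|_{\op}\le\|R\|_2\lesssim d^{-(m+6)/2}$. Alternatively, Funk--Hecke gives eigenvalues $\E[R(T_d)P_{q,d}(T_d)]$, each bounded by $\|R\|_2$ via Cauchy--Schwarz and $|P_{q,d}|\le1$.

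There is no real obstacle here. The only point needing care is the centering constant: one should control it through $\E[\widetilde R(T_d)]$ and not coefficient by coefficient, which is what keeps the bound at $d^{-(m+6)/2}$. The other thing to check is that all constants depend only on $K$, $m$ and $s$, and not on $d$; this holds because $s_d\ge\sqrt{c(1-c)}$.
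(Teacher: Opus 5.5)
Your proof is correct and follows the paper's argument. Taylor's theorem bounds the uncentered remainder $\kappa_d-\widetilde\Pi$ by $|t|^{m+6}$. Centering of $\kappa_d$ shows that $R$ is the centered version of this remainder, so the constant is controlled by $\E|T_d|^{m+6}\lesssim d^{-(m+6)/2}$. Rotation invariance gives the uniform slice norm, and the Cauchy--Schwarz (Hilbert--Schmidt) bound gives the operator norm. You also fill in details the paper's three-line proof leaves implicit, such as $s_d\ge\sqrt{c(1-c)}$ and Minkowski's inequality for the $L^s$ bound.
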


\begin{proof}
Taylor's theorem gives
$|\kappa_d(t)-\widetilde\Pi(t)|\lesssim|t|^{m+6}$.
Since $R$ is its centered version, spherical moments give
\[
 \|R(T_d)\|_s
 \lesssim\|T_d\|_{s(m+6)}^{m+6}
 \lesssim_s d^{-(m+6)/2}.
\]
The pointwise estimate follows in the same way.
Rotation invariance gives the uniform slice norm, and
Cauchy--Schwarz gives $\|T_R\|_{\op}\le\|R\|_2$.
\end{proof}

We first compare the likelihood weights. Recall $\Gamma_d$
from~\eqref{eq:4-5}, and define
$S_\Pi=\sum_{i<j}\Pi(T_{ij})\Pi(T_{ij}')$.
Since Taylor approximation is most effective near zero, let
$\mathcal E_\tau(X)=\{\max_{i<j}|T_{ij}|\le\tau\}$, where
$\tau=\sqrt{C_0\log(n+d)/d}$. The next estimate keeps the
indicator of this event inside the prior expectation.

\begin{proposition}[Likelihood comparison]
\label{prop:weighted-transfer}
Suppose $n\le Cd^{m+q_m/3}$. For every fixed $D>0$, one can
choose $C_0$ so that $\P[\mathcal E_\tau^c]\le(n+d)^{-D}$
and, for every fixed $a>1$, every event $\mathcal G$ and every
real measurable function $F$ of the independent prior pair,
\begin{equation}
 \E\big[\1_{\mathcal G\cap\mathcal E_\tau(X)\cap
                \mathcal E_\tau(X')}\Gamma_de^F\big]
 \le e^{o(1)}
 \big(\E[\1_{\mathcal G}e^{a(S_\Pi+F)}]\big)^{1/a},
 \label{eq:transfer-interface}
\end{equation}
whenever the expectation on the right is finite.
\end{proposition}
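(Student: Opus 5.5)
The plan has three parts: prove the tail bound for $\mathcal E_\tau$, show that on $\mathcal E_\tau(X)\cap\mathcal E_\tau(X')$ the weight $\Gamma_d$ is bounded by $e^{S_\Pi+\text{error}}$, and absorb the error by H\"older's inequality.

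\textbf{The tail bound.} The spherical tail bound~\eqref{eq:kernel-bridge-sphere-tail} gives $\P[|T_d|>\tau]\le 2\exp(-c d\tau^2)=2(n+d)^{-cC_0}$. A union bound over the $\binom n2$ pairs then yields $\P[\mathcal E_\tau^c]\le (n+d)^{2-cC_0}\le(n+d)^{-D}$ once $C_0$ is large.

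\textbf{Pointwise comparison.} Each factor of $\Gamma_d$ is nonnegative, so $1+x\le e^x$ gives $\Gamma_d\le\exp(S_{\kappa_d})$, where $S_{\kappa_d}=\sum_{i<j}\kappa_d(T_{ij})\kappa_d(T'_{ij})$. Writing $\kappa_d=\Pi+R$, we get
\[
S_{\kappa_d}=S_\Pi+\sum_{i<j}\big(\Pi(T_{ij})R(T'_{ij})+R(T_{ij})\Pi(T'_{ij})+R(T_{ij})R(T'_{ij})\big)=:S_\Pi+E.
\]
On the event, Lemma~\ref{lem:residual-bounds} gives $|R(T_{ij})|\lesssim \tau^{m+6}+d^{-(m+6)/2}\lesssim (\log(n+d))^{(m+6)/2}d^{-(m+6)/2}$. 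For $\Pi$ we have $|\Pi(t)|\lesssim |t|^m+d^{-m/2}$, because its constant term cancels against centering and the first nonzero derivative is of order $m$. Hence $|\Pi(T_{ij})|\lesssim(\log(n+d))^{m/2}d^{-m/2}$. Summing over $n^2$ pairs gives
\[
|E|\lesssim n^2(\log(n+d))^{m+3}d^{-m-3}\le C d^{2m+2q_m/3-m-3}\,\mathrm{polylog}(d)=d^{m+2q_m/3-3+o(1)}.
\]
This is not $o(1)$ when $m\ge 3$, so a deterministic bound on $E$ is too crude in general and I will not rely on it.

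\textbf{Absorbing $E$ by H\"older.} Rather than bounding $E$ pointwise, I will apply H\"older with exponents $a$ and $a'=a/(a-1)$:
\[
\E[\1_{\mathcal G\cap\mathcal E}e^{S_\Pi+E+F}]\le \big(\E[\1_{\mathcal G}e^{a(S_\Pi+F)}]\big)^{1/a}\big(\E[\1_{\mathcal E}e^{a'E}]\big)^{1/a'}.
\]
It then suffices to show that $\E[\1_{\mathcal E}e^{a'E}]\le e^{o(1)}$. Here $E$ is a sum of products $\Pi(T_{ij})R(T'_{ij})$ over two independent samples, and its mean is zero: conditional on $X$, each term has mean zero in $X'$ because $R$ is canonical.

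I will control its exponential moment in the same way as Lemma~\ref{lem:finite-mgf}. Conditioning on $X$, I expand $R$ and $\Pi$ in Gegenbauer terms; $R$'s components carry the extra factor $d^{-3}$ relative to $\Pi$'s. The cross term is then a bilinear overlap of the type $O_{q\ell}$ with a coefficient smaller by $d^{-3}$. Using $\|T_R\|_{\op}\lesssim d^{-(m+6)/2}$ and the conditional estimate of Lemma~\ref{lem:signed-block} (with the event $\mathcal E_\tau$ truncating large values, as $\mathcal G_D$ did before), its log-MGF should be of order $V_0d^{-3}$ or $n^2 d^{-2m-6}$. Both are $o(1)$ for $n\le Cd^{m+q_m/3}$.

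The main obstacle is making this conditional exponential moment rigorous, because $R$ is not a finite harmonic expansion. I would first try decomposing $R=R_{\le L}+R_{>L}$ at a fixed degree $L$. The low part gets the Gram-matrix/quadratic-form argument; for the high part, the $d^{-3}$ gain together with the truncation at $\tau$ should give a crude bound of $n\|R\|_{L^s}\cdot\|\Pi\|$ in moment form. This is exactly what converts the $e^{o(1)}$ prefactor into the displayed inequality.
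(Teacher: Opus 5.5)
Your first two steps are sound and coincide with the paper. The union bound for $\mathcal E_\tau$, the bound $\Gamma_d\le e^{S_{\kappa_d}}$ from $1+x\le e^x$, and the H\"older split with exponents $a$ and $a'=a/(a-1)$ are exactly how~\eqref{eq:tr-7} arises. (Your deterministic bound $|E|\le d^{m+2q_m/3-3+o(1)}$ in fact already fails at $m=2$, where the exponent is $1/3$.)

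The gap is the last step, which carries all the substance: you never show that the residual has exponential moments $1+o(1)$, and the route you sketch does not go through.
\begin{itemize}
\item Lemma~\ref{lem:signed-block} needs the conditioned coefficient matrix to be a single-degree Gram matrix $\mathbf U_\ell(X')$. It must be positive semidefinite with unit diagonal, so that it can serve as a Gaussian covariance in Lemma~\ref{lem:gaussian-holder}. Its constants also come from derivative bounds on one fixed $P_{q,d}$.
\item The matrix $(R(T'_{ij}))$ has neither property, and $R$ has infinitely many harmonic degrees. The term $R(T_{ij})R(T'_{ij})$ has no harmonic factor at all.
\item Your fallback split $R=R_{\le L}+R_{>L}$ is not carried out. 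The pointwise bound of Lemma~\ref{lem:residual-bounds} does not pass to the projection $R_{>L}$.
\item Working with $\E[\1_{\mathcal E}e^{a'E}]$ has a structural problem. The indicator couples all pairs and destroys the centering you want to exploit. Without it, the kernel of $E$ has sup norm of order one.
\end{itemize}

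The missing idea is to truncate and recenter. You should then use the canonical structure in the pair variables $Z_i=(X_i,X_i')$ through a $U$-statistic inequality rather than through harmonic analysis.

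Replace $\Pi$ and $R$ by $f_\tau=f\1_{\{|t|\le\tau\}}-\E[f(T_d)\1_{\{|T_d|\le\tau\}}]$. Lemma~\ref{lem:central-event} shows that on the two central events this changes $\sum_{i<j}\kappa_d(T_{ij})\kappa_d(T'_{ij})$ by only $o(1)$. Hence $\Gamma_d\le\exp(S_\Pi+U+o(1))$ with $U=\sum_{i<j}\widetilde\Delta(Z_i,Z_j)$.

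The kernel $\widetilde\Delta$ is globally defined, canonical in each $Z_i$, and bounded by $A_\Delta\lesssim\tau^{2m+6}$. So Lemma~\ref{lem:glz} applies without any expansion of $R$. Its parameters use only:
\begin{itemize}
\item $\|\widetilde\Delta\|_2^2\lesssim d^{-2m-6}$;
\item $\|T_{\widetilde\Delta}\|_{\op}\lesssim d^{-m-3-q_m/2}$, which follows from $T_{\widetilde\Delta}=T_{\Pi_\tau}\otimes T_{R_\tau}+\cdots$ and $\|T_R\|_{\op}\le\|R\|_2$;
\item the sup norm $A_\Delta$.
\end{itemize}

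Canonical cancellation gains one factor of $n$ over your deterministic bound $n^2\tau^{2m+6}$. The paper uses the refined parameters in~\eqref{eq:tr-3}. But even bounding all four parameters by the sup norm gives a tail $\exp(-cu/(nA_\Delta))$ on $[nA_\Delta,n^2A_\Delta]$, with $nA_\Delta=d^{q_m/3-3+o(1)}\to0$. Integrating this tail up to the deterministic bound $|U|\le n^2A_\Delta$ gives $\E[e^{s|U|}]=1+o(1)$ for every fixed $s$. That closes your H\"older step.
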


\begin{proof}
For $f\in\{\Pi,R\}$, let
$f_\tau=f\1_{\{|t|\le\tau\}}-\E[f(T_d)\1_{\{|T_d|\le\tau\}}]$
and define
\[
 U=\sum_{i<j}\big(
 \Pi_\tau(T_{ij})R_\tau(T_{ij}')
 +R_\tau(T_{ij})\Pi_\tau(T_{ij}')
 +R_\tau(T_{ij})R_\tau(T_{ij}')\big).
\]
Lemma~\ref{lem:central-event} and $1+x\le e^x$ give
$\Gamma_d\le\exp(S_\Pi+U+o(1))$ on the two central events.
The four parameter estimate in~\eqref{eq:tr-3} gives
$\E[e^{s|U|}]=1+o(1)$ for every fixed $s>0$; the tail
integration is given in~\eqref{eq:tr-6} in
Appendix~\ref{app:proof-weighted-transfer}.
For $a'=a/(a-1)$, H\"older's inequality therefore gives
\begin{equation}
 \E\big[\1_{\mathcal G\cap\mathcal E_\tau(X)\cap
                 \mathcal E_\tau(X')}\Gamma_de^F\big]  
 \le e^{o(1)}
 \big(\E[\1_{\mathcal G}e^{a(S_\Pi+F)}]\big)^{1/a}
 \big(\E[e^{a'|U|}]\big)^{1/a'}.
 \label{eq:tr-7}
\end{equation}
This proves~\eqref{eq:transfer-interface}.
\end{proof}

We can now use the estimates for each fixed order. Let
$h_{k,d}^\Pi$ be formed from $\Pi$ as
in~\eqref{eq:star-kernel}, and define
\[
 \bar g_k^\Pi=\E_P\big[
 \E_P[h_{k,d}^\Pi(X_1,\ldots,X_k)\mid A]^2\big].
\]
Take $\mathcal I=\mathcal I_\Pi$ and $b_q=b_{q,d}^\Pi$ in
the overlap and Gram event definitions. By
Lemma~\ref{lem:finite-mgf} and the coefficient bounds just
verified,
\begin{equation}
 \log\E\big[\1_{\mathcal G_D(X)\cap\mathcal G_D(X')}
           e^{a(S_\Pi+F)}\big]
 \lesssim_{a,t,D}1+V_0,
 \quad V_0\le d^{2q_m/3+o(1)}.
 \label{eq:smooth-prior-mgf}
\end{equation}
Here $F$ is any additional overlap term allowed in that lemma.
Proposition~\ref{prop:weighted-transfer} supplies this same
bound with the original likelihood. Thus the arguments for
order two, order three, and higher fixed orders apply with
$\Pi$ in the observables and $P$ in the posterior.

Recall that, for $I\subseteq[n]$, we write $X_I=(X_i)_{i\in I}$ and
$X_I'=(X_i')_{i\in I}$. To remove $R$ from the observables,
the replica identity gives
\[
 \binom nk|\bar g_k-\bar g_k^\Pi|
 \le\E_{\post}\Bigg[\Bigg|
 \sum_{|I|=k}\big(
 h_{k,d}(X_I)h_{k,d}(X_I')
 -h_{k,d}^\Pi(X_I)h_{k,d}^\Pi(X_I')\big)
 \Bigg|\Bigg].
\]
Every summand contains at least one residual factor. The gain
$d^{-3}$ in Lemma~\ref{lem:residual-bounds} makes the prior
tail small enough to survive the likelihood comparison.
The resulting estimate is as follows.

\begin{proposition}[Fixed orders under the original law]
\label{prop:star-transfer}
For every fixed $k\ge2$,
\begin{equation}
 \binom n{k+1}\bar g_k^\Pi=o(1),
 \quad \binom nk|\bar g_k-\bar g_k^\Pi|=o(n^{-1}),
 \quad \binom n{k+1}\bar g_k=o(1).
 \label{eq:star-transfer-summary}
\end{equation}
\end{proposition}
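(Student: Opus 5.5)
The plan is to prove the three assertions in~\eqref{eq:star-transfer-summary} in order. The third follows from the first two: $\binom n{k+1}=\frac{n-k}{k+1}\binom nk$, so
\[
\binom n{k+1}\bar g_k\le\binom n{k+1}\bar g_k^\Pi+\tfrac{n}{k+1}\binom nk|\bar g_k-\bar g_k^\Pi|=o(1).
\]

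For the first assertion, rerun the fixed order arguments of Propositions~\ref{prop:finite-order-two}, \ref{prop:finite-order-three} and~\ref{prop:finite-higher}, with the observables built from $\Pi$ and the posterior taken under the original law $P$. Those proofs used the auxiliary likelihood in one place only: each posterior tail $\P_{\post}[\mathcal B]$ was bounded through Lemma~\ref{lem:planting} by $\E[\Gamma_d\1_{\mathcal B}]$, and this was then controlled by exponential moments of $S_\kappa+F$ on the Gram events. I will intersect every event with $\mathcal E_\tau(X)\cap\mathcal E_\tau(X')$, whose complement has posterior probability at most $2(n+d)^{-D}$ because each replica has the prior marginal. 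On this intersection, Proposition~\ref{prop:weighted-transfer} replaces $\Gamma_d$ by $e^{S_\Pi}$ at the cost of $e^{o(1)}$ and a Hölder exponent $a>1$. I will take $F=-\lambda$ times the relevant overlap, or use $e^F$ as a Chernoff factor. Then \eqref{eq:smooth-prior-mgf} gives the same bounds $C(1+V_0)$ as before, so the tail estimates \eqref{eq:mc-11}, \eqref{eq:hc-9} and \eqref{eq:newton-overlap-main} survive with changed constants. Feasibility of $\Pi$ was only used to make the factors of $\Gamma_d$ nonnegative. Here $\Gamma_d$ is the genuine likelihood overlap, so that issue disappears. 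The deterministic bounds such as $|O_{qq}|\le\binom n2$ and $|T_3|\lesssim n^3d^{-3m}$ still hold for $\Pi$ on $\mathcal E_\tau$, and they absorb the polynomially small bad events.

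For the second assertion, write $h_{k,d}=h^\Pi_{k,d}+\Delta_k$, where $\Delta_k$ is a sum of the $2^k-1$ star integrals that contain at least one factor $R(\langle x_i,z\rangle)$. The difference of replica products then splits into terms $\sum_{|I|=k}\Delta_k(X_I)h_{k,d}(X'_I)$ and symmetric variants. I will bound $\E_{\post}|\cdot|$ by Cauchy--Schwarz in the posterior and the replica identity:
\[
\Big|\E_{\post}\sum_I\Delta_k(X_I)h_{k,d}(X_I')\Big|\le\Big(\E_{\post}\Big[\sum_I\Delta_k(X_I)\Delta_k(X_I')\Big]\Big)^{1/2}\Big(\E_{\post}\Big[\sum_I h_{k,d}(X_I)h_{k,d}(X_I')\Big]\Big)^{1/2},
\]
because each factor is a positive semidefinite form $\E_A\|\E[\cdot\mid A]\|^2$ summed over $I$. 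The second factor is $\binom nk\bar g_k\le\binom nk\E[H_d(T_d)^k]$, which is controlled by~\eqref{eq:growing-moment}. For the first factor I will use the crude Jensen bound
\[
\E_A\big[\E[\Delta_k\mid A]^2\big]\le\E_{\mathrm{prior}}\big[\Delta_k^2\big],
\]
where, by Lemma~\ref{lem:residual-bounds} and Hölder in $z$, $\|\Delta_k\|_2^2\lesssim d^{-(m+6)}\,(d^{-m})^{k-1}$ up to the contraction gains. This gives a bound of order $n^k d^{-3}\cdot d^{-mk}\cdots$, which must be compared with $n^{-1}$.

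The main obstacle is this last comparison. The crude prior bound may lose the $(s/d)^{q_m/2}$ contraction gains of~\eqref{eq:growing-moment}, and then $n^{k+1}d^{-mk}$ alone can be large at $n\approx d^{m+q_m/3}$. The factor $d^{-3}$ must beat $d^{(m+q_m/3)(k+1)-mk-kq_m/2}$ after the geometric mean. If it does not, I will instead run the posterior tail method of the first step directly on the mixed overlap $\sum_I\Delta_k(X_I)h^\Pi_{k,d}(X'_I)$. I will bound its prior moments through the harmonic expansion of $R$, which has operator norm at most $d^{-(m+6)/2}$, and transfer them with Proposition~\ref{prop:weighted-transfer}, as was done for $T_3$ and $Y_H$. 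In that route the six extra derivatives supply the needed $d^{-3}$ margin against the $e^{C(1+V_0)}=e^{d^{2q_m/3+o(1)}}$ transfer cost, because the residual tails are stretched-exponential in a positive power of $d$.
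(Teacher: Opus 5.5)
Your plan for the first and third assertions is the paper's. You rerun the order-two, order-three and Newton arguments with $\Pi$ in the observables and the original posterior, replacing $\Gamma_d$ through Proposition~\ref{prop:weighted-transfer} on $\mathcal E_\tau(X)\cap\mathcal E_\tau(X')$ and the Gram events, and then use $\binom n{k+1}=\frac{n-k}{k+1}\binom nk$. (The Chernoff factor should be $F=+t|b_qb_\ell|O_{q\ell}$, not a negative multiple.)

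The gap is in the second assertion, where your primary route cannot close for general $m$. Take its most favorable form: use the first assertion, so that only $\binom nk\delta_k=o(n^{-1})$ is needed, with $\delta_k=\E_P[\E_P[\Delta_k(X_1,\ldots,X_k)\mid A]^2]$. The Jensen step $\delta_k\le\E[\Delta_k^2]$ then discards the posterior and leaves a prior quantity that is genuinely too large.

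At $k=2$, $\Delta_2$ is the profile of $T_\Pi T_R+T_RT_\Pi+T_R^2$. Its degree-$q_m$ coefficient is the cross term between the degree-$q_m$ harmonic coefficients of $\Pi$ (size $d^{-m/2}$) and of $R$ (size $d^{-(m+6)/2}$ when $K^{(m+6)}(0)\ne0$), so it is of order $d^{-m-3}$. Hence $\E[\Delta_2^2]\gtrsim d^{-2m-6-q_m}$ and $n^3\E[\Delta_2^2]\gtrsim d^{m-6}/\omega_d^3$. This is not $o(1)$ for $m\ge7$ when $\omega_d$ grows slowly. The six extra Taylor orders buy a fixed factor $d^{-6}$, while the prior overcount $\binom n3\E[H_d(T_d)^2]\asymp d^{m}/\omega_d^3$ grows with $m$. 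Your geometric-mean version, which also bounds $\bar g_k$ by its prior moment, is worse: it gives $d^{m-3}/\omega_d^3$ at $k=2$.

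So you are forced into your fallback. That is the paper's route, but as written it omits the mechanism that makes it work. Under the independent prior you need $\P[|W_k|>n^{-1-\delta}]\le\exp(-d^{\zeta})$ with $\zeta$ \emph{strictly larger than} $2q_m/3$, to beat the transfer cost $e^{C(1+V_0)}$. A tail that is stretched exponential in some positive power of $d$ is not enough. Fixed-order prior moments cannot give such a tail, and hypercontractivity is unavailable because $R$ has infinitely many harmonic degrees.

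The paper gets the tail as follows.
\begin{enumerate}
\item Put all terms containing $R$ into $W_k=\sum_{|I|=k}D_k(Z_I)$, including those with $R$ in both replicas, which your mixed overlap $\sum_I\Delta_k(X_I)h^\Pi_{k,d}(X_I')$ leaves out. This is a $U$-statistic in the pairs $Z_i=(X_i,X_i')$, canonical because $\Pi$ and $R$ are centered and the replicas are independent.
\item Bound its slice norms by H\"older, using the Taylor-based bounds $\sup_x\|R(x,\cdot)\|_s\lesssim d^{-(m+6)/2}$ and $\|T_R^2\|_\infty\lesssim d^{-m-6}$ together with $\|H_\Pi\|_s\lesssim d^{-m-q_m/2}$. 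This gives $a_r\lesssim d^{-mk-3-(2r-1)q_m/4}$.
\item Apply Lemma~\ref{lem:adamczak-cor} at $t=n^{-1-\delta}$ and check that every exponent exceeds $2q_m/3$. At $r=k=2$ the exponent is $3+q_m/12$, so the residual's gain $d^{-3}$ is exactly what is needed.
\item Transfer with \eqref{eq:tr-7a} and Lemma~\ref{lem:planting}, using $|W_k|\lesssim n^k$ on the excluded events.
\end{enumerate}
This bounds $\E_{\post}[|W_k|]=o(n^{-1})$ directly, with no Cauchy--Schwarz split.
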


The proof is given in Appendix~\ref{app:proof-star-transfer}.
It expands the residual terms and checks the slice norms
needed for the canonical statistic bound. We are now ready to
combine the fixed orders with the tail estimate from
Proposition~\ref{prop:cavity-tail}.

\begin{proposition}[Indistinguishability for finitely flat kernels]
\label{prop:finite-flat-converse}
If $K\in\maK_{\mathrm{smooth}}(m)$ and
$d\gg n^{3/(3m+q_m)}$, then $\KL(P\|Q)\to0$ and
$\TV(P,Q)\to0$.
\end{proposition}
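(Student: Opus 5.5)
The plan is to assemble the pieces already established. Write $n=d^{m+q_m/3}/\omega_d$ with $\omega_d\to\infty$; this is equivalent to $d\gg n^{3/(3m+q_m)}$. We may also assume $n\to\infty$, since for bounded $n$ the finitely many edges are handled by the same estimates, or directly by the fact that $\tau_{\ell,d}\to0$. By Pinsker's inequality and Lemma~\ref{lem:cavity}, $\TV(P,Q)^2\le\KL(P\Vert Q)/2\le\frac12\sum_{k=2}^{n-1}G_k\le\frac12\sum_{k=2}^{n-1}\bar G_k$. It therefore suffices to show $\sum_{k=2}^{n-1}\bar G_k=o(1)$. Fix an integer $k_\star>6m/q_m+2$, and split the sum into the fixed orders $2\le k\le k_\star$ and the tail $k>k_\star$.

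For the tail, Proposition~\ref{prop:cavity-tail} gives $\sum_{k=k_\star+1}^{n-1}\bar G_k=o(1)$ directly for the original kernel. It relies only on the prior moment bounds \eqref{eq:growing-moment}, so no posterior estimate is needed there.

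For each fixed $2\le k\le k_\star$, Proposition~\ref{prop:star-transfer} gives $\bar G_k=\binom n{k+1}\bar g_k=o(1)$ under the original law. Since the number of such orders is fixed and does not grow with $n$, their sum is $o(1)$. Two hypotheses must be checked before invoking it. First, $n\le Cd^{m+q_m/3}$, which is needed for Proposition~\ref{prop:weighted-transfer}; it holds trivially since $\omega_d\to\infty$. Second, the Taylor coefficients must satisfy \eqref{eq:finite-bounds} and \eqref{eq:finite-field}; these follow from Lemma~\ref{lem:taylor-finite} and the computation immediately after it. With these in place, the order two, three and higher fixed-order arguments (Propositions~\ref{prop:finite-order-two}, \ref{prop:finite-order-three} and \ref{prop:finite-higher}) run with $\Pi$ in the observables and $P$ in the posterior, which is exactly the content of Proposition~\ref{prop:star-transfer}.

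Adding the two ranges gives $\KL(P\Vert Q)\to0$, and hence $\TV(P,Q)\to0$. The real difficulty lies in the deferred Proposition~\ref{prop:star-transfer}, where the residual $R$ must be removed from the observables under the original posterior. Here the gain $d^{-3}$ from Lemma~\ref{lem:residual-bounds} has to beat the factor $e^{C(1+V_0)}$ with $V_0\le d^{2q_m/3+o(1)}$ incurred by the planting comparison. At the level of this proposition, the only care needed is bookkeeping: $k_\star$ must be fixed before taking $d\to\infty$, so that the implicit constants in the fixed-order bounds are uniform.
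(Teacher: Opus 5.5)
Your argument is correct and matches the paper's proof: fix $k_\star>6m/q_m+2$, bound the fixed orders $2\le k\le k_\star$ by Proposition~\ref{prop:star-transfer} and the tail by Proposition~\ref{prop:cavity-tail}, then conclude with Lemma~\ref{lem:cavity} and Pinsker's inequality. Your aside about bounded $n$ is not needed, since Theorem~\ref{thm:finite-flat} already takes $n,d\to\infty$.
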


\begin{proof}
Fix an integer $k_\star>6m/q_m+2$.
Lemma~\ref{lem:cavity} and
Propositions~\ref{prop:star-transfer} and~\ref{prop:cavity-tail}
give
\[
 \KL(P\|Q)
 \le\sum_{k=2}^{k_\star}\binom n{k+1}\bar g_k
    +\sum_{k=k_\star+1}^{n-1}\binom n{k+1}\bar g_k=o(1).
\]
Pinsker's inequality completes the proof of the converse in
Theorem~\ref{thm:finite-flat}.
\end{proof}

\section{Discussions}\label{sec:discussion}
\noindent\textbf{The conjecture in~\cite{mao2026random}.}
Our results show that signed triangles attain the optimal detection
threshold for many kernels beyond those previously studied. However,
their signal can be weakened by cancellation between the even and odd
parts of a mixed cusp. Remark~\ref{rem:cusp-cancellation} gives a fixed
kernel for which $n^3\tau_{3,d}^2\to0$ while
$n^4\tau_{4,d}^2\to\infty$. By Theorem~\ref{thm:cusp},
$\TV(P,Q)\to1$, contradicting the indistinguishability prediction
of~\cite[Conjecture~1]{mao2026random}. The quadrilateral signal avoids
this cancellation, and our matching impossibility result establishes
the optimality of signed quadrilateral counts for this example.

\

\noindent\textbf{Connecting the hard and smooth thresholds.}
The thresholds $n^3$ for dense hard RGGs in~\cite{bubeck2016testing}
and $n^{3/4}$ for smooth kernels with $K'(0)\ne0$
in~\cite{mao2026random} are connected by the pure odd cusp family.
For each fixed $0<\beta\le1$, Theorem~\ref{thm:cusp} gives
$d_\star=n^{3/(3\beta+1)}$. The exponent approaches $3$ as
$\beta\downarrow0$, agreeing with the jump threshold, and equals
$3/4$ at $\beta=1$, when the leading local term is linear.
Thus, within a family of fixed dense kernels, the local power
$\beta$ explains the range of threshold exponents between these
two benchmarks.

\

\noindent\textbf{Smooth kernels with $K'(0)=0$.}
When the linear term vanishes, the first nonzero higher derivative
determines the leading local signal.
For $K\in\maK_{\mathrm{smooth}}(m)$,
Theorem~\ref{thm:finite-flat} gives
$d_\star=n^{3/(3m+q_m)}$, attained by signed triangles.
The parity of the leading Taylor term determines $q_m$:
$q_m=1$ for odd $m$ and $q_m=2$ for even $m$.
These formulas continue the odd and even cusp expressions at the
level of threshold exponents. Together, the two theorems describe
how local behavior at zero determines detection across
discontinuous, nondifferentiable, and finitely flat smooth kernels.

\

\noindent\textbf{Sparse graphs and kernels depending on the dimension.}
Our results concern fixed dense kernels. When $p\to0$, many bounds
depend on $p$, and this dependence must be tracked throughout the
proof. If $K=K_d$ also varies with $d$, its local regularity at zero
alone may no longer determine the threshold. One must also control
how $K_d$ changes with $d$, particularly on the scale $d^{-1/2}$
of the latent inner products. A general characterization of the
optimal detection threshold in these settings remains open.

\section*{Acknowledgments}
Part of this work was carried out when D. Huang was visiting Yale University.
We thank Yihong Wu for helpful discussions and for suggesting
that we expand $K$ to order $m$ and compute the resulting
contributions for general kernels.

\section*{Statement on AI use}

We began by asking whether signed triangle counts are always optimal
for RGG detection. Our initial calculations suggested a counterexample
in which signed quadrilateral counts perform better.
This example
belongs to the cusp class studied in this paper, where we establish
the optimality of signed quadrilateral counts. 
We also observed that the quadrilateral signal cannot cancel because all its spectral contributions are nonnegative.
It led us to ask more
generally how the kernel determines the optimal test statistic.
We used AI tools in the following two stages of this investigation.

\begin{itemize}
\item During discussions with Yihong Wu, he suggested that we expand
a general kernel $K$ to order $m$ and compute the resulting
contributions. We pursued this approach for smooth kernels with
$K'(0)=0$ and a first nonzero derivative of finite order.
We used GPT-5.6 Sol to assist in developing the proofs for this
class. These results show that signed triangle counts remain
optimal for the finitely flat smooth kernels considered here.

\item We then returned to the question of when signed triangles
can be suboptimal and considered another direction beyond
$K'(0)\neq0$: kernels that are not differentiable at zero.
The discontinuous kernels in~\cite{bubeck2016testing}
motivated us to ask whether their detection threshold could be
connected with the smooth kernel threshold
in~\cite{mao2026random}. We used local power expansions near zero,
with terms such as $|x|^\alpha$ ($0<\alpha<1$), to describe different degrees
of regularity and formulate the cusp class.
We used GPT-6 Astra to assist in developing the proofs for this
class, including the cases where the triangle signal cancels
and signed quadrilateral counts become optimal.
\end{itemize}

We also used GPT-5.6 Sol and GPT-6 Astra to assist with figures,
LaTeX formatting, and language editing. We take responsibility
for the mathematical claims and the final presentation of this paper.

\bibliographystyle{alpha}
\bibliography{main}

\appendix

\section{Spectral asymptotics}\label{app:harmonic}

\subsection{Spectral estimates for cusp kernels}
\label{app:cusp-spectrum}

We first record the local coefficient estimates used in
Section~\ref{sec:cusp-spectrum}. Let $K$ satisfy one of the continuous
cases in Definition~\ref{def:cusp}. Write
$e_d(t)=(\kappa_d(t)+\kappa_d(-t))/2$ and
$o_d(t)=(\kappa_d(t)-\kappa_d(-t))/2$.
Only even degrees occur in $e_d$, and only odd degrees in $o_d$.
Thus $v_d=\|e_d(T_d)\|_2^2+\|o_d(T_d)\|_2^2$.

\begin{lemma}[Local harmonic coefficients]\label{gen:lem:gap-transfer}
For this $K$, we have $p_d\to p_0$,
$s_d\to\sqrt{p_0(1-p_0)}$, and~\eqref{eq:cusp-low-coefficients} holds in the stated cases.
For every fixed $p\ge1$,
\begin{align*}
 \|e_d(T_d)\|_2^2&\asymp d^{-\alpha},\quad
 \|e_d(T_d)\|_p\lesssim d^{-\alpha/2}
 \quad\text{(pure even or mixed)},\\
 \|o_d(T_d)\|_2^2&\asymp d^{-\beta},\quad
 \|o_d(T_d)\|_p\lesssim d^{-\beta/2}
 \quad\text{(pure odd or mixed)}.
\end{align*}
A component of $K$ that vanishes near zero has exponentially small
centered fixed moments and harmonic coefficients.
\end{lemma}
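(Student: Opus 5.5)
We first handle $p_d$ and $s_d$, and then reduce every remaining statement to Gaussian moment computations at the scale $T_d=O_{\P}(d^{-1/2})$, using concentration to discard the region $|t|\ge r_0$. Write $W_d=\sqrt d\,T_d$. We will use three facts: $W_d\Rightarrow G$; the moments of $W_d$ are bounded uniformly in $d$ for each fixed order, since \eqref{eq:kernel-bridge-sphere-tail} gives $\|T_d\|_r\lesssim\sqrt{r/d}$; and $\P[|T_d|\ge r_0]\le Ce^{-cd}$.

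For the first claim, we split $p_d=\E[K(T_d)]$ on $|T_d|<r_0$ and its complement. By symmetry of $T_d$, the odd part integrates to zero, so
\[
p_d=p_0+\E[K_e(|T_d|)\mathbf 1\{|T_d|<r_0\}]+O(e^{-cd}).
\]
Since $K_e(r)\to0$ as $r\downarrow0$ in all three continuous cases, the middle term is $O(d^{-\alpha/2})$ or $0$. Hence $p_d\to p_0$ and $s_d\to\sqrt{p_0(1-p_0)}$.

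For the norm bounds, note that $e_d(t)=(K_e(|t|)+p_0-p_d)/s_d$ on $|t|<r_0$, and $o_d(t)=\sgn(t)K_o(|t|)/s_d$ there. The local expansions give constants $C$ with $|K_e(r)|\le Cr^\alpha$ and $|K_o(r)|\le Cr^\beta$ on $(0,r_0)$; here we take $r_0$ smaller if needed, and use boundedness of $K$ on the rest of $(0,r_0)$. Therefore
\[
\|e_d(T_d)\|_p\lesssim \|T_d\|_{p\alpha}^{\alpha}+|p_d-p_0|+e^{-cd/p}\lesssim d^{-\alpha/2},
\]
and likewise $\|o_d(T_d)\|_p\lesssim d^{-\beta/2}$.

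For the lower bounds and for \eqref{eq:cusp-low-coefficients}, we write
\[
d^{\beta/2}s_d\,o_d(T_d)=a_o\sgn(W_d)|W_d|^\beta+\rho_d ,
\]
where $\rho_d$ comes from the $o(r^\beta)$ remainder. We claim $\rho_d\to0$ in $L^2$. To see this, split at $|T_d|\le\eta$: on that event the remainder is at most $\epsilon(\eta)|W_d|^\beta$, which is uniformly small in $L^2$. On $\eta<|T_d|<r_0$, the probability is $e^{-c\eta^2d}$, and that region contributes negligibly after multiplication by $d^{\beta/2}$. Uniform integrability of the powers of $W_d$ and weak convergence then give
\[
d^{\beta}s_d^2\|o_d\|_2^2\to a_o^2M_{2\beta}>0 .
\]
For the coefficient, $\gamma_{1,d}=\sqrt d\,\E[\kappa_d(T_d)T_d]=\E[\kappa_d(T_d)W_d]$, so
\[
s_d\,d^{\beta/2}\gamma_{1,d}\to a_o\E[|G|^{\beta+1}]=a_oM_{\beta+1}.
\]
The even part is handled the same way. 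Since $\E[e_d(T_d)]=0$, the centering constant $p_0-p_d$ cancels against the mean of $a_e|W_d|^\alpha$, which gives $d^{\alpha}s_d^2\|e_d\|_2^2\to a_e^2\var(|G|^\alpha)>0$. For degree two, $\psi_{2,d}(T_d)\to(G^2-1)/\sqrt2$ in $L^2$ by Lemma~\ref{lem:gegen-hermite}, or directly from the explicit Gegenbauer formula. Integration by parts gives $\E[|G|^\alpha(G^2-1)]=\alpha M_\alpha$, which yields the stated constant.

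For the final assertion, suppose a component (the even or the odd part, after removing $p_0$) vanishes on $(-r_0,r_0)$. It is then supported where $|T_d|\ge r_0$. Hölder's inequality makes its fixed moments $O(e^{-cd})$, and Cauchy--Schwarz does the same for its harmonic coefficients.

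The main obstacle is the remainder control in the $L^2$ convergence. It needs uniform integrability of $|W_d|^{2\alpha}$, which follows from the uniform moment bounds, together with care that $p_d-p_0$ is itself of order $d^{-\alpha/2}$ and therefore cannot be neglected in the even part. Tracking the centered quantities exactly, rather than approximating $p_d$ by $p_0$, resolves this.
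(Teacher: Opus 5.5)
Your proof is correct, but it reaches the Gaussian limits by a softer route than the paper. You decompose, for example, $d^{\beta/2}s_d\,o_d(T_d)=a_o\sgn(W_d)|W_d|^\beta+\rho_d$ with $\rho_d\to0$ in $L^2$, and then pass to the limit using only $W_d\Rightarrow G$ and uniform integrability. You obtain the lower bounds by computing the exact limits $a_o^2M_{2\beta}$ and $a_e^2\var(|G|^\alpha)$ of the normalized $L^2$ norms.

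The paper instead proves a quantitative weighted comparison $\int(1+|u|^q)|f_d(u)-\varphi(u)|\,du=O_q(d^{-1})$ between the density of $\sqrt d\,T_d$ and the Gaussian density. Combined with the explicit formulas for $\psi_{1,d}$ and $\psi_{2,d}$, this rewrites $\gamma_{1,d}$ and $\gamma_{2,d}$ as exact Gaussian expectations of $K_o(|G|/\sqrt d)|G|$ and $K_e(|G|/\sqrt d)(G^2-1)$, up to errors $O(d^{-\beta/2-1})$ and $O(d^{-\alpha/2-1})$. Dominated convergence on the Gaussian side then gives the constants. The lower bounds on the component norms follow cheaply from Parseval, via $\|e_d\|_2^2\ge\gamma_{2,d}^2$ and $\|o_d\|_2^2\ge\gamma_{1,d}^2$.

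Your argument is more elementary and suffices for the lemma as stated, which only asks for $(1+o(1))$ accuracy. The paper's version gives a relative error $O(d^{-1})$ whenever the local expansion is exact, with no $o(r^\alpha)$ or $o(r^\beta)$ remainder. That is the kind of input needed for rate statements such as $\tau_{3,d}=O(d^{-3\alpha/2-2})$ for the cancelling kernel in Remark~\ref{rem:cusp-cancellation}, and a pure weak-convergence argument does not provide it.

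Two small points. First, the tail bound \eqref{eq:kernel-bridge-sphere-tail} that you cite is actually established inside this very proof, from $\E[W_d^{2j}]\le(2j-1)!!$, so you should include that one-line derivation rather than invoke it. Second, in the final assertion, the centered even part of a pure odd kernel equals the constant $(p_0-p_d)/s_d$ on $|t|<r_0$ rather than vanishing there. This is harmless, since you already showed $|p_d-p_0|=O(e^{-cd})$ in that case, but the argument should say so.
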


\begin{proof}
Let $U_d=\sqrt dT_d$. Its even moments satisfy
\[
 \E[U_d^{2j}]
 =\frac{d^j(2j-1)!!}{d(d+2)\cdots(d+2j-2)}
 \le (2j-1)!!,\quad j\ge1.
\]
Its odd moments vanish by symmetry. Hence
\[
 \E[e^{tU_d}]
 =\sum_{j\ge0}\frac{t^{2j}\E[U_d^{2j}]}{(2j)!}
 \overset{\mathrm{(a)}}{\le}e^{t^2/2},\quad
 \P[U_d>u]\overset{\mathrm{(b)}}{\le}
 \inf_{t>0}e^{-tu+t^2/2}=e^{-u^2/2}.
\]
Here $\mathrm{(a)}$ uses the even moment bound, and
$\mathrm{(b)}$ applies Markov's inequality to $e^{tU_d}$.
Applying the same bound to $-U_d$ gives
\begin{equation}
 \E[e^{tU_d}]\le e^{t^2/2},\quad
 \P[|U_d|>u]\le2e^{-u^2/2},\quad t\in\mathbb R,\quad u\ge0.
 \label{eq:kernel-bridge-sphere-tail}
\end{equation}
In particular, for fixed $r_0>0$, $p\ge1$, and $q$,
\[
 \|\indc{|T_d|\ge r_0}\|_p\le 2^{1/p}e^{-dr_0^2/(2p)},\quad
 \big|\E[\kappa_d(T_d)\psi_{q,d}(T_d)\indc{|T_d|\ge r_0}]\big|
 \lesssim e^{-dr_0^2/4}.
\]
The second bound uses Cauchy--Schwarz,
$\|\psi_{q,d}(T_d)\|_2=1$, and boundedness of $\kappa_d$.
Thus changing $K$ outside $(-r_0,r_0)$ has exponentially small effects
on the moments and coefficients under consideration.

Recall that $G\sim N(0,1)$ and $M_t=\E[|G|^t]$.
Extend $K_e$ and $K_o$ by zero outside $(0,r_0)$. We first show that
\begin{align}
 \gamma_{1,d}
 &=s_d^{-1}\E[K_o(|G|/\sqrt d)|G|]+O(d^{-\beta/2-1}),
 \label{gen:eq:first-gap-transfer}\\
 \gamma_{2,d}
 &=\frac{1+O(d^{-1})}{\sqrt2s_d}\E[K_e(|G|/\sqrt d)(G^2-1)]
   +O(d^{-\alpha/2-1}),
 \label{gen:eq:second-gap-transfer}
\end{align}
where the first relation applies in the pure odd and mixed cases,
and the second in the pure even and mixed cases.
Indeed, the density of $U_d$ is
\[
 f_d(u)=\frac{\Gamma(d/2)}{\sqrt{\pi d}\Gamma((d-1)/2)}
           (1-u^2/d)_+^{(d-3)/2}.
\]
For the standard normal density $\varphi$ and every fixed $q\ge0$,
\begin{equation}
 \int_{\mathbb R}(1+|u|^q)|f_d(u)-\varphi(u)|du=O_q(d^{-1}).
 \label{gen:eq:weighted-gaussian}
\end{equation}
Indeed, expansion of the gamma ratio and logarithm gives
$|f_d(u)-\varphi(u)|\le Cd^{-1}(1+u^4)e^{-cu^2}$
for $|u|\le d^{1/8}$. By~\eqref{eq:kernel-bridge-sphere-tail},
\[
 \int_{|u|>d^{1/8}}(1+|u|^q)(f_d(u)+\varphi(u))\dd u
 \lesssim_q e^{-cd^{1/4}},
\]
which proves~\eqref{gen:eq:weighted-gaussian}. Now use
\[
 \psi_{1,d}(u/\sqrt d)=u,\quad
 \psi_{2,d}(u/\sqrt d)
 =\frac{\sqrt{(d-1)(d+2)/2}}{d-1}(u^2-1).
\]
The centering constant cancels from the second coefficient since
$\E[U_d^2-1]=0$. The bounds
$|K_o(d^{-1/2}|u|)|\lesssim d^{-\beta/2}|u|^\beta$ and
$|K_e(d^{-1/2}|u|)|\lesssim d^{-\alpha/2}|u|^\alpha$,
together with~\eqref{gen:eq:weighted-gaussian}, prove
\eqref{gen:eq:first-gap-transfer}--\eqref{gen:eq:second-gap-transfer}.
The local expansions and dominated convergence give
\begin{align*}
 d^{\beta/2}\E[K_o(|G|/\sqrt d)|G|]
 &\longrightarrow a_oM_{\beta+1},\\
 d^{\alpha/2}\E[K_e(|G|/\sqrt d)(G^2-1)]
 &\longrightarrow a_e(M_{\alpha+2}-M_\alpha)
 \overset{\mathrm{(a)}}{=}a_e\alpha M_\alpha,
\end{align*}
where $\mathrm{(a)}$ uses $M_{\alpha+2}=(\alpha+1)M_\alpha$,
obtained by integration by parts in the Gaussian density.
Substitution in~\eqref{gen:eq:first-gap-transfer}--\eqref{gen:eq:second-gap-transfer}
proves~\eqref{eq:cusp-low-coefficients}.

Since the odd part integrates to zero,
$|p_d-p_0|\lesssim d^{-\alpha/2}$ in the pure even and mixed cases.
In the pure odd case, $|p_d-p_0|\lesssim e^{-cd}$.
The local power bounds and~\eqref{eq:kernel-bridge-sphere-tail} give
\[
 \|e_d(T_d)\|_p\lesssim d^{-\alpha/2},\quad
 \|o_d(T_d)\|_p\lesssim d^{-\beta/2}.
\]
For the even part this includes its centering correction.
The matching lower bounds follow from Parseval and
\eqref{eq:cusp-low-coefficients}:
\[
 \|e_d(T_d)\|_2^2\ge\gamma_{2,d}^2\asymp d^{-\alpha},\quad
 \|o_d(T_d)\|_2^2\ge\gamma_{1,d}^2\asymp d^{-\beta}.
\]
Each bound applies when the corresponding local component is nonzero.
When it vanishes,~\eqref{eq:kernel-bridge-sphere-tail} gives exponentially
small moments and coefficients, including the centering correction.
\end{proof}

\begin{proof}[Proof of Proposition~\ref{gen:prop:spectral-scales}]
For the continuous cases, we separate the two parities in
\eqref{eq:trace-formula}. The harmonic dimensions increase with
degree, so for an integer $Q\ge1$, a set
$I\subseteq\{Q,Q+1,\ldots\}$, and $\ell\ge2$,
\[
 \sum_{q\in I}|\gamma_{q,d}|^\ell N_{q,d}^{1-\ell/2}
 \overset{\mathrm{(a)}}{\le}N_{Q,d}^{1-\ell/2}
       \sum_{q\in I}|\gamma_{q,d}|^\ell
 \overset{\mathrm{(b)}}{\le}N_{Q,d}^{1-\ell/2}
       \Bigg(\sum_{q\in I}\gamma_{q,d}^2\Bigg)^{\ell/2}.
\]
Here $\mathrm{(a)}$ uses $N_{q,d}\ge N_{Q,d}$ and $\ell\ge2$,
and $\mathrm{(b)}$ uses the monotonicity of sequence norms.
Apply this bound to odd degrees $q\ge3$ and even degrees $q\ge4$:
\begin{align}
 \tau_{3,d}
 &=\frac{\gamma_{1,d}^3}{\sqrt d}
   +\frac{\gamma_{2,d}^3}{\sqrt{N_{2,d}}}
   +O(\|o_d(T_d)\|_2^3d^{-3/2}+\|e_d(T_d)\|_2^3d^{-2}),
 \label{gen:eq:cubic-decomposition}\\
 \tau_{4,d}
 &=\frac{\gamma_{1,d}^4}{d}
   +\frac{\gamma_{2,d}^4}{N_{2,d}}
   +O(\|o_d(T_d)\|_2^4d^{-3}+\|e_d(T_d)\|_2^4d^{-4}).
 \label{gen:eq:quartic-decomposition}
\end{align}
By Lemma~\ref{gen:lem:gap-transfer}, the fourth trace is comparable
to the sum of its first two terms. The operator norm satisfies
\[
 \max\Bigg\{\frac{\gamma_{1,d}^2}{d},
              \frac{\gamma_{2,d}^2}{N_{2,d}}\Bigg\}
 \le\|T_{\kappa_d}\|_{\op}^2
 \le\max\Bigg\{\frac{\|o_d(T_d)\|_2^2}{d},\frac{\|e_d(T_d)\|_2^2}{N_{2,d}}\Bigg\}.
\]
The same lemma and $N_{2,d}\asymp d^2$ give the asserted fourth
trace and operator norm scales in the pure even, pure odd, and
mixed cases. In the pure cases, the other parity contributes
only exponentially small errors. In the mixed case with
$\beta\ne\alpha+1/3$, the first two terms of
\eqref{gen:eq:cubic-decomposition} have different orders, and the
remainder is smaller than the dominant term. This proves the
pure cubic scales and~\eqref{gen:eq:nonbalanced-cubic}.

If $\beta=\alpha+1/3$, the first two cubic terms have the same
order. Since $N_{2,d}=d^2(1+O(d^{-1}))/2$,
\eqref{eq:cusp-low-coefficients} and
\eqref{gen:eq:cubic-decomposition} give
\[
 s_d^3\tau_{3,d}
 =d^{-(3\alpha+2)/2}
  \Bigg(a_o^3M_{\beta+1}^3+\frac{(a_e\alpha M_\alpha)^3}{2}+o(1)\Bigg)
  +O(d^{-3\alpha/2-2}).
\]
Here the last term bounds
$\|o_d(T_d)\|_2^3d^{-3/2}+\|e_d(T_d)\|_2^3d^{-2}$.
Keeping the common factor $s_d^{-3}$ proves
\eqref{eq:cusp-critical-coefficient} even when the two leading
constants cancel.

Finally, consider a jump, and let $\varepsilon$ be the sign in
Definition~\ref{def:cusp}. Recall that $U_d=\sqrt dT_d$.
By symmetry,
\begin{align*}
 \varepsilon s_d\gamma_{1,d}
 &=\frac12\E\big[\varepsilon\{K(|T_d|)-K(-|T_d|)\}|U_d|\big]\overset{\mathrm{(a)}}{\ge}\frac{c_1}{2}
   \E[|U_d|\indc{|T_d|<r_0}]
   -C\E[|U_d|\indc{|T_d|\ge r_0}]
 \overset{\mathrm{(b)}}{\gtrsim}1.
\end{align*}
Here $\mathrm{(a)}$ uses the jump condition and boundedness of $K$.
For $\mathrm{(b)}$, use~\eqref{eq:kernel-bridge-sphere-tail}
and $\E[|U_d|]\to M_1>0$.
The boundedness of $\kappa_d$ then gives
$\gamma_{1,d}^2\asymp v_d\asymp 1$.
Applying the same dimension bound to degrees $q\ge2$ yields
\[
 \tau_{3,d}=\frac{\gamma_{1,d}^3}{\sqrt d}+O(d^{-1}),\quad
 \tau_{4,d}=\frac{\gamma_{1,d}^4}{d}+O(d^{-2}),\quad
 \frac{\gamma_{1,d}^2}{d}
 \le\|T_{\kappa_d}\|_{\op}^2\le\frac{v_d}{d}.
\]
These are the jump assertions and complete the proof.
\end{proof}

\subsection{Finitely flat spectral asymptotics}\label{app:proof-spectrum}

We prove Proposition~\ref{prop:spectrum} by finding the leading
coefficients and bounding the remaining degrees.

\begin{proof}[Proof of Proposition~\ref{prop:spectrum}]
Since $P_{1,d}(t)=t$, Lemma~\ref{lem:gegen-moment} gives
$\|T_d\|_p=O(d^{-1/2})$ for every fixed $p\ge2$.
Taylor's theorem and centering therefore yield
\begin{equation}
\Big\|
\kappa_d(T_d)
-\frac{K^{(m)}(0)}{m!s_d}
 \big(T_d^m-\E[T_d^m]\big)
\Big\|_2
=
O\big(d^{-(m+1)/2}\big).
\label{eq:smooth-centered-remainder}
\end{equation}

Under the coupling used in Lemma~\ref{lem:gegen-hermite},
$\sqrt dT_d\to G$ almost surely, where $G\sim\mathcal N(0,1)$
is a standard normal random variable.
The moment bounds from Lemma~\ref{lem:gegen-moment} imply
$(\sqrt dT_d)^m\to G^m$ in $L^2$.
Let $\operatorname{He}_q$ denote the monic Hermite
polynomial of degree $q$ orthogonal with respect to the standard Gaussian measure.
Since $p_d\to K(0)$ and
$s_d\to\sqrt{K(0)(1-K(0))}$,
combining~\eqref{eq:smooth-centered-remainder} with
Lemma~\ref{lem:gegen-hermite} and the Cauchy--Schwarz inequality gives,
for $q=m,m-2,\ldots,q_m$,
\begin{align*}
 d^{m/2}\gamma_{q,d}
 &\longrightarrow
 \frac{K^{(m)}(0)\E[G^m\operatorname{He}_q(G)]}
 {m!\sqrt{q!K(0)(1-K(0))}}=\frac{K^{(m)}(0)\E[G^{m-q}]}
 {(m-q)!\sqrt{q!K(0)(1-K(0))}}\ne0.
\end{align*}
The centering term vanishes because
$\E[\operatorname{He}_q(G)]=0$.
The equality follows by integrating the Gaussian density by parts
$q$ times, and the limit is nonzero because $m-q$ is even.
This proves $|\gamma_{q,d}|\asymp d^{-m/2}$ for the stated degrees.
Moreover, the Taylor estimate and Parseval's identity give
\begin{equation}
\sum_{q\ge1}\gamma_{q,d}^2
=
\E[\kappa_d(T_d)^2]
=
O(d^{-m}).
\label{eq:smooth-coefficient-energy}
\end{equation}

\noindent\textbf{Odd $m$.} Fix an integer $\ell\ge3$.
Here $q_m=1$.
By~\eqref{eq:trace-formula} and
\eqref{eq:smooth-coefficient-energy},
\[
 \big|
\tau_{\ell,d}-\gamma_{1,d}^{\ell}d^{1-\ell/2}
\big|
\le
N_{2,d}^{1-\ell/2}
\big(\sum_{q\ge2}\gamma_{q,d}^2\big)^{\ell/2}=
O\big(d^{-m\ell/2+2-\ell}\big).
\]
Here $N_{q,d}\ge N_{2,d}\asymp d^2$ for $q\ge2$, and
$\sum_q|\gamma_{q,d}|^\ell\le(\sum_q\gamma_{q,d}^2)^{\ell/2}$.
Since
$|\gamma_{1,d}|^\ell d^{1-\ell/2}
\asymp d^{-m\ell/2+1-\ell/2}$,
the relative error is
$O(d^{1-\ell/2})=O(d^{-1/2})$, as required.
 
\noindent\textbf{Even $m$.} Here $q_m=2$.
The centered Taylor term of degree $m$ is even and hence orthogonal
to $\psi_{1,d}(t)=\sqrt dt$.
Thus~\eqref{eq:smooth-centered-remainder} and
the Cauchy--Schwarz inequality imply
$\gamma_{1,d}=O(d^{-(m+1)/2})$.
Separating degree one from the degrees at least three in
\eqref{eq:trace-formula}, we obtain
\begin{align*}
 \big|\tau_{\ell,d}-\gamma_{2,d}^{\ell}N_{2,d}^{1-\ell/2}\big|
 &\overset{\mathrm{(a)}}{\le}|\gamma_{1,d}|^\ell d^{1-\ell/2}
 +N_{3,d}^{1-\ell/2}
 \Big(\sum_{q\ge3}\gamma_{q,d}^2\Big)^{\ell/2}\overset{\mathrm{(b)}}{\lesssim}
 d^{-m\ell/2+1-\ell}+d^{-m\ell/2+3-3\ell/2}.
\end{align*}
Here $\mathrm{(a)}$ uses $N_{q,d}\ge N_{3,d}$ and the monotonicity
of sequence norms. For $\mathrm{(b)}$, use
$\gamma_{1,d}=O(d^{-(m+1)/2})$,
$N_{3,d}\asymp d^3$, and~\eqref{eq:smooth-coefficient-energy}.
Since
$|\gamma_{2,d}|^\ell N_{2,d}^{1-\ell/2}
\asymp d^{-m\ell/2+2-\ell}$,
the relative error is
$O(d^{-1}+d^{1-\ell/2})=O(d^{-1/2})$. This proves~\eqref{eq:prop-finite-positive-flat-1}.

Finally, taking $\ell=3,4$ and using
$|\gamma_{q_m,d}|\asymp d^{-m/2}$ and
$N_{q_m,d}\asymp d^{q_m}$ gives the asserted estimates for
$|\tau_{3,d}|$ and $\tau_{4,d}$.
This completes the proof.
\end{proof}
The following lemmas give the moment bounds and Gaussian limit used
in the proof of Proposition~\ref{prop:spectrum}.

\begin{lemma}[Gegenbauer moments at fixed degree]\label{lem:gegen-moment}
For every fixed integer $r\ge1$, there are constants
$c_{\ref{lem:gegen-moment}}(r),C_{\ref{lem:gegen-moment}}(r)>0$
such that, for $2\le p\le c_{\ref{lem:gegen-moment}}(r)d$,
\begin{equation}
 \|P_{r,d}(T_d)\|_p\le C_{\ref{lem:gegen-moment}}(r)\Big(\frac pd\Big)^{r/2}.       \label{eq:tr-8a}
\end{equation}
\end{lemma}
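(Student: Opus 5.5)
We will prove \eqref{eq:tr-8a} by expanding the Gegenbauer polynomial in monomials and bounding spherical moments of each monomial. Write $P_{r,d}(t)=\sum_{j\le r,\,j\equiv r\ (2)}c_{r,j,d}t^j$. The leading monomial $t^r$ already has the claimed size: $\|T_d^r\|_p=\|T_d\|_{rp}^r$, and the sub-Gaussian bound \eqref{eq:kernel-bridge-sphere-tail} gives $\|T_d\|_{q}\lesssim\sqrt{q/d}$, hence $\|T_d^r\|_p\lesssim_r (p/d)^{r/2}$. The difficulty is the lower-order monomials, whose coefficients are not small in $d$. Bounding them naively by $\|T_d^j\|_p\lesssim (p/d)^{j/2}$ is too weak, since $(p/d)^{j/2}\gg(p/d)^{r/2}$ when $p\ll d$.

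The key observation is that, since $P_{r,d}$ is orthogonal to lower-degree polynomials under the law of $T_d$, its lower coefficients exactly compensate the typical scale. For instance, $P_{2,d}(t)=(dt^2-1)/(d-1)$. More generally, write
\[
 P_{r,d}(t)=\frac{r!}{\prod(\cdot)}\,d^{-r/2}\,\widetilde{\mathrm{He}}_{r,d}(\sqrt d\,t),
\]
where $\widetilde{\mathrm{He}}_{r,d}(u)$ is a monic polynomial of degree $r$ whose coefficients are bounded uniformly in $d$ and converge to those of $\mathrm{He}_r$. This follows from the explicit Gegenbauer formula $C_r^{\lambda}(t)=\sum_k(-1)^k\frac{\Gamma(r-k+\lambda)}{\Gamma(\lambda)k!(r-2k)!}(2t)^{r-2k}$ with $\lambda=(d-2)/2$, normalised by $C_r^\lambda(1)\asymp_r \lambda^r$. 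The coefficient of $t^{r-2k}$ in $P_{r,d}$ is then of order $\lambda^{r-k}/\lambda^r\asymp d^{-k}$. Consequently $|c_{r,r-2k,d}|\lesssim_r d^{-k}$, and
\[
 \|P_{r,d}(T_d)\|_p\le\sum_k|c_{r,r-2k,d}|\,\|T_d\|_{(r-2k)p}^{r-2k}\lesssim_r\sum_k d^{-k}(p/d)^{(r-2k)/2}=\sum_k p^{-k}(p/d)^{r/2}\,p^{k}\cdot p^{-k}.
\]
More precisely, each term equals $(p/d)^{r/2}p^{-k}\le (p/d)^{r/2}$ because $p\ge2$. This yields \eqref{eq:tr-8a}, and the constraint $p\le c(r)d$ is not even needed for this step (for $p\ge d$ one could also use $|P_{r,d}|\le1$).

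The main obstacle is verifying the coefficient bound $|c_{r,r-2k,d}|\lesssim_r d^{-k}$ uniformly in $d$ with an honest ratio of Gamma functions. We would compute $C_r^\lambda(1)=\Gamma(r+2\lambda)/(\Gamma(2\lambda)r!)\asymp_r(2\lambda)^r/r!$ and $\Gamma(r-k+\lambda)/\Gamma(\lambda)\le(\lambda+r)^{r-k}$. For $d\ge3$ with $r$ fixed, these ratios are bounded by $C_r\lambda^{-k}$. Small $d$ is absorbed into the constant, or handled via the trivial bound $|P_{r,d}|\le1$ together with $p\ge2$ and $p\le c(r)d$, which forces $(p/d)^{r/2}\ge (2/d)^{r/2}$, a constant for bounded $d$. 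The moment bound $\|T_d\|_q\le C\sqrt{q/d}$ for all $q\ge2$ is immediate from \eqref{eq:kernel-bridge-sphere-tail} together with $|T_d|\le1$.
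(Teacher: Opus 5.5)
Your proposal is correct and follows essentially the same route as the paper. Both arguments expand $P_{r,d}$ into monomials $t^{r-2k}$ with coefficients $O_r(d^{-k})$, bound $\|T_d^{r-2k}\|_p$ by the sub-Gaussian moment estimate $\|T_d\|_v\le C\sqrt{v/d}$, and sum $d^{-k}(p/d)^{(r-2k)/2}=(p/d)^{r/2}p^{-k}$ using $p\ge2$.

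The only difference is how the coefficient bound is obtained. You read it off the explicit Gamma-function formula for $C_r^{\lambda}$ with $\lambda=(d-2)/2$, whereas the paper proves it by induction on the normalized three-term Gegenbauer recurrence; both are routine.
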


\begin{proof}
The normalized Gegenbauer recurrence is
\begin{equation}
 P_{r+1,d}(t)
 =\frac{d+2r-2}{d+r-2}tP_{r,d}(t)
  -\frac r{d+r-2}P_{r-1,d}(t).
\label{eq:gegenbauer-recurrence}
\end{equation}
Induction on $r$ gives
\[
 P_{r,d}(t)=\sum_{j=0}^{\lfloor r/2\rfloor}
 c_{r,j}(d)t^{r-2j},\quad |c_{r,j}(d)|\le Cd^{-j}.
\]
By~\eqref{eq:kernel-bridge-sphere-tail},
$\|T_d\|_v\le C\sqrt{v/d}$ for $v\ge2$. Since $r$ is fixed,
\begin{align*}
 \|P_{r,d}(T_d)\|_p
 &\overset{\mathrm{(a)}}{\le}C\sum_{j=0}^{\lfloor r/2\rfloor}
       d^{-j}\Big(\frac pd\Big)^{(r-2j)/2}=C\Big(\frac pd\Big)^{r/2}
       \sum_{j=0}^{\lfloor r/2\rfloor}p^{-j}
 \overset{\mathrm{(b)}}{\lesssim}\Big(\frac pd\Big)^{r/2}.
\end{align*}
Here $\mathrm{(a)}$ uses the triangle inequality and
$\|T_d^a\|_p=\|T_d\|_{ap}^a$ for $a\ge1$; the constant term is
bounded directly. Also, $\mathrm{(b)}$ uses $p\ge2$.
This proves~\eqref{eq:tr-8a}.
\end{proof}

We choose $c_{\ref{lem:gegen-moment}}(r)\le1$ nonincreasing in $r$,
by replacing it with the minimum over degrees at most $r$.
To compare Gegenbauer and Hermite polynomials, let
$G_1,G_2,\ldots$ be independent standard Gaussian variables and define
\[
 U_d=\frac{\sqrt dG_1}{(\sum_{j=1}^dG_j^2)^{1/2}}
 \overset{\mathrm d}=\sqrt dT_d,\quad G=G_1.
\]
Recall that $\psi_{q,d}(t)=\sqrt{N_{q,d}}P_{q,d}(t)$.

\begin{lemma}[Gegenbauer--Hermite limit]\label{lem:gegen-hermite}
Under this coupling, for every fixed $q\ge0$ and $1\le r<\infty$,
\[
 \psi_{q,d}(U_d/\sqrt d)
 \longrightarrow \frac{\operatorname{He}_q(G)}{\sqrt{q!}}
 \quad\text{in }L^r.
\]
\end{lemma}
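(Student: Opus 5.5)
The plan is to prove the limit first for the Gegenbauer polynomials, in the form $\sqrt{N_{q,d}}P_{q,d}(U_d/\sqrt d)$, through their coefficients, and to upgrade almost sure convergence to $L^r$ convergence by uniform integrability. Under the stated coupling, the strong law of large numbers gives $d^{-1}\sum_{j\le d}G_j^2\to1$ almost surely, so $U_d\to G$ almost surely.

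\emph{Coefficients.} Write $P_{q,d}(t)=\sum_{j=0}^{\lfloor q/2\rfloor}c_{q,j}(d)t^{q-2j}$, as in the proof of Lemma~\ref{lem:gegen-moment}. Then
\[
 \psi_{q,d}(u/\sqrt d)=\sum_{j}\sqrt{N_{q,d}}\,c_{q,j}(d)\,d^{-(q-2j)/2}u^{q-2j}.
\]
By~\eqref{eq:multiplicity}, $\sqrt{N_{q,d}}\,d^{-q/2}\to1/\sqrt{q!}$, so it suffices to show that $d^{j}c_{q,j}(d)$ converges to the coefficient of $u^{q-2j}$ in $\operatorname{He}_q(u)$ for each fixed $j$. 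Rescale the recurrence~\eqref{eq:gegenbauer-recurrence} by setting $\tilde P_{q,d}(u)=d^{q/2}P_{q,d}(u/\sqrt d)\prod_{i<q}\frac{d+i-2}{d+2i-2}\cdot(\text{normalization})$; more simply, define $R_{q,d}(u)=a_{q,d}P_{q,d}(u/\sqrt d)$ with $a_{q,d}$ the reciprocal of the leading coefficient times $d^{q/2}$, so that $R_{q,d}$ is monic. The recurrence then reads
\[
 R_{q+1,d}(u)=uR_{q,d}(u)-\frac{q(d+q-3)}{d+2q-4}\cdot\frac{d}{d+2q-2}\cdot\frac{1}{d}\,\cdots
\]
with its $u$-free coefficient converging to $q$. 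The precise constants come from a routine computation, and I will only check that the limit coefficient is $q$, which reproduces the Hermite recurrence $\operatorname{He}_{q+1}=u\operatorname{He}_q-q\operatorname{He}_{q-1}$. Induction on $q$ then gives coefficientwise convergence $R_{q,d}\to\operatorname{He}_q$.

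\emph{Normalization constant.} It remains to show $a_{q,d}^{-1}\sqrt{N_{q,d}}\to1/\sqrt{q!}$. I would avoid computing this constant directly. Since $\psi_{q,d}$ has unit norm, $1=\E[\psi_{q,d}(T_d)^2]$. The monic polynomials $R_{q,d}(U_d)$ converge in every $L^r$, because the coefficients converge and $U_d\to G$ in every $L^r$. That last convergence follows from almost sure convergence together with the uniform moment bound $\E[U_d^{2j}]\le(2j-1)!!$ from the proof of Lemma~\ref{gen:lem:gap-transfer}. Hence $\E[R_{q,d}(U_d)^2]\to\E[\operatorname{He}_q(G)^2]=q!$, and comparing with the unit norm gives $a_{q,d}^{-2}\to1/q!$, up to a positive sign fixed by $P_{q,d}(1)=1$ and positive leading coefficients.

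\emph{Conclusion.} Combining the two steps, $\psi_{q,d}(U_d/\sqrt d)=a_{q,d}^{-1}\sqrt{N_{q,d}}\,\cdot\,$(rescaled) $R_{q,d}(U_d)$ is a polynomial of fixed degree with convergent coefficients evaluated at $U_d$. Minkowski's inequality and $L^{r(q-2j)}$ convergence of the powers $U_d^{q-2j}$ then give convergence in $L^r$ to $\operatorname{He}_q(G)/\sqrt{q!}$.

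The main obstacle is the bookkeeping in the rescaled recurrence, specifically verifying that its lower-order coefficient tends to exactly $q$. Using the unit-norm identity to fix the normalization sidesteps the gamma-function constants entirely.
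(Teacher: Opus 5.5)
Your proposal is correct and follows essentially the paper's route: coefficientwise convergence by induction on the three-term recurrence~\eqref{eq:gegenbauer-recurrence}, $U_d\to G$ almost surely by the strong law, and the uniform bound $\E[U_d^{2j}]\le(2j-1)!!$ to upgrade to $L^r$. The only difference is how you handle the normalization: you rescale monically and recover the constant from $\E[\psi_{q,d}(T_d)^2]=1$, whereas the paper carries $\sqrt{N_{q,d}}\sim d^{q/2}/\sqrt{q!}$ through the recurrence and obtains the coefficients $1/\sqrt{q+1}$ and $\sqrt{q/(q+1)}$.

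Your displayed monic recurrence is garbled. Drop the stray $1/d$: the coefficient of $R_{q-1,d}$ is $\frac{qd(d+q-3)}{(d+2q-2)(d+2q-4)}\to q$. Also, the unit-norm comparison gives $a_{q,d}^{-2}N_{q,d}\to1/q!$, not $a_{q,d}^{-2}\to1/q!$. The monic detour is unnecessary in any case, because your first reduction already closes directly: $\tilde c_{r,j}=d^jc_{r,j}(d)$ satisfies $\tilde c_{r+1,j}=\frac{d+2r-2}{d+r-2}\tilde c_{r,j}-\frac{rd}{d+r-2}\tilde c_{r-1,j-1}$. Its limit is exactly the coefficient form of $\operatorname{He}_{r+1}=u\operatorname{He}_r-r\operatorname{He}_{r-1}$.
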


\begin{proof}
Start from $P_{0,d}=1$, $P_{1,d}(t)=t$, and use
\eqref{eq:gegenbauer-recurrence}.
Since $N_{q,d}=d^q/q!(1+O_q(d^{-1}))$, after substituting
$t=u/\sqrt d$ and multiplying by $\sqrt{N_{q+1,d}}$, the two coefficients
converge to $1/\sqrt{q+1}$ and $\sqrt{q/(q+1)}$.  The limiting recurrence is
\[
 \frac{\operatorname{He}_{q+1}(u)}{\sqrt{(q+1)!}}
 =\frac{u}{\sqrt{q+1}}\frac{\operatorname{He}_q(u)}{\sqrt{q!}}
 -\sqrt{\frac q{q+1}}
  \frac{\operatorname{He}_{q-1}(u)}{\sqrt{(q-1)!}}.
\]
Induction gives coefficientwise convergence, and the strong law gives
$U_d\to G_1$ almost surely. By~\eqref{eq:kernel-bridge-sphere-tail},
$\sup_d\E[|U_d|^a]<\infty$ for every fixed $a>0$.
Thus the polynomial convergence also holds in $L^r$.
\end{proof}

\section{Cavity identities and concentration tools}\label{app:information}

We prove the Gram matrix and concentration bounds used in
Section~\ref{sec:polynomial}. The posterior bounds for cusp kernels
are proved in Appendix~\ref{app:cusp}.

\subsection{Uniform Gram operators for harmonic features}
\label{app:events}

The same harmonic Gram event enters both the signed block inequality and the
posterior bounds for finite harmonic expansions.  We construct it once here under the prior.

\begin{lemma}[A uniform bound for Gram matrices]\label{lem:gram-event}
Let $z_1,\ldots,z_n$ be independent unit vectors in a Euclidean space
of dimension $N$ with $\E[z_i\otimes z_i]=I/N$, and let
$\mathbf U=(\langle z_i,z_j\rangle)_{i,j\le n}$.  For every fixed $D>0$, with
probability at least $1-(n+N)^{-D}$,
\begin{equation}
 \|\mathbf U\|_{\op}
 \le C_{\ref{lem:gram-event}}(D)\{1+n/N+\log(n+N)\}.                            \label{eq:mc-6a}
\end{equation}
\end{lemma}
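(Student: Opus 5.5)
We would pass from the $n\times n$ Gram matrix to the $N\times N$ sample covariance: with $Z$ the $n\times N$ matrix whose rows are $z_i^\top$, we have $\mathbf U=ZZ^\top$, so
\[
\|\mathbf U\|_{\op}=\|Z^\top Z\|_{\op}=\Big\|\sum_{i=1}^n z_i\otimes z_i\Big\|_{\op}.
\]
The sum $W=\sum_i z_i\otimes z_i$ has independent positive semidefinite summands with $\|z_i\otimes z_i\|_{\op}=1$ and $\E W=(n/N)I$. This is the setting of the matrix Chernoff inequality (Tropp): for independent PSD matrices $Y_i$ with $\|Y_i\|_{\op}\le R$ and $\mu_{\max}=\|\E\sum_i Y_i\|_{\op}$,
\[
\P\big[\lambda_{\max}(W)\ge(1+\eta)\mu_{\max}\big]\le N\Big(\frac{e^{\eta}}{(1+\eta)^{1+\eta}}\Big)^{\mu_{\max}/R}.
\]
The plan is to apply this with $R=1$ and $\mu_{\max}=n/N$.

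Equivalently, one may use the form $\P[\lambda_{\max}(W)\ge t]\le N(e\mu_{\max}/t)^{t}$, valid for $t\ge e\mu_{\max}$. Take $t=C\{1+n/N+\log(n+N)\}$ with $C\ge e^2$. Then $e\mu_{\max}/t\le e^{-1}$, so the probability is at most $N e^{-t}\le N(n+N)^{-C}$. Choosing $C=C(D)\ge D+1$ gives the bound $(n+N)^{-D}$, which is \eqref{eq:mc-6a}.

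To justify the form used, start from the Chernoff bound with $1+\eta=t/\mu_{\max}$. It gives
\[
N\exp\big(-\mu_{\max}[(1+\eta)\log(1+\eta)-\eta]\big)\le N\exp\big(-t\log(t/(e\mu_{\max}))\big),
\]
where we used $(1+\eta)\log(1+\eta)-\eta\ge(1+\eta)\log((1+\eta)/e)$. In the degenerate case $\mu_{\max}$ tiny, the bound only improves, so there is no issue.

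The one step that needs care is the choice of inequality. Plain matrix Bernstein would give a $\sqrt{(n/N)\log}$ term plus a $\log$ term, which is also enough. The Chernoff route avoids computing the variance proxy $\|\sum_i\E(z_i\otimes z_i)^2\|_{\op}=n/N$, which is immediate here anyway since $(z\otimes z)^2=z\otimes z$ for unit vectors. The only place the structure is used is $\E[z_i\otimes z_i]=I/N$. In the application, the vectors are $z_i=N_{q,d}^{-1/2}\Phi_q(X_i)$ built from an orthonormal basis of $\maH_{q,d}$. The addition theorem gives $\langle z_i,z_j\rangle=P_{q,d}(\langle X_i,X_j\rangle)$, $\|z_i\|=1$, and isotropy by rotation invariance. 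This is how the event $\mathcal G_D$ in \eqref{eq:smooth-gram-event} follows, with a union bound over the finite set $\mathcal I$.
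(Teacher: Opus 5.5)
Your proof is correct and follows essentially the paper's route: pass from the Gram matrix to $\sum_i z_i\otimes z_i$ and apply a matrix concentration inequality from Tropp. The only difference is that the paper applies matrix Bernstein to the centered summands $z_i\otimes z_i-I/N$, with variance proxy $n/N$, and then absorbs the $\sqrt{nu/N}$ term via $2\sqrt{ab}\le a+b$. You instead use the matrix Chernoff bound for positive semidefinite summands, which yields $n/N+\log(n+N)$ directly without centering; both work equally well here.
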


\begin{proof}
Let $Z:\mathbb R^n\to\mathbb R^N$ have columns $z_i$.  The nonzero
eigenvalues of $\mathbf U=Z^*Z$ are those of
$ZZ^*=\sum_{i=1}^n z_i\otimes z_i$.
Let $Y_i=z_i\otimes z_i-I/N$. Since $\|z_i\|=1$,
\[
 \E[Y_i]=0,\quad \|Y_i\|_{\op}\le1,\quad
 \E[Y_i^2]=\Big(\frac1N-\frac1{N^2}\Big)I.
\]
Thus $\big\|\sum_i\E[Y_i^2]\big\|_{\op}\le n/N$.
The matrix Bernstein inequality for symmetric matrices
\cite[Theorem~1.6.2]{tropp2015introduction} therefore gives, for
$u\ge1$,
\[
 \P\big[\big\|\sum_iY_i\big\|_{\op}
 >C\big(\sqrt{nu/N}+u\big)\big]\le2Ne^{-u}.
\]
Choose $u=(D+3)\log(n+N)$. Outside an event of probability at most
$2Ne^{-u}\le(n+N)^{-D}$,
\[
 \|\mathbf U\|_{\op}
 \overset{\mathrm{(a)}}{\le}\frac nN+C\Big(\sqrt{\frac{nu}{N}}+u\Big)
 \overset{\mathrm{(b)}}{\lesssim}\frac nN+u.
\]
Here $\mathrm{(a)}$ adds the mean $nI/N$, and
$\mathrm{(b)}$ uses $2\sqrt{ab}\le a+b$.
This proves~\eqref{eq:mc-6a}.
\end{proof}

Recall the finite degree set $\mathcal I$ and the event
$\mathcal G_D$ in~\eqref{eq:smooth-gram-event}.
Choose an orthonormal basis $(Y_{q,a})_{a\le N_{q,d}}$ of the
harmonics of degree $q$, and let
$z_q(x)=N_{q,d}^{-1/2}(Y_{q,a}(x))_{a\le N_{q,d}}$.
The addition formula gives $U_q(x,y)=\langle z_q(x),z_q(y)\rangle$,
$\|z_q(x)\|=1$, and $\E[z_q(X)z_q(X)^\top]=I/N_{q,d}$.
Thus Lemma~\ref{lem:gram-event} applies to each $\mathbf U_q$.
Since $\mathcal I$ is fixed and $N_{q,d}\asymp d^q$,
there is a fixed $D'>D$ such that
\[
 \sum_{q\in\mathcal I}(n+N_{q,d})^{-D'}
 \le(n+N_{\max})^{-D}.
\]
Apply the lemma with $D'$ and take a union bound. After increasing
the constant in~\eqref{eq:smooth-gram-event} to
$C_{\ref{lem:gram-event}}(D,\mathcal I)$, we obtain the claimed
probability $\P[\mathcal G_D^c]\le(n+N_{\max})^{-D}$.

\subsection{Concentration inequalities for posterior overlaps}
\label{app:tools}

We first record the polynomial moment and $U$-statistic bounds.
We then bound the exponential moments of the harmonic overlaps
in~\eqref{eq:smooth-quadratic-form}.

\begin{lemma}[Hypercontractivity on a product of spheres]
\label{lem:spherical-hypercontractivity}
Fix integers $s,L\ge1$.  If $F$ is a polynomial on
$(\Sph^{d-1})^s$ of total degree in spherical harmonics at most $L$, then, for
every $p\ge2$,
\begin{equation}
 \|F\|_{L^p((\sigma_d)^s)}
 \le (p-1)^{L/2}\|F\|_{L^2((\sigma_d)^s)}.               \label{eq:app-hc}
\end{equation}
In particular, degree at most $L$ in each variable gives the bound
$(p-1)^{sL/2}\|F\|_2$.
\end{lemma}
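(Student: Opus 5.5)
The plan is to deduce the statement from Gaussian hypercontractivity by a limiting argument, or more directly from hypercontractivity of the spherical heat semigroup. The most direct route uses the known result that on $\Sph^{d-1}$ the Poisson/heat semigroup $P_t$ satisfies Nelson-type hypercontractivity. The log-Sobolev constant of $\sigma_d$ with respect to the Laplacian is at least as good as the Gaussian one after the standard normalization, since the Ricci curvature is $d-2>0$ and Bakry--\'Emery gives log-Sobolev constant $1/(d-1)$ for the Laplace--Beltrami operator. So $\|P_tf\|_q\le\|f\|_p$ whenever $e^{2(d-1)t}\ge(q-1)/(p-1)$.

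Tensorization of the log-Sobolev inequality extends this semigroup bound to the product $(\Sph^{d-1})^s$ with generator $\Delta=\sum_i\Delta_i$. The eigenvalue of $-\Delta_i$ on degree-$q$ harmonics is $q(q+d-2)\le q(d-1)\cdot\frac{q+d-2}{d-1}$. Here a slight mismatch appears: the eigenvalue grows faster than linearly in $q$ relative to $d-1$.

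To handle this, apply the semigroup to $g=\sum_{\mathbf q}e^{t\lambda_{\mathbf q}}F_{\mathbf q}$, where $F=\sum_{\mathbf q}F_{\mathbf q}$ is the decomposition into products of harmonics of multidegree $\mathbf q=(q_1,\dots,q_s)$ with $\sum q_i\le L$, and $\lambda_{\mathbf q}=\sum_iq_i(q_i+d-2)$. Choosing $e^{2(d-1)t}=p-1$ gives
\[
\|F\|_p=\|P_tg\|_p\le\|g\|_2=\Big(\sum e^{2t\lambda_{\mathbf q}}\|F_{\mathbf q}\|_2^2\Big)^{1/2}.
\]
We need $e^{t\lambda_{\mathbf q}}\le(p-1)^{L/2}$, that is, $\lambda_{\mathbf q}\le L(d-1)$. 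This bound holds since $q_i(q_i+d-2)\le q_i(L+d-2)$. The step that needs care is the extra factor $(L+d-2)/(d-1)$. If exactness fails, I would instead use the sharper spherical hypercontractivity of Beckner, which works with the operator $\sqrt{-\Delta+((d-2)/2)^2}-(d-2)/2$ whose eigenvalue on degree $q$ is exactly $q$. In Beckner's form, the Poisson semigroup $e^{-t\mathcal N}$, where $\mathcal N$ multiplies degree-$q$ components by $q$, is hypercontractive $L^2\to L^p$ once $e^{-2t}\le1/(p-1)$. Tensorizing that, via Gross's tensorization of hypercontractivity for product semigroups, gives $\|F\|_p\le(p-1)^{L/2}\|F\|_2$ for total degree $\le L$, by expanding $F$ in the total-degree number operator. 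I expect this to be the main point: citing the dimension-free sphere result (Beckner 1992 / Mueller--Weissler) for the number-operator semigroup rather than the Laplacian, since the Laplacian route alone loses a factor $(L+d-2)/(d-1)$ in the exponent.

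The final claim is immediate from the main bound: degree at most $L$ in each of $s$ variables means total degree at most $sL$.
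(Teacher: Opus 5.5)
Your final route (Beckner's hypercontractivity for the Poisson semigroup that multiplies degree-$q$ harmonics by $\rho^q$, tensorized to the product of spheres and applied to $\sum_j\rho^{-j}F_j$ with $\rho=(p-1)^{-1/2}$) is essentially the paper's argument; the paper cites the product form of Beckner's inequality due to Montanaro instead of tensorizing it by hand. You should drop the Laplacian detour rather than ``handle it with care'': the inequality $\lambda_{\mathbf q}\le L(d-1)$ you assert there is false as soon as some $q_i\ge2$ (since $q(q+d-2)>q(d-1)$), so that route only yields the exponent $\frac L2\cdot\frac{L+d-2}{d-1}$ in place of $\frac L2$.
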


\begin{proof}
Let $\mathsf P_\rho$ be the product Poisson semigroup, which multiplies
a harmonic component of total degree $j$ by $\rho^j$.
At $\rho=(p-1)^{-1/2}$, \cite[Corollary~11]{montanaro2012hypercontractive}
gives $\|\mathsf P_\rho G\|_p\le\|G\|_2$. This is the product form of
the spherical hypercontractive inequality in~\cite{beckner1992sobolev}.
Writing $F=\sum_{j=0}^L F_j$ as an orthogonal harmonic sum, we obtain
\[
 \|F\|_p
 \overset{\mathrm{(a)}}{\le}\Big\|\sum_{j=0}^L\rho^{-j}F_j\Big\|_2
 \overset{\mathrm{(b)}}{\le}\rho^{-L}\|F\|_2
 =(p-1)^{L/2}\|F\|_2.
\]
Here $\mathrm{(a)}$ applies the semigroup bound to
$\sum_j\rho^{-j}F_j$, and $\mathrm{(b)}$ uses orthogonality
and $0<\rho\le1$. For coordinate degrees at most $L$, the total degree is at most $sL$.
\end{proof}

The next bound estimates a product of Gaussian factors.
Let $(B_1,\ldots,B_N)$ be centered jointly Gaussian vectors in a
Euclidean space $E$ of finite dimension, with joint covariance
$C=\operatorname{Cov}(B_1,\ldots,B_N)$, and let $\gamma\sim N(0,I_E)$.
The covariance $C$ may be singular.

\begin{lemma}[Gaussian covariance bound]\label{lem:gaussian-holder}
If $\operatorname{Cov}(B_i)=I_E$ for all $i$ and
$C\preceq pI_{E^N}$ for some $p\ge1$, then, for all nonnegative
measurable $f_i$,
\begin{equation}
 \E\Bigg[\prod_{i=1}^N f_i(B_i)\Bigg]
 \le \prod_{i=1}^N\big(\E[f_i(\gamma)^p]\big)^{1/p}.
 \label{eq:t-1}
\end{equation}
\end{lemma}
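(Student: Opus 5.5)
The plan is to write the Gaussian expectation as an integral against a single Gaussian measure and apply Brascamp--Lieb/Hölder in its Gaussian form. Stack the vectors as $B=(B_1,\ldots,B_N)\in E^N$. This is a centered Gaussian vector with covariance $C\preceq pI$, so we may realize it as $B=C^{1/2}g$ with $g\sim N(0,I_{E^N})$. Since $C\preceq pI$, we have $pI-C\succeq0$. Let $W\sim N(0,pI-C)$ be independent of $B$. Then $B+W\sim N(0,pI_{E^N})$, and its blocks are i.i.d.\ $N(0,pI_E)$.

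The cleanest route I would try first is the dual (entropy) form of Gaussian hypercontractivity / Brascamp--Lieb. Consider the linear maps $\pi_i:E^N\to E$, $\pi_i(x)=x_i$. The inequality
\[
 \int\prod_i f_i(\pi_i x)\,\gamma_C(dx)\le\prod_i\|f_i\|_{L^p(\gamma_E)}
\]
is a Gaussian Brascamp--Lieb inequality with all exponents $c_i=1/p$. By Lieb's theorem that Gaussian functions are extremal, it suffices to check it for centered Gaussian $f_i(y)=\exp(-\tfrac12\langle y,A_iy\rangle)$ with $A_i\succ -I/p$. Equivalently, it suffices to verify the geometric condition $\sum_i c_i\pi_i^*\Sigma_i\pi_i\preceq$ (appropriate covariance). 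For the Gaussian-measure version (Barthe/Carlen--Cordero-Erausquin form), the condition needed is exactly $C\preceq \big(\sum_i c_i\pi_i^*\pi_i\big)^{-1}$, which reads $C\preceq pI$, provided each marginal satisfies $\pi_iC\pi_i^*=I_E$. Both are among our hypotheses.

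As a self-contained alternative, I would give a semigroup proof. Define $\Phi(t)=\E[\prod_i(P_{s(t)}f_i^{p})^{1/p}(\cdot)]$ along an Ornstein--Uhlenbeck interpolation between $B$ and independent copies. One shows monotonicity by differentiating: the derivative is a quadratic form in the gradients $\nabla\log P f_i^p$. Its sign is governed by $I/p\cdot\mathrm{blockdiag}$ versus $C/p^2$, and it is nonnegative precisely when $C\preceq pI$.

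The main obstacle is handling singular $C$ and rigorously justifying the extremality / derivative computation. I would first truncate the $f_i$ to bounded functions bounded below by $\epsilon$ and replace $C$ by $C_\epsilon=(1-\epsilon)C+\epsilon I$. This is nonsingular, still satisfies $C_\epsilon\preceq pI$, and has marginals equal to $I$. I would then pass to the limit by monotone convergence and weak convergence of Gaussians, using the lower semicontinuity coming from the nonnegativity of $f_i$, after first approximating by continuous bounded functions.
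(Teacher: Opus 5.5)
Your main route is correct. The Gaussian-measure Brascamp--Lieb inequality you cite (Barthe, Carlen--Cordero-Erausquin), applied with diagonal blocks equal to $I_E$ and exponents $1/p$, has sufficient condition $C\preceq pI$, and that gives the lemma.

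The paper rests on the same inequality but derives the Gaussian-measure form instead of citing it. It writes $B=C^{1/2}G$ with $G\sim N(0,I_M)$ and sets $L_i=\pi_iC^{1/2}$, so that $L_iL_i^*=I_E$ and $\sum_iL_i^*L_i=C$. It then diagonalizes the slack $I_M-C/p=\sum_a\lambda_a u_a\otimes u_a$. Finally it applies the Lebesgue geometric Brascamp--Lieb inequality to two families of maps: the maps $L_i$ with weight $1/p$ and functions $f_i^pe^{-|y|^2/2}$, and the one-dimensional maps $x\mapsto\langle u_a,x\rangle$ with weight $\lambda_a$ and function $e^{-t^2/2}$. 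The decomposition
\[
 \frac1p\sum_{i=1}^N L_i^*L_i+\sum_{a=1}^M\lambda_a u_a\otimes u_a=I_M
\]
makes all Gaussian weights recombine into $e^{-|x|^2/2}$. This slack term is the analytic counterpart of your $W\sim N(0,pI-C)$, which you introduce but never use.

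Working in the $G$-coordinates means no inverse of $C$ ever appears, so the singular case needs no separate treatment. Your regularization $C_\epsilon=(1-\epsilon)C+\epsilon I$ is legitimate: its diagonal blocks stay $I_E$, and $C_\epsilon\preceq pI$ because $p\ge1$. The limiting step, however, is loose as written. Weak convergence only gives the inequality for bounded continuous $f_i$, and nonnegativity by itself supplies no semicontinuity. To reach arbitrary measurable $f_i$ you need an outer approximation: take lower semicontinuous majorants $v_i\ge f_i^p$ with $\E[v_i(\gamma)]\le\E[f_i(\gamma)^p]+\eta$ (Vitali--Carath\'eodory), then use monotone convergence. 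You could skip the regularization entirely by applying your cited theorem directly to the standard Gaussian $G$ and the maps $L_i$.

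The Lieb-extremality and semigroup remarks are unnecessary once the theorem is cited. The semigroup sketch in particular is too schematic to count as an independent proof.
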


\begin{proof}
Let $M=N\dim(E)$.  Let $G\sim N(0,I_M)$ and realize
$B=C^{1/2}G$.  If $\pi_i:E^N\to E$ is the $i$th coordinate projection,
let
$L_i=\pi_iC^{1/2}:\mathbb R^M\longrightarrow E$.
The diagonal covariance assumption says $L_iL_i^*=I_E$, while
$\sum_{i=1}^N L_i^*L_i=C$.
Diagonalize $I_M-C/p=\sum_{a=1}^M\lambda_a u_a\otimes u_a$,
where $\lambda_a\ge0$ and the $u_a$ are orthonormal. Then
\begin{equation}
 \frac1p\sum_{i=1}^N L_i^*L_i
 +\sum_{a=1}^M\lambda_a u_a\otimes u_a=I_M.                    \label{eq:t-1a}
\end{equation}
First suppose that the $f_i$ are bounded and compactly supported.
Apply~\cite[Proposition~2.8]{bennett2008brascamp} to the maps
$L_i$ and $x\mapsto\langle u_a,x\rangle$, with weights
$1/p$ and $\lambda_a$. Equation~\eqref{eq:t-1a} gives
\begin{align}
 \int_{\mathbb R^M}e^{-\|x\|^2/2}
       \prod_{i=1}^N f_i(L_ix)\dd x
 \overset{\mathrm{(a)}}{\le}&~
 \prod_{i=1}^N\Big(\int_E f_i(y)^p e^{-\|y\|^2/2}\dd y\Big)^{1/p}
 \prod_{a=1}^M\Big(\int_{\mathbb R}e^{-t^2/2}\dd t\Big)^{\lambda_a}\notag\\
 \overset{\mathrm{(b)}}{=}&~(2\pi)^{M/2}
 \prod_{i=1}^N\big(\E[f_i(\gamma)^p]\big)^{1/p}.
 \label{eq:t-1b}
\end{align}
Here $\mathrm{(a)}$ is the geometric Brascamp--Lieb inequality;
\eqref{eq:t-1a} combines its Gaussian factors into
$e^{-\|x\|^2/2}$. For $\mathrm{(b)}$, take traces in~\eqref{eq:t-1a}
to get $\sum_a\lambda_a=M-M/p$.
Divide by $(2\pi)^{M/2}$ and use $B=C^{1/2}G$ to obtain~\eqref{eq:t-1}.
Monotone truncation handles general nonnegative $f_i$.
No inverse of $C$ was used, so the proof includes singular $C$.
\end{proof}

Let $k\ge2$ be fixed, let $(\Omega,\mu)$ be a probability space, and let
$h:\Omega^k\to\mathbb R$ be bounded, symmetric, and canonical:
\[
 \int h(x_1,\ldots,x_k)\mu(\dd x_j)=0
 \quad\text{for every }j,
 \quad\mu^{k-1}\text{-a.e. in the remaining variables}.             
\]
Let $c_{i_1,\ldots,i_k}$ be deterministic symmetric coefficients,
extended by zero to tuples with repeated indices.
For $I\subseteq[k]$, write $I^c=[k]\setminus I$,
$x_I=(x_j)_{j\in I}$, and $\mu^I=\bigotimes_{j\in I}\mu$.
Define
\[
 M=\max_{i_1<\cdots<i_k}|c_{i_1,\ldots,i_k}|,
 \quad
 a_r=\max_{\substack{I\subseteq[k]\\|I|=r}}\esssup_{x_{I^c}}
       \|h(x_I,x_{I^c})\|_{L^2(\mu^I)},\quad 0\le r\le k.
\]
In particular, $a_0=\|h\|_\infty$ and
$a_k=\|h\|_{L^2(\mu^{\otimes k})}$.
The next bound will be used conditionally on one latent sample.
It depends only on the largest coefficient and the $L^2$ norms
obtained by integrating some of the variables of $h$.

\begin{lemma}[A bound for canonical $U$-statistics]
\label{lem:adamczak-cor}
Let $W_1,\ldots,W_n$ be independent with law $\mu$, and define
$V=\sum_{i_1<\cdots<i_k}c_{i_1,\ldots,i_k}h(W_{i_1},\ldots,W_{i_k})$.
If $t\ge\max_{0\le r\le k}Mn^{r/2}a_r$, then
\begin{equation}
 \P[|V|\ge t]
 \le C_{\ref{lem:adamczak-cor}}(k)\exp\Bigg(
 -c_{\ref{lem:adamczak-cor}}(k)\min_{0\le r\le k}
 \Big(\frac{t}{Mn^{r/2}a_r}\Big)^{2/(2k-r)}\Bigg).
 \label{eq:t-7}
\end{equation}
The bound also holds conditionally on random coefficients independent
of the sample.
\end{lemma}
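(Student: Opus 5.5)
The plan is to derive \eqref{eq:t-7} from the moment inequality for canonical $U$-statistics in \cite[Theorem~7]{adamczak2006moment}, followed by a Markov/optimization step. We treat $V$ as a decoupled-type canonical $U$-statistic with kernels $h_{i_1,\ldots,i_k}=c_{i_1,\ldots,i_k}h$. These kernels are bounded and canonical because $h$ is canonical and the coefficients are constants. For every $p\ge2$, Adamczak's theorem gives
\[
 \|V\|_p\le C(k)\sum_{J}p^{|J|/2}\,\|(h_{\mathbf i})\|_{J},
\]
where the sum runs over partitions $J$ of subsets of $[k]$ into blocks, and $\|\cdot\|_J$ are the mixed $L^2$/operator-type norms of the kernel array. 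Adamczak's statement also contains a term that is a maximum over indices with an $L^\infty$ component in some coordinates. We will use the version that includes these terms; this is where the parameter $r$ (the number of integrated variables) will enter.

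The core step is to bound every norm appearing in the moment bound by the simple quantities $Mn^{r/2}a_r$. Each such norm is a supremum over test arrays of sums $\sum_{\mathbf i}c_{\mathbf i}\E[h(W_{\mathbf i})\prod f(\cdot)]$. In a term where $r$ coordinates are integrated in $L^2$ and the remaining $k-r$ are taken in $L^\infty$ (sup over indices and points), we will bound $|c_{\mathbf i}|\le M$. Cauchy--Schwarz in the $r$ integrated coordinates then bounds the $h$-part by $a_r$. Summing the unit-norm test functions over at most $n^{r}$ index choices, again via Cauchy--Schwarz, costs $n^{r/2}$. This should give a bound of the form
\[
 \|V\|_p\le C(k)\sum_{r=0}^k p^{(2k-r)/2}\,M n^{r/2}a_r .
\]
To check the power of $p$: a term with $r$ variables integrated carries at most $p^{(2k-r)/2}$, since coordinates taken in sup norm come with a factor $p$ and integrated coordinates contribute at most $p^{1/2}$ each. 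Because $p\ge1$, we may use the largest power appearing in each class. This bookkeeping is where I expect the main obstacle. Adamczak's theorem is stated with many partition-indexed norms, and we must verify carefully that each one is dominated by the corresponding $Mn^{r/2}a_r$ with the exponent $(2k-r)/2$, and that the canonical structure justifies the decoupling/symmetrization. If the direct comparison is delicate, the fallback is the cruder bound that replaces every operator-type norm by the corresponding Hilbert--Schmidt ($L^2$) norm, which only increases it and is exactly of the form $Mn^{r/2}a_r$.

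Given the moment bound, the tail bound \eqref{eq:t-7} follows in the standard way. Set
\[
 p=c\min_r\Bigl(\frac{t}{Mn^{r/2}a_r}\Bigr)^{2/(2k-r)} .
\]
The hypothesis $t\ge\max_r Mn^{r/2}a_r$ ensures that this $p$ is at least a constant, and we enlarge $C(k)$ to cover the case $p<2$. With this choice each summand satisfies $p^{(2k-r)/2}Mn^{r/2}a_r\le c^{(2k-r)/2}t$, so $\|V\|_p\le t/e$ for small $c$. Markov's inequality then gives
\[
 \P[|V|\ge t]\le (\|V\|_p/t)^p\le e^{-p},
\]
which is \eqref{eq:t-7}. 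For random coefficients independent of the sample, we apply the deterministic bound conditionally on the coefficients; $M$ is then measurable with respect to the conditioning, and the conclusion holds conditionally.
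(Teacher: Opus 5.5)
Your proposal is correct and is essentially the paper's argument: you decouple $k!V$, apply Adamczak's inequality, and dominate each partition norm $\mathcal N_{I,\mathcal J}$ by $Mn^{r/2}a_r$ via Cauchy--Schwarz (your Hilbert--Schmidt ``fallback'' is exactly the paper's step). You should cite the de la Pe\~na--Montgomery-Smith decoupling explicitly, as the paper does, since Adamczak's bound is stated for decoupled sums; canonicity is needed for Adamczak's bound, not for the decoupling. The only difference is that you pass through the moment form and Markov's inequality, comparing powers of $p$ via $p\ge1$ and $|\mathcal J|\le r$, whereas the paper uses the tail form directly and compares exponents via $t\ge Mn^{r/2}a_r$ and $|\mathcal J|\le r$.
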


\begin{proof}
Write $\mathbf i=(i_1,\ldots,i_k)\in[n]^k$ and
$\mathbf i_I=(i_j)_{j\in I}$; sums over $\mathbf i_I$ range over $[n]^I$.
Let $\Part(I)$ be the partitions of $I\subseteq[k]$ into nonempty
blocks, with $\Part(\varnothing)=\{\varnothing\}$.
For $\mathcal J=\{J_1,\ldots,J_\ell\}\in\Part(I)$, define
\begin{equation}
 \mathcal N_{I,\mathcal J}
 =\esssup_{x_{I^c}}\max_{\mathbf i_{I^c}}\sup_f
 \Bigg|\sum_{\mathbf i_I}\int c_{\mathbf i}h(x_I,x_{I^c})
 \prod_{a=1}^{\ell}f^{(a)}_{\mathbf i_{J_a}}(x_{J_a})
 \mu^I(\dd x_I)\Bigg|,
 \label{eq:t-3}
\end{equation}
where the supremum is over families satisfying
$\sum_{\mathbf i_{J_a}}\|f^{(a)}_{\mathbf i_{J_a}}\|_2^2\le1$
for each $a$. For $I=\varnothing$, this means
$\mathcal N_{\varnothing,\varnothing}
 =\max_{\mathbf i}\|c_{\mathbf i}h\|_\infty$.
Apply~\cite[Theorem~1]{delapena1995decoupling} to the ordered sum
$k!V$, then~\cite[Theorem~7]{adamczak2006moment} to its decoupled form.
Absorbing the factors depending only on $k$ gives
\begin{equation}
 \P[|V|\ge t]
 \le C_{\ref{lem:adamczak-cor}}(k)\exp\Bigg(
 -c_{\ref{lem:adamczak-cor}}(k)
 \min_{I\subseteq[k]}\min_{\mathcal J\in\Part(I)}
 \Big(\frac{t}{\mathcal N_{I,\mathcal J}}\Big)^{
 2/(|\mathcal J|+2|I^c|)}\Bigg).
 \label{eq:t-4}
\end{equation}
Fix $I,\mathcal J$, the frozen indices, and the frozen variables.
Cauchy--Schwarz in the remaining indices and variables gives
\begin{align}
 \mathcal N_{I,\mathcal J}
 &\overset{\mathrm{(a)}}{\le}\esssup_{x_{I^c}}\max_{\mathbf i_{I^c}}
 \Bigg(\sum_{\mathbf i_I}\int
 |c_{\mathbf i}h(x_I,x_{I^c})|^2\mu^I(\dd x_I)\Bigg)^{1/2}\overset{\mathrm{(b)}}{\le}Mn^{r/2}a_r,\quad r=|I|.
 \label{eq:t-6}
\end{align}
Here $\mathrm{(a)}$ is Cauchy--Schwarz. Its second factor is at most
one because the blocks of $\mathcal J$ partition $I$:
\[
 \sum_{\mathbf i_I}\int
 \prod_{a=1}^{\ell}|f^{(a)}_{\mathbf i_{J_a}}(x_{J_a})|^2
 \mu^I(\dd x_I)
 =\prod_{a=1}^{\ell}\sum_{\mathbf i_{J_a}}
 \|f^{(a)}_{\mathbf i_{J_a}}\|_2^2\le1.
\]
For $\mathrm{(b)}$, use $|c_{\mathbf i}|\le M$ and at most $n^r$
choices of $\mathbf i_I$. Since $|\mathcal J|\le r$ and
$t/(Mn^{r/2}a_r)\ge1$,
\[
 \Big(\frac{t}{\mathcal N_{I,\mathcal J}}\Big)^{2/(|\mathcal J|+2|I^c|)}
 \ge\Big(\frac{t}{Mn^{r/2}a_r}\Big)^{2/(2k-r)}.
\]
Substitution in~\eqref{eq:t-4} proves~\eqref{eq:t-7}.
\end{proof}

At order two, we use a bound that retains four different norms.
Let $h:\Omega^2\to\mathbb R$ be bounded, symmetric, and canonical,
and let $W_1,\ldots,W_n$ be independent with law $\mu$.
Define $U_n(h)=\sum_{i<j}h(W_i,W_j)$ and
\begin{align*}
 A_h&=\|h\|_\infty,\quad
 B_h^2=n\esssup_x\int h(x,y)^2\mu(\dd y),\\
 C_h^2&=n^2\int h(x,y)^2\mu(\dd x)\mu(\dd y),\quad
 D_h=n\|T_h\|_{L^2(\mu)\to L^2(\mu)},
\end{align*}
where $T_hf(x)=\int h(x,y)f(y)\mu(\dd y)$.

\begin{lemma}[Gin\'e--Lata\l a--Zinn bound at order two]
\label{lem:glz}
For this kernel $h$, there are universal constants $c,C>0$ such that,
for all $u>0$,
\begin{equation}
 \P[|U_n(h)|\ge u]
 \le C\exp\Big(-c\min\Big\{\frac{u^2}{C_h^2},\frac{u}{D_h},
   \frac{u^{2/3}}{B_h^{2/3}},\frac{u^{1/2}}{A_h^{1/2}}\Big\}
 \Big).                                                \label{eq:t-8}
\end{equation}
\end{lemma}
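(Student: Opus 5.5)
The plan is to derive the bound from the moment inequality of Gin\'e, Lata{\l}a and Zinn for decoupled canonical $U$-statistics of order two, combined with decoupling, in the same way Lemma~\ref{lem:adamczak-cor} was derived. At order two we keep all four norms separately instead of collapsing them through Cauchy--Schwarz.

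\textbf{Step 1: decoupling.} Let $W'_1,\ldots,W'_n$ be an independent copy of the sample. By~\cite[Theorem~1]{delapena1995decoupling}, there is a universal constant $C$ such that
\[
 \P\big[|U_n(h)|\ge u\big]\le C\,\P\Big[\Big|\sum_{i\ne j}h(W_i,W'_j)\Big|\ge u/C\Big].
\]
It therefore suffices to prove~\eqref{eq:t-8} for the decoupled sum
\[
 \widetilde U=\sum_{i\ne j}h_{ij}(W_i,W'_j),\qquad h_{ij}=h\,\indc{i\ne j}.
\]
Because $h$ is canonical, this decoupled sum is again a canonical sum in the sense required by the next step.

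\textbf{Step 2: moment bound and comparison of norms.} The moment bound of Gin\'e--Lata{\l}a--Zinn (the order-two case of~\cite[Theorem~7]{adamczak2006moment}) states that, for every $p\ge2$,
\[
 \|\widetilde U\|_p\le K\big(p^{1/2}C+pD+p^{3/2}B+p^2A\big),
\]
where $K$ is universal and:
\begin{itemize}
\item $C^2=\sum_{i\ne j}\E h_{ij}^2$;
\item $D=\sup\big\{\E\sum_{i,j}h_{ij}(W_i,W'_j)f_i(W_i)g_j(W'_j)\big\}$, the supremum being over families with $\sum_i\E f_i^2\le1$ and $\sum_j\E g_j^2\le1$;
\item $B^2=\max\big\{\max_i\|\sum_j\E_{W'}h_{ij}^2\|_\infty,\ \max_j\|\sum_i\E_W h_{ij}^2\|_\infty\big\}$;
\item $A=\max_{i,j}\|h_{ij}\|_\infty$.
\end{itemize}
We then bound each quantity by its counterpart in the statement.
\begin{itemize}
\item $C^2\le n^2\|h\|_2^2=C_h^2$.
\item For $D$, the Cauchy--Schwarz inequality in $L^2(\mu)$ gives
\[
 \sum_{i,j}\langle f_i,T_hg_j\rangle=\Big\langle\sum_i f_i,\;T_h\sum_j g_j\Big\rangle\le\|T_h\|_{\op}\Big\|\sum_i f_i\Big\|_2\Big\|\sum_j g_j\Big\|_2\le\|T_h\|_{\op}\,n.
\]
Here $\|\sum_i f_i\|_2\le\sqrt n$ follows from Cauchy--Schwarz and the constraint $\sum_i\E f_i^2\le1$. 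The restriction $i\ne j$ removes only diagonal terms, which are bounded in the same way up to a constant. Hence $D\lesssim D_h$.
\item $B^2\le n\,\esssup_x\int h^2\,\dd\mu=B_h^2$.
\item $A\le A_h$.
\end{itemize}

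\textbf{Step 3: from moments to tails.} Markov's inequality gives
\[
 \P[|\widetilde U|\ge e\|\widetilde U\|_p]\le e^{-p}.
\]
Given $u$, choose $p$ to be a small constant times
\[
 \min\Big\{\frac{u^2}{C_h^2},\ \frac{u}{D_h},\ \frac{u^{2/3}}{B_h^{2/3}},\ \frac{u^{1/2}}{A_h^{1/2}}\Big\}.
\]
With this choice each of the four terms in the moment bound is at most $u/(4eK)$, so $e\|\widetilde U\|_p\le u$ and the tail is at most $e^{-p}$. When this minimum is below $2$, the right side of~\eqref{eq:t-8} is at least a constant once $C\ge e^{2c}$, so the inequality holds trivially. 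Combining with Step~1 and absorbing constants proves~\eqref{eq:t-8}.

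\textbf{Main obstacle.} The substantive step is the operator-norm bound on $D$ in Step~2, since it is the only place where we use more than $L^2$ or sup-norm information about $h$. The rest is bookkeeping of universal constants. In particular, we must check that the moment theorem is applied to the decoupled, canonical kernel and that the exact $p$-exponents ($1/2,1,3/2,2$) are the ones matching the four rates in~\eqref{eq:t-8}.
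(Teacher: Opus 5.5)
Your proposal is correct and follows essentially the same route as the paper: decouple $\sum_{i\ne j}h(W_i,W_j)$ by de la Pe\~na--Montgomery-Smith, apply the Gin\'e--Lata\l a--Zinn inequality to the decoupled canonical kernel $\indc{i\ne j}h$, and compare its four parameters with $A_h,B_h,C_h,D_h$. The differences are cosmetic. The paper cites the tail form (Gin\'e--Lata\l a--Zinn, Theorem~3.3) directly and bounds the operator parameter by $\|\mathbf 1\mathbf 1^\top-I_n\|_{\op}\|T_h\|_{\op}=(n-1)\|T_h\|_{\op}$. You instead go through the moment form plus Markov, and bound $D$ by Cauchy--Schwarz with a diagonal correction; in that step the trivial regime is really $p<2$, i.e.\ the minimum below $2/c_0$, which is absorbed by taking $c=c_0$ and $C\ge e^2$.
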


\begin{proof}
Apply~\cite[Theorem~1]{delapena1995decoupling} to
$2U_n(h)=\sum_{i\ne j}h(W_i,W_j)$, then use
\cite[Theorem~3.3]{gine2000exponential}.
For the decoupled kernel matrix
$(\indc{i\ne j}h)_{i,j\le n}$, its four parameters are bounded by
$A_h,B_h,C_h,D_h$. In particular, the operator parameter is
$\|\mathbf 1\mathbf 1^\top-I_n\|_{\op}\|T_h\|_{\op}
=(n-1)\|T_h\|_{\op}\le D_h$.
This proves~\eqref{eq:t-8} after adjusting the universal constants.
\end{proof}

We next estimate the overlaps $O_{q\ell}$
from~\eqref{eq:smooth-quadratic-form}. Recall that
$U_q(x,y)=\langle z_q(x),z_q(y)\rangle$, with
$\|z_q(x)\|=1$ and $\E[z_q(X)z_q(X)^\top]=I/N_{q,d}$.
All harmonic degrees below are fixed. We first bound the exponential
moment of a smooth function on a product of spheres.
Gradients, Hessians, and distances below use the standard product metric.

\begin{lemma}[A smooth exponential moment bound]\label{lem:smooth-t2-mgf}
Let $\mu=\sigma_d^{\otimes Q}$ for integers $d\ge3$ and $Q\ge1$,
and let $f$ be a smooth function with $\E_\mu[f]=0$ and
$\|\operatorname{Hess}f\|_{\op}\le L$ at every point.
For $|t|L<d-2$,
\begin{equation}
 \log\E_\mu[e^{tf}]
 \le\frac{t^2\E_\mu[|\nabla f|^2]}{2(d-2-|t|L)}.
 \label{eq:mc-1}
\end{equation}
\end{lemma}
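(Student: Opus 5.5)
We will prove \eqref{eq:mc-1} by the Herbst argument, with the log-Sobolev inequality replaced by a Poincaré-type comparison that exploits the curvature of the sphere. The product $(\Sph^{d-1})^Q$ has Ricci curvature bounded below by $d-2$. We therefore use the Bakry--\'Emery framework with carré du champ $\Gamma(f)=|\nabla f|^2$ and $\Gamma_2(f)\ge(d-2)\Gamma(f)$. Let $P_s$ be the heat semigroup on the product, and set $\Lambda(t)=\log\E_\mu[e^{tf}]$.

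The plan is to write the variance-type identity along the semigroup:
\[
\E_\mu[g\,(f-\E f)]=\int_0^\infty\E_\mu[\langle\nabla g,\nabla P_sf\rangle]\,\dd s ,
\]
with $g=e^{tf}$. This gives
\[
\Lambda'(t)=\frac{t\int_0^\infty\E_\mu\big[e^{tf}\langle\nabla f,\nabla P_sf\rangle\big]\,\dd s}{\E_\mu[e^{tf}]}.
\]
Alternatively, and this is the route we would try first, we use the cleaner spectral form. The generator satisfies $-\mathcal L\ge(d-1)$ on centered functions, with $d-1$ the first eigenvalue. The Hessian bound controls how $e^{tf}$ tilts the measure: under the tilted measure $\nu_t\propto e^{tf}\mu$, Bochner's formula gives curvature at least $d-2-|t|L$, since the tilt adds $-t\operatorname{Hess}f$ to the Bakry--\'Emery tensor. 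The tilted measure therefore satisfies a Poincaré inequality with constant $1/(d-2-|t|L)$.

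Next we differentiate. Since $\Lambda''(t)=\var_{\nu_t}(f)$, Poincaré under $\nu_t$ gives
\[
\Lambda''(t)\le\frac{\E_{\nu_t}[|\nabla f|^2]}{d-2-|t|L}.
\]
To replace $\E_{\nu_t}$ by $\E_\mu$, we would instead work with $\Lambda'(t)$ directly. Using the covariance representation under $\nu_t$ (Brascamp--Lieb type, valid since curvature stays positive), we get
\[
\Lambda'(t)=\operatorname{Cov}_{\nu_t}\text{-type expression},
\]
and we aim for the differential inequality
\[
\Lambda'(t)\le\frac{t\,\E_\mu[|\nabla f|^2]}{d-2-|t|L},
\]
then integrate from $0$. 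Integrating this naively gives a log rather than the exact form $t^2/(2(d-2-|t|L))$. Since the denominator is monotone in $|t|$, however, bounding $1/(d-2-|s|L)\le1/(d-2-|t|L)$ for $|s|\le|t|$ yields exactly \eqref{eq:mc-1}.

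The main obstacle is obtaining $\E_\mu[|\nabla f|^2]$ rather than the tilted average $\E_{\nu_t}[|\nabla f|^2]$. We would handle this via an interpolation: $\frac{\dd}{\dd s}$ of $\E_\mu[e^{tP_sf}]$-type quantities, or more concretely the Brascamp--Lieb variance inequality applied to the reference measure $\mu$ with weight $e^{tf}$. There the $\operatorname{Hess}$ bound enters as $(\,(d-2)I-t\operatorname{Hess}f)^{-1}\preceq(d-2-|t|L)^{-1}I$, and the identity $\E_{\nu_t}[\nabla f\cdot\nabla\log(\dd\nu_t/\dd\mu)]$ is absorbed by integration by parts under $\mu$ using $\E_\mu f=0$. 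If that bookkeeping fails, the fallback is to accept $\E_{\nu_t}$, bound $\Lambda''(t)\le\E_{\nu_t}|\nabla f|^2/(d-2-|t|L)$, and check whether the later application only needs this weaker form.
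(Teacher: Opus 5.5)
Your proposal has a genuine gap at exactly the step you flag, and the intermediate target you aim for is false.

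\textbf{Where the argument fails.} Every route you sketch yields $\nu_t$-averages of gradients. This covers Poincar\'e or Brascamp--Lieb for the tilt $\nu_t\propto e^{tf}\mu$, and also the semigroup covariance formula, which reads $\Lambda'(t)=t\int_0^\infty\E_{\nu_t}[\langle\nabla f,\nabla P_sf\rangle]\,\dd s$. Nothing in the proposal converts these into $\E_\mu[|\nabla f|^2]$. Since $\nabla\log(\dd\nu_t/\dd\mu)=t\nabla f$, the proposed integration by parts just returns $t\E_{\nu_t}[|\nabla f|^2]$.

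\textbf{The differential inequality is false.} The inequality $\Lambda'(t)\le t\E_\mu[|\nabla f|^2]/(d-2-|t|L)$ does not hold in general. Take $Q=1$, $t=1$, and $f(x)=\sqrt d\,x_1+\frac d4(x_1^2-\frac1d)$.
\begin{itemize}
\item The eigenvalues of $\operatorname{Hess}f$ are $-\sqrt d\,x_1-\frac d2x_1^2$ and $\frac d2-\sqrt d\,x_1-dx_1^2$. Hence $L=\frac d2+\sqrt d$, and $|t|L<d-2$ for large $d$.
\item Set $y=\sqrt d\,x_1$, which is approximately $N(0,1)$ under $\mu$. Then $f=y+\frac14(y^2-1)$ and $|\nabla f|^2=d(1+y/2)^2(1-y^2/d)$. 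So $\E_\mu[|\nabla f|^2]\sim\frac54d$, and your right side tends to $\frac52$.
\item Under $\nu_1$, $y$ is approximately $N(2,2)$, so $\Lambda'(1)=\E_{\nu_1}[f]\to\frac{13}4>\frac52$.
\end{itemize}
The integrated bound \eqref{eq:mc-1} does survive here, since $\Lambda(1)\to\frac12\log2+\frac34<\frac54$. So no bookkeeping can produce your differential inequality; only the integrated form is true.

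\textbf{The fallback does not suffice.} The bound with $\E_{\nu_t}[|\nabla f|^2]$ does not prove the lemma. It would also not serve Lemma~\ref{lem:signed-replica-mgf}. There $\E_\mu[|\nabla H_{q,Q,\epsilon}|^2]=2\lambda_q\binom Q2/N_{q,d}$ comes from orthogonality under the prior, whereas pointwise or tilted control of $|\nabla H_{q,Q,\epsilon}|^2$ is far weaker.

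\textbf{The missing idea: dualize instead of differentiating.} By the entropy variational formula, $\log\E_\mu[e^{tf}]=\sup_\nu\{t\E_\nu[f]-\KL(\nu\Vert\mu)\}$. So you need to bound $\E_\nu[f]-\E_\mu[f]$ for arbitrary $\nu$.
\begin{itemize}
\item Couple $X\sim\mu$ and $Y\sim\nu$ optimally for $W_2$, and Taylor expand $f$ along a minimizing geodesic starting at $X$.
\item The first-order term is at most $\sqrt{\E_\mu[|\nabla f|^2]}\,W_2(\nu,\mu)$ by Cauchy--Schwarz. This works precisely because the gradient is evaluated at the $\mu$-distributed endpoint.
\item The second-order term is at most $\frac L2W_2(\nu,\mu)^2$.
\item Under Ricci curvature $d-2$, the Otto--Villani inequality gives $W_2^2\le2\KL(\nu\Vert\mu)/(d-2)$.
\end{itemize}
The problem then reduces to maximizing $|t|\sqrt{2\E_\mu[|\nabla f|^2]/(d-2)}\,a-(1-|t|L/(d-2))a^2$ over $a=\sqrt{\KL(\nu\Vert\mu)}\ge0$, which gives exactly \eqref{eq:mc-1}. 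This is where the untilted energy enters, and a tilted-measure Herbst argument does not capture it.
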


\begin{proof}
The product sphere has Ricci curvature $(d-2)I$. Hence
\cite[Theorems~1 and~2]{otto2000generalization} gives
$W_2^2(\nu,\mu)\le2\KL(\nu\Vert\mu)/(d-2)$ for every probability measure $\nu$,
where $W_2$ is the quadratic Wasserstein distance for the product metric.
For an optimal coupling of $X\sim\mu$ and $Y\sim\nu$, Taylor's
inequality along a minimizing geodesic gives
\begin{align*}
 \E_\nu[f]
 &\overset{\mathrm{(a)}}{\le}\sqrt{\E_\mu[|\nabla f|^2]}W_2(\nu,\mu)
       +\frac L2W_2^2(\nu,\mu)\overset{\mathrm{(b)}}{\le}
 \sqrt{\frac{2\E_\mu[|\nabla f|^2]\KL(\nu\Vert\mu)}{d-2}}
       +\frac L{d-2}\KL(\nu\Vert\mu).
\end{align*}
Here $\mathrm{(a)}$ uses $\E_\mu[f]=0$, the Hessian bound, and
Cauchy--Schwarz; $\mathrm{(b)}$ uses the transport inequality.
For $t>0$, the entropy variational formula therefore yields
\begin{align*}
 \log\E_\mu[e^{tf}]
 &\overset{\mathrm{(c)}}{=}
 \sup_{\nu\ll\mu}\big(t\E_\nu[f]-\KL(\nu\Vert\mu)\big)\\
 &\le\sup_{a\ge0}\Bigg(
 t\sqrt{\frac{2\E_\mu[|\nabla f|^2]}{d-2}}a
 -\Big(1-\frac{tL}{d-2}\Big)a^2\Bigg)=\frac{t^2\E_\mu[|\nabla f|^2]}{2(d-2-tL)}.
\end{align*}
For $\mathrm{(c)}$, Jensen's inequality gives the upper bound,
with equality at $\dd\nu/\dd\mu=e^{tf}/\E_\mu[e^{tf}]$.
The next line sets $a=\sqrt{\KL(\nu\Vert\mu)}$ and maximizes over
all $a\ge0$. Apply the same argument to $-f$ when $t<0$.
\end{proof}

We apply Lemma~\ref{lem:smooth-t2-mgf} to signed sums of harmonic kernels.
For independent $X_1,\ldots,X_Q\sim\sigma_d$ and signs
$\epsilon_a\in\{-1,1\}$, define
$H_{q,Q,\epsilon}=\sum_{a<b}\epsilon_a\epsilon_bU_q(X_a,X_b)$.

\begin{lemma}[Signed replica MGF]\label{lem:signed-replica-mgf}
For each fixed $q\ge1$, there are constants
$c_{\ref{lem:signed-replica-mgf}}(q),C_{\ref{lem:signed-replica-mgf}}(q)>0$
such that, for $d\ge3$, $Q\ge2$, all choices of signs, and
$|t|Q<c_{\ref{lem:signed-replica-mgf}}(q)d$,
\begin{equation}
 \log\E[e^{tH_{q,Q,\epsilon}}]
 \le C_{\ref{lem:signed-replica-mgf}}(q)\frac{t^2Q^2}{N_{q,d}}.                         \label{eq:mc-4}
\end{equation}
\end{lemma}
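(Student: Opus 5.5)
We apply Lemma~\ref{lem:smooth-t2-mgf} on the product sphere $(\Sph^{d-1})^Q$ with $f=H_{q,Q,\epsilon}$. This function is already centered: each $U_q(X_a,X_b)$ with $a\ne b$ has mean zero, because $q\ge1$ and $\E[P_{q,d}(T_d)]=0$. So it remains to bound two quantities, the expected squared gradient $\E[|\nabla f|^2]$ and a uniform bound $L$ on the Hessian operator norm. We will aim for $\E[|\nabla f|^2]\lesssim_q Q^2 d/N_{q,d}$ and $L\lesssim_q Q$. Then, for $|t|L\le (d-2)/2$, the right side of~\eqref{eq:mc-1} is at most $t^2\E[|\nabla f|^2]/(d-2)\lesssim_q t^2Q^2/N_{q,d}$, which is~\eqref{eq:mc-4}. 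The condition $|t|Q<c(q)d$ guarantees $|t|L\le(d-2)/2$ once $c(q)$ is small and $d\ge3$. Small $d$ can be absorbed into the constants.

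For the gradient, the derivative with respect to $X_a$ is $\epsilon_a\sum_{b\ne a}\epsilon_b\nabla_x P_{q,d}(\langle x,X_b\rangle)|_{x=X_a}$. Here $\nabla_x P_{q,d}(\langle x,y\rangle)=P_{q,d}'(\langle x,y\rangle)(y-\langle x,y\rangle x)$ is the tangential gradient. Expanding the square gives $\E|\nabla f|^2=\sum_a\sum_{b,c\ne a}\epsilon_b\epsilon_c\E[\langle g_{ab},g_{ac}\rangle]$ with $g_{ab}=\nabla_xU_q(X_a,X_b)$. The cross terms with $b\ne c$ do not vanish in general, so we bound them by harmonic analysis. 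Fix $X_a$ and integrate over $X_b$: $\E_{X_b}[g_{ab}]=\nabla_x\E_{X_b}U_q(x,X_b)=0$. Since $X_b$ and $X_c$ are independent given $X_a$, the cross terms with $b\ne c$ are zero. Only the diagonal terms remain: $\E|\nabla f|^2=\sum_a\sum_{b\ne a}\E|g_{ab}|^2\le Q^2\E|\nabla_x P_{q,d}(\langle x,Y\rangle)|^2$. The last quantity is computed with the spherical Laplacian. The function $x\mapsto P_{q,d}(\langle x,y\rangle)$ is a degree $q$ harmonic with eigenvalue $q(q+d-2)$, so $\E|\nabla U_q|^2=q(q+d-2)\E[P_{q,d}(T_d)^2]=q(q+d-2)/N_{q,d}$. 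This is $\lesssim_q d/N_{q,d}$, as required.

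For the Hessian, $f$ is a sum of pair terms, and its Hessian has blocks indexed by pairs $(a,b)$. A diagonal block $(a,a)$ is a sum of $Q-1$ terms $\nabla_x^2 U_q(x,X_b)$. An off-diagonal block $(a,b)$ is $\epsilon_a\epsilon_b\nabla_x\nabla_y U_q$. Each such piece has operator norm at most a constant $C(q)$, uniformly in $d$. To see this, write the ambient derivatives: $\nabla^2_x P(\langle x,y\rangle)=P''(\langle x,y\rangle)y y^\top$ plus the curvature correction $-\langle x,y\rangle P'(\langle x,y\rangle)$ times the tangential identity. This gives a bound of order $\sup_{[-1,1]}(|P'_{q,d}|+|P''_{q,d}|)$. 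The Markov brothers' inequality gives $\sup_{[-1,1]}|P^{(j)}_{q,d}|\le q^{2j}$ for $|P_{q,d}|\le1$. The whole Hessian is then a $Q\times Q$ block matrix whose diagonal blocks have norm $\le C(q)(Q-1)$ and whose off-diagonal blocks have norm $\le C(q)$. Schur's test (maximum row sum of block norms) gives $L\le 2C(q)Q$.

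The step we expect to be the main obstacle is the Hessian bound on the curved product space. Lemma~\ref{lem:smooth-t2-mgf} requires a Riemannian Hessian bound valid at every point, and the curvature correction terms and the block Schur argument need to be written out carefully. A secondary point is $f$ being a polynomial restricted to the sphere rather than merely Lipschitz, but smoothness is automatic. If the Markov bound is too crude we lose nothing, since only $d$-independence of the constant matters.
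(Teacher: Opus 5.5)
Your proposal is correct and follows the paper's proof. Like the paper, you apply Lemma~\ref{lem:smooth-t2-mgf} with two inputs: the gradient bound $\E|\nabla H_{q,Q,\epsilon}|^2\lesssim_q dQ^2/N_{q,d}$, from the eigenvalue $q(q+d-2)$ together with conditional centering that kills the cross terms, and the Hessian bound $C(q)Q$, from block row sums. The only difference is cosmetic: you obtain the $d$-uniform bounds on $P_{q,d}'$ and $P_{q,d}''$ from Markov's inequality and $|P_{q,d}|\le1$, whereas the paper bounds the coefficients of $P_{q,d}$ by induction in~\eqref{eq:gegenbauer-recurrence}, and both work.
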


\begin{proof}
Let $\lambda_q=q(q+d-2)$. Each edge term is an eigenfunction of
the negative Laplacian on the product of spheres, with eigenvalue $2\lambda_q$. Distinct edge terms are orthogonal:
for edges sharing one endpoint, condition on that endpoint and use
centering. Thus
\[
 \E[|\nabla H_{q,Q,\epsilon}|^2]
 \overset{\mathrm{(a)}}{=}2\lambda_q\E[H_{q,Q,\epsilon}^2]
 =\frac{2\lambda_q}{N_{q,d}}\binom Q2
 \lesssim\frac{dQ^2}{N_{q,d}},
\]
where $\mathrm{(a)}$ is integration by parts on the product sphere.
Induction in~\eqref{eq:gegenbauer-recurrence} bounds the coefficients
of $P_{q,d}$ uniformly for fixed $q$ and $d\ge3$. Hence
$\sup_{|u|\le1}(|P_{q,d}'(u)|+|P_{q,d}''(u)|)\le C(q)$.
The spherical first and second derivatives of $\langle x,y\rangle$
are bounded, so each Hessian block contributed by an edge has norm
at most $C(q)$. Each block row contains $Q-1$ such edges, giving
$\|\operatorname{Hess}H_{q,Q,\epsilon}\|_{\op}\le C(q)Q$.
Choose $c_{\ref{lem:signed-replica-mgf}}(q)$ so that
$|t|C(q)Q\le(d-2)/2$. Then
\[
 \log\E[e^{tH_{q,Q,\epsilon}}]
 \overset{\mathrm{(b)}}{\le}
 \frac{t^2\E[|\nabla H_{q,Q,\epsilon}|^2]}{d-2}
 \lesssim\frac{t^2Q^2}{N_{q,d}},
\]
where $\mathrm{(b)}$ uses~\eqref{eq:mc-1} in
Lemma~\ref{lem:smooth-t2-mgf}.
\end{proof}

\begin{lemma}[Exponential moments conditional on one sample]
\label{lem:signed-block}
Fix $q,\ell\ge1$ and condition on $X'$. For every real $t$ satisfying
$|t|\|\mathbf U_\ell(X')\|_{\op}<c_{\ref{lem:signed-block}}(q)d$,
\[
 \log\E_X[e^{tO_{q\ell}}]
 \le C_{\ref{lem:signed-block}}(q)
 \frac{t^2n\|\mathbf U_\ell(X')\|_{\op}}{N_{q,d}}.
\]
\end{lemma}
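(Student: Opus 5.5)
Conditional on $X'$, the overlap is the quadratic form
\[
 O_{q\ell}=\sum_{i<j}U_q(X_i,X_j)\,u_{ij},\qquad u_{ij}=U_\ell(X_i',X_j'),
\]
in the independent points $X_1,\ldots,X_n$, with deterministic symmetric coefficient matrix $\mathbf u=\mathbf U_\ell(X')-I$ (we drop the diagonal). The plan is to mimic the proof of Lemma~\ref{lem:signed-replica-mgf}: apply the transport-entropy bound of Lemma~\ref{lem:smooth-t2-mgf} on $(\Sph^{d-1})^n$ to the smooth centered function $f(X)=\sum_{i<j}u_{ij}U_q(X_i,X_j)$. Centering holds because each $U_q(X_i,X_j)$ is canonical for $q\ge1$. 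By \eqref{eq:mc-1}, it suffices to bound $\E|\nabla f|^2$ by $C\,dn\|\mathbf U_\ell(X')\|_{\op}/N_{q,d}$ and the Hessian operator norm by $C(q)\|\mathbf U_\ell(X')\|_{\op}$. The condition $|t|C(q)\|\mathbf U_\ell\|_{\op}\le(d-2)/2$ then yields the claim.

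\emph{Gradient.} Each edge term $U_q(X_i,X_j)$ is an eigenfunction of the product Laplacian with eigenvalue $2\lambda_q$, $\lambda_q=q(q+d-2)$, and distinct edge terms are orthogonal, as shown in the proof of Lemma~\ref{lem:signed-replica-mgf}. Integration by parts therefore gives
\[
\E|\nabla f|^2=2\lambda_q\E f^2=\frac{2\lambda_q}{N_{q,d}}\sum_{i<j}u_{ij}^2\le\frac{\lambda_q}{N_{q,d}}\|\mathbf u\|_{\HS}^2 .
\]
We then use $\|\mathbf u\|_{\HS}^2\le n\|\mathbf u\|_{\op}^2$ together with $\|\mathbf u\|_{\op}\le\|\mathbf U_\ell\|_{\op}+1\le2\|\mathbf U_\ell\|_{\op}$, since $\|\mathbf U_\ell\|_{\op}\ge\max_i(\mathbf U_\ell)_{ii}=1$. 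This bound carries an extra factor $\|\mathbf U_\ell\|_{\op}$ compared with the target. A better estimate is $\|\mathbf u\|_{\HS}^2\le\tr(\mathbf U_\ell^2)\le\|\mathbf U_\ell\|_{\op}\tr(\mathbf U_\ell)=n\|\mathbf U_\ell\|_{\op}$, because $\mathbf U_\ell\succeq0$ as a Gram matrix with unit diagonal, and $\sum_{i\ne j}u_{ij}^2\le\tr(\mathbf U_\ell^2)$. This gives exactly $\E|\nabla f|^2\lesssim dn\|\mathbf U_\ell\|_{\op}/N_{q,d}$.

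\emph{Hessian.} This is the main obstacle: the naive row-sum bound from Lemma~\ref{lem:signed-replica-mgf} gives $C(q)\max_i\sum_j|u_{ij}|$, which can be of order $n$ rather than $\|\mathbf U_\ell\|_{\op}$. I would write the Hessian in block form. The off-diagonal block $(i,j)$ equals $u_{ij}$ times a bounded $d\times d$ tangential matrix $B_{ij}(X_i,X_j)$ with $\|B_{ij}\|_{\op}\le C(q)$. The diagonal block $i$ equals $\sum_j u_{ij}D_{ij}$ with $\|D_{ij}\|\le C(q)$. For the off-diagonal part, pair a unit vector $(v_i)$ and bound $\sum_{i,j}u_{ij}\langle v_i,B_{ij}v_j\rangle$. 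Use the tensor structure: $P_{q,d}$ has bounded coefficients, and the spherical derivatives of $\langle x,y\rangle$ are built from $x,y$ and projections, so $B_{ij}$ is a finite combination $\sum_s\alpha_s(T_{ij})\,a_s(X_i)\otimes b_s(X_j)$ plus a term proportional to the tangential identity. Schur-product inequalities then give the bound $\|\mathbf u\circ\mathbf M\|_{\op}\le\|\mathbf u\|_{\op}\max|M_{ij}|$ when $\mathbf M$ is a Gram-type matrix with bounded diagonal, since Hadamard products with positive semidefinite matrices of bounded diagonal are contractive. The scalar factors $\alpha_s(T_{ij})$ are polynomials in $T_{ij}$, and $(T_{ij})$ is itself positive semidefinite with unit diagonal, so the same Schur argument applies.

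For the diagonal blocks, the radial structure reduces the bound to $\max_i|\sum_j u_{ij}g(T_{ij})|$ for polynomials $g$. This quantity is random, and I would either control it by restricting to the range $|t|$ allowed, or absorb it by re-centering with $\E_{X_j}$: its mean vanishes for $j\ne i$ because of canonicality. If this diagonal term resists a deterministic bound, the fallback is to apply Lemma~\ref{lem:smooth-t2-mgf} only on a high-probability set, or to decouple $O_{q\ell}$ into a bilinear form and condition on half the points, in which case the Hessian in the remaining variables is handled by the same Schur argument.
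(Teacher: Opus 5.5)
Your gradient computation is correct, and the Schur-product argument does control the off-diagonal blocks. The gap is the diagonal blocks, and it cannot be closed within this approach. Lemma~\ref{lem:smooth-t2-mgf} needs $\|\mathrm{Hess} f\|_{\op}\le L$ at \emph{every} point, and no bound $L\le C(q)\|\mathbf U_\ell(X')\|_{\op}$ holds.

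To see this, fix $i$. Place $X_j=X_i$ for every $j$ with $u_{ij}>0$, and place all other $X_j$ ($j\ne i$) at one point $w\perp X_i$. Take a tangent vector $v$ supported on block $i$ with $v_i\perp X_i,w$. The off-diagonal terms and the terms $u_{ij}P_{q,d}''(T_{ij})\langle v_i,X_j\rangle^2$ vanish, and the curvature term of the spherical Hessian leaves
\[
 \mathrm{Hess} f[v,v]=-P_{q,d}'(1)\,|v_i|^2\sum_{j:\,u_{ij}>0}u_{ij},
 \qquad P_{q,d}'(1)=\frac{q(q+d-2)}{d-1}\ge q .
\]
Positive row sums of a unit-diagonal Gram matrix are not controlled by its operator norm. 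For typical $X'$ they are of order $n/\sqrt{N_{\ell,d}}$, while on $\mathcal G_D$ one has $\|\mathbf U_\ell(X')\|_{\op}\lesssim 1+n/N_{\ell,d}+\log(n+N_{\ell,d})$. The gap is of order $\min\{n/\sqrt{N_{\ell,d}},\sqrt{N_{\ell,d}}\}$ up to logarithms.

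So the transport argument only yields the range $|t|\lesssim d/\max_i\sum_j|u_{ij}|$. This is too small for Lemma~\ref{lem:finite-mgf}, where $t\asymp b_qb_\ell$ can be of order one. None of your fallbacks repairs this:
\begin{itemize}
\item Restricting $t$ does not change the Hessian of $f$.
\item The transport lemma has no version on a high-probability set, and off such a set $e^{tO_{q\ell}}$ can be as large as $e^{|t|\binom n2}$.
\item After decoupling and conditioning on half of the points, the remaining function is $\sum_{i\in A}\phi_i(X_i)$. Its Hessian is block diagonal and consists exactly of the problematic blocks.
\end{itemize}

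The missing idea is to replace the arbitrary coefficient matrix by a bounded number of exchangeable replicas before using the transport bound. Write $O_{q\ell}=\frac12\|\sum_i z_q(X_i)z_\ell(X_i')^\top\|_{\HS}^2-\frac n2$ and linearize the square with a standard Gaussian matrix $G$. The vectors $B_i=Gz_\ell(X_i')$ are jointly Gaussian with $\operatorname{Cov}(B_i,B_j)=U_\ell(X_i',X_j')I$. Then $\E_X[e^{tO_{q\ell}}]=e^{-tn/2}\E[\prod_i\phi_+(B_i)]$, where $\phi_+(b)=\E_z[e^{\sqrt t\langle b,z\rangle}]$.

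Let $Q$ be the smallest even integer not less than $\|\mathbf U_\ell(X')\|_{\op}$. The joint covariance is at most $QI$, so the Gaussian Brascamp--Lieb bound of Lemma~\ref{lem:gaussian-holder} gives $(\E_\gamma[\phi_+(\gamma)^Q])^{n/Q}$. Integrating out $\gamma$ turns this into $(\E\exp(t\sum_{a<b}\langle z_a,z_b\rangle))^{n/Q}$ over $Q$ independent points.

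All coefficients are now $\pm1$ on $Q$ points, so the row-sum Hessian bound is $C(q)Q$. Lemma~\ref{lem:signed-replica-mgf} bounds the logarithm by $Ct^2Q^2/N_{q,d}$, and the power $n/Q$ yields the claim. For $t<0$, use the characteristic function, take absolute values before applying Lemma~\ref{lem:gaussian-holder}, and expand $|\phi_-|^Q$ with $Q/2$ signs of each type.
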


\begin{proof}
The Gram matrix $\mathbf U_\ell(X')$ is positive semidefinite with unit
diagonal, so its operator norm is at least one. Let $Q$ be the smallest
even integer not less than this norm. Then
$\|\mathbf U_\ell(X')\|_{\op}\le Q\le3\|\mathbf U_\ell(X')\|_{\op}$.
We may therefore apply~\eqref{eq:mc-4} in
Lemma~\ref{lem:signed-replica-mgf} at $|t|$ and $Q$.

Let $z_i=z_q(X_i)$ and $v_i=z_\ell(X_i')$. Since both vectors have norm one,
\[
 O_{q\ell}=\frac12\Big\|\sum_i z_iv_i^\top\Big\|_{\HS}^2-\frac n2.
\]
Let $(B_i)_{i\le n}$ be centered Gaussian vectors in $\mathbb R^{N_{q,d}}$
with $\operatorname{Cov}(B_i,B_j)=U_\ell(X_i',X_j')I$, and let
$\gamma\sim N(0,I_{N_{q,d}})$. Write $z$ for an independent copy of $z_q(X_1)$.
Expectations indexed by $z_1,\ldots,z_Q$ below use $Q$ independent copies of $z$.
For $t\ge0$, the Gaussian exponential identity gives
\[
 \E_X[e^{tO_{q\ell}}]
 =e^{-tn/2}\E_B\Big[\prod_i\phi_+(B_i)\Big],\quad
 \phi_+(b)=\E_z[e^{\sqrt t\langle b,z\rangle}].
\]
The covariance operator is at most $QI$.
Lemma~\ref{lem:gaussian-holder} and Gaussian integration yield
\begin{align}
 \E_X[e^{tO_{q\ell}}]
 &\overset{\mathrm{(a)}}{\le}
 e^{-tn/2}\big(\E_\gamma[\phi_+(\gamma)^Q]\big)^{n/Q}\notag\\
 &\overset{\mathrm{(b)}}{=}e^{-tn/2}
 \Big(\E_{z_1,\ldots,z_Q}
   \big[e^{(t/2)\|\sum_a z_a\|^2}\big]\Big)^{n/Q}=\Big(\E_{z_1,\ldots,z_Q}
   \big[e^{t\sum_{a<b}\langle z_a,z_b\rangle}\big]\Big)^{n/Q}.
 \label{eq:mc-5b}
\end{align}
Here $\mathrm{(a)}$ is Lemma~\ref{lem:gaussian-holder},
and $\mathrm{(b)}$ uses
$\E_\gamma[e^{\langle\gamma,w\rangle}]=e^{\|w\|^2/2}$.
The last equality uses $\sum_a\|z_a\|^2=Q$.
By~\eqref{eq:mc-4} in Lemma~\ref{lem:signed-replica-mgf}, the logarithm
is at most $Ct^2nQ/N_{q,d}$.

For the negative sign, the Gaussian characteristic function gives
\[
 \E_X[e^{-tO_{q\ell}}]
 =e^{tn/2}\E_B\Big[\prod_i\phi_-(B_i)\Big],\quad
 \phi_-(b)=\E_z[e^{i\sqrt t\langle b,z\rangle}].
\]
Taking absolute values gives
\[
 \E_X[e^{-tO_{q\ell}}]
 \le e^{tn/2}\E_B\Big[\prod_i|\phi_-(B_i)|\Big]
 \overset{\mathrm{(c)}}{\le}
 e^{tn/2}\big(\E_\gamma[|\phi_-(\gamma)|^Q]\big)^{n/Q},
\]
where $\mathrm{(c)}$ applies Lemma~\ref{lem:gaussian-holder}
to the nonnegative functions $|\phi_-|$.
Choose $Q/2$ signs equal to $1$ and the other $Q/2$ equal to $-1$.
Expanding $|\phi_-|^Q$ and integrating the Gaussian variable gives
\begin{align}
 e^{tQ/2}\E_\gamma[|\phi_-(\gamma)|^Q]
 &=\E_{z_1,\ldots,z_Q}
 \Big[\exp\Big(\frac{tQ}2-\frac t2
         \big\|\sum_a\epsilon_a z_a\big\|^2\Big)\Big]\notag\\
 &=\E_{z_1,\ldots,z_Q}
 \Big[\exp\Big(-t\sum_{a<b}\epsilon_a\epsilon_b
                    \langle z_a,z_b\rangle\Big)\Big].
 \label{eq:mc-6}
\end{align}
Again~\eqref{eq:mc-4} bounds the logarithm by $Ct^2Q^2/N_{q,d}$.
Raising to the power $n/Q$ and using
$Q\le3\|\mathbf U_\ell(X')\|_{\op}$ proves both signs.
\end{proof}

\section{Proofs for cusp kernels}\label{app:cusp}

Recall the posterior costs $g_{r,k}$ and $G_k$ from
\eqref{eq:4-2} and \eqref{eq:cavity-costs}.
We prove the common moment and prediction estimates, followed by
the bounds for order two, order three, and any fixed range of
higher orders. These estimates apply to the three continuous
cases of Definition~\ref{def:cusp}.
Section~\ref{sec:order-high-csup} combines them with the prior
bound and treats the jump case by~\eqref{gen:eq:universal-Gram-KL}.

\subsection{Moment and conditional expectation bounds}\label{app:cusp-local}

Recall Lemma~\ref{lem:localization}, which reduces the continuous
cusp proof to the localized model. The same comparison preserves
the spectral and moment estimates used here. Orthonormality and the
Hilbert--Schmidt bound give
\begin{align*}
 |\gamma_{q,d}-\widetilde\gamma_{q,d}|
 &\le\|\kappa_d-\widetilde\kappa_d\|_2,\quad |v_d-\widetilde v_d|
 +\|T_{\kappa_d}-T_{\widetilde\kappa_d}\|_{\mathrm{op}}
 \le C\|\kappa_d-\widetilde\kappa_d\|_2,\\
 \|H_d-\widetilde H_d\|_\infty
 &\le(\|\kappa_d\|_2+\|\widetilde\kappa_d\|_2)
       \|\kappa_d-\widetilde\kappa_d\|_2.
\end{align*}
For the last line, expand the two convolutions and use
Cauchy--Schwarz; rotational invariance makes every row have the
same $L^2$ norm. The errors are exponentially small by
\eqref{eq:cusp-localized-KL} in Lemma~\ref{lem:localization}, so the earlier estimates apply
to the localized model. As in Section~\ref{sec:cusp-converse},
we omit tildes throughout the continuous cusp proof and retain
the notation of the main text. Unless a conditioning or another law is
specified, expectations and $L^p$ norms below use independent uniform
roots; for a zonal profile $f$, $\|f\|_p=\|f(T_d)\|_p$.

Recall $v_d=\E[\kappa_d(T_d)^2]$ from
\eqref{eq:harmonic-expansion}, and let $e_d,o_d$ be the even and
odd parts defined in Appendix~\ref{app:cusp-spectrum}.
By~\eqref{eq:cusp-low-coefficients}, a present even component has
$|\gamma_{2,d}|\asymp d^{-\alpha/2}$ and a present odd component
has $|\gamma_{1,d}|\asymp d^{-\beta/2}$.
A missing component and its corresponding coefficient are exactly
zero after localization. The local power bounds and
$\delta_d=d^{-11/24}$ therefore give, for a fixed $B$,
\begin{align}
 \|\kappa_d\|_\infty^2
 &\lesssim v_d^{11/12},\notag\\
 |e_d(y/\sqrt d)|&\le C|\gamma_{2,d}|(1+|y|)^B,\quad
 |o_d(y/\sqrt d)|\le C|\gamma_{1,d}|(1+|y|)^B.
 \label{eq:cusp-localized-envelope}
\end{align}
Here Lemma~\ref{gen:lem:gap-transfer} gives
$v_d\asymp d^{-\alpha}$, $d^{-\beta}$, or
$d^{-\min\{\alpha,\beta\}}$ in the pure even, pure odd, or mixed case,
respectively. Centering changes the supremum bound by at most a factor of two.
The even part is centered under the exact spherical marginal.
Every fixed amplification $p_d+s_db_*\kappa_d$ therefore stays
in $[c/2,1-c/2]$ for all sufficiently large $d$.

We will use the following consequences of
Lemma~\ref{gen:lem:gap-transfer} and
Proposition~\ref{gen:prop:spectral-scales}:
\begin{equation}
 v_d\asymp\gamma_{2,d}^2+\gamma_{1,d}^2,\quad
 \tau_{4,d}\asymp\gamma_{2,d}^4d^{-2}+\gamma_{1,d}^4d^{-1},\quad
 v_d/d\lesssim\sqrt{\tau_{4,d}}.
 \label{eq:cusp-app-scales}
\end{equation}
Outside the critical mixed case, recall the exponent $s$ from
\eqref{eq:cusp-dimension}. Then
\begin{equation}
 \tau_{3,d}^2\asymp d^{-3/s},\quad
 \sqrt{\tau_{4,d}}\lesssim d^{-1/s-1/6},\quad
 d^{1/3-1/s}\lesssim v_d\lesssim d^{2/3-1/s},\quad 1/s\le5/3.
 \label{eq:cusp-app-ordinary-scales}
\end{equation}
For a critical mixed link, the upper bounds in
\eqref{eq:cusp-app-ordinary-scales} still hold with
$1/s=\alpha+2/3$, except that
$\tau_{3,d}^2\lesssim d^{-3/s}$ may be a strict upper bound.
Thus the estimates at the triangle scale also apply to this case.
On $\beta=\alpha+1/3$, the harmonic sum~\eqref{eq:trace-formula}
instead gives
\begin{align}
 v_d&\asymp d^{-\alpha},\quad
 \sqrt{\tau_{4,d}}\asymp d^{-\alpha-5/6},\notag\\
 \|T_{\kappa_d}\|_{\mathrm{op}}^2&\asymp d^{-\alpha-4/3},\quad
 |\tau_{5,d}|\lesssim d^{-5\alpha/2-7/3},\quad
 \tau_{8,d}\asymp d^{-4\alpha-13/3}.
 \label{eq:cusp-app-critical-scales}
\end{align}
For the last two bounds, separate degrees one and two in
\eqref{eq:trace-formula}. Parseval and $N_{q,d}\ge cd^3$ for $q\ge3$
give, for $\ell=5,8$,
\[
 \sum_{q\ge3}|\gamma_{q,d}|^\ell N_{q,d}^{1-\ell/2}
 \le Cd^{-3(\ell/2-1)}v_d^{\ell/2}.
\]
Together with $|\gamma_{1,d}|\asymp d^{-\alpha/2-1/6}$ and
$|\gamma_{2,d}|\asymp d^{-\alpha/2}$, this proves the asserted scales.
We retain the actual signed $\tau_{3,d}$ on this relation.
Recall also from Lemma~\ref{lem:contracted} that
$\|H_d(T_d)\|_p\le Cp\sqrt{\tau_{4,d}}$ for $p\ge2$ and
$\|H_d(T_d)\|_4\lesssim\sqrt{\tau_{4,d}}$.

Recall the interaction kernel $h_{m,d}$ from~\eqref{eq:star-kernel}.
We bound its moments by expanding the joint density of the
projections onto its roots. Fix an integer $m\ge2$ and $1\le p<\infty$.
For four deterministic roots, let
$S=\sum_{i<j}|\langle x_i,x_j\rangle|$.

\begin{lemma}[Root moments]\label{lem:root-moments}
Under~\eqref{eq:cusp-localized-envelope}, for sufficiently large $d$,
\begin{equation}
 \|h_{m,d}\|_p\le C_{\ref{lem:root-moments}}(m,p)
 \sum_{\substack{0\le j\le m\\j\ \mathrm{even}}}
 |\gamma_{2,d}|^{m-j}|\gamma_{1,d}|^j d^{-m/2+j/4}.
 \label{eq:cusp-app-root-fixed}
\end{equation}
For any four deterministic roots,
\begin{equation}
 |h_{4,d}|\le C\big[\gamma_{1,d}^4S^2+\gamma_{2,d}^2\gamma_{1,d}^2(S^3+S/d)
                    +\gamma_{2,d}^4(S^4+S^2/d+d^{-2})\big].
 \label{eq:cusp-app-root-four}
\end{equation}
\end{lemma}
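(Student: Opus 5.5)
Fix the roots $x_1,\ldots,x_m$. The integral over $Z\sim\sigma_d$ depends only on the projections $Y_i=\sqrt d\langle x_i,Z\rangle$. The plan is to write $h_{m,d}(x)=\E_Z[\prod_i(e_d+o_d)(Y_i/\sqrt d)]$ and expand each factor into its even and odd parts. This gives a sum over subsets $J\subseteq[m]$ of odd factors:
\[
 h_{m,d}=\sum_{J\subseteq[m]}\E_Z\Big[\prod_{i\notin J}e_d(Y_i/\sqrt d)\prod_{i\in J}o_d(Y_i/\sqrt d)\Big].
\]
The map $Z\mapsto-Z$ kills every term with $|J|$ odd, which is the source of the restriction to even $j=|J|$ in~\eqref{eq:cusp-app-root-fixed}.

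\textbf{Step 1: density expansion.} For a fixed term, compare the joint law of $(Y_1,\ldots,Y_m)$ with the Gaussian law $N(0,\mathrm{Gram})$. When the roots are nearly orthogonal, with off-diagonal entries $T_{ij}$ of size $O(d^{-1/2})$ for independent uniform roots, expand that law around $N(0,I_m)$. The first correction is $\sum_{i<j}T_{ij}Y_iY_j$; higher corrections are products of Gram entries times Hermite polynomials, plus the $O(1/d)$ sphere-versus-Gaussian term from~\eqref{gen:eq:weighted-gaussian}.

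\textbf{Step 2: use centering and parity.} Because $\kappa_d$ is centered under the exact marginal, each factor must be matched to at least one Hermite factor of degree $\ge1$. An even factor needs even degree $\ge2$, contributing $|\gamma_{2,d}|$. An odd factor needs degree $\ge1$, contributing $|\gamma_{1,d}|$. The pointwise envelopes~\eqref{eq:cusp-localized-envelope} bound each factor's projection onto a fixed Hermite polynomial by the corresponding $\gamma$, times polynomial moments. Each Gram entry $T_{ij}$ raises the degree of roots $i$ and $j$ by one. So a monomial $\prod T_{ij}^{k_{ij}}$ is needed with degree $\ge2$ at even roots and $\ge1$ at odd roots. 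The minimal total edge degree is therefore $(2(m-j)+j)/2=m-j/2$.

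\textbf{Step 3: integrate over roots.} Under independent uniform roots, $\|\prod T_{ij}^{k_{ij}}\|_p\lesssim d^{-\sum k_{ij}/2}$ by~\eqref{eq:kernel-bridge-sphere-tail}. This gives $|\gamma_{2,d}|^{m-j}|\gamma_{1,d}|^jd^{-(m-j/2)/2}=|\gamma_{2,d}|^{m-j}|\gamma_{1,d}|^jd^{-m/2+j/4}$, which is~\eqref{eq:cusp-app-root-fixed}. Terms of higher degree, and the $1/d$ sphere corrections (each worth the same as two Gram powers), are of smaller or equal order.

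For~\eqref{eq:cusp-app-root-four} with deterministic roots, keep the same bookkeeping but bound each Gram monomial by a power of $S$. Each $1/d$ correction can replace two degrees. Four odd roots need edge degree $2$, giving $\gamma_{1,d}^4S^2$. Two even and two odd roots need degree $3$, giving $S^3$, or degree $1$ plus one $1/d$ correction, giving $S/d$. Four even roots need degree $4$, giving $S^4$, $S^2/d$, or $d^{-2}$.

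\textbf{Main obstacle.} The main obstacle is making the expansion rigorous with remainders, because the profiles are non-smooth (cusps), so we cannot Taylor expand the factors themselves. Only the density can be expanded, which should be smooth in the Gram entries. The plan is to do the following:
\begin{enumerate}
 \item Expand the exact joint density of projections of the uniform $Z$ onto $\mathrm{span}(x_1,\ldots,x_m)$, which is $\propto(1-y^\top\mathrm{Gram}^{-1}y/d)^{(d-m-2)/2}$, to a fixed order in the Gram entries and $1/d$, on the event $\max|T_{ij}|\le d^{-11/24}$ and $|y|\le d^{1/8}$.
 \item Control the complementary event with the exponential tails.
 \item Bound the remainder by $\|\kappa_d\|_\infty^m$ times its probability, using $\|\kappa_d\|_\infty^2\lesssim v_d^{11/12}$.
\end{enumerate}
The remainder is then absorbed once the expansion order is taken larger than $m$.
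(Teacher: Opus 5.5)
Your argument for \eqref{eq:cusp-app-root-fixed} is the paper's. You expand the density of the projections in the Gram entries and $1/d$, use centering and parity to force degree at least two at even roots and at least one at odd roots, and then take moments of the Gram monomials. Your Gaussian reference is a harmless variant of the paper's expansion against the exact product marginal $\prod_i f_d(y_i)$. Since $e_d$ is centered under $f_d$, its Gaussian degree-zero integral is $O(|\gamma_{2,d}|/d)$, which is cheaper than degree two.

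The gap is in \eqref{eq:cusp-app-root-four}. Your plan for rigor does not transfer to arbitrary deterministic roots. That plan restricts to $\max|T_{ij}|\le d^{-11/24}$, discards the complement by exponential tails, and expands jointly in $F$ and $1/d$.
\begin{itemize}
\item There is no probability with which to discard configurations with $\|F\|_{\op}>1/2$, where the projection density need not even exist.
\item A joint expansion leaves a remainder of order $d^{-J-1}$ times the profile sizes, with no factor of $S$. In the pure odd case, $\gamma_{2,d}=0$ after localization, so the claimed bound is $C\gamma_{1,d}^4S^2$. That bound vanishes for exactly orthogonal roots, so such a remainder cannot be absorbed.
\end{itemize}

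What is missing is the paper's argument at fixed $d$. Write $M_j(F)$ for the parity class with $j$ odd profiles, and Taylor expand in $F$ alone at fixed $d$.
\begin{itemize}
\item For orthogonal roots, the reflections $Z\mapsto Z-2\langle x_i,Z\rangle x_i$ preserve $\sigma_d$. Each $F$-derivative at $F=0$ multiplies the radial density by a polynomial whose parity in $y_i$ equals the number of differentiated entries touching root $i$.
\item Hence a term survives only if every odd root is touched an odd number of times. This gives $M_4(0)=DM_4(0)=0$ and $M_2(0)=0$ exactly.
\item The same symmetry shows that the $1/d$ corrections can never supply the odd degree at an odd root. Otherwise your count would also admit a spurious $\gamma_{1,d}^4/d$ term.
\item The density expansion is then needed only for two things. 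It shows that the surviving derivatives at $F=0$ are $O(d^{-1})$, with $M_0(0)=O(d^{-2})$. It also shows that the next derivative is bounded uniformly on $\|F\|_{\op}\le1/2$.
\item When $\|F\|_{\op}>1/2$, one has $S\ge c$. H\"older with the envelopes then bounds each parity class by $|\gamma_{2,d}|^{4-j}|\gamma_{1,d}|^j$, which the right side dominates.
\end{itemize}
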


\begin{proof}
Recall the density $f_d$ of $\sqrt dT_d$ from Lemma~\ref{gen:lem:gap-transfer}.
For nonzero components, define
$g_e(y)=e_d(y/\sqrt d)/|\gamma_{2,d}|$ and
$g_o(y)=o_d(y/\sqrt d)/|\gamma_{1,d}|$.
Set the corresponding profile to zero when a component is missing. By~\eqref{eq:cusp-localized-envelope},
both normalized profiles have uniformly bounded moments of each fixed order.
Centering and parity give
\[
 \int y^r g_e(y)f_d(y)dy=0\quad(r=0\text{ or }r\text{ odd}),\quad
 \int y^r g_o(y)f_d(y)dy=0\quad(r\text{ even}).
\]
For the Gram matrix $R=I+F$ of $m$ roots, the joint density of their scaled projections onto $Z$ is
\begin{equation}
 f_{d,R}(y)=\frac{\Gamma(d/2)}{\Gamma((d-m)/2)(\pi d)^{m/2}\sqrt{\det R}}
 \Big(1-\frac{y^\top R^{-1}y}{d}\Big)_+^{(d-m-2)/2}.
 \label{eq:cusp-app-projection-density}
\end{equation}
For fixed $L,J$ and $\|F\|_{\mathrm{op}}\le1/2$, expand this density against the exact product marginal. For a multi-index $\eta=(\eta_{ij})_{i<j}$, write $|\eta|=\sum_{i<j}\eta_{ij}$ and $F^\eta=\prod_{i<j}F_{ij}^{\eta_{ij}}$. There are polynomials $A_{\eta,h}$ such that
\begin{equation}
 f_{d,I+F}(y)=\prod_i f_d(y_i)
 \sum_{\ell=0}^L\sum_{h=0}^Jd^{-h}\sum_{|\eta|=\ell}F^\eta A_{\eta,h}(y)
 +\mathcal R(y,F),\quad \deg A_{\eta,h}\le2\ell+4h.
 \label{eq:cusp-app-density-expansion}
\end{equation}
For every fixed $b\ge0$, the remainder satisfies
\[
 \int(1+|y|)^b|\mathcal R(y,F)|dy
 \le C\big(\|F\|_{\HS}^{L+1}+d^{-J-1}\big).
\]
Indeed, at Gaussian order an $F$ derivative adds degree at most two,
and a term of order $d^{-h}$ adds degree at most $4h$.
Taylor's formula applies on
$\max_i|y_i|\le d^\epsilon$, with fixed sufficiently small
$\epsilon>0$. The weighted integral outside this set is
$O(\exp(-cd^{2\epsilon}))$ by the spherical tail bound.

A product with an odd number of odd profiles integrates to zero.
For $j$ even, a surviving monomial must have degree at least two
in each even coordinate and at least one in each odd coordinate. Hence
\[
 2\ell+4h\ge2(m-j)+j,\quad
 d^{-h}\|F^\eta\|_p\overset{\mathrm{(a)}}{\le}C d^{-h-\ell/2}
                         \overset{\mathrm{(b)}}{\le}C d^{-m/2+j/4}.
\]
Here $\mathrm{(a)}$ uses H\"older and the fixed spherical moments of
the entries of $F$; $\mathrm{(b)}$ uses the degree condition.
Choose $L,J\ge2m$. Then the integrated remainder is smaller than
the displayed bound. Also,
$\P[\|F\|_{\mathrm{op}}>1/2]\le Ce^{-cd}$ for fixed $m$;
the deterministic H\"older bound and~\eqref{eq:cusp-localized-envelope}
control this event. Summing over the even values of $j$ proves
\eqref{eq:cusp-app-root-fixed}.

For~\eqref{eq:cusp-app-root-four}, expand first in $F$, retaining exact parity at each finite $d$. Let $M_j(F)$ be the normalized integral with $j$ odd profiles, and let $D$ denote differentiation with respect to the matrix argument. The derivatives at $F=0$ satisfy
\[
\begin{array}{c|c|c}
j&\text{constant and lower derivatives}&\text{remainder}\\\hline
4&M_4(0)=DM_4(0)=0&O(S^2)\\
2&M_2(0)=0,\quad DM_2(0),D^2M_2(0)=O(d^{-1})&O(S^3)\\
0&M_0(0)=O(d^{-2}),\quad DM_0(0)=0,&O(S^4)\\
 &D^2M_0(0),D^3M_0(0)=O(d^{-1})&
\end{array}
\]
Coordinate sign changes give the zeros in the table. The degree
condition gives the stated powers of $d^{-1}$ for the other entries.
The next derivative has a uniformly bounded weighted $L^1$ norm
on $\|F\|_{\mathrm{op}}\le1/2$, by the density expansion. Taylor's
formula therefore gives
\[
 |M_4(F)|\le CS^2,\quad
 |M_2(F)|\le C(S^3+S/d),\quad
 |M_0(F)|\le C(S^4+S^2/d+d^{-2}).
\]
Here $S^2/d\le C S/d$ and $S^3/d\le C S^2/d$ on this set.
If $\|F\|_{\mathrm{op}}>1/2$, then $S\ge c$, and H\"older gives
$|M_j(F)|\le C$. Restoring the profile coefficients proves
\eqref{eq:cusp-app-root-four}.
\end{proof}

Two consequences of Lemma~\ref{lem:root-moments} are used repeatedly. With a constant independent of $p\ge1$,
\begin{equation}
 \|h_{3,d}\|_4^2\le C(\gamma_{2,d}^6d^{-3}+\gamma_{2,d}^2\gamma_{1,d}^4d^{-2}),\quad
 \|h_{4,d}\|_p\le C\big(\gamma_{1,d}^4p/d+\gamma_{2,d}^2\gamma_{1,d}^2(p/d)^{3/2}+\gamma_{2,d}^4p^2/d^2\big).
 \label{eq:cusp-app-root-useful}
\end{equation}
The second bound follows from~\eqref{eq:cusp-app-root-four} and
$\|S\|_p\le C\sqrt{p/d}$, using $p\ge1$ to absorb the terms containing $d^{-1}$.
In particular, $h_{3,d}=0$ for a localized pure odd link.
Moreover, four independent copies of the integrated point give
$\|h_{m,d}\|_4^2=(\E[h_{4,d}^m])^{1/2}\le\|h_{4,d}\|_m^{m/2}$.
Substituting~\eqref{eq:cusp-app-root-useful} and using
\eqref{eq:cusp-app-scales} yields
\begin{equation}
 \|h_{m,d}\|_4^2\le(Cm^2)^{m/2}\tau_{4,d}^{m/2};
 \quad\text{on }\beta=\alpha+1/3,\quad
 \|h_{m,d}\|_4^2\le(Cm)^{m/2}\tau_{4,d}^{m/2}\quad(m\le c d^{1/3}).
 \label{eq:cusp-app-root-growing}
\end{equation}
In the critical bound, use $\gamma_{1,d}^2\asymp\gamma_{2,d}^2d^{-1/3}$ in~\eqref{eq:cusp-app-root-useful}; the two smaller terms are at most $Cp\tau_{4,d}$ when $p\le c d^{1/3}$.

For likelihood denominators we also need a bound uniform in the root configuration. The bound applies even when the roots are close to one another.

\begin{lemma}[A uniform envelope]\label{lem:growing-roots}
There are fixed constants $C,B,c>0$ such that, for even
$4\le m\le c\sqrt d$ and arbitrary roots,
\begin{equation}
 |h_{m,d}(x_1,\ldots,x_m)|\le(Cm)^{Bm}v_d^{m/2}
 \Big(\sum_{i<j}\langle x_i,x_j\rangle^2+d^{-1}\Big).
 \label{eq:cusp-app-uniform-root}
\end{equation}
\end{lemma}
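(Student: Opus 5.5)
We bound $h_{m,d}(x_1,\ldots,x_m)=\E_Z[\prod_{i}\kappa_d(\langle x_i,Z\rangle)]$ for arbitrary roots by decomposing each factor as $\kappa_d=e_d+o_d$ and using the localized envelope~\eqref{eq:cusp-localized-envelope}. Write $g(y)=\kappa_d(y/\sqrt d)$, so $|g(y)|\le Cv_d^{1/2}(1+|y|)^B$, where we have used $|\gamma_{1,d}|+|\gamma_{2,d}|\lesssim v_d^{1/2}$. The plan has three steps. First, a crude bound. Second, an orthogonal-case bound, which gives the factor $d^{-1}$. Third, an interpolation in the Gram perturbation $F$ that produces the term $\sum_{i<j}\langle x_i,x_j\rangle^2$.

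\emph{Step 1 (crude bound).} By H\"older over the $m$ factors and the moment bounds $\E|\sqrt d\langle x,Z\rangle|^{q}\le (Cq)^{q/2}$ from~\eqref{eq:kernel-bridge-sphere-tail}, we have
\[
|h_{m,d}|\le\prod_i\|g(\sqrt d\langle x_i,Z\rangle)\|_m\le (Cm)^{Bm}v_d^{m/2}.
\]
This bound is uniform in the roots. It already suffices whenever $\Sigma:=\sum_{i<j}\langle x_i,x_j\rangle^2\ge c_0$ for a fixed constant $c_0$, at the cost of a factor $c_0^{-1}$. So it remains to treat $\Sigma<c_0$, i.e.\ nearly orthonormal roots with Gram matrix $R=I+F$ and $\|F\|_{\HS}^2=2\Sigma$ small.

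\emph{Step 2 (orthogonal roots, $F=0$).} In this case the projections $y_i=\sqrt d\langle x_i,Z\rangle$ have the exact joint density~\eqref{eq:cusp-app-projection-density} with $R=I$. Compare it with $\prod_i f_d(y_i)$. The ratio is a function of $d^{-1}$ and the quantities $|y|^2$, $\sum_i y_i^2$, and so on. Each $g$ is centered under $f_d$, so the product term vanishes. The first correction is of order $d^{-1}$ times a polynomial whose monomials, by centering, must cover every coordinate. Here one must be careful: with $m$ growing, a naive expansion gives $d^{-1}$ times terms mixing only two coordinates, leaving the others uncentered. I would instead write
\[
f_{d,I}(y)/\textstyle\prod_i f_d(y_i)-1
\]
explicitly. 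Since centering kills any term depending on fewer than all $m$ coordinates, an inclusion–exclusion (cumulant-type) expansion leaves only the fully connected part. Its size is $d^{-m/2+1}$-like, which is in any case $\le (Cm)^{Bm}d^{-1}$. The factor $(Cm)^{Bm}$ absorbs the combinatorics and the Gaussian moments. The condition $m\le c\sqrt d$ keeps $(d-m-2)/2$ comparable to $d/2$ and the density ratio controlled on $|y|\le d^{1/4}$; outside that region, tails give $e^{-c\sqrt d}$.

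\emph{Step 3 (general $F$, and the main obstacle).} Write $y=R^{1/2}w$ with $w$ having the orthogonal law, and interpolate $F_t=tF$. We need the derivative in $t$ at $0$ to vanish, so that the $F$-dependence is quadratic. The first derivative is $\sum_{i<j}F_{ij}\E[\partial\text{-term}]$, which involves $\E[g'(y_i)y_j\prod_{k\ne i}g(y_k)]$ or, in density form, $\E[y_iy_j\prod g(y_k)]$. For $m\ge4$ the coordinates $k\notin\{i,j\}$ are still centered factors. Hence this term is $O(d^{-1})$ by the Step~2 argument, and the first-order contribution is at most $\|F\|_{\HS}\,d^{-1}(Cm)^{Bm}\le(\Sigma+d^{-1})(Cm)^{Bm}$. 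The second-order remainder is bounded by $\|F\|_{\HS}^2\sup_t$ of the weighted second derivatives, estimated via Gaussian-type integration by parts and H\"older as in Step~1. This is where the factor $\Sigma$ comes from.

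The main obstacle is keeping all constants of the form $(Cm)^{Bm}$ uniformly over $F$ and $m\le c\sqrt d$. In particular, $g$ is only H\"older, not differentiable, so derivatives must fall on the density $f_{d,R}$ rather than on $g$. I would therefore differentiate the explicit density~\eqref{eq:cusp-app-projection-density} in $F$. Each derivative produces polynomial weights of degree at most $4$ in $y$ together with factors of $d^{-1}$, and these weights are controlled in $L^{m}$ by Gaussian moments.
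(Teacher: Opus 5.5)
Your proposal is correct and follows essentially the paper's own route. The crude H\"older bound handles $\sum_{i<j}\langle x_i,x_j\rangle^2\ge c_0$. Otherwise, a second-order Taylor expansion of $h_{m,d}(I+tF)$, with all derivatives placed on the explicit projection density, gives $h_{m,d}(I)$ and $Dh_{m,d}(I)[F]$ of order $d^{-1}$ because the untouched coordinates are centered. The second derivative is bounded by Cauchy--Schwarz and H\"older, using that each coordinate has marginal $f_d$. One correction to your description: the weights produced by differentiating $\log f_{d,I+tF}$ are rational in $y$, not polynomial, and the paper controls them through the Beta law of $y^\top R_t^{-1}y/d$ rather than through Gaussian moments.

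The one real difference is your Step~2. The paper compares $f_{d,I}$ with the Gaussian density $\varphi_m$, uses $|f_{d,I}/\varphi_m-1|\le C(m^2+|y|^4)/d$, and applies a single Cauchy--Schwarz. Your inclusion--exclusion over fully connected parts, together with its unproved $d^{-m/2+1}$ size, is therefore unnecessary, since only $d^{-1}$ is needed. The same one-line Cauchy--Schwarz against $\|f_{d,I}/\prod_i f_d(y_i)-1\|_{L^2}$ already works with your exact-marginal reference measure.
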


\begin{proof}
Let $R=I+F$ be the Gram matrix of the roots, so $F_{ii}=0$.
For positive definite $R$, write $h_{m,d}(R)$ for the integral
against the density $f_{d,R}$ in~\eqref{eq:cusp-app-projection-density}.
Let $\varphi_m$ be the standard Gaussian density and
$L_{d,m}=f_{d,I}/\varphi_m$. Extend each profile by zero outside
$[-\sqrt d,\sqrt d]$. Direct expansion gives
\begin{align*}
 L_{d,m}(y)
 &=c_{d,m}\exp(|y|^2/2)(1-|y|^2/d)_+^{(d-m-2)/2},\quad
 |\log c_{d,m}|\le Cm^2/d,\\
 |L_{d,m}(y)-1|
 &\le C(m^2+|y|^4)/d,\\
 \Big|\frac{d-m-2}{d-|y|^2}L_{d,m}(y)-1\Big|
 &\le C(m^2+|y|^4)/d.
\end{align*}
In the last line the product is extended by zero outside $|y|^2<d$.
For $|y|^2\le d/2$, expand the logarithms. For $|y|^2>d/2$,
both multipliers are bounded by $C$, as follows by maximizing their
radial factors.

Exact spherical centering and the case $m=1$ give
\[
 \Big|\int\kappa_d(y/\sqrt d)\varphi_1(y)dy\Big|
 \le C\sqrt{v_d}/d,\quad
 \int|y\kappa_d(y/\sqrt d)|\varphi_1(y)dy\le C\sqrt{v_d}.
\]
For the Gaussian derivative, the summand indexed by $i<j$ is
\[
 F_{ij}\prod_{a\in\{i,j\}}\int y_a\kappa_d(y_a/\sqrt d)\varphi_1(y_a)dy_a
       \prod_{a\notin\{i,j\}}\int\kappa_d(y_a/\sqrt d)\varphi_1(y_a)dy_a.
\]
Since $m-2\ge2$, its absolute value is at most
$C^m|F_{ij}|v_d^{m/2}d^{-(m-2)}$.
The two multiplier bounds control the spherical correction by
$(Cm)^{Bm}v_d^{m/2}/d$. Thus
\begin{equation}
 |h_{m,d}(I)|+|Dh_{m,d}(I)[F]|
 \le(Cm)^{Bm}v_d^{m/2}/d\quad(\|F\|_{\mathrm{op}}\le1/2).
 \label{eq:cusp-app-uniform-linear}
\end{equation}
For the second derivative, let $R_t=I+tF$,
$q_t=y^\top R_t^{-1}y$, and $\ell_t=\log f_{d,R_t}$. Then
\begin{align*}
 q_t'&=-y^\top R_t^{-1}FR_t^{-1}y,\quad
 q_t''=2y^\top R_t^{-1}FR_t^{-1}FR_t^{-1}y,\\
 \ell_t'&=-\tfrac12\operatorname{Tr}(R_t^{-1}F)
       -\frac{d-m-2}{2}\frac{q_t'}{d-q_t},\\
 \ell_t''&=\tfrac12\operatorname{Tr}((R_t^{-1}F)^2)
       -\frac{d-m-2}{2}
       \Big(\frac{q_t''}{d-q_t}+\frac{(q_t')^2}{(d-q_t)^2}\Big).
\end{align*}
Under $f_{d,R_t}$, $q_t/d\sim\mathrm{Beta}(m/2,(d-m)/2)$.
Since $|q_t'|\le C\|F\|_{\HS}q_t$ and
$|q_t''|\le C\|F\|_{\HS}^2q_t$, the beta moments give
$\|\ell_t'\|_4\le Cm\|F\|_{\HS}$ and
$\|\ell_t''\|_2\le Cm^2\|F\|_{\HS}^2$.
Each coordinate has marginal density $f_d$, because $(R_t)_{ii}=1$.
Hence
\[
 |D^2h_{m,d}(R_t)[F,F]|
 \overset{\mathrm{(a)}}{\le}\Big\|\prod_i\kappa_d(y_i/\sqrt d)\Big\|_2
       \| (\ell_t')^2+\ell_t''\|_2
 \overset{\mathrm{(b)}}{\le}(Cm)^{Bm}v_d^{m/2}\|F\|_{\HS}^2.
\]
Here $\mathrm{(a)}$ is Cauchy--Schwarz; $\mathrm{(b)}$ uses the derivative
bounds and H\"older with the polynomial envelope in
\eqref{eq:cusp-localized-envelope}. The density vanishes to order
greater than two at its boundary, so differentiation under the integral
is valid. Taylor's integral formula and~\eqref{eq:cusp-app-uniform-linear}
now prove~\eqref{eq:cusp-app-uniform-root} when $\|F\|_{\mathrm{op}}\le1/2$.
Otherwise, $\sum_{i<j}R_{ij}^2\ge1/8$, and H\"older gives the same bound.
\end{proof}
\noindent\textbf{Posterior prediction.}\phantomsection\label{app:cusp-queries}
We next bound the squared conditional means in~\eqref{eq:4-2}.
We need the full graph $A^{(r)}$ and, for two roots, the same graph
with edge $12$ withheld. Write $\mathcal A_{r,K_k}=A^{(r)}$ and
$\mathcal A_{r,\varnothing}=A^{(r)}\setminus\{12\}$ for these
observations. Here $K_k$ is the complete graph on the queried roots $[k]$.
We take $E=K_k$, or $E=\varnothing$ when $k=2$.
Fix $C_0>0$, let $2\le k\le\min(r,C_0\log d)$ with $r\le n$,
and let $f=f(X_1,\ldots,X_k)$ be bounded and canonical in each root.
For an edge $e=\{i,j\}$, write $T_e=T_{ij}$.
For a fixed factor $b_*>1$, define
\[
 A(t)=\exp(nb_*^2H_d(t)),\quad
 f_F=f\prod_{e\in F}b_*\kappa_d(T_e),\quad F\subseteq K_k.
\]
Here $T_A$ is the integral operator with profile $A$,
$T_A^{\otimes k}$ applies it separately to the $k$ roots, and the
inner product is in $L^2(\sigma_d^{\otimes k})$.
The amplification by $b_*$ is used only to bound a likelihood
ratio. It is valid for large $d$ by~\eqref{eq:cusp-localized-envelope}.

\begin{lemma}[Conditional expectations]
\label{lem:cusp-query}
There are constants $b_*>1$ and $C,c,\delta>0$, depending only on
$K,C_0$, such that, if $n\sqrt{\tau_{4,d}}\le c$ and $d$ is sufficiently large,
\begin{align}
 \E_P\big[\E_P[f\mid\mathcal A_{r,E}]^2\big]
 &\le C^k\Big(k^2n\tau_{4,d}\|f\|_4^2+
       \sum_{F\subseteq E}\langle f_F,T_A^{\otimes k}f_F\rangle\Big)
       +\|f\|_\infty^2\exp(-cd^\delta),
 \label{eq:cusp-query-tensor}\\
 \E_P\big[\E_P[f\mid\mathcal A_{r,E}]^2\big]
 &\le C^k\|f\|_4^2
       \Big(k^2n\tau_{4,d}+(Ck)^k(v_d^{11/24}+Cn\sqrt{\tau_{4,d}/d})^k\Big)+\|f\|_\infty^2\exp(-cd^\delta).
 \label{eq:cusp-query-rough}
\end{align}
The operator norm of $T_A$ is at most $C$ on constants,
$Cn\sqrt{\tau_{4,d}/d}$ on centered functions, and
$Cn\sqrt{\tau_{4,d}}/d$ on centered even functions.
\end{lemma}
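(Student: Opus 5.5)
The plan is to reduce the posterior of the $k$ roots to a tilted prior by conditioning on everything else, and then to average the squared numerator under the null. \emph{First step.} By conditional Jensen, adding information can only increase the second moment of a conditional mean. We therefore enlarge $\mathcal A_{r,E}$ to $\mathcal A_{r,E}\vee\sigma(X_{k+1},\ldots,X_r)$. Given the non-root points, the roots are a priori i.i.d.\ $\sigma_d$. The observed edges split into three groups: edges among non-roots, which do not involve the roots; root to non-root edges; and internal edges in $E$. Hence
\[
 \E_P[f\mid \mathcal A_{r,E},X_{>k}]=\frac{N}{D},\quad
 N=\E_{\mathrm{roots}}\Big[f\prod_{i\le k}\prod_{j>k}(1+\kappa_d(T_{ij})Y_{ij})\prod_{e\in E}(1+\kappa_d(T_e)Y_e)\Big],
\]
with $D$ the same expression with $f$ replaced by $1$. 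It then suffices to bound $\E_P[N^2/D^2]$.

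\emph{Second step: removing the denominator and changing measure.} I will use H\"older to split $\E_P[N^2D^{-2}]\le \E_P[|N|^{2a}]^{1/a}\,\E_P[D^{-2a'}]^{1/a'}$, with $a$ close to one. The negative moments of $D$ will be controlled by reverse hypercontractivity, as announced at the end of Section~\ref{sec:cavity}. $D$ is an average of a product of factors that stay in $[c/2,1-c/2]$ relative to $p_d$, and such averages have bounded negative moments off an event of probability $\exp(-cd^\delta)$. That exceptional event is responsible for the error term $\|f\|_\infty^2\exp(-cd^\delta)$, and on it I will simply use $|N/D|\le\|f\|_\infty$. Next I will pass from $P$ to $Q$ on the observed edges incident to the roots. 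The $P$-likelihood of these edges given all latent points is itself a product of factors $1+\kappa_d Y$. Absorbing it together with the $2a$-th power into a single amplified weight is what produces the fixed factor $b_*>1$. This amplified weight is still a valid probability by~\eqref{eq:cusp-localized-envelope}. After these reductions I have to bound an expression of the form $\E_Q[N_{b_*}^2]$ averaged over the prior of $X_{>k}$.

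\emph{Third step: computing $\E_Q[N_{b_*}^2]$.} Under $Q$ the standardized edges are independent and orthonormal, so squaring $N$ introduces two replica copies of the roots, $x$ and $x'$. Averaging over the root to non-root edges gives $\prod_{i\le k}\prod_{j>k}(1+b_*^2\kappa_d(x_i,X_j)\kappa_d(x_i',X_j))\le\exp\big(b_*^2\sum_j\kappa_d(x_i,X_j)\kappa_d(x_i',X_j)\big)$. The internal edges expand into $\sum_{F\subseteq E}f_F(x)f_F(x')$. For each $i$ I split the exponent into its mean $nb_*^2H_d(\langle x_i,x_i'\rangle)$ and a fluctuation. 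The mean part gives exactly $\prod_iA(\langle x_i,x_i'\rangle)$, hence the term $\langle f_F,T_A^{\otimes k}f_F\rangle$. For the fluctuation, $\sum_j(\kappa\kappa-H_d)$ is a sum of $n$ centered terms of variance $\lesssim\tau_{4,d}$. Since $n\sqrt{\tau_{4,d}}\le c$, its exponential moments are $1+O(n\tau_{4,d})$ per root. Expanding to first order and applying Cauchy--Schwarz with $f\in L^4$ yields $C^kk^2n\tau_{4,d}\|f\|_4^2$, where $k^2$ counts the pairs of roots carrying the correction.

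\emph{Operator norms and the rough bound.} By the Funk--Hecke formula, $T_A$ acts on harmonics of degree $q$ as a scalar, and $A-1=nb_*^2H_d+O((nH_d)^2)$ with $H_d=\sum_q\gamma_q^2P_q$. On degree one this gives $n\gamma_{1,d}^2/d\lesssim n\sqrt{\tau_{4,d}/d}$, and on degree two $n\gamma_{2,d}^2/N_{2,d}\lesssim n\sqrt{\tau_{4,d}}/d$, both by~\eqref{eq:cusp-app-scales}. On constants $T_A$ is bounded by $\E[A]\le C$. For~\eqref{eq:cusp-query-rough}, each root of $f_F$ decomposes into a centered part, on which $T_A$ gains $Cn\sqrt{\tau_{4,d}/d}$, and a constant part, which must be compensated by an internal edge factor $|\kappa_d|\le Cv_d^{11/24}$. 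This follows from canonicity of $f$ and $\|\kappa_d\|_\infty^2\lesssim v_d^{11/12}$. Summing over $F$ and over these choices gives the factor $(Ck)^k$. I expect the hardest step to be the denominator/change-of-measure step, namely obtaining uniform negative moments of $D$ with only $C^k$ losses, for $k$ up to $C_0\log d$. The first thing I would try is reverse hypercontractivity for the product measure on roots and edges, combined with the localization $|T|\le\delta_d$, which keeps every factor bounded away from zero.
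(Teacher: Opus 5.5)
Your first step (conditional Jensen on $X_{>k}$) and your replica expansion over $F\subseteq E$ match the paper. The derivation of \eqref{eq:cusp-query-tensor} in your third step, however, breaks down. After averaging the root-to-design edges under $Q$ you must evaluate $\E_{x,x'}\big[\sum_{F}f_F(x)f_F(x')\prod_{i,j}\big(1+b_*^2\kappa_d(\langle x_i,X_j\rangle)\kappa_d(\langle x_i',X_j\rangle)\big)\big]$. Since $f_F(x)f_F(x')$ takes both signs, you cannot replace the product by the pointwise larger $\exp(\cdot)$. For the same reason $\exp(sb_*^2H_d)\le\exp(nb_*^2H_d)$ fails where $H_d<0$.

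Your fluctuation estimate is also off. For fixed roots, $\kappa_d(\langle x,Z\rangle)\kappa_d(\langle x',Z\rangle)-H_d(\langle x,x'\rangle)$ has variance of order $v_d^2$, not $\tau_{4,d}$; its root average is $v_d^2-\tau_{4,d}$, because $\E_x[\kappa_d(\langle x,Z\rangle)^2]=v_d$ for every $Z$. Exponentiating therefore costs roughly $\exp(cknv_d^2)$ when $r\asymp n$. The hypothesis $n\sqrt{\tau_{4,d}}\le c$ allows $nv_d^2\to\infty$: for a pure even cusp, $\sqrt{\tau_{4,d}}\asymp d^{-\alpha-1}$ and $v_d\asymp d^{-\alpha}$, so $n\asymp d^{\alpha+1}$ gives $nv_d^2\asymp d^{1-\alpha}$.

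The missing idea is to average the design exactly and never exponentiate. Each design point enters linearly for each root pair, so $\E_Z\prod_i\big(1+b_*^2\kappa_d(\langle X_i,Z\rangle)\kappa_d(\langle X_i',Z\rangle)\big)=1+b_*^2\sum_iH_d(\langle X_i,X_i'\rangle)+J_k$. Here $J_k$ collects the terms $h_{2|I|,d}$ with $|I|\ge2$, and $\|J_k\|_4\le Ck^2\tau_{4,d}$ by \eqref{eq:cusp-app-root-growing}. Deleting $J_k$ from the $s$-th power (difference of powers, H\"older with exponents $2,4,4$, exponential moments of $n|H_d|$ and $n|J_k|$) is what produces $C^kk^2n\tau_{4,d}\|f\|_4^2$. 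The remaining power is expanded multinomially. Each integrated coefficient $\langle f_F,\bigotimes_iT_{H_d^{a_i}}f_F\rangle$ is nonnegative, because pointwise powers of the positive semidefinite kernel $H_d$ are positive semidefinite. Only this positivity justifies $(s)_{|\mathbf a|}\le n^{|\mathbf a|}$ and the resummation into $\langle f_F,T_A^{\otimes k}f_F\rangle$.

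Your denominator step is asserted rather than proved, and it is where $\|f\|_\infty^2\exp(-cd^\delta)$ actually comes from. Single factors never approach zero, so localization is beside the point. The issue is that $D$ averages products over about $n$ incident edges, so its negative moments depend on the design $X_{>k}$. Your H\"older split under $P$ also does not reduce to a two-replica quantity. A non-integer power $|N|^{2a}$ cannot be ``absorbed'' into an amplified weight without a hypercontractive inequality, and even for $a=1$ the extra $P$-likelihood yields a three-replica expression.

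The paper instead writes $\E_P[(N/L)^2]=\E_Q[N^2/L]$ and represents $N=\mathcal T_\theta\hat N$, $L=\mathcal T_\theta\hat L$, where $\mathcal T_\theta$ is the retain-or-resample operator and $b_*=1/\theta$. Forward hypercontractivity gives $\|\mathcal T_\theta\hat N\|_4\le\|\hat N\|_2$, and reverse hypercontractivity plus H\"older gives $\E_Q[L^{-2}]\le(\E_Q\hat L^2)^2$. Together these yield $\E_Q[N^2/L]\le\E_Q[\hat N^2]\,\E_Q[\hat L^2]$. The multiplier is at most $(1+Cv_d^{11/12})^{|E|}(\E_Q[L_Z^2])^k$, with $L_Z$ the single-root design likelihood, and the exceptional event is $\{\E_Q[L_Z^2]>M\}$. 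Its probability is at most $\exp(-cd^\delta)$ because $\E_Z[(\E_Q[L_Z^2])^p]\le\exp(Cpn\sqrt{\tau_{4,d}}+o(1))$ for $p\approx d^\delta$. Proving that moment bound requires $2p$ replicas, the uniform envelope of Lemma~\ref{lem:growing-roots} for the multi-pair terms, and exponential moments of $T_d^2$ and $n|H_d|$.

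Finally, your eigenvalue computation treats only the first-order term of $A-1$. The paper covers all powers and all degrees $q\ge1$ at once via $|\lambda_q(T_g)|\le N_{q,d}^{-1/2}\|g\|_2$ and $\|H_d^j\|_2\le(Cj\sqrt{\tau_{4,d}})^j$.
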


\begin{proof}
We first remove likelihood denominators. For this comparison only,
let $Q$ be a product of Bernoulli laws whose parameters are bounded away from zero and one,
and let $\mathcal T_\theta$ independently retain each coordinate
with probability $\theta$ and otherwise resample it from its marginal.
For sufficiently small fixed $\theta$,
$\|\mathcal T_\theta N\|_4\le\|N\|_2$. To see the bound on one
coordinate, write $N=a+bY$, where $\E Y=0$, $\E Y^2=1$, and
$|Y|\le M$. Expanding the fourth power and using
$2|a||b|^3\le a^2b^2+b^4$ gives
\[
 \E[(a+\theta bY)^4]
 \le a^4+(6\theta^2+2M\theta^3)a^2b^2
          +(2M\theta^3+M^2\theta^4)b^4
 \le(a^2+b^2)^2.
\]
Applying this bound one coordinate at a time and using Minkowski's
inequality proves the product bound.
For a positive $L$ with $\E_Q L=1$,
\cite[Corollary~1.11]{mossel2011reverse}, applied with exponents
$1/2$ and $-2$, gives
$\|\mathcal T_\theta L\|_{-2}\ge\|L\|_{1/2}$ when
$\theta\le1/6$. Here $\|U\|_q=(\E_Q[U^q])^{1/q}$ for positive
$U$ and $q\ne0$. Moreover, H\"older's inequality gives
\[
 1\le(\E_Q L^{1/2})^{2/3}(\E_Q L^2)^{1/3},\quad
 \|L\|_{1/2}\ge(\E_Q L^2)^{-1}.
\]
Thus $\E_Q[(\mathcal T_\theta L)^{-2}]\le(\E_Q L^2)^2$.
Choose $b_*=1/\theta$ satisfying both inequalities.
For the observed coordinates, let $L$ be the likelihood relative to $Q$
and let $N$ be the numerator of the posterior mean of $f$, so that
this mean is $N/L$. The product expansion
shows that the original likelihood and numerator are
$L=\mathcal T_\theta\widehat L$ and
$N=\mathcal T_\theta\widehat N$, respectively, when hats denote
amplification by $b_*$. Consequently
\begin{equation}
 \E_Q[N^2/L]
 \overset{\mathrm{(a)}}{\le}\big(\E_Q[N^4]\big)^{1/2}\big(\E_Q[L^{-2}]\big)^{1/2}
 \overset{\mathrm{(b)}}{\le}\E_Q[\widehat N^2]\E_Q[\widehat L^2].
 \label{eq:cusp-app-uq-denominator}
\end{equation}
Here $\mathrm{(a)}$ is Cauchy--Schwarz and $\mathrm{(b)}$ uses the two
bounds for $\mathcal T_\theta$. The same calculation applies conditionally on an observed subgraph,
since the conditional likelihood still has mean one under $Q$.

We now take $Q=\Bern(p_d)^{\otimes s}$ on $B\in\{0,1\}^s$ and
write $Y_i(B)=(B_i-p_d)/s_d$.
Let $Z_1,\ldots,Z_s$ and $X$ be independent with law $\sigma_d$,
where $s\le n$, and define
\[
 L_Z(B)=\E_X\Big[\prod_{i=1}^s
       (1+b_*\kappa_d(\langle X,Z_i\rangle)Y_i(B))\Big].
\]
For integer $p\ge2$, let $(X_i,X_i')_{i\le p}$ be independent
pairs with law $\sigma_d\otimes\sigma_d$, and define
\[
 J_p=\sum_{\substack{I\subseteq[p]\\|I|\ge2}}
       b_*^{2|I|}h_{2|I|,d}((X_i,X_i')_{i\in I}),\quad
 S_p=\sum_{i<j}\langle U_i,U_j\rangle^2,
\]
where $U_1,\ldots,U_{2p}$ list these roots. The term $J_p$ contains
those factors involving at least two root pairs. The replica identity gives
\[
 \E_Z\big[(\E_Q[L_Z^2])^p\big]=\E\Big[
       \big(1+\sum_{i=1}^pb_*^2H_d(\langle X_i,X_i'\rangle)+J_p\big)^s\Big].
\]
For a fixed sufficiently large integer $D_0$,
\eqref{eq:cusp-app-uniform-root} in Lemma~\ref{lem:growing-roots} gives
\[
 |J_p|\le Cp^{D_0}v_d^2(S_p+d^{-1})
 \quad\text{if }p^{D_0+1}v_d=o(1)\text{ and }2p\le c\sqrt d.
\]
Indeed, the coefficient of $S_p+d^{-1}$ is at most
$\sum_{j=2}^p(Cp^{2B+1}v_d)^j\le Cp^{4B+2}v_d^2$.
Increasing $D_0$ absorbs the fixed factor $b_*$.
For $0\le t\le d/4$, the spherical even moments give
\[
 \E\exp(tT_d^2)
 \le\sum_{j\ge0}\frac{t^j(2j-1)!!}{j!d^j}
 =(1-2t/d)^{-1/2}\le\exp(2t/d).
\]
H\"older over the $\binom{2p}{2}$ overlaps gives
\[
 \E\exp(Cnp^{D_0}v_d^2S_p)
 \le\exp(Cnp^{D_0+2}v_d^2/d),
 \quad Cnp^{D_0+2}v_d^2\le d/4.
\]
Also, $\|H_d\|_q\le Cq\sqrt{\tau_{4,d}}$ implies
$\E\exp(Cn|H_d|)\le\exp(Cn\sqrt{\tau_{4,d}})$ when
$n\sqrt{\tau_{4,d}}$ is small. Since the $p$ designated pairs are
independent, Cauchy--Schwarz gives
\begin{equation}
 \E_Z\big[(\E_Q[L_Z^2])^p\big]\le
 \exp\big(Cpn\sqrt{\tau_{4,d}}+Cnp^{D_0+2}v_d^2/d\big).
 \label{eq:cusp-app-uq-design-moments}
\end{equation}
Choose a fixed $\delta>0$ sufficiently small that
$2d^\delta\le c\sqrt d$ and
$d^{\delta(D_0+2)}v_d=o(1)$. Since $nv_d/d\le Cn\sqrt{\tau_{4,d}}$, the second
term in the exponent is $o(1)$, uniformly for
$2\le p\le2d^\delta$. Taking $p=\lfloor d^\delta\rfloor$ and
then a sufficiently large fixed $M$ proves
$\P_Z[\E_Q[L_Z^2]>M]\le\exp(-cd^\delta)$.

Condition also on the $s=r-k$ other latent points. Conditional Jensen
shows that this increases the squared conditional mean.
Given these points, edges between them are independent of the queried roots.
The remaining likelihood includes the incident edges
and the edges in $E$. Each internal edge contributes
at most $1+Cv_d^{11/12}$ to its amplified second moment, and the
$k$ incident channels factor. Thus the multiplier in
\eqref{eq:cusp-app-uq-denominator} is at most
\[
 (1+Cv_d^{11/12})^{|E|}\big(\E_Q[L_Z^2]\big)^k
 \overset{\mathrm{(a)}}{\le}\exp(Ck^2v_d^{11/12})M^k
 \overset{\mathrm{(b)}}{\le}C^k
 \quad\text{on }\{\E_Q[L_Z^2]\le M\}.
\]
Here $\mathrm{(a)}$ uses $1+x\le e^x$ and the displayed event;
$\mathrm{(b)}$ uses $k=O(\log d)$ and the polynomial decay of $v_d$.
On the complement, bound the actual squared conditional mean by
$\|f\|_\infty^2$.

Let $X=(X_1,\ldots,X_k)$ and $X'=(X_1',\ldots,X_k')$ be the
independent prior root vectors. After averaging the numerator second
moment over the design, its exact value is
\[
 \E\Big[f(X)f(X')
  \prod_{ij\in E}(1+b_*^2\kappa_d(T_{ij})\kappa_d(T_{ij}'))
  \big(1+b_*^2\sum_{i=1}^kH_d(\langle X_i,X_i'\rangle)+J_k\big)^s\Big].
\]
The first bound in~\eqref{eq:cusp-app-root-growing} gives
\[
 \|J_k\|_4\le\sum_{j=2}^k\binom kj(Cj)^j\tau_{4,d}^{j/2}
 \le Ck^2\tau_{4,d}.
\]
The calculation proving \eqref{eq:cusp-app-uq-design-moments}, with
$p=k$ and fixed exponential powers, also gives
\[
 \E\exp\Big(Cn\big(\sum_i|b_*^2H_d(\langle X_i,X_i'\rangle)|+|J_k|\big)\Big)
 \le C^k.
\]
The internal product has absolute value at most
$\exp(Ck^2v_d^{11/12})$. Using the difference of powers and then
H\"older with exponents $2,4,4$, the change caused by setting
$J_k=0$ is at most
\[
 C^kn\|f(X)f(X')\|_2\|J_k\|_4
 \le C^kk^2n\tau_{4,d}\|f\|_4^2.
\]
The exponential bound holds for every fixed power after decreasing
the allowed constant in $n\sqrt{\tau_{4,d}}\le c$.

Expand the internal factors over $F\subseteq E$ and expand the remaining
power multinomially. For each $\boldsymbol a=(a_1,\ldots,a_k)$,
the integrated coefficient is
\[
 Q_{F,\boldsymbol a}
 =\big\langle f_F,\bigotimes_{i=1}^k
                T_{b_*^{2a_i}H_d^{a_i}}f_F\big\rangle\ge0.
\]
The kernel $b_*^2H_d$ is positive semidefinite. Its pointwise powers are also positive semidefinite, since
$\langle u,v\rangle^j=\langle u^{\otimes j},v^{\otimes j}\rangle$.
The convention $H_d^0=1$ gives the constant projection. Thus
replacing $(s)_{|\boldsymbol a|}$ by $n^{|\boldsymbol a|}$
increases the sum after integration, and
\[
 \sum_{|\boldsymbol a|\le s}
   \frac{(s)_{|\boldsymbol a|}}{\prod_i a_i!}Q_{F,\boldsymbol a}
 \overset{\mathrm{(a)}}{\le}\sum_{\boldsymbol a\ge0}
   \prod_i\frac{n^{a_i}}{a_i!}Q_{F,\boldsymbol a}
 =\langle f_F,T_A^{\otimes k}f_F\rangle.
\]
Here $\mathrm{(a)}$ uses $Q_{F,\boldsymbol a}\ge0$ and
$(s)_j\le n^j$. The equality uses the series for $A$.
This proves~\eqref{eq:cusp-query-tensor}, including signed $f_F$.

The constant eigenvalue of $T_A$ is bounded by
$\E\exp(n|b_*^2H_d|)\le C$. Let $\lambda_q(T_A)$ denote its eigenvalue
on spherical harmonics of degree $q$.
For $q\ge1$, the multiplicity $N_{q,d}$ gives
\[
 |\lambda_q(T_A)|
 \overset{\mathrm{(a)}}{\le}N_{q,d}^{-1/2}
       \sum_{j\ge1}\frac{n^j}{j!}b_*^{2j}\|H_d^j\|_2
 \overset{\mathrm{(b)}}{\le}Cn\sqrt{\tau_{4,d}} N_{q,d}^{-1/2}.
\]
Here $\mathrm{(a)}$ is the Hilbert--Schmidt bound on each harmonic
eigenspace, and $\mathrm{(b)}$ follows by summing the series, since
$b_*^{2j}\|H_d^j\|_2\le(Cj\sqrt{\tau_{4,d}})^j$. Since $N_{q,d}\ge d$ for
$q\ge1$ and $N_{q,d}\ge cd^2$ for even $q\ge2$, this proves
the asserted operator bounds.

If $F$ has $e$ edges, at least $(k-2e)_+$ coordinates of $f_F$
remain canonical, and $\|f_F\|_2^2\le (Cv_d^{11/12})^e\|f\|_2^2$. Hence
\begin{align*}
 \sum_{F\subseteq E}\langle f_F,T_A^{\otimes k}f_F\rangle
 &\overset{\mathrm{(a)}}{\le}C^k\|f\|_2^2
    \sum_{e=0}^{\binom k2}\binom{\binom k2}{e}
       (Cv_d^{11/12})^e\big(Cn\sqrt{\tau_{4,d}/d}\big)^{(k-2e)_+}\\
 &\overset{\mathrm{(b)}}{\le}C^k(Ck)^k\big(v_d^{11/24}+n\sqrt{\tau_{4,d}/d}\big)^k\|f\|_2^2.
\end{align*}
Here $\mathrm{(a)}$ applies the operator bounds to the canonical coordinates.
For $\mathrm{(b)}$ and $2e\le k$, use $\binom{\binom k2}{e}\le k^{2e}$ and
compare each term with $(v_d^{11/24}+Cn\sqrt{\tau_{4,d}/d})^k$. For $2e>k$, the
binomial tail is at most
$(Ckv_d^{11/12})^{k/2}\exp(Ck^2v_d^{11/12})$; here $k^2v_d^{11/12}=o(1)$.
This proves \eqref{eq:cusp-query-rough}.

\end{proof}

\subsection{Order two}\label{app:cusp-order-two}\label{app:cusp-sums}
Under the assumptions of Lemma~\ref{lem:cusp-query}, let $f$ be a
bounded canonical zonal profile of two roots. Its tensor bound gives
\begin{align}
 \E_P\big[\E_P[f\mid A^{(r)}]^2\big]
 &\le C\big(\E[f(T_d)\kappa_d(T_d)]^2+n\tau_{4,d}\|f\|_4^2\big)
       +\|f\|_\infty^2\exp(-cd^\delta)\notag\\
 &\quad{}+\frac{Cn^2\tau_{4,d}}{d}\big(\|f\|_2^2+
       \|f\kappa_d-\E[f\kappa_d]\|_2^2\big),
 \label{eq:cusp-query-two}\\
 \E_P\big[\E_P[f\mid\mathcal A_{r,\varnothing}]^2\big]
 &\le C\Big(n\tau_{4,d}\|f\|_4^2+
             \frac{n^2\tau_{4,d}}{d}\|f\|_2^2\Big)
       +\|f\|_\infty^2\exp(-cd^\delta).
 \label{eq:cusp-query-two-minus}
\end{align}

To verify these bounds, note that rotational invariance gives
$\E[f\kappa_d\mid X_i]=\E[f\kappa_d]$ for $i=1,2$.
Thus $f$ and $f\kappa_d-\E[f\kappa_d]$ are canonical in both roots.
The operator bounds in Lemma~\ref{lem:cusp-query} give
\begin{align*}
 \langle f,T_A^{\otimes2}f\rangle
 &\le Cn^2\tau_{4,d}\|f\|_2^2/d,\\
 \langle f\kappa_d,T_A^{\otimes2}(f\kappa_d)\rangle
 &\le C\E[f\kappa_d]^2
       +Cn^2\tau_{4,d}\|f\kappa_d-\E[f\kappa_d]\|_2^2/d.
\end{align*}
Only the first term occurs when edge $12$ is withheld.
When it is retained, both occur, with a fixed factor $b_*^2$ on
the second. Substitution in~\eqref{eq:cusp-query-tensor} in Lemma~\ref{lem:cusp-query} proves
\eqref{eq:cusp-query-two} and~\eqref{eq:cusp-query-two-minus}.

Proposition~\ref{prop:cusp-triangle-two} applies
\eqref{eq:cusp-query-two} with $f=H_d$ at the triangle scale.
On the critical relation $\beta=\alpha+1/3$, the last term in
\eqref{eq:cusp-triangle-two} is
$n^5\tau_{4,d}^2/d\asymp n^5\tau_{4,d}^{5/2}d^{\alpha-1/6}$.
We improve this term by conditioning on the graph among the other
vertices, then applying the bound for two roots to $H_{4,d}$.
Assume $\Lambda_{n,d}=n^3\tau_{3,d}^2+n^4\tau_{4,d}^2\le c_{\ref{prop:cusp-KL}}(K)$ throughout this
part; then $n\lesssim d^{\alpha+5/6}$ and
$nv_d/d\lesssim n\sqrt{\tau_{4,d}} d^{-1/6}$ by~\eqref{eq:cusp-app-critical-scales}.

Recall that $H_{4,d}$ is the profile of $T_{\kappa_d}^4$ and
$g_{r,2}=\E_P[\E_P[H_d(T_{12})\mid A^{(r)}]^2]$.
We will also use
\begin{equation}
 \E\big[\exp\big(Cn(|H_d(T_{13})|+|H_d(T_{24})|+|h_{4,d}|)\big)\big]\le C
 \label{eq:cusp-app-critical-root-weight}
\end{equation}
for four independent roots. To verify it, use
$\|H_d\|_p\le Cp\sqrt{\tau_{4,d}}$ and
$|h_{4,d}|\le Cv_d^2(\sum_{i<j}T_{ij}^2+d^{-1})$ from
Lemma~\ref{lem:growing-roots}.
H\"older separates the two $H_d$ factors and the six squared
inner products. The exponential estimates used in
\eqref{eq:cusp-app-uq-design-moments} then apply because
$nv_d^2/d\lesssim n\sqrt{\tau_{4,d}}$.
For each fixed coefficient in the exponential, decrease the allowed
bound on $n\sqrt{\tau_{4,d}}$ if needed.

For the next estimate, keep the graph on the other $m$ vertices
while integrating their latent points. Let $0\le m\le n-2$, let
$Z_1,\ldots,Z_m$ be independent prior directions, and let $C$ be
their observed graph.
Let $X,Y$ be two additional independent prior roots. Their incident
edge vectors are $B,B'$, and their mutual edge is withheld.
Here $Q$ is the matched product Bernoulli law on these $2m$ edges,
and $Y_i(B)=(B_i-p_d)/s_d$. Define
\begin{align*}
 \ell_{x,Z}^{b_*}(B)&=\prod_{i=1}^m
    (1+b_*\kappa_d(\langle x,Z_i\rangle)Y_i(B)),\quad L_Z^{b_*}(B)=\E_X[\ell_{X,Z}^{b_*}(B)],\\
 N_C^{b_*}(B,B')&=\E[H_d(\langle X,Y\rangle)
       \ell_{X,Z}^{b_*}(B)\ell_{Y,Z}^{b_*}(B')\mid C].
\end{align*}
For these likelihood quantities, a superscript $b$ means that
$b_*$ is replaced by $b$ in the defining formula.
The amplification changes only the incident likelihood factors;
the conditional law $Z\mid C$ remains the original posterior.
The following lemma bounds prediction by the second moment of
this numerator.

\begin{lemma}[Conditioning on the remaining graph]
\label{lem:cusp-retained-design}
For the localized critical link with $\Lambda_{n,d}\le c_{\ref{prop:cusp-KL}}(K)$, there are fixed constants $b_*,C,c,\delta>0$ such that, for
sufficiently large $d$,
\begin{equation}
 \E_P\big[\E_P[H_d(\langle X,Y\rangle)\mid C,B,B^{\prime}]^2\big]
 \le C\E_C\E_Q[(N_C^{b_*})^2]
       +\|H_d\|_\infty^2\exp(-cd^\delta).
 \label{eq:cusp-app-ctwo-retained-design}
\end{equation}
\end{lemma}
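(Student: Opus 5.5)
We follow the denominator-removal scheme from the proof of Lemma~\ref{lem:cusp-query}, now applied conditionally on the retained graph $C$. Given $C$, the posterior mean of $H_d(\langle X,Y\rangle)$ given $(C,B,B')$ equals $N_C^{1}(B,B')/L_C^{1}(B,B')$. Here $L_C^1(B,B')=\E[\ell^1_{X,Z}(B)\ell^1_{Y,Z}(B')\mid C]$ is the conditional likelihood of the incident edges relative to $Q$. Since $X$ and $Y$ are independent of $Z$ and their mutual edge is withheld, this likelihood has mean one under $Q$ for every $C$. Hence
\[
\E_P\big[\E_P[H_d\mid C,B,B']^2\big]=\E_C\,\E_Q\big[(N_C^1)^2/L_C^1\big].
\]

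Fix $\theta\le1/6$ as in that proof and take $b_*=1/\theta$. Each incident factor is linear in $Y_i(B)$. The noise operator $\mathcal T_\theta$ on the $2m$ coordinates therefore maps amplified objects to original ones: $N_C^1=\mathcal T_\theta N_C^{b_*}$ and $L_C^1=\mathcal T_\theta L_C^{b_*}$. This holds conditionally on $C$, because $Z\mid C$ is unchanged by the amplification. Applying \eqref{eq:cusp-app-uq-denominator} conditionally on $C$ gives
\[
\E_Q[(N_C^1)^2/L_C^1]\le \E_Q[(N_C^{b_*})^2]\,\E_Q[(L_C^{b_*})^2].
\]

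It then suffices to bound the multiplier $\E_Q[(L_C^{b_*})^2]$ by a constant $M$ outside an event of small probability. By the replica identity, $\E_Q[(L_C^{b_*})^2]$ is the conditional expectation given $C$ of $\E_Q[(L^{b_*}_Z)^2]^2$, with the $X$ and $Y$ channels factoring. Using $\E_Q[(L^{b_*}_Z)^2]\le \E_Q[(L^{b_*}_Z)^2]^{1/2}\cdot(\cdot)$ is not needed; we simply write
\[
\E_Q[(L_C^{b_*})^2]\le \E\big[\E_Q[(L_Z^{b_*})^2]^2\mid C\big].
\]
Let $W(C)$ denote the right-hand side. Markov's inequality gives
\[
\P_C[W(C)>M]\le M^{-p}\E_C[W^p]\le M^{-p}\E_Z\big[\E_Q[(L_Z^{b_*})^2]^{2p}\big],
\]
where the last step is conditional Jensen. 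Note that $C$ is distributed as the original graph, since the $Z$ have the prior. Now apply \eqref{eq:cusp-app-uq-design-moments} with $2p\le 2d^\delta$. This uses $n\sqrt{\tau_{4,d}}$ small and $nv_d/d\lesssim n\sqrt{\tau_{4,d}}d^{-1/6}$, both valid on the critical relation under $\Lambda_{n,d}\le c_{\ref{prop:cusp-KL}}(K)$. The resulting bound is $\exp(Cpn\sqrt{\tau_{4,d}}+o(1))\le e^{Cp}$. Taking $p\asymp d^\delta$ and $M$ a large fixed constant yields $\P[W>M]\le\exp(-cd^\delta)$.

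On the event $\{W\le M\}$, the bound reads $M\,\E_Q[(N_C^{b_*})^2]$. On its complement, we bound the squared posterior mean by $\|H_d\|_\infty^2$. Combining the two cases gives \eqref{eq:cusp-app-ctwo-retained-design}.

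The main obstacle is the conditional use of the noise-operator argument. We must check that, given $C$, the amplified likelihood is a genuine positive likelihood with $\E_Q$-mean one, so that reverse hypercontractivity applies. Positivity follows from \eqref{eq:cusp-localized-envelope}, which keeps $p_d+s_db_*\kappa_d$ inside $[c/2,1-c/2]$. We must also check that the moment bound on $W$ transfers from the prior design to the design conditioned on $C$; this is exactly the role of the Jensen step above.
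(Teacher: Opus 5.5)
Your proposal is correct and follows the paper's proof essentially step for step. The shared steps are: Bayes' formula with the conditional likelihood $L_C^1$; the fixed-amplification denominator bound \eqref{eq:cusp-app-uq-denominator} applied given $C$; conditional Jensen, \eqref{eq:cusp-app-uq-design-moments} and Markov, which keep the multiplier $\E_Q[(L_C^{b_*})^2]$ bounded outside an event of probability $\exp(-cd^\delta)$; and the trivial bound $\|H_d\|_\infty^2$ on that event. The differences are cosmetic: you threshold the Jensen upper bound $W(C)$ rather than $\E_Q[(L_C^{b_*})^2]$ itself, and the step $\E_Q[(L_C^{b_*})^2]\le W(C)$ is an inequality (two posterior replicas of $Z$ given $C$, then Cauchy--Schwarz), not the identity your wording suggests.
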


\begin{proof}
In expectations involving $L_Z^{b_*}$ alone, $Q$ acts on its $m$
edge coordinates. Since the two roots are conditionally
independent before their edges are observed, the amplified likelihood
relative to $Q$ given $C$ is
\[
 L_C^{b_*}(B,B')=\E[L_Z^{b_*}(B)L_Z^{b_*}(B')\mid C].
\]
Conditional Jensen gives
\begin{align}
 \E_Q[(L_C^{b_*})^2]
 &\overset{\mathrm{(a)}}{\le}\E\big[(\E_Q[(L_Z^{b_*})^2])^2\mid C\big],\notag\\
 \E_C\big[(\E_Q[(L_C^{b_*})^2])^p\big]
 &\overset{\mathrm{(b)}}{\le}\E_Z\big[(\E_Q[(L_Z^{b_*})^2])^{2p}\big]
 \le\exp(Cpn\sqrt{\tau_{4,d}}+o(1)),
 \label{eq:cusp-app-ctwo-conditional-design-moment}
\end{align}
where $\mathrm{(a)}$ and $\mathrm{(b)}$ are conditional Jensen, followed
by averaging over $C$. The last inequality is
\eqref{eq:cusp-app-uq-design-moments} from the proof of
Lemma~\ref{lem:cusp-query}. Take $p=\lfloor d^\delta\rfloor$,
decreasing $\delta$ so that the bound also holds at $2p$.

Bayes' formula and the inequality for a fixed amplification
\eqref{eq:cusp-app-uq-denominator}, applied conditionally on $C$, give
\[
 \E_Q[(N_C^1)^2/L_C^1]
 \le\E_Q[(N_C^{b_*})^2]\E_Q[(L_C^{b_*})^2].
\]
For a sufficiently large fixed $M$, Markov's inequality gives
\[
 \P_C[\E_Q[(L_C^{b_*})^2]>M]
 \le M^{-p}\exp(Cpn\sqrt{\tau_{4,d}}+o(1))
 \le\exp(-cd^\delta).
\]
On $\{\E_Q[(L_C^{b_*})^2]\le M\}$ use the conditional Bayes bound;
on its complement
use $|\E[H_d(\langle X,Y\rangle)\mid C,B,B^{\prime}]|\le\|H_d\|_\infty$ under the actual law.
This proves \eqref{eq:cusp-app-ctwo-retained-design}.
\end{proof}

\begin{proof}[Proof of Proposition~\ref{prop:cusp-critical-two}]
Let $\mathcal A=\mathcal A_{r,\varnothing}$ be the observed graph
with edge $12$ withheld. Under $Q$, $Y_{12}$ has mean zero and
variance one. Bayes' formula gives
\begin{align}
 g_{r,2}
 &=\E_{\mathcal A}\E_Q\Bigg[
 \frac{\big(\E[H_d(T_{12})\mid\mathcal A]
 +Y_{12}\E[H_d(T_{12})\kappa_d(T_{12})\mid\mathcal A]\big)^2}
 {1+Y_{12}\E[\kappa_d(T_{12})\mid\mathcal A]}\Bigg]\notag\\
 &\overset{\mathrm{(a)}}{\le}C\Big(\tau_{3,d}^2+
   \E_P\big[\E_P[H_d(T_{12})\mid\mathcal A]^2\big]
   +\E_P\big[\E_P[H_d\kappa_d-\tau_{3,d}\mid\mathcal A]^2\big]\Big).
 \label{eq:cusp-app-ctwo-mutual}
\end{align}
Here $\mathrm{(a)}$ uses the dense lower bound on the denominator
and the zero mean and unit variance of $Y_{12}$ under $Q$.
We also use $\E[H_d(T_d)\kappa_d(T_d)]=\tau_{3,d}$.
The centered profile $H_d\kappa_d-\tau_{3,d}$ is canonical in both
roots. By H\"older, the local envelope, and
\eqref{eq:H-moments} in Lemma~\ref{lem:contracted},
\[
 \|H_d\kappa_d-\tau_{3,d}\|_4^2
 \le4\|H_d\|_8^2\|\kappa_d\|_8^2
 \le Cv_d\tau_{4,d}.
\]
Applying~\eqref{eq:cusp-query-two-minus}, a consequence of
Lemma~\ref{lem:cusp-query}, gives
\begin{align}
 \E_P\big[\E_P[H_d\kappa_d-\tau_{3,d}\mid\mathcal A]^2\big]
 &\le C(n\tau_{4,d}+n^2\tau_{4,d}/d)v_d\tau_{4,d}\notag\\
 &\le Cn\tau_{4,d}^2(v_d+nv_d/d)
 \overset{\mathrm{(b)}}{\le}Cn\tau_{4,d}^2.
 \label{eq:cusp-app-ctwo-decoration}
\end{align}
Here $\mathrm{(b)}$ uses $v_d=o(1)$ and
$nv_d/d\lesssim n\sqrt{\tau_{4,d}}d^{-1/6}$.
The exponentially small error is absorbed into the last bound.

We now bound $\E_P\big[\E_P[H_d(T_{12})\mid\mathcal A_{r,\varnothing}]^2\big]$ while retaining the posterior of the other latent points.
Let $m=r-2$, let $C$ be the observed graph on the $m$ remaining
vertices, and apply Lemma~\ref{lem:cusp-retained-design}.
In the product expansion, the coefficient indexed by
incident subsets $S,T\subseteq[m]$ is
$b_*^{|S|+|T|}\E[f_{S,T}(Z)\mid C]$, where
\begin{equation}
 f_{S,T}(Z)=\E_{X,Y}\Bigg[
 H_d(\langle X,Y\rangle)\prod_{i\in S}\kappa_d(\langle X,Z_i\rangle)
          \prod_{j\in T}\kappa_d(\langle Y,Z_j\rangle)\Bigg].
 \label{eq:cusp-app-ctwo-coefficient}
\end{equation}
Canonicality makes the coefficient zero if either subset is empty.
Fix $0<\theta<1$ and decrease $c_{\ref{prop:cusp-KL}}(K)$ so that
\eqref{eq:cusp-app-ctwo-conditional-design-moment} also holds with
the fixed amplification $b_*/\theta$.
Take $L=C\log d$ with $C$ sufficiently large.
Parseval and the same likelihood bound with amplification
$b_*/\theta$ give
\begin{align*}
 &~\sum_{|S|+|T|>L}b_*^{2(|S|+|T|)}
       \E_C\big[\E[f_{S,T}(Z)\mid C]^2\big] \le~\theta^{2L}\E_C\E_Q[(N_C^{b_*/\theta})^2]
 \overset{\mathrm{(a)}}{\le}C\theta^{2L}\|H_d\|_\infty^2
 \le d^{-30}.
\end{align*}
Here $\mathrm{(a)}$ uses
$|N_C^{b_*/\theta}|\le\|H_d\|_\infty L_C^{b_*/\theta}$ and
\eqref{eq:cusp-app-ctwo-conditional-design-moment} in
Lemma~\ref{lem:cusp-retained-design}. The larger fixed amplification
is feasible by~\eqref{eq:cusp-localized-envelope}.

For $S\cap T\ne\varnothing$, conditional Jensen permits replacing
the squared posterior coefficient by its prior second moment.
Extend the nonnegative sum to all intersecting pairs of subsets. Its prior value is exactly
\begin{align*}
 &~\E\Big[H_d(T_{12})H_d(T_{34})
   (1+b_*^2H_d(T_{13})+b_*^2H_d(T_{24})+b_*^4h_{4,d})^m\Big]\\
 &~{}-\E\Big[H_d(T_{12})H_d(T_{34})
   (1+b_*^2H_d(T_{13})+b_*^2H_d(T_{24}))^m\Big].
\end{align*}
The difference of powers and H\"older bound its absolute value by
\[
 Cm\|H_d(T_{12})H_d(T_{34})\|_2\|h_{4,d}\|_4
 \overset{\mathrm{(a)}}{\le}Cm\tau_{4,d}^2
 \le Cn\tau_{4,d}^2.
\]
The exponential factor is bounded by~\eqref{eq:cusp-app-critical-root-weight}.
Here $\mathrm{(a)}$ uses independence of $T_{12},T_{34}$,
$\|H_d\|_2^2=\tau_{4,d}$, and the critical bound in
\eqref{eq:cusp-app-root-growing}.
In particular, $S=T=\{i\}$ has coefficient
$H_{4,d}(1)=\tau_{4,d}$. For $S=\{i\}$ and $T=\{j\}$ with
$i\ne j$, convolution in \eqref{eq:cusp-app-ctwo-coefficient} gives
$H_{4,d}(T_{ij})$. When $m\ge2$, their posterior contribution is at most
$Cm^2\E_P[\E_P[H_{4,d}(T_{12})\mid A^{(m)}]^2]$.
Conditional Jensen and $m\le n$ bound this by
$Cn^2\E_P[\E_P[H_{4,d}(T_{12})\mid A^{(r)}]^2]$.
For $m<2$ there are no such terms.

It remains to sum disjoint subsets with $s=|S|$, $t=|T|$ and
$k=s+t\ge3$. By relabeling the roots, take $S=[s]$ and
$T=\{s+1,\ldots,s+t\}$, and write $f_{s,t}=f_{S,T}$.
It is canonical in each of its $k$ roots. Four replicas and
integration over their disjoint design vertices give
\[
 \E[f_{s,t}^4]=\langle h_{4,d}^s,T_{H_d}^{\otimes4}h_{4,d}^t\rangle.
\]
Since $\|T_{H_d}\|_{\mathrm{op}}=\|T_{\kappa_d}\|_{\mathrm{op}}^2$,
the critical bound~\eqref{eq:cusp-app-root-growing} implies
\begin{equation}
 \|f_{s,t}\|_4^2
 \overset{\mathrm{(a)}}{\le}\|T_{\kappa_d}\|_{\mathrm{op}}^4
       \|h_{4,d}\|_{2s}^{s/2}\|h_{4,d}\|_{2t}^{t/2}
 \overset{\mathrm{(b)}}{\le}\|T_{\kappa_d}\|_{\mathrm{op}}^4
       (Cs)^{s/2}(Ct)^{t/2}\tau_{4,d}^{k/2}.
 \label{eq:cusp-app-ctwo-bistar-moment}
\end{equation}
Here $\mathrm{(a)}$ uses the operator norm of $T_{H_d}^{\otimes4}$
and $\mathrm{(b)}$ uses~\eqref{eq:cusp-app-root-useful}, since
$2k\le2L\le c_0d^{1/3}$ for large $d$.
There are at most $m^k/(s!t!)$ such ordered subset pairs. The inequalities $j!\ge(j/e)^j$ and
$s\log s+(k-s)\log(k-s)\ge k\log(k/2)$ give
\[
 \sum_{s=1}^{k-1}
 \frac{(Cs)^{s/2}(C(k-s))^{(k-s)/2}}{s!(k-s)!}
 \le C^kk^{1-k/2}.
\]
Applying \eqref{eq:cusp-query-rough} in Lemma~\ref{lem:cusp-query}
and multiplying by $n^3$, the $k^2n\tau_{4,d}$ contribution is at most
\begin{align}
 Cn^4\tau_{4,d}\|T_{\kappa_d}\|_{\mathrm{op}}^4
       \sum_{k=3}^Lk^{3-k/2}(Cn\sqrt{\tau_{4,d}})^k
 &\overset{\mathrm{(a)}}{\le}Cn^7\tau_{4,d}^{5/2}\|T_{\kappa_d}\|_{\mathrm{op}}^4\overset{\mathrm{(b)}}{\le}Cn^7\tau_{4,d}^{7/2}d^{-1}.
 \label{eq:cusp-app-ctwo-sum-joint}
\end{align}
Here $\mathrm{(a)}$ sums a geometric series when $n\sqrt{\tau_{4,d}}$
is small; $\mathrm{(b)}$ uses~\eqref{eq:cusp-app-critical-scales}.
For the remaining contribution, write
$k^{1+k/2}=k^{5/2}(\sqrt k)^{k-3}$. The bound
\[
 C\sqrt L n\sqrt{\tau_{4,d}}
    \big(v_d^{11/24}+n\sqrt{\tau_{4,d}/d}\big)=o(1)
\]
allows a geometric majorant, which gives
\begin{align}
 &~n^3\|T_{\kappa_d}\|_{\mathrm{op}}^4
       \sum_{k=3}^L C^k k^{1+k/2}
       \big[n\sqrt{\tau_{4,d}}
          (v_d^{11/24}+n\sqrt{\tau_{4,d}/d})\big]^k
 \notag\\
 \overset{\mathrm{(a)}}{\le}&~Cn^6\tau_{4,d}^{3/2}\|T_{\kappa_d}\|_{\mathrm{op}}^4
       \big(v_d^{11/24}+n\sqrt{\tau_{4,d}/d}\big)^3
 \overset{\mathrm{(b)}}{\le}Cn^6\tau_{4,d}^3d^{-3\alpha/8-1/6}.
 \label{eq:cusp-app-ctwo-sum-internal}
\end{align}
Here $\mathrm{(a)}$ sums the geometric majorant; $\mathrm{(b)}$ uses
$v_d^{11/24}+n\sqrt{\tau_{4,d}/d}\le Cd^{-11\alpha/24}$ and
$n^3\|T_{\kappa_d}\|_{\mathrm{op}}^4\asymp n^3\tau_{4,d}^{3/2}d^{\alpha-1/6}$.
There are at most $\exp(C(\log d)^2)$ retained choices, while
the query errors are bounded by $\exp(-cd^\delta)$. Their total,
including the cutoff error, is at most $d^{-20}$ after multiplication
by $n^3$. Combining \eqref{eq:cusp-app-ctwo-mutual}--\eqref{eq:cusp-app-ctwo-sum-internal} gives
\begin{align}
 g_{r,2}
 &\le C\Big(\tau_{3,d}^2+n\tau_{4,d}^2
      +n^2\E_P\big[\E_P[H_{4,d}(T_{12})\mid A^{(r)}]^2\big]\Big)\notag\\
 &\quad{}+C\big(n^4\tau_{4,d}^{7/2}d^{-1}
             +n^3\tau_{4,d}^3d^{-3\alpha/8-1/6}\big)+d^{-20}.
 \label{eq:cusp-app-ctwo-first}
\end{align}
The second line is at most $Cn\tau_{4,d}^2$, since
$n\sqrt{\tau_{4,d}}\le1$ and $\tau_{4,d}\asymp d^{-2\alpha-5/3}$.

For the second prediction, convolution gives
$\|H_{4,d}\|_2^2=\tau_{8,d}$ and $\E[H_{4,d}\kappa_d]=\tau_{5,d}$.
Apply Lemma~\ref{lem:contracted} to $H_d$, whose degree one
coefficient is $\gamma_{1,d}^2/\sqrt d$. Its envelope gives
\begin{align}
 H_{4,d}(t)&=\frac{\gamma_{1,d}^4}{d}t
       +O(\tau_{4,d}(t^2+d^{-1})),\notag\\
 \|H_{4,d}\|_4^2
 &\lesssim\gamma_{1,d}^8/d^3+\tau_{4,d}^2/d^2
 \overset{\mathrm{(a)}}{\lesssim}\tau_{8,d}.
 \label{eq:cusp-critical-H4-moment}
\end{align}
Here $\mathrm{(a)}$ uses~\eqref{eq:cusp-app-critical-scales}; the first
term is the degree one contribution to $\tau_{8,d}$, and
$\tau_{4,d}^2/d^2\lesssim\tau_{8,d}$.
Also, H\"older and~\eqref{eq:cusp-localized-envelope} give
$\|H_{4,d}\kappa_d-\tau_{5,d}\|_2^2\le
\|H_{4,d}\|_4^2\|\kappa_d\|_4^2\lesssim v_d\tau_{8,d}$.
Substitution in~\eqref{eq:cusp-query-two}, a consequence of
Lemma~\ref{lem:cusp-query}, and
$\tau_{8,d}/d\asymp\|T_{\kappa_d}\|_{\mathrm{op}}^8$ give
\begin{equation}
 \E_P\big[\E_P[H_{4,d}(T_{12})\mid A^{(r)}]^2\big]\le C\big(\tau_{5,d}^2+n\tau_{4,d}\tau_{8,d}+n^2\tau_{4,d}\|T_{\kappa_d}\|_{\mathrm{op}}^8\big).
 \label{eq:cusp-app-ctwo-second}
\end{equation}

Finally,
\begin{align*}
 &~n^5\tau_{5,d}^2+n^6\tau_{4,d}\tau_{8,d}
       +n^7\tau_{4,d}\|T_{\kappa_d}\|_{\mathrm{op}}^8\\
 \overset{\mathrm{(a)}}{\le}&~Cn^4\tau_{4,d}^2
 \big[(n\sqrt{\tau_{4,d}})d^{-1/2}
 +(n\sqrt{\tau_{4,d}})^2d^{-1}
 +(n\sqrt{\tau_{4,d}})^3d^{\alpha-7/6}\big]
 \overset{\mathrm{(b)}}{\le}Cn^4\tau_{4,d}^2.
\end{align*}
Here $\mathrm{(a)}$ uses~\eqref{eq:cusp-app-critical-scales}, and
$\mathrm{(b)}$ uses $\alpha\le1$ and $n\sqrt{\tau_{4,d}}\le c$.
Equations~\eqref{eq:cusp-app-ctwo-first} and
\eqref{eq:cusp-app-ctwo-second}
give the stated prediction bounds. Multiplying
\eqref{eq:cusp-app-ctwo-first} by $n^3$ bounds $G_2$. For $r=2$, the identity
$\E[H_d(T_{12})\mid A_{12}]=\tau_{3,d}Y_{12}$ proves the same
bound directly. No lower bound on $|\tau_{3,d}|$ is used.
\end{proof}
\subsection{Order three}\label{app:cusp-order-three}

We prove the critical part of Proposition~\ref{prop:cusp-triangle-three}.
Throughout this subsection, $\beta=\alpha+1/3$ and
$\Lambda_{n,d}\le c_{\ref{prop:cusp-KL}}(K)$. Recall from~\eqref{eq:cusp-app-critical-scales}
that $v_d\asymp d^{-\alpha}$ and
$\sqrt{\tau_{4,d}}\asymp d^{-\alpha-5/6}$.
Thus $n\sqrt{\tau_{4,d}}\le c_{\ref{prop:cusp-KL}}(K)^{1/4}$, and
Lemma~\ref{lem:cusp-query} applies for sufficiently small $c_{\ref{prop:cusp-KL}}(K)$.
We retain the internal edges in its expansion, as in the proof sketch
in Section~\ref{sec:order-3-csup}.

Let $A(t)=\exp(nb_*^2H_d(t))$, with $b_*$ from
Lemma~\ref{lem:cusp-query}, and let $T_A$ be its integral operator.
For a fixed $m\ge3$, let $X,X'$ be independent prior $m$-tuples and write
$T_{ij}'=\langle X_i',X_j'\rangle$.
We keep $m$ general because the
same calculation will be used in Proposition~\ref{prop:cusp-critical-higher}.
For $F\subseteq K_m$, define
$f_F(X)=h_{m,d}(X)\prod_{\{i,j\}\in E(F)}b_*\kappa_d(T_{ij})$.
Expanding the internal factors gives
\begin{align}
&~\E\Big[h_{m,d}(X)h_{m,d}(X')
  \prod_{i=1}^m A(\langle X_i,X_i'\rangle)
  \prod_{i<j}\big(1+b_*^2\kappa_d(T_{ij})\kappa_d(T_{ij}')\big)\Big]
  \notag\\
=&~\sum_{F\subseteq K_m}
 \E\Big[f_F(X)f_F(X')\prod_{i=1}^m A(\langle X_i,X_i'\rangle)\Big]
 =\sum_{F\subseteq K_m}\langle f_F,T_A^{\otimes m}f_F\rangle.
\label{eq:cusp-critical-internal-expansion}
\end{align}
By~\eqref{eq:cusp-query-tensor} in Lemma~\ref{lem:cusp-query},
it remains to bound these terms together with
$n\tau_{4,d}\|h_{m,d}\|_4^2$ and the exponentially small error.

For $I\subseteq[m]$, let $X_I=(X_i)_{i\in I}$.
To bound the quadratic forms, we separate the mean of $f_F$ from
its centered dependence on subsets of roots.
These quantities have a graph interpretation. Let
$\operatorname{cone}(F)$ be the graph obtained by adding one vertex
adjacent to all $m$ roots, and define
$t(J,\kappa_d)=\E[\prod_{e\in E(J)}\kappa_d(T_e)]$. Then
$\E[f_F]=b_*^{|E(F)|}t(\operatorname{cone}(F),\kappa_d)$.
A squared conditional mean joins two copies of this graph along
the roots in $I$. See Figure~\ref{fig:cusp-cones}.
We first give a bound for these graph integrals. A biconnected
block is a maximal subgraph on at least three vertices that stays
connected after deleting any one vertex.

\begin{figure}[htbp]
\centering
\begingroup
\definecolor{proofteal}{RGB}{20,103,116}
\definecolor{proofcopper}{RGB}{175,83,43}
\definecolor{proofink}{RGB}{44,49,55}
\begin{tikzpicture}[vertex/.style={circle,fill=proofink,inner sep=1.8pt},hub/.style={circle,fill=proofcopper,inner sep=2.2pt},rootedge/.style={draw=proofteal,line width=.9pt},hubedge/.style={draw=proofcopper,line width=.9pt,dashed}]
\begin{scope}
\node[vertex] (a) at (0,0) {}; \node[vertex] (b) at (1.25,0) {}; \node[hub] (h) at (.625,1) {};
\draw[rootedge] (a)--(b); \draw[hubedge] (h)--(a) (h)--(b);
\node[anchor=north,align=center,text width=3.8cm] at (.625,-.35) {an edge component of $F$\\its cone gives $\tau_{3,d}$};
\end{scope}
\begin{scope}[xshift=4cm]
\node[vertex] (a) at (0,0) {}; \node[vertex] (b) at (.8,0) {}; \node[vertex] (c) at (1.6,0) {}; \node[hub] (h) at (.8,1) {};
\draw[rootedge] (a)--(b)--(c); \draw[hubedge] (h)--(a) (h)--(b) (h)--(c);
\node[anchor=north,align=center,text width=4.1cm] at (.8,-.35) {a path component of $F$\\the cone has a block on four vertices};
\end{scope}
\begin{scope}[xshift=9cm]
\node[vertex] (a) at (0,.5) {}; \node[vertex] (b) at (1.3,.5) {}; \node[hub] (h) at (.65,1.1) {}; \node[hub] (j) at (.65,-.1) {};
\draw[hubedge] (h)--(a)--(j)--(b)--(h);
\node[anchor=north,align=center,text width=4.1cm] at (.65,-.35) {two cones share two roots\\the hub edges contain a cycle of length four};
\end{scope}
\end{tikzpicture}
\endgroup
\caption{Internal components and the graphs created by conditional second moments. Solid edges belong to $F$; dashed edges join a hub to a root. Integrating an edge component preserves the signed cubic trace. A larger component, or two cones glued on at least two roots, has a biconnected block on at least four vertices.}
\label{fig:cusp-cones}
\end{figure}

The graph integrals below use only $\kappa_d$ and the centered square
$\kappa_d^2-v_d$. On the critical relation,
\eqref{eq:cusp-localized-envelope} gives
\begin{align}
 |e_d(y/\sqrt d)|+d^{1/6}|o_d(y/\sqrt d)|
 &\le Cd^{-\alpha/2}(1+|y|)^B,\notag\\
 |(e_d^2+o_d^2-v_d)(y/\sqrt d)|
       +d^{1/6}|2e_do_d(y/\sqrt d)|
 &\le Cd^{-\alpha}(1+|y|)^{2B}.
 \label{eq:cusp-edge-envelope}
\end{align}
The second line follows by squaring the first and using
$v_d\asymp d^{-\alpha}$. Both profiles have mean zero.
To combine these bounds, fix a connected simple graph $F$ with a
biconnected block on at least four vertices, and partition its edges
into $E_1,E_2$.

\begin{lemma}[Graph integrals]\label{lem:fixed-graph-budget}
For this graph and partition, on $\beta=\alpha+1/3$ and for sufficiently large $d$,
\[
 \Big|\E\Big[\prod_{e\in E_1}\kappa_d(T_e)
                    \prod_{e\in E_2}(\kappa_d(T_e)^2-v_d)\Big]\Big|
 \le C_{\ref{lem:fixed-graph-budget}}(F)
 d^{-\alpha(|E_1|/2+|E_2|)-(|V(F)|-1)/2-1/6}.
\]
\end{lemma}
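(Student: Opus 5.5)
The plan is to reduce the integral to mixed moments of rescaled inner products, and then to count powers of $d$ through a Wick-type pairing argument.

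\emph{Step 1: reduction to mixed moments.} Write $V=V(F)$, attach one latent point to each vertex, and set $y_e=\sqrt d\,T_e$. Split every factor by parity. Using~\eqref{eq:cusp-edge-envelope}, a factor in $E_1$ is $d^{-\alpha/2}g^{e}(y_e)+d^{-\alpha/2-1/6}g^{o}(y_e)$, and a factor in $E_2$ is $d^{-\alpha}g^{e}(y_e)+d^{-\alpha-1/6}g^{o}(y_e)$. Here $g^{e}$ is even and centered and $g^{o}$ is odd, and both have polynomial envelopes $C(1+|y|)^{2B}$. Expanding the product gives $2^{|E(F)|}$ parity patterns. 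For a pattern with $o$ odd edges, the prefactor is exactly $d^{-\alpha(|E_1|/2+|E_2|)}d^{-o/6}$. It therefore suffices to show, for each pattern,
\[
 \Big|\E\Big[\prod_{e}g^{\pi(e)}(y_e)\Big]\Big|\le C(F)\,d^{-(|V|-1)/2-\mathbf 1\{o=0\}/6}.
\]
That is, odd patterns only need the base rate $d^{-(|V|-1)/2}$, and all-even patterns must gain a further $d^{-1/6}$. In fact, all-even patterns should gain a full $d^{-1/2}$.

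\emph{Step 2: Gaussian expansion with exact parity.} Realize the points as $g_v/|g_v|$ with $g_v\sim N(0,I_d)$, as in Lemma~\ref{gen:lem:gap-transfer}. Expand the joint density of $(y_e)_{e\in E(F)}$ against the product of its marginals, as in~\eqref{eq:cusp-app-density-expansion} from Lemma~\ref{lem:root-moments}, but now indexed by the edges of $F$ rather than by star edges. Truncate on $\max_e|y_e|\le d^{\epsilon}$; the spherical tail bound makes the complementary region exponentially small. Coordinate sign flips $g_v\mapsto -g_v$ act on the edges of a star, and they force the sum of polynomial degrees at each vertex to be even. Centering forces each even edge to carry degree at least $2$ in its own variable, and each odd edge degree at least $1$. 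A monomial $\prod_e y_e^{k_e}$ has Gaussian mixed moment of the form $d^{-\sum_e k_e/2}\cdot d^{c(k)}$, where $c(k)$ counts the free coordinate cycles in the Wick contraction. The multigraph with $k_e$ parallel copies of each edge has all vertex degrees even, so it decomposes into closed walks. Each closed walk of length $\ell$ contributes $d^{1-\ell/2}$, and the $d^{-h}$ corrections in the density expansion only decrease the exponent further.

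\emph{Step 3: the counting, which is the main obstacle.} For the leading coefficient one must show $\sum_e k_e/2-c(k)\ge(|V|-1)/2$, with strict improvement by at least $1/2$ when every $k_e\ge2$. The biconnected block on at least four vertices is where the hypothesis enters: every edge must be covered, and inside a $2$-connected block on $b\ge4$ vertices a cover by closed walks with minimal cost $\sum(\ell/2-1)$ costs at least $(b-2)/2\ge1$ in excess of what triangles would give. I would try to prove this by induction over the block--cut tree. Blocks sharing a cut vertex contribute additively to the exponent, and bridges and other tree-like parts cost $1/2$ per vertex, matching $(|V|-1)/2$ overall. The extra $1/6$ in the target then comes from one of two sources: either the block on at least four vertices forces a cycle of length at least $4$, which gives $d^{-1/2}$ beyond the base, or, when all edges are odd, the factor $d^{-o/6}$ with $o\ge4$ beats a triangle-type count.

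To make this precise, I would verify the worst cases $C_4$ (degree-one odd edges, matching $\tau_{4,d}\asymp d^{-2\alpha-5/3}$) and $K_4$ minus an edge directly. This checks that the exponent $-(|V|-1)/2-1/6$ is attained and not exceeded. The degenerate Wick pairings that reuse a vertex coordinate, and the uniformity of the truncation remainder, are routine once the polynomial envelopes are in place.
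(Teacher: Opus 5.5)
Your Step~1 reduces the lemma to a false statement, so there is a genuine gap. For an odd pattern ($o\ge1$) you keep only one factor $d^{-1/6}$ out of the prefactor $d^{-o/6}$. You then require $|\E[\prod_e g^{\pi(e)}(y_e)]|\le C(F)\,d^{-(|V|-1)/2}$.

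Take $F=C_4$, which satisfies the hypothesis, with all four edges odd. The leading density monomial is $y_{12}y_{23}y_{34}y_{41}$, with one free coordinate cycle. Indeed $\E[y_{12}y_{23}y_{34}y_{41}]=d^{2}\,\E[T_{12}T_{23}T_{34}T_{41}]=d^{-1}$ exactly. So this pattern's integral has order $d^{-1}$, not $O(d^{-3/2})$.

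For the same reason, the inequality $\sum_e k_e/2-c(k)\ge(|V|-1)/2$ proposed in Step~3 is false. For $C_4$ it reads $2-1<3/2$, and for a pendant triangle $3/2-1<1$.

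The mechanism is general. A closed walk around a cycle of length $\ell$ costs $\ell/2-1$. That is $1/2$ \emph{less} than the tree rate $(\ell-1)/2$ for the $\ell-1$ vertices it adds. By the vertex sign flips, the odd edges of a nonvanishing pattern form a nonempty even subgraph. So odd patterns always contain cycles of odd-multiplicity edges, and along these the walk cost drops below the tree rate. The only thing that pays for this saving is the factor $d^{-1/6}$ per odd edge, which your reduction discards beyond the first.

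Your proposed sanity check on $C_4$ exposes this. $\tau_{4,d}\asymp d^{-2\alpha-5/3}$ is the prefactor $d^{-2\alpha-2/3}$ times an integral of size $d^{-1}$, not $d^{-3/2}$. Your Step~3 remark that $d^{-o/6}$ with $o\ge4$ beats a triangle count points the right way, but it contradicts the reduction you made.

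The missing idea is to keep the full $d^{-o/6}$ together with the integral and prove the joint budget
\[
(\text{cost of the monomial})+o/6\ge(|V|-1)/2+1/6 .
\]
The paper does this with the potential $s_*=(\text{assigned cost})+o_*/6-(v_*-1)/2$:
\begin{itemize}
\item Closed walks are split into simple cycles of cost $\ell/2-1$ and backtracks of cost $1/2$. This is legitimate because a walk with $r\ge2$ pieces has excess $r-1\ge k_2/2$.
\item Even single-edge factors get cost $1/2$.
\item The pieces are added in an order in which each meets the earlier support.
\end{itemize}
The increments are as follows:
\begin{itemize}
\item A pendant even edge gives $\Delta s_*\ge0$.
\item A new simple cycle of length $\ell$ with $k<\ell$ old edges gives $\Delta s_*\ge(\ell+k-3)/6$.
\item A fully reused cycle gives $\Delta s_*\ge\ell/3-1$.
\end{itemize}
Hence $s_*\ge0$, with equality only when the support consists of bridges and triangle blocks. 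The block on at least four vertices rules this out, and since $s_*\in\frac16\mathbb Z$, you get $s_*\ge1/6$.

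In this accounting an odd cycle of length $\ell$ nets exactly $(\ell-3)/6$: triangles are neutral, and longer cycles supply the $1/6$. Your block--cut tree induction, with a claimed excess of $(b-2)/2$ per block, does not capture this trade-off. For example, an odd Hamiltonian cycle through a block on $b$ vertices has walk cost below the tree rate. It is acceptable only because of its $b/6$ odd-edge gain.
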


\begin{proof}[Proof of Lemma~\ref{lem:fixed-graph-budget}]
Let $k_e=\kappa_d$ for $e\in E_1$ and
$k_e=\kappa_d^2-v_d$ for $e\in E_2$.
Let $q=|V(F)|$, $z_{ij}=\sqrt d\langle X_i,X_j\rangle$, and
let $W=(z_{ij})_{1\le i,j\le q}$ be symmetric with zero diagonal.
Within this proof, $\mu_d$ denotes the law of $\sqrt dT_d$,
and $\P_{\mathrm{Gram}}$ denotes the law of $W$.
We expand the Gram density relative to the product of these marginal
laws, and then bound each monomial that survives centering.

\begin{itemize}
\item \textbf{Step I.}
By~\cite[equation~(2.1)]{rouault2007asymptotic}, the relative density
is $c_{d,q}\exp(R_d(W))$ on $I+W/\sqrt d\succ0$, where
\begin{equation}
 R_d(W)=\frac{d-q-1}{2}\log\det(I+W/\sqrt d)
 -\frac{d-3}{2}\sum_{i<j}\log(1-z_{ij}^2/d).
 \label{eq:cusp-app-0920-gram-log-ratio}
\end{equation}
Since $\operatorname{tr}(W^2)=2\sum_{i<j}z_{ij}^2$, the quadratic
terms of order one cancel. For $\|W\|_{\mathrm{op}}<\sqrt d$,
the remaining series is
\begin{align*}
 R_d(W)
 &=\frac{q-2}{2d}\sum_{i<j}z_{ij}^2
 +\sum_{\ell\ge3}\frac{(-1)^{\ell+1}}{2\ell}
 d^{1-\ell/2}\operatorname{tr}(W^\ell)\\
 &\quad{}-(q+1)\sum_{\ell\ge3}\frac{(-1)^{\ell+1}}{2\ell}
 d^{-\ell/2}\operatorname{tr}(W^\ell)
 +\sum_{r\ge2}\frac{d^{1-r}-3d^{-r}}{2r}
 \sum_{i<j}z_{ij}^{2r}.
\end{align*}
Fix $0<\varepsilon<1/6$ and set
$\mathcal D_d=\{\max_{i<j}|z_{ij}|\le d^\varepsilon\}$.
For sufficiently large $d$, this box lies in the positive definite
region, and
\[
 \sup_{\mathcal D_d}|R_d(W)|
 \le C(q)\big(d^{-1/2+3\varepsilon}
                  +d^{-1+2\varepsilon}\big)=o(1), \quad  \P_{\mathrm{Gram}}(\mathcal D_d^c)
 +\mu_d^{\otimes\binom q2}(\mathcal D_d^c)
 \le C(q)\exp(-cd^{2\varepsilon}).
\]
For the second bound, apply Lemma~\ref{lem:gegen-moment} at degree one
and Markov's inequality with moment order proportional to $d^{2\varepsilon}$,
then take a union bound over the $\binom q2$ entries. In particular,
\[
 c_{d,q}
 =\frac{\P_{\mathrm{Gram}}(\mathcal D_d)}
 {\int_{\mathcal D_d}\exp(R_d(W))d\mu_d^{\otimes\binom q2}}
 \overset{\mathrm{(a)}}{=}1+o(1),
\]
where $\mathrm{(a)}$ uses the two bounds on $\mathcal D_d$.

For each fixed $J\ge1$, Taylor expansion gives polynomials
$P_0=1,P_1,\ldots,P_J$ such that, on $\mathcal D_d$,
\begin{equation}
 \Big|\exp(R_d(W))-\sum_{j=0}^Jd^{-j/2}P_j(W)\Big|
 \le C(q,J)d^{-(J+1)/2}(1+\|W\|_{\HS})^{3(J+1)}.
 \label{eq:cusp-app-0920-density-remainder}
\end{equation}
Let $R(h,W)$ be the right side of~\eqref{eq:cusp-app-0920-gram-log-ratio}
with $d=h^{-2}$. Its singularity at $h=0$ is removable because
the quadratic terms cancel.
For $0\le h\le d^{-1/2}$ and $W\in\mathcal D_d$,
\[
 |\partial_h^j R(h,W)|\le C(q,j)(1+\|W\|_{\HS})^{j+2},\quad
 |\partial_h^{J+1}\exp(R(h,W))|
 \le C(q,J)(1+\|W\|_{\HS})^{3(J+1)}.
\]
The first estimate follows from the logarithmic series. The second uses the
product rule and $\sup|R(h,W)|=o(1)$.
Taylor's formula then gives~\eqref{eq:cusp-app-0920-density-remainder}.

All fixed moments of $\mu_d$ are uniformly bounded. We integrate
\eqref{eq:cusp-app-0920-density-remainder} against the edge profiles to obtain
\begin{align*}
 \E\Big[\prod_{e\in F}k_e(T_e)\Big]
 &\overset{\mathrm{(a)}}{=}c_{d,q}\sum_{j=0}^Jd^{-j/2}
 \int P_j(W)\prod_{e\in F}k_e(z_e/\sqrt d)
       d\mu_d^{\otimes\binom q2}(z)\\
 &\quad{}+O_{F,J}\Big(
 d^{-\alpha(|E_1|/2+|E_2|)}
 \big[d^{-(J+1)/2}+\exp(-cd^{2\varepsilon})\big]\Big).
\end{align*}
Here $\mathrm{(a)}$ uses~\eqref{eq:cusp-edge-envelope} and H\"older's inequality:
the polynomial remainder contributes $d^{-(J+1)/2}$, while the integrals
outside $\mathcal D_d$ are exponentially small. The retained polynomials
are integrated over the full product domain, so centering and parity
apply to each coordinate.

\item \textbf{Step II.}
The trace terms expand over closed walks:
\[
 \operatorname{tr}(W^\ell)
 =\sum_{i_1,\ldots,i_\ell=1}^q
 z_{i_1i_2}\cdots z_{i_\ell i_1}.
\]
Before collecting like monomials, the series for $R_d$ and its
exponential therefore give products of the following terms.
The second column records the exponent $s$ in the factor $d^{-s}$:
\[
 \begin{array}{c|c}
 \text{factor}&\text{exponent }s\\ \hline
 \text{closed walk of length }\ell\ge3&\ell/2-1\text{ or }\ell/2\\
 z_{ij}^{2r},\ r\ge2&r-1\text{ or }r\\
 z_{ij}^{2}&1.
 \end{array}
\]
For a monomial $\prod_e z_e^{m_e}$ that has nonzero integral,
\[
 m_e\ge1\quad(e\in F),\quad
 m_e\text{ is even}\quad(e\notin F),
\]
and each edge of $F$ contributes at most
\[
 \Big|\int z^{m_e}k_e(z/\sqrt d)d\mu_d(z)\Big|
 \le C(m_e)
 \begin{cases}
  d^{-\alpha/2},&e\in E_1,\\
  d^{-\alpha},&e\in E_2
 \end{cases}
 \times
 \begin{cases}
  1,&m_e\text{ even},\\
  d^{-1/6},&m_e\text{ odd}.
 \end{cases}
\]
The first condition follows from centering; the others follow from the
symmetry of $\mu_d$ and~\eqref{eq:cusp-edge-envelope}.
If $o$ edges have odd multiplicity, the integral consequently gains
$d^{-o/6}$. The support contains $F$, hence is connected on its
$q$ vertices.

\item \textbf{Step III.}
Decompose every nonsimple closed walk into simple cycles of length
at least three and backtracks of length two. If there are $r\ge2$
pieces, including $k_2$ backtracks, the difference between the
original walk cost and the sum of the costs $\ell_i/2-1$ is
\[
 \frac{\sum_{i=1}^r\ell_i}{2}-1
 -\sum_{i=1}^r\Big(\frac{\ell_i}{2}-1\Big)
 =r-1\ge\frac{k_2}{2}.
\]
We may therefore assign cost $1/2$ to every backtrack and cost
$\ell/2-1$ to every simple cycle of length $\ell$.
Walk terms with cost $\ell/2$ have one additional unit of cost. For each remaining factor supported on one edge, also assign cost $1/2$.
These assignments preserve multiplicities and do not increase the total cost.

Since the support is connected, order the pieces so that each new
piece meets the union of the earlier pieces. At any stage let $v_*$ be the number of vertices,
$o_*$ the number of edges with odd multiplicity, and set
\[
 s_*=(\text{assigned cost})+o_*/6-(v_*-1)/2.
\]
For the first piece and for each subsequent addition, the following
cases give $s_*\ge0$:
\begin{align*}
 s_*&=0,\quad\text{first even edge},\quad s_*=(\ell-3)/6,\quad\text{first simple cycle of length }\ell,\\
 \Delta s_*&=1/2-\Delta v_*/2\ge0,\quad\text{added even edge}.
\end{align*}
For an added simple cycle of length $\ell$ with $k<\ell$ old edges,
$\Delta v_*\le\ell-k-1$ and $\Delta o_*\ge\ell-2k$, so
\[
 \Delta s_*
 \ge\ell/2-1+(\ell-2k)/6-(\ell-k-1)/2 =(\ell+k-3)/6\ge0.
\]
If $k=\ell$, then $\Delta v_*=0$, $\Delta o_*\ge-\ell$, and
$\Delta s_*\ge\ell/3-1\ge0$.
If the final value is $s_*=0$, every increment is zero. The formulas
therefore allow only a first edge or triangle, followed by a pendant
even edge, a triangle meeting the old support at one vertex, or a
triangle all of whose edges are already present. Such a support has
only bridges and triangle blocks. It cannot contain the assumed block
of $F$, so $s_*>0$. Since $s_*$ is a multiple of $1/6$,
the original exponent $s$ satisfies
\[
 s+o/6\ge(\text{assigned cost})+o/6
 \overset{\mathrm{(a)}}{\ge}(q-1)/2+1/6.
\]
Here $\mathrm{(a)}$ uses $s_*\ge1/6$.
\end{itemize}

Each surviving monomial is therefore bounded by
$C(F,J)d^{-\alpha(|E_1|/2+|E_2|)}d^{-(q-1)/2-1/6}$.
Choose $J$ so that $(J+1)/2>(q-1)/2+1/6$ and sum the finitely many
retained terms. The integrated Taylor remainder and the exponential
tail error satisfy the same bound, proving the claim.
\end{proof}

We now bound the terms in the query estimate. Fix an integer $M\ge4$.
Proposition~\ref{prop:cusp-critical-higher} uses the estimates up to
order $M$ before applying the bound for growing orders.

\begin{lemma}[Fixed critical orders]\label{lem:cusp-critical-fixed-orders}
For the localized mixed link with $\beta=\alpha+1/3$ and
$\Lambda_{n,d}\le c_{\ref{prop:cusp-KL}}(K)$, for sufficiently large $d$,
\[
 G_3\lesssim n^4\tau_{4,d}^2,\quad
 \sum_{m=4}^{\min\{M,n-1\}}G_m\le C_{\ref{lem:cusp-critical-fixed-orders}}(M)\Lambda_{n,d}.
\]
\end{lemma}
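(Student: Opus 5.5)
Fix $3\le m\le M$ with $m\le n-1$. The plan is to bound $g_{r,m}$ through the tensor form \eqref{eq:cusp-query-tensor} of Lemma~\ref{lem:cusp-query} with $E=K_m$, and then to sum the exposure weights $\binom rm$, which gives a factor $n^{m+1}$. The first term, $C^mm^2n\tau_{4,d}\|h_{m,d}\|_4^2$, is handled by the critical bound \eqref{eq:cusp-app-root-growing}. After multiplication by $n^{m+1}$ it is at most $C(M)(n\sqrt{\tau_{4,d}})^{m+2}\le C(M)n^4\tau_{4,d}^2$, because $m\ge3$ and $n\sqrt{\tau_{4,d}}\le1$. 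The exponential error is absorbed since $n\lesssim d^{\alpha+5/6}$. Everything therefore reduces to the internal expansion \eqref{eq:cusp-critical-internal-expansion}, that is, to bounding $\sum_{F\subseteq K_m}\langle f_F,T_A^{\otimes m}f_F\rangle$.

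For fixed $F$, I would take the Efron--Stein (Hoeffding) decomposition $f_F=\sum_{I\subseteq[m]}f_F^{I}$, where $f_F^{I}$ is canonical in the roots of $I$ and depends only on $X_I$. The operator $T_A^{\otimes m}$ preserves this decomposition. By the operator bounds in Lemma~\ref{lem:cusp-query}, each canonical coordinate costs $Cn\sqrt{\tau_{4,d}/d}$ and each constant coordinate costs $C$. This gives
\[
\langle f_F,T_A^{\otimes m}f_F\rangle\le C^m\sum_{I}\big(Cn\sqrt{\tau_{4,d}/d}\big)^{|I|}\|f_F^{I}\|_2^2 .
\]
The term $I=\varnothing$ is $\E[f_F]^2=b_*^{2|E(F)|}t(\operatorname{cone}(F),\kappa_d)^2$. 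I would sort the cones as in Figure~\ref{fig:cusp-cones}. Suppose every root has degree exactly one in $F$, so $F$ is a perfect matching; this requires $m$ even, with $j=m/2$ edges. Integrating the hub through \eqref{eq:cusp-matching-mean} then gives exactly $\tau_{3,d}^j$. The contribution is $n^{m+1}\tau_{3,d}^{m}\le n^{1-j}\Lambda_{n,d}^{j}\lesssim\Lambda_{n,d}$, since $j\ge2$ here. This is the step where the actual signed trace must be kept and not estimated. For $m=3$ no perfect matching exists. If some root is isolated in $F$, centering kills the mean. If $F$ has a component with at least two edges (a path or a triangle), then $\operatorname{cone}(F)$ contains a biconnected block on at least four vertices. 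Lemma~\ref{lem:fixed-graph-budget} with $E_2=\varnothing$ then bounds the mean by $d^{-\alpha(|E(F)|+m)/2-m/2-1/6}$. I would check that $n^{m+1}$ times the square of this is at most $n^4\tau_{4,d}^2$ using $n\lesssim d^{\alpha+5/6}$. The count is: each root uses one hub edge of cost $d^{-\alpha/2}$ and one factor $d^{-1/2}$, and the extra $d^{-1/6}$ plus the $F$-edges supply the remaining budget. For $m=3$ with a triangle or a path, this should give $n^4 d^{-3\alpha-\ldots}\lesssim n^4\tau_{4,d}^2$.

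For $I\ne\varnothing$, I would write $\|f_F^{I}\|_2^2=\E\big[\E[f_F\mid X_I]^2\big]-(\text{lower terms})$. Using two copies of the cone glued along the roots in $I$, this is a graph integral with possibly squared edges. Squared edges are written as $\kappa_d^2=v_d+(\kappa_d^2-v_d)$, which is exactly why Lemma~\ref{lem:fixed-graph-budget} allows the partition $E_1,E_2$. If $|I|\ge2$, two hubs share at least two roots, so a four-cycle block appears and the lemma applies. The operator factor $(n\sqrt{\tau_{4,d}/d})^{|I|}$ supplies the remaining smallness. If $|I|=1$, the glued graph has a cut vertex at the shared root, so the integral factorizes into two rooted cones. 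I would bound these using rotation invariance (rooted means are constant, so this term vanishes unless a higher block is present) together with parity: only products with zero or two odd factors survive, as noted in Section~\ref{sec:order-3-csup}.

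I expect the main obstacle to be the exponent bookkeeping in the cases $m=3$ and $|I|=1$. There the budget is tight: one needs the extra $d^{-1/6}$ from the lemma to beat the allowance $n\le d^{\alpha+5/6}$, and the multiplicities coming from repeated edges in glued cones must be tracked carefully. I would first verify the $K_4$-minus-edge and $K_4$ cones for $m=3$ explicitly, and then argue that adding further roots or edges only improves the exponents. Finally, summing over the finitely many $F$ and $I$ (a constant depending on $M$) gives both claims.
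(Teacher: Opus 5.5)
Your architecture matches the paper's: the tensor query bound \eqref{eq:cusp-query-tensor}, Hoeffding projections with the constant and centered eigenvalue bounds of $T_A$, cones that keep the signed $\tau_{3,d}^j$ for perfect matchings, and glued cones with the $E_1/E_2$ split of Lemma~\ref{lem:fixed-graph-budget} when $|I|\ge2$. In the mean term, applying the graph lemma to the whole cone when some component of $F$ has two edges also works, since then $e(F)\ge2$. The case $|I|=1$, which you flag as the main obstacle, is trivial. Rotation invariance gives $\E[f_F\mid X_i]=\E[f_F]$ for every $F$, so $P_{\{i\}}f_F=0$ always, with no exception and no parity argument needed.

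The genuine gap is the term $F=\varnothing$. Here $f_\varnothing=h_{m,d}$ is canonical, so only $I=[m]$ survives, and your claim that $(Cn\sqrt{\tau_{4,d}/d})^{|I|}$ supplies the remaining smallness fails.
\begin{itemize}
\item \emph{Why your route fails.} Gluing two stars gives $K_{2,m}$, and Lemma~\ref{lem:fixed-graph-budget} yields $\|h_{m,d}\|_2^2\lesssim d^{-\alpha m-(m+1)/2-1/6}$. The contribution is then of order $(n\sqrt{\tau_{4,d}})^{2m+1}d^{\alpha-(m-1)/6}$. For $F\ne\varnothing$ the same count gives $(n\sqrt{\tau_{4,d}})^{m+1+|I|}d^{\alpha(1-e(F))-(m-1)/6}$, so it is an internal edge that pays for the factor $d^{\alpha}$.
\item \emph{Why this is not a technicality.} When the triangle signal cancels, $\Lambda_{n,d}\le c$ still allows $n\sqrt{\tau_{4,d}}$ to be a small constant. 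Your bound then exceeds $n^4\tau_{4,d}^2$ at $m=3$ whenever $\alpha>1/3$, and also at $m=4,5,6$ for $\alpha$ near one.
\item \emph{Repair for $m\ge4$.} Use the root-moment bound \eqref{eq:cusp-app-root-growing}, $\|h_{m,d}\|_2^2\le\|h_{m,d}\|_4^2\lesssim\tau_{4,d}^{m/2}$, instead of the graph lemma. This gives $n(n\sqrt{\tau_{4,d}})^{2m}d^{-m/2}$, which suffices.
\item \emph{Why $m=3$ needs more.} Even the root-moment bound leaves $(n\sqrt{\tau_{4,d}})^7d^{\alpha-2/3}$. The exact order $\|h_{3,d}\|_2^2\asymp d^{-3\alpha-8/3}$, coming from the $(1,1,2)$ harmonic components, still leaves $d^{\alpha-5/6}$. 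So no moment bound combined with the generic eigenvalue bound closes this case.
\end{itemize}

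The missing idea is parity inside the operator. Since $h_{m,d}(-x_1,\ldots,-x_m)=h_{m,d}(x_1,\ldots,x_m)$, its harmonic components have positive degrees with even sum. For odd $m$, some coordinate therefore carries an even degree $q\ge2$. On that coordinate, the third operator bound in Lemma~\ref{lem:cusp-query} gives $|\lambda_q(T_A)|\le Cn\sqrt{\tau_{4,d}}/d$. Combined with the root-moment bound, this yields $n(n\sqrt{\tau_{4,d}})^{2m}d^{-\lceil m/2\rceil}$. At $m=3$ this is $(n\sqrt{\tau_{4,d}})^7d^{\alpha-7/6}\lesssim n^4\tau_{4,d}^2$, because $\alpha\le1$. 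The parity remark you borrowed from Section~\ref{sec:order-3-csup} belongs here, not at $|I|=1$.
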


\begin{proof}
Fix $3\le m\le\min\{M,n-1\}$ and let $e(F)=|E(F)|$.
By~\eqref{eq:cusp-query-tensor} in Lemma~\ref{lem:cusp-query} and
\eqref{eq:cusp-app-root-growing},
\begin{equation}
 G_m
 \le C(n\sqrt{\tau_{4,d}})^{m+2}
   +Cn^{m+1}\sum_{F\subseteq K_m}
       \langle f_F,T_A^{\otimes m}f_F\rangle
   +Cn^{m+1}\exp(-cd^\delta).
 \label{eq:cusp-app-fixed-query}
\end{equation}
Here we used $\sum_{r=m}^{n-1}\binom rm=\binom n{m+1}$.
The first term is at most $Cn^4\tau_{4,d}^2$ because
$n\sqrt{\tau_{4,d}}\le1$. The exponential error is also smaller
than this quantity, since
$n\lesssim d^{\alpha+5/6}$ and
$\tau_{4,d}^2\asymp d^{-4\alpha-10/3}$.

To separate the means from the dependence on the roots, define
\[
 P_If_F=\sum_{J\subseteq I}(-1)^{|I|-|J|}
                 \E[f_F\mid X_J],\quad I\subseteq[m].
\]
Here $\E[f_F\mid X_\varnothing]=\E[f_F]$.
These projections are orthogonal, sum to $f_F$, and are centered
in every coordinate in $I$.
Rotational invariance gives $T_A1=\E[A(T_d)]$.
Since $T_A$ preserves constants and their orthogonal complement,
\begin{equation}
 \langle f_F,T_A^{\otimes m}f_F\rangle
 =\sum_{I\subseteq[m]}\E[A(T_d)]^{m-|I|}
       \langle P_If_F,T_A^{\otimes |I|}P_If_F\rangle \overset{\mathrm{(a)}}{\le} C\sum_{I\subseteq[m]}
       (n\sqrt{\tau_{4,d}/d})^{|I|}\|P_If_F\|_2^2.
 \label{eq:cusp-app-0920-fixed-projections}
\end{equation}
For $I=\varnothing$, the inner product means $|\E[f_F]|^2$.
Here $\mathrm{(a)}$ uses the constant and centered operator bounds
in Lemma~\ref{lem:cusp-query}. We estimate the four types of terms
in~\eqref{eq:cusp-app-0920-fixed-projections} separately.

\begin{itemize}
\item \textbf{Term I.} For $I=\varnothing$, we bound $|\E[f_F]|^2$.
For $m=3$, integration gives the explicit cases
\[
 \E[f_F]=
 \begin{cases}
 0,&e(F)\le1,\\
 b_*^2\E[\kappa_d(T_d)H_d(T_d)^2],&e(F)=2,\\
 b_*^3\E\big[\prod_{1\le i<j\le4}\kappa_d(T_{ij})\big],&e(F)=3.
 \end{cases}
\]
In the first case, a root is incident to only one kernel factor.
For two internal edges, integrating the endpoints of their path
gives the two factors of $H_d$. For three internal edges, the graph is $K_4$.
We then have
\begin{equation}
 n^4\sum_{F\subseteq K_3}|\E[f_F]|^2
 \overset{\mathrm{(a)}}{\lesssim}n^4\big(v_d\|H_d\|_4^4+d^{-6\alpha-10/3}\big)
 \overset{\mathrm{(b)}}{\lesssim}n^4\tau_{4,d}^2.
 \label{eq:cusp-app-three-means}
\end{equation}
Here $\mathrm{(a)}$ uses Cauchy--Schwarz for the two-edge case
and Lemma~\ref{lem:fixed-graph-budget} for $K_4$;
$\mathrm{(b)}$ uses Lemma~\ref{lem:contracted}, $v_d\lesssim1$,
and $\tau_{4,d}^2\asymp d^{-4\alpha-10/3}$.

For $m\ge4$, an isolated vertex of $F$ again gives
$\E[f_F]=0$. Otherwise, let $F_1,\ldots,F_b$ be its connected
components, with $c_j=|V(F_j)|$ and $e_j=|E(F_j)|$.
Conditioning on the integrated center and using rotational
invariance gives the exact factorization
\[
 \E[f_F]=b_*^{e(F)}\prod_{j=1}^b
          t(\operatorname{cone}(F_j),\kappa_d).
\]
If $c_j=2$, this factor equals the signed $\tau_{3,d}$.
If $c_j\ge3$, the cone is biconnected, and
Lemma~\ref{lem:fixed-graph-budget} gives
\[
 n^{c_j+1}t(\operatorname{cone}(F_j),\kappa_d)^2
 \overset{\mathrm{(a)}}{\lesssim}n^{c_j+1}d^{-\alpha(c_j+e_j)-c_j-1/3} \asymp(n\sqrt{\tau_{4,d}})^{c_j+1}
       d^{-\alpha(e_j-1)-(c_j-3)/6}
 \overset{\mathrm{(b)}}{\lesssim}n^4\tau_{4,d}^2.
\]
Here $\mathrm{(a)}$ applies Lemma~\ref{lem:fixed-graph-budget} to
a cone with $c_j+1$ vertices and $c_j+e_j$ edges;
$\mathrm{(b)}$ uses $c_j\ge3$, $e_j\ge c_j-1$, and $n\sqrt{\tau_{4,d}}\le1$.
Consequently,
\[
 n^{m+1}|\E[f_F]|^2
 =b_*^{2e(F)}n^{1-b}
   \prod_{j=1}^b n^{c_j+1}
       t(\operatorname{cone}(F_j),\kappa_d)^2 \le Cn^{1-b}\Lambda_{n,d}^b
 \le C\Lambda_{n,d}.
\]
In particular, a perfect matching with $\ell=m/2$ edges gives
$b_*^{2\ell}n^{1-\ell}(n^3\tau_{3,d}^2)^\ell$.
Thus no absolute value is taken inside a triangle integral.

\item \textbf{Term II.} Suppose $|I|=1$.
By rotational invariance, $\E[f_F\mid X_i]=\E[f_F]$,
so $P_{\{i\}}f_F=0$ for every $i\in[m]$.

\item \textbf{Term III.} When $F=\varnothing$, $f_F=h_{m,d}$ is centered in every root, so only
$I=[m]$ survives. Also
$h_{m,d}(-X_1,\ldots,-X_m)=h_{m,d}(X_1,\ldots,X_m)$.
Its harmonic degrees are positive and have even sum.
For odd $m$, at least one degree is positive and even.
We use the centered operator bound in Lemma~\ref{lem:cusp-query}
in every coordinate and its even bound in one coordinate when $m$ is odd:
\begin{align*}
 n^{m+1}\langle h_{m,d},T_A^{\otimes m}h_{m,d}\rangle
 &\overset{\mathrm{(a)}}{\le} Cn^{m+1}(n\sqrt{\tau_{4,d}})^m
 d^{-\lceil m/2\rceil}\|h_{m,d}\|_2^2\\
 &\overset{\mathrm{(b)}}{\le} Cn(n\sqrt{\tau_{4,d}})^{2m}
 d^{-\lceil m/2\rceil}\\
 &\overset{\mathrm{(c)}}{\le} C(n\sqrt{\tau_{4,d}})^7
 d^{\alpha-7/6}\le Cn^4\tau_{4,d}^2.
\end{align*}
Here $\mathrm{(a)}$ uses these operator bounds and the parity of the
harmonic degrees; $\mathrm{(b)}$ uses $\|h_{m,d}\|_2\le\|h_{m,d}\|_4$
and~\eqref{eq:cusp-app-root-growing}; $\mathrm{(c)}$ uses $m\ge3$, $n\sqrt{\tau_{4,d}}\le1$,
and $\sqrt{\tau_{4,d}}\asymp d^{-\alpha-5/6}$.
The last inequality follows from $\alpha\le1$.

\item \textbf{Term IV.} Suppose $F\ne\varnothing$ and $k=|I|\ge2$.
Orthogonality gives
$\|P_If_F\|_2^2\le\|\E[f_F\mid X_I]\|_2^2$.
To write the latter as a graph integral, set $X_i'=X_i$ for
$i\in I$ and take all other $X_i,X_i'$, and two centers $Z,Z'$
independently from $\sigma_d$. Then
\begin{equation}
 \|\E[f_F\mid X_I]\|_2^2=b_*^{2e(F)}\E\Big[
 \prod_{j=1}^m\kappa_d(\langle Z,X_j\rangle)
                  \kappa_d(\langle Z',X_j'\rangle)
 \prod_{e\in E(F)}\kappa_d(T_e)\kappa_d(T_e')\Big].
 \label{eq:cusp-app-glued-moment}
\end{equation}
This graph has $2+2m-k$ vertices and contains $K_{2,k}$
between the centers and the shared roots, hence a block
on at least four vertices.
The repeated edges are exactly $D=E(F[I])$, where $F[I]$
is the subgraph induced by the roots in $I$.
Their factors expand as
\[
 \prod_{e\in D}\kappa_d(T_e)^2
 =\sum_{J\subseteq D}v_d^{|D|-|J|}
       \prod_{e\in J}\big(\kappa_d(T_e)^2-v_d\big).
\]
Each retained repeated edge contributes a centered square of size
$d^{-\alpha}$; each deleted edge contributes $v_d\asymp d^{-\alpha}$.
Thus the total amplitude is $d^{-\alpha(m+e(F))}$ in every term.
Deleting internal edges preserves all $2+2m-k$ vertices, the center
edges, and the $K_{2,k}$ subgraph. Lemma~\ref{lem:fixed-graph-budget}
therefore gives
\begin{equation}
 \|P_If_F\|_2^2
 \le Cd^{-\alpha(m+e(F))}
          d^{-(2+2m-k-1)/2-1/6}=Cd^{-\alpha(m+e(F))-m+k/2-2/3}.
 \label{eq:cusp-app-fixed-conditional-moment}
\end{equation}
Its contribution to~\eqref{eq:cusp-app-fixed-query} is therefore
\[
 n^{m+1}(n\sqrt{\tau_{4,d}/d})^k\|P_If_F\|_2^2
 \overset{\mathrm{(a)}}{\le}C(n\sqrt{\tau_{4,d}})^{m+1+k}
 d^{\alpha(1-e(F))-(m-1)/6}
 \overset{\mathrm{(b)}}{\le}Cn^4\tau_{4,d}^2.
\]
Here $\mathrm{(a)}$ uses~\eqref{eq:cusp-app-fixed-conditional-moment}
and $\sqrt{\tau_{4,d}}\asymp d^{-\alpha-5/6}$;
$\mathrm{(b)}$ uses $e(F)\ge1$, $m\ge3$, $k\ge2$, and
$n\sqrt{\tau_{4,d}}\le1$.
\end{itemize}

There are finitely many choices of $m,F,I$ for $m\le M$.
For $m=3$, all four cases are bounded by
$Cn^4\tau_{4,d}^2$; for $m\ge4$, the mean may additionally
contribute $Cn^3\tau_{3,d}^2$.
Summing these bounds in~\eqref{eq:cusp-app-fixed-query}
proves both assertions.
\end{proof}
Lemma~\ref{lem:cusp-critical-fixed-orders} proves the critical
part of Proposition~\ref{prop:cusp-triangle-three}.

\section{Proof for smooth kernels}
\label{app:finite-harmonic}

We prove the estimates used in
Sections~\ref{sec:smooth-order-two}--\ref{sec:smooth-higher-orders}.
Here $P$ is the finite harmonic model in~\eqref{eq:finite-harmonic-model},
with coefficients satisfying~\eqref{eq:finite-bounds}
and~\eqref{eq:finite-field}. We use
$n=d^{m+q_m/3}/\omega_d$, with $\omega_d\to\infty$, as in
Section~\ref{sec:polynomial}. Unsubscripted expectations are under
independent prior samples. Recall the Gram event
$\mathcal G_D$ from~\eqref{eq:smooth-gram-event}.

\subsection{The prior exponential bound}
\label{app:proof-finite-mgf}
\label{app:finite-setup}

The proof of Lemma~\ref{lem:finite-mgf} estimates each harmonic
pair separately. Conditioning on the sample carrying the larger
degree gives the smaller Gram bound.

\begin{proof}[Proof of Lemma~\ref{lem:finite-mgf}]
For $q\le\ell$, condition on $X'$. On $\mathcal G_D(X')$,
\begin{equation}
 \|\mathbf U_\ell(X')\|_{\op}
 \le C_{\ref{lem:gram-event}}(D,\mathcal I)
 \big(1+n/N_{\ell,d}+\log(n+N_{\ell,d})\big).
 \label{eq:mc-8}
\end{equation}
For fixed $c_0$, condition~\eqref{eq:finite-field} gives
$|c_0b_qb_\ell|\|\mathbf U_\ell(X')\|_{\op}=o(d)$.
Lemma~\ref{lem:signed-block} therefore gives
\begin{equation}
 \log\E_X[\exp(c_0b_qb_\ell O_{q\ell})\mid X']
 \le Cn\frac{b_q^2b_\ell^2}{N_{q,d}}
 \Big(1+\log(n+N_{\max})+\frac n{N_{\ell,d}}\Big).
 \label{eq:mc-9b}
\end{equation}
When $\ell<q$, exchange $X$ and $X'$.
H\"older's inequality with $|\mathcal I|^2$ equal exponents gives
\[
 \E[\1_{\mathcal G_D(X)\cap\mathcal G_D(X')}e^{S_\kappa}]
 \le\prod_{q,\ell}
 \Big(\E[\1_{\mathcal G_D(X)\cap\mathcal G_D(X')}
 e^{|\mathcal I|^2b_qb_\ell O_{q\ell}}]\Big)^{1/|\mathcal I|^2}.
\]
For $q\le\ell$, discard $\1_{\mathcal G_D(X)}$ and apply
\eqref{eq:mc-9b}; for $\ell<q$, condition on $X$.
The resulting sums satisfy
\begin{align*}
 \sum_{q\le\ell}\frac{b_q^2b_\ell^2}{N_{q,d}}
 &\le\Big(\sum_q\frac{b_q^2}{N_{q,d}}\Big)
       \Big(\sum_\ell b_\ell^2\Big)
       \overset{\mathrm{(a)}}{\lesssim}d^{-m},\\
 \sum_{q\le\ell}\frac{b_q^2b_\ell^2}{N_{q,d}N_{\ell,d}}
 &\le\Big(\sum_q\frac{b_q^2}{N_{q,d}}\Big)^2
       \overset{\mathrm{(a)}}{\lesssim}d^{-2m},
\end{align*}
where $\mathrm{(a)}$ uses~\eqref{eq:finite-bounds}.
The symmetric pairs give the same bounds. By the definition of $V_0$,
\begin{equation}
 \log\E[\1_{\mathcal G_D(X)\cap\mathcal G_D(X')}
 e^{S_\kappa}]\le CV_0.
 \label{eq:mc-10}
\end{equation}
For $a(S_\kappa+F)$, every pair coefficient has absolute value
at most $a(1+t_0)|b_qb_\ell|$. Condition~\eqref{eq:finite-field}
still holds, and Lemma~\ref{lem:signed-block} allows either sign.
The calculation giving~\eqref{eq:mc-10} therefore proves
\eqref{eq:finite-mgf}.
\end{proof}

\subsection{Order three}
\label{app:proof-finite-order-three}
\label{app:cubic-event}

We first bound the largest coefficient of the conditional
$U$-statistic in Section~\ref{sec:smooth-order-three}.

\begin{lemma}[Cubic envelope event]
\label{lem:cubic-envelope}
For every fixed $D>0$,
\begin{equation}
 \P\Big[\max_{i<j<k}|h_3(X_i,X_j,X_k)|
 >C_{\ref{lem:cubic-envelope}}(D)
   (\log(n+d))^{C_{\ref{lem:cubic-envelope}}(D)}
   d^{-3m/2-3q_m/4}\Big]\le(n+d)^{-D}.
 \label{eq:app-event-2}
\end{equation}
\end{lemma}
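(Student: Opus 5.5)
The plan is to bound a single fixed triple, $\P[|h_3(X_1,X_2,X_3)|>t]$, for the threshold $t$ displayed in~\eqref{eq:app-event-2}, and then take a union bound over the at most $n^3$ triples. Since $n$ is polynomial in $d$, it suffices to show that one triple exceeds the threshold with probability at most $(n+d)^{-D-3}$.

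\textbf{Step 1: reduce $h_3$ to a polynomial.} Here $h_3$ is built from the finite harmonic kernel $\kappa=\sum_{q\in\mathcal I}b_qU_q$. Integrating out the center $Z$ shows that $h_3(x_1,x_2,x_3)$ is a polynomial on $(\Sph^{d-1})^3$. Its degree in each variable is at most $\max\mathcal I$, which is a fixed number. It is also canonical in each argument.

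\textbf{Step 2: bound the $L^2$ norm.} Section~\ref{sec:smooth-order-three} already gives
\[
\|h_3\|_2^2=\E[H(X_1,X_2)^3]\lesssim d^{-3m-3q_m/2},
\]
where $H$ is the kernel of $T_\kappa^2$. Taking square roots, $\|h_3\|_2\lesssim d^{-3m/2-3q_m/4}$, which is exactly the scale appearing in the threshold.

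\textbf{Step 3: pass to high moments.} Apply Lemma~\ref{lem:spherical-hypercontractivity} with $s=3$ and $L=3\max\mathcal I$. This gives, for every $p\ge2$,
\[
\|h_3\|_p\le (p-1)^{L/2}\|h_3\|_2 .
\]

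\textbf{Step 4: Markov with a logarithmic moment.} Take $p=(D+3)\log(n+d)$ and set the threshold to $t=e\,p^{L/2}\|h_3\|_2$. Markov's inequality then gives
\[
\P[|h_3|>t]\le e^{-p}=(n+d)^{-D-3}.
\]
The threshold $t$ has the form $C(\log(n+d))^{L/2}d^{-3m/2-3q_m/4}$. Choosing the constant $C_{\ref{lem:cubic-envelope}}(D)$ to cover both the prefactor and the exponent $L/2$ yields~\eqref{eq:app-event-2} after the union bound.

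\textbf{Main obstacle.} The one point to check is that the kernel integrated over $Z$ really has bounded degree in each root, so that the hypercontractive constant is independent of $d$. This is the case because each factor $\kappa(x_i,Z)$ is a polynomial of fixed degree in the pair $(x_i,Z)$ and integrating over $Z$ does not raise the degree in $x_i$. Since $\mathcal I$ is fixed, all constants depend only on $D$ and $\mathcal I$. The case where $h_3$ is identically zero is trivial.
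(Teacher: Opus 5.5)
Your proposal is correct and follows essentially the same route as the paper's proof. Both arguments bound $\|h_3\|_p$ by $p^{C}\|h_3\|_2$ via the spherical hypercontractivity lemma, use $\|h_3\|_2^2=\E[H(X_1,X_2)^3]\lesssim d^{-3m-3q_m/2}$, and then apply Markov's inequality at $p\asymp(D+O(1))\log(n+d)$ together with a union bound over the at most $n^3$ triples.
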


\begin{proof}
Recall that $H$ is the kernel of $T_\kappa^2$.
By~\eqref{eq:finite-bounds},
$\|H\|_2\le\|T_\kappa\|_{\op}\|\kappa\|_2
\lesssim d^{-m-q_m/2}$. For $p\ge2$,
\begin{equation}
 \|h_3\|_p\overset{\mathrm{(a)}}{\le}Cp^C\|h_3\|_2
 =Cp^C\big(\E[H(X_1,X_2)^3]\big)^{1/2}
 \overset{\mathrm{(b)}}{\lesssim}p^Cd^{-3m/2-3q_m/4}.
 \label{eq:app-event-1}
\end{equation}
Here $\mathrm{(a)}$ uses Lemma~\ref{lem:spherical-hypercontractivity}
for the fixed degree polynomial $h_3$; the equality follows from
Fubini; $\mathrm{(b)}$ uses
$\E[H^3]\le\|H\|_3^3\lesssim\|H\|_2^3$ from the same lemma.
Take $p=(D+4)\log(n+d)$. Markov's inequality and a union bound give
\[
 \P\Big[\max_{i<j<k}|h_3(X_i,X_j,X_k)|
 >eCp^Cd^{-3m/2-3q_m/4}\Big]
 \le\binom n3e^{-p}\le(n+d)^{-D},
\]
which proves~\eqref{eq:app-event-2}.
\end{proof}

\begin{proof}[Proof of Proposition~\ref{prop:finite-order-three}]
Recall $\kappa=\sum_{q\in\mathcal I}b_qU_q$ and that the kernel of
$T_\kappa^2$ is $H=\sum_{q\in\mathcal I}(b_q^2/N_{q,d})U_q$.
The coefficient bounds in~\eqref{eq:finite-bounds} imply
\begin{equation}
 \|\kappa\|_2\lesssim d^{-m/2},
 \quad\|T_\kappa\|_{\op}\lesssim d^{-(m+q_m)/2},
 \quad\|H\|_2\lesssim d^{-m-q_m/2}.
 \label{eq:smooth-cubic-spectral}
\end{equation}
For the last estimate, use
$\|T_\kappa^2\|_{\HS}\le
\|T_\kappa\|_{\op}\|T_\kappa\|_{\HS}$.

\noindent\textbf{Step I.} We verify the slice bounds used in~\eqref{eq:hc-9}.
Recall $a_r$ from Section~\ref{sec:smooth-order-three}.
By Lemma~\ref{lem:spherical-hypercontractivity},
$\sup_x\|\kappa(x,\cdot)\|_p\lesssim d^{-m/2}$
for every fixed $p$. H\"older's inequality gives
\begin{equation}
 a_0\le\sup_{x_1,x_2,x_3}
       \prod_{j=1}^3\|\kappa(x_j,\cdot)\|_3
 \lesssim d^{-3m/2}.
 \label{eq:hc-1}
\end{equation}
For one integrated variable, use~\eqref{eq:smooth-cubic-spectral}:
\begin{align}
 a_1&=\sup_{x_1,x_2}
 \big\|T_\kappa[\kappa(x_1,\cdot)\kappa(x_2,\cdot)]\big\|_2\overset{\mathrm{(a)}}{\le}\|T_\kappa\|_{\op}
 \sup_{x_1,x_2}\|\kappa(x_1,\cdot)\|_4
                  \|\kappa(x_2,\cdot)\|_4
 \lesssim d^{-3m/2-q_m/2}.
 \label{eq:hc-2}
\end{align}
Here $\mathrm{(a)}$ uses the operator norm and H\"older's inequality;
the last bound uses~\eqref{eq:smooth-cubic-spectral}
and Lemma~\ref{lem:spherical-hypercontractivity}.
Rotation invariance makes the slice with two variables integrated
constant in the third variable. Thus
\begin{align}
 a_2=a_3,\quad a_3^2&=\E[H(X_1,X_2)^3]
 \le\|H\|_3^3\overset{\mathrm{(a)}}{\lesssim}\|H\|_2^3
 \overset{\mathrm{(b)}}{\lesssim}d^{-3m-3q_m/2},\label{eq:hc-4}
\end{align}
where $\mathrm{(a)}$ uses Lemma~\ref{lem:spherical-hypercontractivity}
and $\mathrm{(b)}$ uses~\eqref{eq:smooth-cubic-spectral}.

\noindent\textbf{Step II.} Condition on $X'\in\mathcal A_D$, with
$\mathcal A_D$ defined in~\eqref{eq:smooth-cubic-event}.
The coefficients $h_3(X_i',X_j',X_k')$ are fixed, and
$h_3$ is canonical in each unprimed variable. The scales in
\eqref{eq:t-7} of Lemma~\ref{lem:adamczak-cor} satisfy
\[
 \max_{i<j<k}|h_3(X_i',X_j',X_k')|n^{r/2}a_r
 \le L_nd^{-3m/2-3q_m/4}n^{r/2}a_r\lesssim D_r.
\]
Here $D_r$ is defined in~\eqref{eq:smooth-cubic-scales}.
Equation~\eqref{eq:t-7} in that lemma now gives~\eqref{eq:hc-9}
for $t\ge C\max_{0\le r\le3}D_r$.

Let
\begin{equation}
 n=d^{m+q_m/3}/\omega_d,
 \quad\omega_d\to\infty,
 \quad t=n^{-1-\delta}.
 \label{eq:hc-9a}
\end{equation}
At $\delta=0$, substitution in the four terms gives
\begin{equation}
 (\zeta_0,\zeta_1,\zeta_2,\zeta_3)
 =\Big(\frac{2m}{3}+\frac{5q_m}{36},
       \frac{3m}{5}+\frac{3q_m}{10},
       \frac m2+\frac{5q_m}{12},
       \frac m3+\frac{4q_m}{9}\Big).
 \label{eq:hc-10}
\end{equation}
More explicitly, for $0\le r\le3$,
\[
 \Big(\frac{t}{D_r}\Big)^{2/(6-r)}
 \ge d^{\zeta_r-2\delta(m+q_m/3)/(6-r)+o(1)},
\]
since $L_n=d^{o(1)}$ and the powers of $\omega_d$ are positive.
As $m\ge q_m$ and $q_m\in\{1,2\}$,
\[
 \min_{0\le r\le3}\zeta_r>\frac{2q_m}{3}.
\]
Choose $\delta>0$ so small that
$\zeta:=\min_r\{\zeta_r-2\delta(m+q_m/3)/(6-r)\}>2q_m/3$.
In particular, $t/D_r\to\infty$ for every $r$.
Averaging the conditional estimate~\eqref{eq:hc-9} over $X'$ gives
\begin{equation}
 \P[|T_3|>n^{-1-\delta},\mathcal A_D(X')]
 \le\exp\big(-d^{\zeta+o(1)}\big),
 \quad\zeta>2q_m/3.
 \label{eq:hc-10a}
\end{equation}

\noindent\textbf{Step III.} We transfer~\eqref{eq:hc-10a} to the posterior.
Let $\mathcal B_t=\{|T_3|>t\}$ and
$\mathcal G=\mathcal G_D(X)\cap\mathcal G_D(X')
\cap\mathcal A_D(X)\cap\mathcal A_D(X')$.
Since $\Gamma_d\le e^{S_\kappa}$ by~\eqref{eq:finite-likelihood-overlap},
\begin{align}
 \E[\Gamma_d\1_{\mathcal B_t\cap\mathcal G}]
 &\overset{\mathrm{(a)}}{\le}\Big(\E[e^{2S_\kappa}
 \1_{\mathcal G_D(X)\cap\mathcal G_D(X')}]\Big)^{1/2}
 \P[\mathcal B_t\cap\mathcal G]^{1/2}\overset{\mathrm{(b)}}{\le}e^{C(1+V_0)}
 \P[\mathcal B_t\cap\mathcal G]^{1/2}.
 \label{eq:hc-12}
\end{align}
Here $\mathrm{(a)}$ is Cauchy--Schwarz and $\mathrm{(b)}$ uses
\eqref{eq:finite-mgf} in Lemma~\ref{lem:finite-mgf} with $a=2$.
Since $n\le d^{m+q_m/3}$, the definition of $V_0$ gives
$V_0\le d^{2q_m/3+o(1)}$. Thus
\begin{align*}
 \P_{\post}[\mathcal B_t\cap\mathcal G]
 &\overset{\mathrm{(a)}}{\le}
 \big(\E[\Gamma_d\1_{\mathcal B_t\cap\mathcal G}]\big)^{1/2}\overset{\mathrm{(b)}}{\le}
 \exp\big(C(1+V_0)-d^{\zeta+o(1)}\big)
 =\exp\big(-d^{\zeta+o(1)}\big),
\end{align*}
where $\mathrm{(a)}$ is~\eqref{eq:4-6} in Lemma~\ref{lem:planting}
and $\mathrm{(b)}$ combines~\eqref{eq:hc-10a} and~\eqref{eq:hc-12}.
The last equality uses $\zeta>2q_m/3$.
The prior marginal identity in Lemma~\ref{lem:planting} also gives
\[
 \P_{\post}[\mathcal G^c]
 \le2\P[\mathcal G_D^c]+2\P[\mathcal A_D^c]
 \lesssim n^{-D}.
\]
Finally, $|T_3|\lesssim n^3d^{-3m}$ by~\eqref{eq:hc-1}, so
\begin{align*}
 \E_{\post}[|T_3|]
 &\le t+Cn^3d^{-3m}
   \big(\P_{\post}[\mathcal B_t\cap\mathcal G]
       +\P_{\post}[\mathcal G^c]\big)\\
 &\le n^{-1-\delta}
 +Cn^3d^{-3m}\Big(\exp\big(-d^{\zeta+o(1)}\big)+n^{-D}\Big)
 =o(n^{-1}),
\end{align*}
where $D>5$ suffices. Substituting
\eqref{eq:smooth-cubic-replica} yields
\begin{equation}
 \binom n4\bar g_3
 =\frac{n-3}{4}\E_{\post}[T_3]
 \le\frac{n-3}{4}\E_{\post}[|T_3|]=o(1).
 \label{eq:hc-13}
\end{equation}
\end{proof}
\subsection{Quadratic overlaps for higher orders}
\phantomsection\label{app:proof-finite-higher}
We bound the posterior moments of $Y_H$ needed in
Section~\ref{sec:smooth-higher-orders}. Recall $Y_H$ from
\eqref{eq:hc-15} and $O_{q\ell}$ from~\eqref{eq:smooth-quadratic-form}.

\begin{lemma}[Quadratic overlap]
\label{lem:finite-quadratic-overlap}
For every fixed $s\ge1$,
\begin{equation}
 \|Y_H\|_{L^s(\P_{\post})}
 \le C_{\ref{lem:finite-quadratic-overlap}}(s)\big(nd^{-2m}+(1+V_0)d^{-(m+q_m)}\big).
 \label{eq:mc-17}
\end{equation}
\end{lemma}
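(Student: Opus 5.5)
The bound follows the template of Proposition~\ref{prop:finite-order-two}: establish a posterior tail bound for $Y_H$ and then integrate it. First I expand $Y_H$ in harmonic overlaps. Since $H=\sum_q w_qU_q$ with $w_q=b_q^2/N_{q,d}$, we have
\[
 Y_H=\sum_{q,\ell}w_qw_\ell\Big(nU_q(1)U_\ell(1)+2O_{q\ell}\Big)
 =n\Big(\sum_q w_q\Big)^2+2\sum_{q,\ell}w_qw_\ell O_{q\ell}.
\]
The diagonal term is deterministic. By \eqref{eq:finite-bounds}, $\sum_q w_q=O(d^{-m})$, so it contributes $O(nd^{-2m})$, which is the first term in \eqref{eq:mc-17}. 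It therefore suffices to bound $\|\sum_{q,\ell}w_qw_\ell O_{q\ell}\|_{L^s(\P_{\post})}$. Because the index set is finite, Minkowski's inequality reduces this to bounding $w_qw_\ell\|O_{q\ell}\|_{L^s(\P_{\post})}$ for each fixed pair $(q,\ell)$.

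For each pair I prove two-sided posterior tails for $O_{q\ell}$, using Lemma~\ref{lem:finite-mgf} with $F=\pm t|b_qb_\ell|O_{q\ell}$. The lemma only covers $F=t|b_qb_\ell|O_{q\ell}$, but its proof through Lemma~\ref{lem:signed-block} works for either sign. Exponential Markov gives
\[
 \E\big[\Gamma_d\1_{\mathcal G_D(X)\cap\mathcal G_D(X')}\1_{\{\pm O_{q\ell}>x\}}\big]\le\exp\big(C(1+V_0)-t|b_qb_\ell|x\big).
\]
Applying Lemma~\ref{lem:planting} together with the prior marginal bound $\P_{\post}[\mathcal G_D^c]\le 2(n+N_{\max})^{-D}$ yields
\[
 \P_{\post}\Big[|O_{q\ell}|>\frac{C(1+V_0)+u}{|b_qb_\ell|}\Big]\le Ce^{-cu}+2(n+N_{\max})^{-D}.
\]
Integrating this tail and using $|O_{q\ell}|\le n^2$ with $D$ large (depending on $s$) gives
\[
 \|O_{q\ell}\|_{L^s(\P_{\post})}\le C_s\,\frac{1+V_0}{|b_qb_\ell|}.
\]
Consequently $w_qw_\ell\|O_{q\ell}\|_{L^s}\le C_s(1+V_0)\,|b_q||b_\ell|/(N_{q,d}N_{\ell,d})$. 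By Cauchy--Schwarz and \eqref{eq:finite-bounds}, $\sum_{q,\ell}|b_qb_\ell|/(N_{q,d}N_{\ell,d})\le |\mathcal I|\sum_q b_q^2/N_{q,d}^2=O(d^{-(m+q_m)})$. This gives the second term of \eqref{eq:mc-17}.

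The main obstacle is the off-diagonal pairs $q\ne\ell$. For these, $O_{q\ell}$ has no sign, and the exponential moment has to be controlled in both directions under the Gram events. I expect the conditional bound of Lemma~\ref{lem:signed-block}, taken with respect to the sample carrying the larger degree, to handle this, as in the proof of Lemma~\ref{lem:finite-mgf}. One must still check that condition \eqref{eq:finite-field} leaves room for the extra coefficient $t\le t_0$ combined with the factor $a$. If a direct bound on an off-diagonal pair failed, my fallback would be polarization: write $2O_{q\ell}$ in terms of overlaps of the combined features $z_q\oplus z_\ell$, which reduces the problem to diagonal, positive semidefinite overlaps.
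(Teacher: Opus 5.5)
Your argument is correct, but it handles the sign of $O_{q\ell}$ differently from the paper.

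The paper never needs a lower tail. By the addition formula, $n+2O_{q\ell}=\bigl\|\sum_i z_q(X_i)\otimes z_\ell(X_i')\bigr\|^2$. Hence $0\le n+2O_{q\ell}\le n+2(O_{q\ell})_+$ for every pair $(q,\ell)$, diagonal or not. The paper writes $Y_H=\sum_{q,\ell}w_qw_\ell(n+2O_{q\ell})$ as a sum of nonnegative terms. It then integrates only the upper posterior tail, which comes from Lemma~\ref{lem:finite-mgf} exactly as stated ($F=t|b_qb_\ell|O_{q\ell}$, $t>0$). The $n$ inside each bracket produces the $nd^{-2m}$ term.

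You instead split off the deterministic diagonal $n(\sum_q w_q)^2$ and bound $\|O_{q\ell}\|_{L^s(\P_{\post})}$ through two-sided tails. This needs Lemma~\ref{lem:finite-mgf} with $F=-t|b_qb_\ell|O_{q\ell}$. As you say, that extension is legitimate. Lemma~\ref{lem:signed-block} holds for both signs, and the proof of Lemma~\ref{lem:finite-mgf} only uses the bound $a(1+t_0)|b_qb_\ell|$ on each pair coefficient. So your route works, but it requires checking the lemma beyond its literal statement, whereas the paper's positivity identity makes the lower side free.

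Your identification of the obstacle is also slightly off. The diagonal overlaps $O_{qq}$ have no sign either, so off-diagonal pairs are not special. Moreover, $n+2O_{q\ell}\ge0$ gives the deterministic bound $|O_{q\ell}|\le n/2+(O_{q\ell})_+$. After multiplying by $w_qw_\ell$ and summing, the extra $n/2$ contributes only $O(nd^{-2m})$. So neither the both-sign extension nor the polarization fallback is actually needed.
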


\begin{proof}
The addition formula writes $U_q(x,y)=\langle z_q(x),z_q(y)\rangle$
with $\|z_q(x)\|=1$. Hence
\begin{equation}
 0\le n+2O_{q\ell}
 =\Big\|\sum_i z_q(X_i)\otimes z_\ell(X_i')\Big\|^2\le n^2.
 \label{eq:newton-nonnegative-overlap}
\end{equation}
Using $H=\sum_q(b_q^2/N_{q,d})U_q$ in~\eqref{eq:hc-15} gives
\begin{equation}
 Y_H=\sum_{q,\ell\in\mathcal I}
      \frac{b_q^2b_\ell^2}{N_{q,d}N_{\ell,d}}(n+2O_{q\ell}).
 \label{eq:mc-15}
\end{equation}
Pairs with $b_qb_\ell=0$ contribute zero, so fix an active pair.
Choose $D>2s$ and let
$\mathcal G=\mathcal G_D(X)\cap\mathcal G_D(X')$.
For fixed $0<t\le t_0$ and $x\ge0$,
\begin{align*}
 \E[\Gamma_d\1_{\mathcal G}\1_{\{O_{q\ell}>x\}}]
 &\overset{\mathrm{(a)}}{\le}e^{-t|b_qb_\ell|x}
       \E[\1_{\mathcal G}e^{S_\kappa+t|b_qb_\ell|O_{q\ell}}]\overset{\mathrm{(b)}}{\le}
       \exp\big(C(1+V_0)-t|b_qb_\ell|x\big),
\end{align*}
where $\mathrm{(a)}$ uses~\eqref{eq:finite-likelihood-overlap}
and exponential Markov; $\mathrm{(b)}$ uses
\eqref{eq:finite-mgf} in Lemma~\ref{lem:finite-mgf}.
Applying~\eqref{eq:4-6} in Lemma~\ref{lem:planting} gives
\begin{equation}
 \P_{\post}\Big[
 O_{q\ell}>\frac{C(1+V_0)+u}{|b_qb_\ell|},\quad\mathcal G\Big]
 \le Ce^{-cu},\quad u\ge0.
 \label{eq:newton-overlap-tail}
\end{equation}
In particular, with $(x)_+=\max\{x,0\}$,
\[
 \E_{\post}[(O_{q\ell})_+^s\1_{\mathcal G}]
 =s\int_0^\infty x^{s-1}\P_{\post}[O_{q\ell}>x,\mathcal G]\dd x
 \le C_s\Big(\frac{1+V_0}{|b_qb_\ell|}\Big)^s.
\]
Here we use the bound $1$ up to $C(1+V_0)/|b_qb_\ell|$
and~\eqref{eq:newton-overlap-tail} thereafter. Since
$0\le n+2O_{q\ell}\le n+2(O_{q\ell})_+$,
\begin{equation}
 \|(n+2O_{q\ell})\1_{\mathcal G}\|_{L^s(\P_{\post})}
 \le C_s\Big(n+\frac{1+V_0}{|b_qb_\ell|}\Big).
 \label{eq:newton-overlap-good}
\end{equation}
The prior marginal identity in Lemma~\ref{lem:planting} gives
$\P_{\post}[\mathcal G^c]\le2\P[\mathcal G_D^c]
\le2(n+N_{\max})^{-D}$. By~\eqref{eq:newton-nonnegative-overlap},
\begin{equation}
 \|(n+2O_{q\ell})\1_{\mathcal G^c}\|_{L^s(\P_{\post})}
 \le n^2\P_{\post}[\mathcal G^c]^{1/s}\le C_sn.
 \label{eq:newton-overlap-bad}
\end{equation}
Minkowski's inequality in~\eqref{eq:mc-15} now yields
\begin{align*}
 \|Y_H\|_{L^s(\P_{\post})}
 &\overset{\mathrm{(a)}}{\le}
 C_sn\Big(\sum_q\frac{b_q^2}{N_{q,d}}\Big)^2
 +C_s(1+V_0)\Big(\sum_q\frac{|b_q|}{N_{q,d}}\Big)^2\\
 &\overset{\mathrm{(b)}}{\le}
 C_{\ref{lem:finite-quadratic-overlap}}(s)
 \big(nd^{-2m}+(1+V_0)d^{-(m+q_m)}\big).
\end{align*}
Here $\mathrm{(a)}$ uses~\eqref{eq:newton-overlap-good}
and~\eqref{eq:newton-overlap-bad}; $\mathrm{(b)}$ uses
\eqref{eq:finite-bounds} and
$(\sum_q|b_q|/N_{q,d})^2\le
|\mathcal I|\sum_qb_q^2/N_{q,d}^2$.
\end{proof}

\section{Transfer to the original graph law}
\label{app:transfer}

Throughout this appendix, $P$ is the original graph law,
$n\le Cd^{m+q_m/3}$, and $\kappa_d=\Pi+R$, as in~\eqref{eq:taylor-polynomial}.
We first control $R$ in the likelihood bound, then in the
interaction kernels. The final calculation proves the uniform
moment estimate used in Proposition~\ref{prop:cavity-tail}.

\subsection{Likelihood comparison}
\label{app:proof-residual-bounds}
\label{app:central-event}

Recall $\mathcal E_\tau$, $\tau=\sqrt{C_0\log(n+d)/d}$, and
$S_\Pi=\sum_{i<j}\Pi(T_{ij})\Pi(T_{ij}')$ from
Section~\ref{sec:smooth-transfer}. The expectations below use two
independent prior samples. For $f\in\{\Pi,R\}$,
define
$f_\tau(t)=f(t)\1_{\{|t|\le\tau\}}
 -\E[f(T_d)\1_{\{|T_d|\le\tau\}}]$.
Let $Z_i=(X_i,X_i')$ and
\[
 \widetilde\Delta(Z_i,Z_j)
 =\Pi_\tau(T_{ij})R_\tau(T_{ij}')
  +R_\tau(T_{ij})\Pi_\tau(T_{ij}')
  +R_\tau(T_{ij})R_\tau(T_{ij}'),
 \quad U=\sum_{i<j}\widetilde\Delta(Z_i,Z_j).
\]
Each truncated kernel is centered, so $\widetilde\Delta$ is
canonical. We can therefore apply Lemma~\ref{lem:glz} to $U$.

\begin{lemma}[Truncation of the likelihood residual]
\label{lem:central-event}
For every fixed $D>0$, a sufficiently large $C_0$ gives
$\P[\mathcal E_\tau(X)^c]\le(n+d)^{-D}$.
Uniformly on $\mathcal E_\tau(X)\cap\mathcal E_\tau(X')$,
\begin{equation}
 \sum_{i<j}\kappa_d(T_{ij})\kappa_d(T_{ij}')=S_\Pi+U+o(1).
 \label{eq:central-residual}
\end{equation}
\end{lemma}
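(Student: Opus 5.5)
The first claim, $\P[\mathcal E_\tau(X)^c]\le(n+d)^{-D}$, follows from a union bound. By the spherical tail bound~\eqref{eq:kernel-bridge-sphere-tail},
\[
\P[|T_{ij}|>\tau]\le 2\exp(-d\tau^2/2)=2(n+d)^{-C_0/2}.
\]
There are at most $n^2$ pairs, so it suffices to take $C_0\ge 2D+6$.

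For~\eqref{eq:central-residual}, work on $\mathcal E_\tau(X)\cap\mathcal E_\tau(X')$. There every inner product satisfies $|T_{ij}|,|T'_{ij}|\le\tau$, so the indicator in $f_\tau$ equals one. Write $c_f=\E[f(T_d)\1_{\{|T_d|\le\tau\}}]$, so that $f(T_{ij})=f_\tau(T_{ij})+c_f$ for $f\in\{\Pi,R\}$. Then
\[
\kappa_d(T_{ij})\kappa_d(T'_{ij})=\big(\Pi_\tau+R_\tau+c_\Pi+c_R\big)(T_{ij})\cdot\big(\Pi_\tau+R_\tau+c_\Pi+c_R\big)(T'_{ij}).
\]
Expanding this product gives three groups of terms:
\begin{itemize}
\item $\Pi_\tau\Pi_\tau'$ together with the three terms making up $\widetilde\Delta$;
\item cross terms that are linear in the constants, such as $c\,f_\tau(T_{ij})$;
\item constant products such as $c_fc_g$.
\end{itemize}
Likewise $S_\Pi$ expands as $\sum_{i<j}\Pi_\tau\Pi'_\tau$ plus terms involving $c_\Pi$. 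The difference $\sum_{i<j}\kappa_d\kappa_d'-S_\Pi-U$ is therefore a sum of terms, each containing at least one factor $c_\Pi$ or $c_R$.

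The key step is to show these constants are tiny. Since $\Pi$ and $R$ are exactly centered under the law of $T_d$,
\[
c_f=-\E[f(T_d)\1_{\{|T_d|>\tau\}}].
\]
By Cauchy--Schwarz, $|c_f|\le\|f(T_d)\|_2\,\P[|T_d|>\tau]^{1/2}$. Here $\|f(T_d)\|_2\lesssim1$ by Lemma~\ref{lem:taylor-finite} and Lemma~\ref{lem:residual-bounds}, and $\P[|T_d|>\tau]\lesssim(n+d)^{-C_0/2}$. Hence $|c_f|\lesssim(n+d)^{-C_0/4}$.

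The remaining bounded factors need a uniform bound on the central event. For $|t|\le\tau$, $\Pi$ is a fixed-degree polynomial with bounded coefficients, so $|\Pi(t)|\lesssim1$, and Lemma~\ref{lem:residual-bounds} gives $|R(t)|\lesssim1$. Consequently $\sup_{|t|\le\tau}|f_\tau(t)|\lesssim1$. Each of the $O(n^2)$ pairs then contributes $O((n+d)^{-C_0/4})$, and the total error is $O(n^2(n+d)^{-C_0/4})=o(1)$ once $C_0>8$. This proves~\eqref{eq:central-residual} uniformly on the central event.

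The only delicate point is the centering constants, specifically that $\Pi$ and $R$ integrate exactly to zero under the law of $T_d$. This holds by construction in~\eqref{eq:taylor-polynomial}: $\Pi$ is centered explicitly, and $R=\kappa_d-\Pi$ with $\kappa_d$ centered. With that in hand, the truncation constants are controlled entirely by the Gaussian-type tail of $T_d$.
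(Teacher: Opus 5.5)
Your proof is correct and follows the paper's argument. You bound $\P[|T_d|>\tau]$ polynomially in $n+d$, take a union bound, and observe that on the central events the discrepancy consists only of terms carrying a centering constant $\E[f(T_d)\1_{\{|T_d|>\tau\}}]$, which is tiny because $\Pi$ and $R$ are exactly centered. The differences are cosmetic: you use the sub-Gaussian spherical tail \eqref{eq:kernel-bridge-sphere-tail} directly where the paper uses the degree-one Gegenbauer moment bound with Markov at $p\asymp\log(n+d)$, and you control the constants by Cauchy--Schwarz in $L^2$ rather than by the uniform boundedness of $\Pi$ and $R$.
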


\begin{proof}
Lemma~\ref{lem:gegen-moment} at degree one gives
$\|T_d\|_p\lesssim\sqrt{p/d}$ for $2\le p\le cd$.
Choose $p=C\log(n+d)$ with $C$ sufficiently large and then $C_0$ sufficiently large.
Markov's inequality gives
$\P[|T_d|>\tau]\le(\|T_d\|_p/\tau)^p\le(n+d)^{-D-6}$.
A union bound then gives
$\P[\mathcal E_\tau(X)^c]\le\binom n2(n+d)^{-D-6}
\le(n+d)^{-D-4}$.
The kernels $\Pi$ and $R$ are bounded and centered, so
\begin{equation}
 \big|\E[\Pi(T_d)\1_{\{|T_d|>\tau\}}]\big|
 +\big|\E[R(T_d)\1_{\{|T_d|>\tau\}}]\big|
 \lesssim(n+d)^{-D-6}.
 \label{eq:central-centering-error}
\end{equation}
For either truncated kernel $f_\tau$,
$\int f_\tau(\langle x,y\rangle)\sigma_d(\dd y)=0$.
Thus each term in $\widetilde\Delta$ integrates to zero in
either pair variable. On $|t|\le\tau$, centering gives
$f_\tau(t)-f(t)=\E[f(T_d)\1_{\{|T_d|>\tau\}}]$.
Expanding $\kappa_d=\Pi+R$ on the two central events therefore gives
\[
 \Big|\sum_{i<j}\kappa_d(T_{ij})\kappa_d(T_{ij}')-S_\Pi-U\Big|
 \overset{\mathrm{(a)}}{\lesssim}n^2(n+d)^{-D-6}=o(1),
\]
where $\mathrm{(a)}$ uses~\eqref{eq:central-centering-error}
and boundedness of $\Pi$ and $R$. This proves~\eqref{eq:central-residual}.
\end{proof}

\phantomsection\label{app:proof-weighted-transfer}
We next establish the exponential moment used in the proof of
Proposition~\ref{prop:weighted-transfer}. The argument has two
steps: estimate the four parameters in Lemma~\ref{lem:glz},
then integrate its tail up to the deterministic bound on $U$.

\noindent\textbf{Step I.} For bounded $f\in\{\Pi,R\}$,
\[
 \|f-f_\tau\|_2+\|T_f-T_{f_\tau}\|_{\op}
 \le 2\|f\|_\infty\P[|T_d|>\tau]^{1/2}.
\]
For each fixed power, increasing $C_0$ makes this error smaller
than that power of $(n+d)^{-1}$. Lemmas~\ref{lem:taylor-finite}
and~\ref{lem:residual-bounds} therefore give
\begin{align*}
 \|\Pi_\tau\|_2&\lesssim d^{-m/2},
 \quad \|T_{\Pi_\tau}\|_{\op}\lesssim d^{-(m+q_m)/2},
 \quad \|\Pi_\tau\|_\infty\lesssim\tau^m,\\
 \|R_\tau\|_2&\lesssim d^{-(m+6)/2},
 \quad \|T_{R_\tau}\|_{\op}\lesssim d^{-(m+6)/2},
 \quad \|R_\tau\|_\infty\lesssim\tau^{m+6}.
\end{align*}
The two replicas are independent under the prior. Therefore
\[
 \|\widetilde\Delta\|_2^2\lesssim d^{-2m-6},
 \quad
 T_{\widetilde\Delta}
 =T_{\Pi_\tau}\otimes T_{R_\tau}
  +T_{R_\tau}\otimes T_{\Pi_\tau}
  +T_{R_\tau}\otimes T_{R_\tau}.
\]
Independent rotations in the two replicas make
$\E[\widetilde\Delta(Z_1,Z_2)^2\mid Z_1]$
constant. Write $A_\Delta,B_\Delta,C_\Delta,D_\Delta$ for the
parameters in Lemma~\ref{lem:glz} with kernel $\widetilde\Delta$
and measure $\sigma_d\otimes\sigma_d$. They satisfy
\begin{align}
 C_\Delta^2&\lesssim n^2d^{-2m-6}
                   \overset{\mathrm{(a)}}{\lesssim}d^{2q_m/3-6},
 \quad B_\Delta^2\lesssim nd^{-2m-6},\notag\\
 D_\Delta&\lesssim nd^{-m-3-q_m/2}
                   \overset{\mathrm{(a)}}{\lesssim}d^{-3-q_m/6},
 \quad A_\Delta\lesssim\tau^{2m+6}.
 \label{eq:tr-3}
\end{align}
Here the first bounds use the displayed $L^2$, operator, and
supremum norms; $\mathrm{(a)}$ uses $n\le Cd^{m+q_m/3}$.
If $A_\Delta=0$, then $U=0$. Otherwise, $|U|\le M$ for
$M=n^2A_\Delta$, and~\eqref{eq:t-8} in
Lemma~\ref{lem:glz} gives
\begin{equation}
 \P[|U|>u]\le C\exp\big(-c\Psi(u)\big),
 \quad
 \Psi(u)=\min\Big\{
 \frac{u^2}{C_\Delta^2},\frac{u}{D_\Delta},
 \frac{u^{2/3}}{B_\Delta^{2/3}},
 \frac{u^{1/2}}{A_\Delta^{1/2}}\Big\}.
 \label{eq:tr-4a}
\end{equation}
At $u=d^{-1}$, the four terms in~\eqref{eq:tr-4a}
have respective lower bounds
$d^{4-2q_m/3}$, $d^{2+q_m/6}$,
$d^{m/3+4/3-q_m/9}$, and $d^{(m+2)/2-o(1)}$
by~\eqref{eq:tr-3}. Hence
$\Psi(d^{-1})\ge d^{\zeta+o(1)}$ for any fixed
\begin{equation}
 \frac{2q_m}{3}<\zeta<
 \min\Big\{4-\frac{2q_m}{3},\quad
            2+\frac{q_m}{6},\quad
            \frac m3+\frac43-\frac{q_m}{9},\quad
            \frac{m+2}{2}\Big\}.
 \label{eq:tr-5}
\end{equation}
Such a $\zeta$ exists because $m\ge q_m$ and $q_m\le2$.

\noindent\textbf{Step II.} Fix $s>0$. If $M\le d^{-1}$,
then $1\le\E[e^{s|U|}]\le e^{s/d}=1+o(1)$.
Otherwise, for $d^{-1}\le u\le M$,
\begin{equation}
 \frac{\Psi(u)}u\ge
 \min\Big\{
 \frac{d^{-1}}{C_\Delta^2},\frac1{D_\Delta},
 (B_\Delta^2M)^{-1/3},(A_\Delta M)^{-1/2}\Big\}
 \overset{\mathrm{(a)}}{\ge}d^{3-q_m/3-o(1)}.
 \label{eq:tr-5a}
\end{equation}
Here $\mathrm{(a)}$ uses~\eqref{eq:tr-3}, $M=n^2A_\Delta$,
and $n\le Cd^{m+q_m/3}$. This gives
$c\Psi(u)-su\ge c\Psi(u)/2$ for all large $d$.
Since $M$ is polynomial in $d$ and $\Psi$ is increasing,
\begin{align*}
 \E[e^{s|U|}\1_{\{|U|>d^{-1}\}}]
 &=e^{s/d}\P[|U|>d^{-1}]
   +s\int_{d^{-1}}^M e^{su}\P[|U|>u]\dd u\\
 &\overset{\mathrm{(a)}}{\le}C(e^{s/d}+sM)
       \exp\big(-c\Psi(d^{-1})/2\big),
\end{align*}
where $\mathrm{(a)}$ uses~\eqref{eq:tr-4a},~\eqref{eq:tr-5a},
and monotonicity of $\Psi$.
After decreasing $\zeta>2q_m/3$ if necessary,
\begin{equation}
 \E[e^{s|U|}\1_{\{|U|>d^{-1}\}}]
 \le\exp\big(-d^{\zeta+o(1)}\big),
 \quad \E[e^{s|U|}]=1+o(1).
 \label{eq:tr-6}
\end{equation}
This supplies the exponential moment used
in~\eqref{eq:tr-7} and proves
Proposition~\ref{prop:weighted-transfer}.

For events $\mathcal G$ and $\mathcal B$ of the independent prior pair,
we also need the tail probability inside the likelihood comparison.
By~\eqref{eq:central-residual},
\begin{align}
\E[\1_{\mathcal G\cap\mathcal E_\tau(X)\cap
           \mathcal E_\tau(X')\cap\mathcal B}\Gamma_d]
 \overset{\mathrm{(a)}}{\le} e^{o(1)}
 \big(\E[\1_{\mathcal G}e^{3S_\Pi}]\big)^{1/3}
 \big(\E[e^{3|U|}]\big)^{1/3}\P[\mathcal B]^{1/3}.
 \label{eq:tr-7a}
\end{align}
Here $\mathrm{(a)}$ uses $\Gamma_d\le\exp(S_\Pi+U+o(1))$
and H\"older's inequality with three equal exponents.
For $\mathcal G\subseteq\mathcal G_D(X)\cap\mathcal G_D(X')$,
the first factor is at most $\exp(C(1+V_0))$ by
Lemma~\ref{lem:finite-mgf}; the second is $1+o(1)$
by~\eqref{eq:tr-6}.

\subsection{Conditional moments and completion}
\label{app:proof-star-transfer}

We prove Proposition~\ref{prop:star-transfer} by first applying
the three fixed order arguments to the polynomial observables,
then expanding their difference from the original observables.

\begin{proof}[Proof of Proposition~\ref{prop:star-transfer}]
Recall that $n=d^{m+q_m/3}/\omega_d$ with $\omega_d\to\infty$.
\noindent\textbf{Step I.} The coefficient bounds for $\Pi$
and the verification of~\eqref{eq:finite-field} are in
Section~\ref{sec:smooth-transfer}. For this expansion,
Lemma~\ref{lem:finite-mgf} gives the cost
\begin{equation}
 V_0\lesssim nd^{-m}\log(n+d)+n^2d^{-2m}
     \le d^{2q_m/3+o(1)}.
 \label{eq:8-3}
\end{equation}
The fixed powers in~\eqref{eq:tr-7} and~\eqref{eq:tr-7a}
preserve~\eqref{eq:finite-field}. We apply them with the
posterior under the original $P$:
\begin{itemize}
 \item $k=2$: take $F=t(b_{q,d}^\Pi)^2O_{qq}$ in~\eqref{eq:tr-7}.
 The overlap tail and diagonal summation in
 Proposition~\ref{prop:finite-order-two} give
 \[
 \binom n3\bar g_2^\Pi
 \lesssim n(1+V_0)d^{-(m+q_m)}
 \lesssim\frac{d^{-2q_m/3}}{\omega_d}
 +\frac{d^{-q_m/3}\log d}{\omega_d^2}
 +\frac1{\omega_d^3}=o(1).
 \]
 \item $k=3$: take the tail event in~\eqref{eq:hc-10a}
 in~\eqref{eq:tr-7a}. By~\eqref{eq:4-6} in
 Lemma~\ref{lem:planting}, its posterior probability on the
 retained events is at most
 $\exp(C(1+V_0)-d^{\zeta+o(1)})
 =\exp(-d^{\zeta+o(1)})$, since $\zeta>2q_m/3$.
 The integration in~\eqref{eq:hc-13} therefore gives
 $\binom n4\bar g_3^\Pi=o(1)$.
 \item $k\ge4$: take $F=t|b_{q,d}^\Pi b_{\ell,d}^\Pi|O_{q\ell}$
 in~\eqref{eq:tr-7}. The tail integration in
 Lemma~\ref{lem:finite-quadratic-overlap} gives~\eqref{eq:mc-17}
 with $H=T_\Pi^2$ and the original posterior.
 Substitution into~\eqref{eq:newton-partition}, as in the two
 cases of Proposition~\ref{prop:finite-higher}, gives
 \[
 \binom n{k+1}\bar g_k^\Pi\lesssim_k
 \begin{cases}
 d^{q_m-m}/\omega_d^{1+k/2},&n\ge d^{m-q_m},\\
 d^{(m-q_m)(1+k/2)-mk},&n<d^{m-q_m}.
 \end{cases}
 \]
 Both bounds tend to zero because $k\ge4$ and $m\ge q_m$.
\end{itemize}
In each case, Lemma~\ref{lem:planting} bounds the posterior
probability of excluded events by $O((n+d)^{-D})$.
The observables are polynomially bounded for fixed $k$,
so choosing $D$ after $k$ makes their contribution $o(1)$.
We have proved
\begin{equation}
 \binom n{k+1}\bar g_k^\Pi=o(1)
 \quad\text{for every fixed }k\ge2.
 \label{eq:8-6}
\end{equation}

\noindent\textbf{Step II.} We now estimate each term
containing $R$. For $J\subseteq[k]$, define
\[
 h_{k,d}^{J}(x_1,\ldots,x_k)
 =\int\prod_{i\in J}R(x_i,z)
       \prod_{i\notin J}\Pi(x_i,z)\sigma_d(\dd z).
\]
For $H_\Pi=T_\Pi^2$, Lemmas~\ref{lem:taylor-finite},
\ref{lem:spherical-hypercontractivity}, and~\ref{lem:residual-bounds}
give, for every fixed $s$,
\[
 \sup_x\|\Pi(x,\cdot)\|_{2s}^2\lesssim d^{-m},
 \quad \|H_\Pi\|_s\lesssim d^{-m-q_m/2},
 \quad \|T_R^2\|_\infty\lesssim d^{-m-6}.
\]
To bound a slice, square $h_{k,d}^J$ and integrate the variables
in $I\subseteq[k]$, with $I^c=[k]\setminus I$ and
$\sigma_d^I=\bigotimes_{i\in I}\sigma_d$. Each integrated factor $\Pi$ gives $H_\Pi$,
and each integrated factor $R$ gives $T_R^2$.
After taking absolute values, H\"older's inequality on $(z,z')$
with $k$ equal exponents gives
\begin{align*}
 \|h_{k,d}^J(x_I,x_{I^c})\|_{L^2(\sigma_d^I)}^2
 \overset{\mathrm{(a)}}{\le}&~\|T_R^2\|_k^{|I\cap J|}\|H_\Pi\|_k^{|I\setminus J|}
 \Big(\sup_x\|R(x,\cdot)\|_{2k}^2\Big)^{|J\setminus I|}
 \Big(\sup_x\|\Pi(x,\cdot)\|_{2k}^2\Big)^{k-|I\cup J|}\\
 \overset{\mathrm{(b)}}{\lesssim}&~
 d^{-mk-6|J|-q_m|I\setminus J|/2}.
\end{align*}
Here $\mathrm{(a)}$ integrates the roots independently.
For $\mathrm{(b)}$, use the displayed bounds and the uniform residual
moment bound in Lemma~\ref{lem:residual-bounds}. Consequently,
\begin{equation}
 \esssup_{x_{I^c}}\|h_{k,d}^{J}(x_I,x_{I^c})\|_{L^2(\sigma_d^I)}
 \lesssim d^{-mk/2-3|J|-q_m|I\setminus J|/4}.
 \label{eq:hc-22}
\end{equation}
For $J\ne\varnothing$, this is at most
$Cd^{-mk/2-3-(|I|-1)_+q_m/4}$, since $q_m/4<3$.

Since $h_{k,d}=\sum_{J\subseteq[k]}h_{k,d}^J$
and $h_{k,d}^\Pi=h_{k,d}^{\varnothing}$, the difference of the
replica products is a sum of terms containing $R$.
For $Z_i=(X_i,X_i')$, define
\begin{align*}
 D_k(Z_1,\ldots,Z_k)
 &=h_{k,d}(X_{[k]})h_{k,d}(X_{[k]}')
   -h_{k,d}^{\Pi}(X_{[k]})h_{k,d}^{\Pi}(X_{[k]}')\\
 &=\sum_{(J,J')\ne(\varnothing,\varnothing)}
       h_{k,d}^{J}(X_{[k]})h_{k,d}^{J'}(X_{[k]}').
\end{align*}
Let $W_k=\sum_{|I|=k}D_k(Z_I)$ and let $a_r$ be the slice norms of $D_k$ in
Lemma~\ref{lem:adamczak-cor}. The two replicas are independent
under the prior, so their slice norms multiply. Since at least
one of $J,J'$ is nonempty,~\eqref{eq:hc-22} gives
\[
 a_0\lesssim d^{-mk-3},\quad
 a_r\lesssim d^{-mk-3-(2r-1)q_m/4},\quad1\le r\le k.
\]
The sum defining $D_k$ is symmetric. Since $\Pi$ and $R$ are
centered, each factor $h_{k,d}^J$ integrates to zero in every
root. Independence of the two prior replicas therefore makes
$D_k$ canonical in each $Z_i$. Applying~\eqref{eq:t-7} in Lemma~\ref{lem:adamczak-cor},
with all coefficients equal
to one, gives
\begin{equation}
 \P[|W_k|>t]
 \le C\exp\Big(-c\min_{0\le r\le k}
       \Big(\frac{t}{n^{r/2}a_r}\Big)^{2/(2k-r)}\Big),
 \label{eq:hc-24}
\end{equation}
provided these ratios are at least one.

At $t=n^{-1}$ and $n\le d^{m+q_m/3}$, the powers of $d$
in~\eqref{eq:hc-24} are bounded below by
\begin{align}
 \alpha_0&=\frac{m(k-1)+3-q_m/3}{k},\notag\\
 \alpha_r&=
 \frac{2\big(m(k-1-r/2)+3+rq_m/3-7q_m/12\big)}{2k-r},
 \quad 1\le r\le k.
 \label{eq:hc-25b}
\end{align}
They all exceed $2q_m/3$. Indeed,
$k(\alpha_0-2q_m/3)\ge3-2q_m/3>0$ because $m\ge q_m$ and $k\ge2$.
For $r\ge1$, the sign of $\alpha_r-2q_m/3$ is the sign
of an affine function of $r$. Its values at $r=0$ and
$r=k$, respectively, satisfy
\begin{align*}
 m(k-1)+3-q_m(2k/3+7/12)&\ge3-11q_m/12>0,\\
 m(k/2-1)+3-7q_m/12&\ge3-7q_m/12>0.
\end{align*}
Choose $\delta>0$ so small that
$2\delta(m+q_m/3)/(2k-r)<\alpha_r-2q_m/3$ for every $r$.
Then the ratios required by~\eqref{eq:hc-24} diverge, and
for some $\zeta>2q_m/3$,
\begin{equation}
 \P[|W_k|>n^{-1-\delta}]
 \le\exp\big(-d^{\zeta+o(1)}\big).
 \label{eq:hc-26}
\end{equation}

\noindent\textbf{Step III.} Define
$\mathcal G_{\mathrm{tr}}=
 \mathcal G_D(X)\cap\mathcal G_D(X')\cap
 \mathcal E_\tau(X)\cap\mathcal E_\tau(X')$.
Apply~\eqref{eq:tr-7a} with
$\mathcal B=\{|W_k|>n^{-1-\delta}\}$.
We obtain
\begin{align*}
 \P_{\post}[\mathcal B\cap\mathcal G_{\mathrm{tr}}]
 &\overset{\mathrm{(a)}}{\le}
 \big(\E[\1_{\mathcal G_{\mathrm{tr}}\cap\mathcal B}\Gamma_d]\big)^{1/2}\overset{\mathrm{(b)}}{\le}
 \exp\big(O(1+V_0)-c d^{\zeta+o(1)}\big)
 =\exp\big(-d^{\zeta+o(1)}\big),
\end{align*}
where $\mathrm{(a)}$ is~\eqref{eq:4-6} in Lemma~\ref{lem:planting}
and $\mathrm{(b)}$ uses~\eqref{eq:tr-7a},~\eqref{eq:tr-6},
and~\eqref{eq:hc-26}. The last equality follows from~\eqref{eq:8-3}
and $\zeta>2q_m/3$.
The marginal identity in Lemma~\ref{lem:planting} gives
$\P_{\post}[\mathcal G_{\mathrm{tr}}^c]
\le2\P[\mathcal G_D^c]+2\P[\mathcal E_\tau^c]$.
Since $|W_k|\lesssim n^k$ deterministically, choosing $D$
after $k$ yields
\begin{equation}
 \E_{\post}[|W_k|]
 \le n^{-1-\delta}
     +Cn^k\P_{\post}[\mathcal G_{\mathrm{tr}}^c]
     +Cn^k\exp\big(-d^{\zeta+o(1)}\big)
 =o(n^{-1}).
 \label{eq:hc-27}
\end{equation}
The replica identity and exchangeability now give
$\binom nk|\bar g_k-\bar g_k^\Pi|
=|\E_{\post}[W_k]|\le\E_{\post}[|W_k|]=o(n^{-1})$.
Together with~\eqref{eq:8-6}, this implies
\begin{equation}
 \binom n{k+1}\bar g_k
 \le\binom n{k+1}\bar g_k^\Pi
   +\frac{n-k}{k+1}\binom nk|\bar g_k-\bar g_k^\Pi|
 =o(1).
 \label{eq:8-7}
\end{equation}
Together with the estimate for the difference, this proves
\eqref{eq:star-transfer-summary}.
\end{proof}

\phantomsection\label{app:proof-cavity-tail}
We finish by proving~\eqref{eq:growing-moment}, the prior moment
bound used in Proposition~\ref{prop:cavity-tail}.

\begin{proof}[Proof of~\eqref{eq:growing-moment}]
By Cauchy--Schwarz and rotation invariance,
\begin{equation}
 \|H_d\|_\infty
 \le\sup_{x,y}\|\kappa_d(x,\cdot)\|_2
                   \|\kappa_d(y,\cdot)\|_2
 =\|\kappa_d\|_2^2\overset{\mathrm{(a)}}{\lesssim}d^{-m}.
 \label{eq:tr-8b2}
\end{equation}
Here $\mathrm{(a)}$ follows from $\kappa_d=\Pi+R$ and
Lemmas~\ref{lem:taylor-finite} and~\ref{lem:residual-bounds}.
Recall $\Pi=\sum_qb_{q,d}^\Pi P_{q,d}$ from
Lemma~\ref{lem:taylor-finite}. Its contraction is
\[
 H_\Pi(t)=\sum_q\frac{(b_{q,d}^\Pi)^2}{N_{q,d}}P_{q,d}(t),
 \quad \sum_q\frac{(b_{q,d}^\Pi)^2}{N_{q,d}}=O(d^{-m}).
\]
For $2\le p\le c_{\ref{lem:gegen-moment}}(m+5)d$, all active
degrees lie in the moment range of Lemma~\ref{lem:gegen-moment}.
Since $p/d\le1$, every degree $q\ge q_m$ satisfies
$\|P_{q,d}(T_d)\|_p\lesssim(p/d)^{q/2}\le(p/d)^{q_m/2}$.
When $m$ is even, degree one can first occur at Taylor order
$m+1$. By~\eqref{eq:smooth-monomial-coefficients},
\[
 \frac{(b_{1,d}^\Pi)^2}{N_{1,d}}\|P_{1,d}(T_d)\|_p
 \lesssim d^{-m-1}\sqrt{p/d}\le d^{-m}(p/d).
\]
Minkowski's inequality therefore gives
$\|H_\Pi(T_d)\|_p\lesssim d^{-m}(p/d)^{q_m/2}$.
Since $T_{\kappa_d}^2-T_\Pi^2=T_\Pi T_R+T_RT_\Pi+T_R^2$,
\[
 \|H_d-H_\Pi\|_\infty
 \overset{\mathrm{(a)}}{\le}2\|\Pi\|_2\|R\|_2+\|R\|_2^2
 \overset{\mathrm{(b)}}{\lesssim}d^{-m-3},
\]
where $\mathrm{(a)}$ applies Cauchy--Schwarz to each kernel;
$\mathrm{(b)}$ uses Lemmas~\ref{lem:taylor-finite}
and~\ref{lem:residual-bounds}. This error is at most
$Cd^{-m}(p/d)^{q_m/2}$ because $p\ge2$ and $q_m\le2$.
Thus
\begin{equation}
 \|H_d(T_d)\|_p\lesssim d^{-m}(p/d)^{q_m/2},
 \quad2\le p\le c_{\ref{lem:gegen-moment}}(m+5)d,
 \label{eq:tr-8}
\end{equation}
which proves~\eqref{eq:growing-moment}.
\end{proof}

\end{document}